\newcommand{\C}{\mathbb{C}}
\newcommand{\K}{\mathbb{K}}
\newcommand{\undn}{\underline{n}}
\newcommand{\calA}{\mathcal{A}}
\newcommand{\calM}{\mathcal{M}}
\newcommand{\calO}{\mathcal{O}}
\newcommand{\calP}{\mathcal{P}}
\newcommand{\calQ}{\mathcal{Q}}
\newcommand{\calT}{\mathcal{T}}
\newcommand{\id}{\operatorname{id}}
\newcommand{\Hom}{\operatorname{Hom}}
\newcommand{\Aut}{\operatorname{Aut}}
\newcommand{\TAut}{\operatorname{TAut}}
\newcommand{\SAut}{\operatorname{SAut}}
\newcommand{\trace}{\operatorname{tr}}
\newcommand{\ob}{\operatorname{Ob}}
\newcommand{\Simp}{\operatorname{Simp}}
\newcommand{\gr}{\operatorname{gr}}
\newcommand{\grp}{\mathcal{G}}
\newcounter{dummy} \numberwithin{dummy}{section}
 \newtheorem{theorem}[dummy]{Theorem}
\newtheorem{thm}[dummy]{Theorem}
\newtheorem{prop}[dummy]{Proposition}
\newtheorem{lemma}[dummy]{Lemma}
\newtheorem{cor}[dummy]{Corollary}
\newtheorem*{thm*}{Theorem}
\newtheorem*{prop*}{Proposition}
\theoremstyle{definition}
\newtheorem{definition}[dummy]{Definition}
\newtheorem{notation}[dummy]{Notation}
\numberwithin{equation}{section}
\newtheorem{construction}[dummy]{Construction}
\theoremstyle{remark}
\newtheorem{remark}[dummy]{Remark}
\newtheorem{example}[dummy]{Example}
\newcommand{\gt}{\mathsf{GT}}
\newcommand{\grt}{\mathsf{GRT}}
\newcommand{\gtm}{\mathsf{GTM}}
\newcommand{\grtm}{\mathsf{GRTM}}
\newcommand{\kv}{\mathsf{KV}}
\newcommand{\kvs}{\mathsf{KV}^\mathrm{sym}}
\newcommand{\krv}{\mathsf{KRV}}
\newcommand{\krvs}{\mathsf{KRV}^\mathrm{sym}}
\newcommand{\solkv}{\mathsf{SolKV}}
\newcommand{\Br}{\mathsf{B}} 
\newcommand{\PB}{\mathsf{PB}} 
\newcommand{\F}{\mathsf{F}} 
\newcommand{\PaB}{\mathsf{PaB}}
\newcommand{\hPaB}{\mathsf{PaB}_{\mathbb{K}}}
\newcommand{\CD}{\mathsf{CD}}
\newcommand{\PaCD}{\mathsf{PaCD}}
\newcommand{\lie}{\mathfrak{lie}} 
\newcommand{\ass}{\mathfrak{ass}} 
\newcommand{\tder}{\mathfrak{tder}}
\newcommand{\sder}{\mathfrak{sder}}
\newcommand{\cyc}{\mathrm{cyc}}
\newcommand{\bch}{\mathfrak{bch}}
\newcommand{\ib}{\mathfrak{t}}
\newcommand{\lkrv}{\mathfrak{krv}}
\newcommand{\lkv}{\mathfrak{kv}}
\newcommand{\inner}{\mathbf{t}} 
\newcommand{\tw}{b} 
\newcommand{\etw}{B} 
\newcommand{\barphi}{\bar{\varphi}}
\newcommand{\Op}{\mathrm{Op}}
\newcommand{\Mop}{\mathrm{MOp}}
\let\oldtocsection=\tocsection
\let\oldtocsubsection=\tocsubsection
\renewcommand{\tocsection}[2]{\hspace{0em}\oldtocsection{#1}{#2}}
\renewcommand{\tocsubsection}[2]{\hspace{1em}\oldtocsubsection{#1}{#2}}
\definecolor{darkblue}{rgb}{0,0,0.7} 
\definecolor{green}{RGB}{57,181,74} 
\definecolor{violet}{RGB}{147,39,143} 
\newcommand{\darkblue}{\color{darkblue}} 
\newcommand{\defn}[1]{{\darkblue \emph{#1}}}
\newcommand{\eqdef}{:=}
\title{Genus zero Kashiwara--Vergne solutions from braids}
\author[Z. Dancso]{Zsuzsanna Dancso}
\address{School of Mathematics and Statistics\\ The University of Sydney\\ Sydney, NSW, Australia}
\email{zsuzsanna.dancso@sydney.edu.au}
\author[I. Halacheva]{Iva Halacheva}
\address{Department of Mathematics \\ Northeastern University \\ Boston, Massachusetts, USA}
\email{i.halacheva@northeastern.edu}
\author[G. Laplante-Anfossi]{Guillaume Laplante-Anfossi}
\address{Centre for Quantum Mathematics, Syddansk Universitet, Campusvej 55, Odense, Denmark}
\email{glaplanteanfossi@imada.sdu.dk}
\author[M. Robertson]{Marcy Robertson}
\address{School of Mathematics and Statistics \\ The University of Melbourne \\ Melbourne, Victoria, Australia}
\email{marcy.robertson@unimelb.edu.au}
\author[C. Singh]{Chandan Singh}
\address{School of Mathematics and Statistics \\ The University of Melbourne \\ Melbourne, Victoria, Australia}
\email{chandans@student.unimelb.edu.au}
\thanks{IH was supported by the National Science Foundation Grant No.\ DMS-2302664. GLA, MR and CS were supported by the Australian Research Council Future Fellowship FT210100256. GLA was supported by the Andrew Sisson Fund, Novo Nordisk Foundation grant NNF20OC0066298, Villum Fonden Villum Investigator grant 37814, and Danish National Research Foundation grant DNRF157. CS acknowledges the support of an Australian Government Research Training Program (RTP) Scholarship and the Dr Albert Shimmins Fund.}
\begin{document}

\begin{abstract} 
Using the language of moperads — monoids in the category of right modules over an operad — we reinterpret the Alekseev–Enriquez–Torossian construction of Kashiwara--Vergne (KV) solutions from associators. 
We show that any equivalence between the moperad of parenthesized braids with a frozen strand and the moperad of chord diagrams gives rise to a family of genus zero KV solutions operadically generated by a single classical KV solution. 
We show that the Grothendieck—Teichm\"uller module groups act on the latter, intertwining the actions of the KV symmetry groups. 
In the other direction, we show that any symmetric KV solution gives rise to a module map from parenthesized braids with a frozen strand to tangential automorphisms of free Lie algebras. This map factors through the moperad of chord diagrams if and only if the associated KV associator is a Drinfeld associator.
\end{abstract}
\maketitle
\section*{Introduction}

The Kashiwara–Vergne (KV) equations were posed in \cite{KV78} as part of an effort to use the combinatorial and structural properties of the Baker–Campbell–Hausdorff (BCH) formula to give an algebraic proof of the Duflo isomorphism \cite{Duflo77}. Informally, a solution to the KV equations consists of a pair of Lie series satisfying compatibility and divergence conditions that, if fulfilled, yield the desired algebraic framework. 
A general solution to the equations was eventually constructed by Alekseev and Meinrenken~\cite{AM06}, building on prior work by Torossian~\cite{Torossian02} using configuration space integrals defining Kontsevich’s formality morphism~\cite{Kontsevich99,Kontsevich03}.

In~\cite{AT12}, Alekseev and Torossian reformulated the KV equations in terms of \emph{tangential automorphisms}, which are automorphisms of the prounipotent group $\exp(\lie_n)$ associated to the completed free Lie algebra on $n$ generators, $\lie_n$. Tangential automorphisms act by conjugating each free generator by elements in $\exp(\lie_n)$ (\cref{def: tangential automorphism}). Tangential automorphisms of $\lie_n$ form a group $\TAut_n$. 
In this formulation, a solution to the KV equations is a tangential automorphism \[F:\exp(\lie_2)\rightarrow \exp(\lie_2)\] satisfying two identities which rectify the failure of associativity and commutativity of the exponential map at the Lie algebra level. Recall that the product of exponentials satisfies
\[
e^{x_1}e^{x_2} = e^{\bch(x_1,x_2)},
\]
where $
\bch(x_1,x_2) \eqdef x_1 + x_2 + \tfrac{1}{2}[x_1,x_2] + \tfrac{1}{12}[x_1,[x_1,x_2]] - \tfrac{1}{12}[x_2,[x_1,x_2]] + \cdots$ is the Baker--Campbell--Hausdorff (BCH) series in $\lie_2$. 
The first KV equation~\eqref{SolKVI} requires the tangential automorphism $F \in \TAut_2$ to satisfy  $F(e^{x_1}e^{x_2})=e^{x_1+x_2}$. For more detail and the second KV equation~\eqref{SolKVII} see \cref{sec: background on tder and KV solutions}. 

The associativity property of the Baker--Campbell--Hausdorff series gives rise to coherence relations that constrain how KV solutions behave under nested compositions. 
In~\cite{AT12}, Alekseev and Torossian 
introduce the class of \emph{KV associators}. The KV associators are tangential automorphisms $G:\exp(\lie_3)\rightarrow \exp(\lie_3)$ of the form 
\[
G = F^{1,23}F^{2,3}(F^{12,3}F^{1,2})^{-1},
\]
which encode the behavior of a KV solution $F$ under nested applications of the BCH formula (\cref{defn: KV Associator}). 
The KV associators satisfy a pentagon identity that reflects the associativity of repeated applications of the BCH formula, together with a pair of hexagon equations encoding symmetry under cyclic permutation of inputs. These identities mirror those satisfied by Drinfeld associators (\cref{def:DrinfAss}). Indeed, Alekseev and Torossian  \cite{AT12} showed that every Drinfeld associator gives rise to a KV solution and corresponding KV associator. Later, Alekseev, Enriquez and Torossian \cite{AET10} gave a now-famous explicit formula constructing KV solutions from Drinfeld associators. 
It remains an open question whether every KV associator arises from a Drinfeld associator (cf. just after Theorem 4.6 of \cite{AT12}).

\vspace{1mm}

\textbf{Expansions.} 
Both Drinfeld associators and Kashiwara--Vergne solutions correspond to specific topological invariants -- called {\em homomorphic expansions} -- for different knotted structures. Below we give an intuitive introduction to homomorphic expansions. 

Homomorphic expansions are particularly useful when defined on a finitely presented algebraic structure $\calT$, typically of topological nature, such as the pure braid group on $n$ strands, or the operad of parenthesized braids. We allow formal linear combinations over a field $\K$ of characteristic 0, and denote the resulting linear extension by $\K\calT$. The {\em augmentation ideal} $\K\calT \supseteq I=\{\sum c_i T_i : c_i\in \K, T_i \in \calT, \sum c_i=0\}$ gives rise to a decreasing filtration on $\K\calT$, called the {\em I-adic} of {\em prounipotent} filtration:
$$\K\calT = I^0 \supseteq I^1 \supseteq I^2 \supseteq \cdots $$

We denote by $\calA$ the degree completed associated graded structure with respect to this filtration: $$\calA= \Pi_{n=0}^\infty I^n/I^{n+1}.$$
A {\em homomorphic expansion} is an isomorphism between the \textit{I}-adic completion $\widehat{\K\calT} = \calT_\K$, and the associated graded structure $\calA$ (see Section \ref{subsec:compform}). We note that in \cite{WKO2,BD13} a homomorphic expansion is defined to be a filtered homomorphism $Z: \K\calT \to \calA$ with the property that the associated graded map of $Z$ is the identity on $\calA$. Such a map induces a unique isomorphism between the \textit{I}-adic completion and the associated graded structure.

The {\em expansion problem} for $\calT$ is the problem of finding a homomorphic expansion for $\K\calT$. The papers \cite{AKKN18highergenus, AKKN_GT_Formality} refer to expansion problems as {\em formality problems}, as homomorphic expansions are related to, although weaker than formality isomorphisms in rational homotopy theory. 
Expansion problems are often difficult, and homomorphic expansions do not always exist. For a finitely presented structure $\calT$, the search for a homomorphic expansion comes down to solving a finite set of equations in $\calA$, in order to determine the $Z$-values of the generators of $\calT$. Often, these equations turn out to be independently interesting.

The prototype for a topological expansion problem is the expansion problem for the {\em operad of parenthesized braids}. The set of homomorphic expansions 
\[
\left\{ \, \varphi\colon \hPaB \overset{\cong}{\longrightarrow} \PaCD \, \right\}
\]
between the completed operad of parenthesized braids $\hPaB$, and its associated graded operad, the operad of parenthesized chord diagrams $\PaCD$, is in one-to-one correspondence with {\em Drinfeld associators}  ~\cite{BN98, FresseBook1, Calaque2025}. Drinfeld associators (\cref{def:DrinfAss}) are solutions of the pentagon and hexagon equations in the exponentiated Drinfeld--Kohno Lie algebras \cite{BN98,FresseBook1,Tamarkin03,Calaque2025}, and they 
are of independent interest in quantum algebra as they encode the coherence conditions for braided monoidal categories.

Solutions to the Kashiwara--Vergne equations also correspond to homomorphic expansions of multiple topological objects: a class of knotted surfaces in four-dimensions called {\em welded foams} \cite{WKO2,WKO2Cor}, and the Goldman--Turaev Lie bialgebra of loops in a punctured disc \cite{AKKN_genus_zero}. 
However, the key construction of KV solutions from Drinfeld associators \cite{AET10} is not phrased in the language of expansions. In fact, it is an interesting question whether it constructs a homomorphic expansion, and if so, for what structure. 

The motivating goal of this paper is to answer this question by uncovering the relevant structure -- the {\em moperad} of braids with a frozen strand -- and reinterpreting the Alekseev--Enriquez--Torossian construction as a construction of a homomorphic expansion for this structure.

\vspace{1mm}

\textbf{Main results.}
The Alekseev--Enriquez--Torossian construction builds on the correspondence between Drinfeld associators and operad homomorphic expansions $\varphi\colon \hPaB \to \CD$, 
showing that the local behavior of such a homomorphic expansion $\varphi$ on a family of elementary morphisms in $\hPaB$ can be used to produce a tangential automorphism $F_{\varphi}$, which satisfies the KV equations. We revisit this construction and show that it amounts to extending $\varphi$ to a homomorphic expansion for the \emph{moperad of parenthesized braids with a frozen strand},~$\PaB^1$ (\cref{def:Moperad of Parenthesized Braids}).

Moperads, which are monoids in the category of right modules over an operad (\cref{def:moperad}), extend the classical operadic framework by allowing for compositions with one distinguished input~\cite{Willwatcher2016Moperads,calaque20moperadic,CamposIdrissiWillwacher24}).  The moperad~$\PaB^1$ consists of groupoids~$\PaB^1(n)$ which model the fundamental groupoids of ordered configuration spaces of~$n$ points in the punctured plane. Topologically, these configurations can be pictured as lying in a cylinder, where the puncture is a marked boundary component. 
The two defining operations on~$\PaB^1$ reflect this geometry:
\begin{enumerate}
  \item a monoid structure given by composition along the frozen strand, corresponding to gluing the unmarked boundary of a cylinder into the marked boundary of the other;
  \item a right $\PaB$-module structure encoding the operadic composition of configurations away from the puncture, analogous to the classical structure encoded by the little discs operad.
\end{enumerate}
A homomorphic expansion for $\hPaB^1$ can be encoded as a pair $(\varphi^1,\varphi)$, where $\varphi$ is an operad equivalence between $\hPaB$ and $\CD$, and $\varphi^1$ is a $\varphi$-equivariant equivalence $\varphi^1:\hPaB^1 \to \CD^+$.

It was shown by Calaque and Gonzalez~\cite[Theorem~3.4]{calaque20moperadic} that the moperad $\PaB^1$ admits an explicit finite presentation. Therefore, homomorphic expansions for $\hPaB^1$ are determined by their values on the generators. This implies that homomorphic expansions of $\hPaB^1$ are in bijection with tuples $(\mu,f,g)\in \K\times\exp(\ib_3)\times\exp(\ib^+_{2})$, where $\ib_n$ denotes the Drinfeld--Kohno Lie algebra and the superscript~$+$ denotes a shift, i.e. $\ib_n^+\cong \ib_{n+1}$. The values an expansion takes on the generators of $\PaB^1$ must satisfy the equations arising from the relations in the presentation of $\PaB^1$. These are the pentagon and hexagon equations of the operad $\PaB$, which ensure that the pair $(\mu,f)$ is a Drinfeld associator (\cref{cor: operadic associators}); and the additional {\em mixed pentagon}, {\em right pentagon} and {\em octagon} equations which arise in the moperad structure of $\PaB^1$ for the pair $(f,g)$ (\cref{presenation of PaB1}, \cref{thm: assoc^1}).
 
Building on this characterization, we show that any operad homomorphic expansion $\varphi:\PaB \to \CD$ has a one-parameter family of extensions to a moperad homomorphic expansion $(\varphi^1,\varphi)$. We re-interpret the Alekseev--Enriquez--Torossian construction to show that such an extension $(\varphi^1,\varphi)$ with coupling constant~$1$ always restricts to a tangential automorphism $F_{\varphi^1} : \exp(\lie_n) \to \exp(\lie_n)$, which satisfies the KV equations and is symmetric, that is, invariant under the Alekseev--Torossian involution \cite{AT12}: 

\begin{thm*}[\cref{thm: the construction is good}]
Any moperad homomorphic expansion $(\varphi^1,\varphi):\hPaB^1 \rightarrow \CD^+$ with $\mu=1$ restricts to a tangential automorphism $F_{\varphi^1}$ on the free group $F_2 \subseteq \PaB^1((0(12)),(0(12)))$, and $F_{\varphi^1}$ is a symmetric KV solution.
\end{thm*}

There is a natural shift functor $(-)^+$ which produces a moperad morphism from any operad morphism (\cref{example: shifted moperad}).
Applying this functor to the homomorphic expansion corresponding to a Drinfeld associator $\varphi : \hPaB \to \CD$, one obtains a homomorphic expansion of moperads $(\varphi^+,\varphi) : \hPaB^+ \to \CD^+$. Precomposing with the natural inclusion $\hPaB^1 \hookrightarrow \hPaB^+$, one recovers the Alekseev–Enriquez–Torossian formula \cite[Theorem~4]{AET10} for $F_{\varphi^1}$. This highlights that incorporating the defining relations of $\PaB^1$ as a $\PaB$-moperad, the KV equations arise from the pentagon and hexagon equations as a natural consequence of extending an operad map to a moperad map.

The set of KV solutions admits commuting left and right actions of the Kashiwara--Vergne groups~$\kv$ and~$\krv$, analogous to the commuting left and right actions of the Grothendieck--Teichm\"uller groups~$\gt$ and $\grt$ on the set of Drinfeld associators \cite{Drin}. 
It was shown in~\cite{AET10} that the Alekseev--Enriquez--Torossian construction intertwines these actions, in other words, constructs a map of torsors between Drinfeld associators and KV solutions. Indeed, the groups~$\gt_1$ and~$\grt_1$ embed as subgroups into the KV symmetry groups~$\kv$ and~$\krv$, respectively. 
Moreover, when a KV solution $F_{\Phi}$ arises from an associator $\Phi$, the actions of $\kv$ and $\krv$ on $F_\Phi$ coincide with the actions of~$\gt_1$ and~$\grt_1$ on $\Phi$ (\cite[Proposition~9.13]{AT12}, \cite[Theorem~9]{AET10}).

In \cite{calaque20moperadic}, Calaque and Gonzalez showed that the group of object-fixing automorphisms of the moperad $\hPaB^1$ is isomorphic to a cyclotomic variant of the Grothendieck--Teichm\"uller group, Enriquez’s \emph{Grothendieck--Teichm\"uller module group} denoted $\mathsf{GTM}$.
This group was introduced in the context of braided module categories~\cite{Enriquez2007cyclotomic}, which are module categories over braided monoidal categories (\cite{Enriquez2007cyclotomic}, \cite{ben2018quantum}, \cite{kolb2020braided}). 
It governs deformation symmetries of these module categories, analogous to how the Grothendieck–Teichm\"uller group $\gt$ governs deformations of braided monoidal categories themselves. 

In Section~\ref{sec: GT1 actions on constructed KV solutions} we show that suitable local data extracted from an automorphism of $\hPaB^1$ yields an element of the symmetry group $\kv$. 
Concretely, this gives an injective group homomorphism
\[
  \gtm_{1} \longrightarrow \kv,
\]
where $\gtm_{1}$ is the subgroup of $\gtm$ with coupling constant $\lambda=1$.  In other words, the Grothendieck–Teichm\"uller module group $\gtm_{1}$ acts as a group of KV symmetries.  One can similarly define a right action of the graded version of the Grothendieck–Teichm\"uller module group $\grtm_{1}$ on KV solutions. 
In Section~\ref{sec: GT1 actions on constructed KV solutions} we conclude the following.  
\begin{thm*}[Theorems~\ref{thm:KV-action-F-012} and \ref{thm:KRV}]
Let $\gtm_{1}$ and $\grtm_{1}$ denote the subgroups of $\gtm$ and $\grtm$ with~$\lambda=1$. 
Then the assignment of a KV solution $F_{\varphi^1}$ to a moperad equivalence $(\varphi^1,\varphi):\hPaB^1 \rightarrow \CD^+$ with $\mu=1$ intertwines the commuting left and right actions of $\gtm_{\lambda=1}$ and $\grtm_{\lambda=1}$ with the left and right actions of the KV symmetry groups $\kv$ and $\krv$.
\end{thm*}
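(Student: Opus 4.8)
The plan is to establish the two intertwining statements separately: \cref{thm:KV-action-F-012} for the left action through $\kv$, and \cref{thm:KRV} for the right action through $\krv$. The guiding principle is that the assignment $(\varphi^1,\varphi)\mapsto F_\varphi$, the homomorphisms $\gt^1_{\lambda=1}\to\kv$ and $\grt^1_{\lambda=1}\to\krv$, and the four group actions are all governed by one recipe --- extracting tangential-automorphism data from the value of a moperad morphism on the distinguished generator $0(12)\in\hPaB^1$ --- and that this extraction is compatible with composition of moperad morphisms. Since $F_\varphi=F_{0(12)}$ is read off from $\varphi^1(0(12))$, and both actions are realized by pre- or post-composing $(\varphi^1,\varphi)$ with a moperad (iso)morphism, each proof reduces to evaluating a composite on a single generator.

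First I would fix conventions by recalling, from \cref{defn: GTRM} and the Calaque--Gonzalez presentation, that $\gt^1_{\lambda=1}$ is the group of object-fixing automorphisms $(\vartheta^1,\vartheta)$ of $\hPaB^1$ with twist $\lambda=1$, acting on isomorphisms by composition on the source, $\vartheta\cdot(\varphi^1,\varphi)=(\varphi^1\circ\vartheta^1,\varphi\circ\vartheta)$ (up to the usual inversion convention making this a left action), while $\grt^1_{\lambda=1}$ acts dually by composition on the $\CD^+$ target. The homomorphism $\gt^1_{\lambda=1}\to\kv$ sends $(\vartheta^1,\vartheta)$ to the tangential automorphism read off from $\vartheta^1(0(12))$ using the presentation of \cref{presenation of PaB1}, i.e.\ the same local-data extraction that defines $F_\varphi$, applied to a moperad endomorphism in place of an isomorphism into $\CD^+$. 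A preliminary step, needed for the statement to be well-posed, is to check that the constraint $\lambda=1$ is exactly what preserves the normalization $\mu=1$ under the action --- so that $\vartheta\cdot\varphi$ is again a moperad isomorphism producing a genuine KV solution --- using that $\mu$ transforms by the multiplicative twist parameter.

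For the left action the central computation starts from
\[
(\varphi^1\circ\vartheta^1)(0(12))=\varphi^1\bigl(\vartheta^1(0(12))\bigr),
\]
after which $F_{\vartheta\cdot\varphi}$ is obtained by the same extraction that produces $F_\varphi$ from $\varphi^1(0(12))$. I would expand $\vartheta^1(0(12))$ in the generators of the finite presentation of $\PaB^1$, isolate the local-data factor defining the image of $\vartheta$ in $\kv$, and then use that $\varphi^1$ is a morphism of moperads --- hence compatible with the monoid multiplication along the frozen strand and with the right $\PaB$-module structure --- to distribute $\varphi^1$ over this expansion. Matching the outcome against the definition of the $\kv$-action on KV solutions should give $F_{\vartheta\cdot\varphi}=\rho_{\kv}(\vartheta)\cdot F_\varphi$. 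The right action is then handled by the mirror-image argument on the chord-diagram side, postcomposing $\varphi^1$ with the graded automorphism and reading off the $\krv$-action; since $\grt^1_{\lambda=1}$ and $\krv$ both live in the associated graded, this half should be more transparent.

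I expect the main obstacle to be combinatorial rather than conceptual: one must verify that the tangential automorphism extracted from $\vartheta^1(0(12))$ coincides exactly with the element of $\kv$ (respectively $\krv$) prescribed by the homomorphism, and that the mixed pentagon and octagon relations defining $\PaB^1$ (\cref{thm: assoc^1}) are precisely what force the distributed expression to reassemble into the $\kv$-action rather than some unrelated transformation. A second delicate point is tracking the frozen strand throughout, so that the monoid structure along the puncture contributes exactly the conjugation data encoded by $\kv$ and is not conflated with the operadic $\PaB$-composition away from the puncture.
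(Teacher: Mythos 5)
Your skeleton matches the paper's: both results are proved by observing that $F_{\varphi}$ and the candidate symmetry-group elements are produced by one and the same restriction procedure --- to $\widehat{\F}_2\subseteq\Aut_{\hPaB^1(2)}(0(12))$ --- and that this restriction is compatible with composition of moperad morphisms, so that $F_{\varphi^1\circ\vartheta^1}=F_{\varphi^1}\circ G_{\vartheta^1}$; the intertwining statements (\cref{cor:KV-action-F-012} and \cref{cor:KRV-action-F-012}) then follow essentially in one line. You also correctly identify that $\lambda=1$ is exactly what preserves the coupling constant $\mu=1$, keeping the composite inside the domain of \cref{thm: the construction is good}.

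However, there is a genuine gap: you treat the homomorphisms $\gt^1_{\lambda=1}\to\kv(2)$ and $\grt^1_{\lambda=1}\to\krv(2)$ as pre-existing data whose compatibility with the actions must be ``matched,'' when in fact constructing them --- i.e.\ proving that the extracted automorphisms lie in $\kv(2)$ and $\krv(2)$ at all --- is the mathematical core of \cref{thm:KV-action-F-012} and \cref{thm:KRV}, and your proposal contains no argument for it. Three things must be verified and none is routine. First, tangentiality: a priori the extraction conjugates the generators $X_i$ by elements of $\widehat{\PB}^1_2$, not of $\widehat{\F}_2$; the paper's \cref{lem:Theta-fixes-product} repairs this using centrality of the full twist $x_{01}x_{12}x_{02}$ to rewrite $g(x_{01},x_{12})$ as a conjugator in $\widehat{\F}_2$, together with a braid identity for conjugation by $\beta_1$. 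Second, the equation \eqref{KV'I}: this requires the braid identity $X_2X_1=E^{0,12}$ (\cref{lemma: KV1 for braids}) combined with $\vartheta^1(E^{0,12})=E^{0,12}$, which holds because $\vartheta^1$ is a moperad map fixing $E^{0,1}$. Third, and most substantively, the divergence equation \eqref{KV'II}: your ``distribute $\varphi^1$ over the expansion'' step cannot produce this, since it is not a statement about values on generators; the paper derives it from the group $1$-cocycle property of the Jacobian applied to $J(G_{\vartheta^1})=F^{-1}\cdot\bigl(J(F_{\varphi^1\circ\vartheta^1})-J(F_{\varphi^1})\bigr)$, using that both $F_{\varphi^1}$ and $F_{\varphi^1\circ\vartheta^1}$ are KV solutions and that $F^{-1}(x_1+x_2)=\bch(x_1,x_2)$. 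The same issue recurs on the graded side, where moreover the paper does not argue by a ``mirror image'' of the $\gt^1$ case: it sets $H_{\varepsilon^1}=F_{\varphi^1}G_{\vartheta^1}F_{\varphi^1}^{-1}$ via the diagram relating $\grt^1_1$ to $\gt^1_1$ through conjugation by $\varphi^1$, and then verifies \eqref{KRVI} and \eqref{KRVII} by leaning on the already-established $\kv$-side facts. Without these membership arguments, your proof establishes only that the assignment is equivariant under composition of moperad morphisms, not that the induced transformations are KV symmetries --- which is what the theorem asserts.
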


In contrast to the case of Drinfeld associators, there are intrinsic obstacles to formulating an operadic theory of KV solutions. 
Nonetheless, in~\cite{WKO2,DHR2} KV solutions are characterized as homomorphic expansions of a {\em circuit algebra} or {\em wheeled prop} \cite{DHR1}. 
However, these underlying algebraic structures present fundamental limitations.
In particular, while tangential automorphisms $F \in \TAut_n$ of \(\exp(\lie_n)\) assemble into an operad in the category of sets (\cref{operad of TAut}), the partial compositions are not group homomorphisms. 
Nevertheless, this minimal operadic structure on the collection of groups \(\TAut_n\) is sufficient to obtain a family of generalized KV solutions of type \((0,n+1)\), or \emph{genus zero KV solutions}, as introduced in \cite{AKKN_genus_zero}. 

The genus zero extension of the classical KV problem allows operations involving multiple inputs. 
In particular, a genus zero KV solution is a tangential automorphism \(F\colon \exp(\lie_n) \to \exp(\lie_n)\) satisfying the condition
\[
F(e^{x_1} \cdots e^{x_n}) = e^{x_1 + \cdots + x_n},
\]
as well as a generalized second KV equation.
In~\cite{AKKN18highergenus}, Alekseev, Kawazumi, Kuno, and Naef construct such higher-arity solutions by inductively ``gluing together'' classical KV solutions. We use their construction to define a non-symmetric, group-colored operad \(\solkv\) of genus zero KV solutions, colored by Duflo functions~(\cref{thm:operad-SolKV}). 
Then, given a homomorphic expansion $(\varphi^1,\varphi)$ of the moperad $\hPaB^1$, we identify a family of tangential automorphisms \(\{F_{0w}\}\), indexed by parenthesizations \(w\) of the identity permutation \(12\cdots n\). This coincides with the suboperad of \(\solkv\) generated by the binary solution \(F_{\varphi^1}\) (\cref{thm:SolKV-from-moperad}). 
As a result, we obtain explicit formulas for genus zero KV solutions of all arities arising from a Drinfeld associator.

A class of better behaved automorphisms of the free group are the \emph{special automorphisms}: those tangential automorphisms of $\exp(\lie_n)$ which act trivially on the abelianization. Special automorphisms in particular form an operad in the category of prounipotent groups: that is, restricting the partial compositions from $\TAut$ to $\SAut$ makes them group homomorphisms. A source of special automorphisms comes from Kohno’s isomorphism
\[
  \widehat{\PB}_n \;\xrightarrow{\;\cong\;}\; \exp(\ib_n)
\]
which identifies the prounipotent completion of the pure braid group with the prounipotent group corresponding to the Drinfeld–Kohno Lie algebra $\ib_n$ of infinitesimal braids (\cite{Kohno1985}).
The semidirect product decomposition of the pure braid group
\(
  \PB_n \cong \F_{n-1}\rtimes \PB_{n-1}
\)
induces, after completion, an action of $\widehat{\PB}_{n-1}$ on the prounipotent free group $\widehat{\F}_{n-1}\cong \exp(\lie_{n-1})$.  This action defines an injective group homomorphism
$
  \exp(\ib_{\,n-1}) \hookrightarrow  \SAut_{\,n-1},
$
 as in \cite[Section~4]{AET10}, \cite[Proposition~3.11]{AT12}.

In \cref{sec: moperad of dervations}, we define two $\SAut$-moperads in prounipotent groups, $\TAut^1$ and $\SAut^1$, for tangential and special automorphisms, respectively. The moperad $\SAut^1$ arises as a natural target for the embedding of the Drinfeld--Kohno operad in $\SAut$ (see \cref{lemma: image of CD+ in SAut1}).
The moperad $\TAut^1$ realizes the strongest natural operadic structure on tangential automorphisms.

Finally, we study symmetric KV solutions, which are KV solutions invariant under the Alekseev--Torossian involution (Definition~\ref{def:SolKVsymmetric}), and which are   
most closely related to Drinfeld associators. Each symmetric KV solution has a corresponding {\em KV associator}, which satisfies the pentagon and hexagon equations in $\SAut_3$.
If a symmetric KV solution arises from a Drinfeld associator, the corresponding KV associator 
lies in the image of the canonical inclusion $\exp(\ib_3) \hookrightarrow \SAut_3$, and thus recovers the original Drinfeld associator. 
In \cref{section: symmetric KV solutions}, we show the following.
\begin{thm*}[{\cref{prop: symmetric KV solutions give KV solutions,cor: moperad map factors if KV Ass is Drinf Ass}}]
    Every symmetric KV solution $F$ defines a map of  $\SAut$-modules 
\[
  (\varphi^1_F, \varphi_F): \hPaB^1 \longrightarrow \TAut^+
\] 
which  factors through an equivalence $\hPaB^1\rightarrow\CD^+$ if and only if the KV associator $G = F^{1,23}F^{2,3}(F^{12,3}F^{1,2})^{-1}$ is in the image of $\exp(\ib_3)$.
\end{thm*}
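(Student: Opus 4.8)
The plan is to construct the moperad morphism on a finite presentation of $\hPaB^1$ and then read off the factorization criterion from the correspondence between moperad isomorphisms $\hPaB^1 \to \CD^+$ and tuples $(\mu,f,g)$. I would begin with the presentation of \cref{presenation of PaB1}, whose generators are the operadic braiding $\beta$ and associator $\alpha$ of $\hPaB$ together with the frozen-strand generators governing composition along the distinguished input, and whose relations are the pentagon and hexagon relations of the operad part and the mixed pentagon and octagon relations of the moperad part (\cref{thm: assoc^1}). A moperad morphism to $\TAut^+$ is then determined by the images of these generators, subject to preserving those relations.

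For the first statement, I would send $\alpha$ to the KV associator $G = F^{1,23}F^{2,3}(F^{12,3}F^{1,2})^{-1}$ and the remaining generators to the tangential automorphisms extracted from $F$ along the operadic and frozen inputs. The pentagon and hexagon relations of the operad part hold for free, since these are precisely the defining identities of a KV associator (\cref{defn: KV Associator}); in particular the operadic component $\varphi_F$ is available for any KV solution. The role of the symmetry hypothesis (\cref{def:SolKVsymmetric}) is to secure the moperad relations: the mixed pentagon and octagon follow from the two KV equations together with the variable-swapping invariance of $F$, which is exactly what is needed to match the two sides of the octagon encoding the interaction of the frozen strand with a swap of the remaining inputs. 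This produces the moperad map $(\varphi^1_F, \varphi_F)\colon \hPaB^1 \to \TAut^+$.

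For the factorization statement, I would use that $\CD^+$ embeds in $\TAut^+$ through the canonical inclusions $\exp(\ib_n) \hookrightarrow \SAut_n \hookrightarrow \TAut_n$ (\cref{lemma: image of CD+ in SAut1}), so that the map factors through $\CD^+$ precisely when every generator image lies in $\exp(\ib_\bullet)$. In the forward direction, if the map factors then $\varphi_F(\alpha) = G$ lands in the arity-three component of $\CD^+$, hence in $\exp(\ib_3)$; since $G$ already satisfies the pentagon and hexagon, it is a Drinfeld associator (\cref{def:DrinfAss}). Conversely, assuming $G$ is a Drinfeld associator gives $G \in \exp(\ib_3)$, and I would then show that the variable-swapping symmetry forces the frozen-strand images into $\exp(\ib_2)$, so that the morphism has image in $\CD^+$. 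Its data is then a tuple $(\mu,f,g)$ with $(\mu,f) = (\mu,G)$ a Drinfeld associator and $(f,g)$ solving the mixed pentagon and octagon, and by the Calaque--Gonzalez correspondence (\cref{thm: assoc^1}) such a tuple corresponds to an isomorphism $\hPaB^1 \to \CD^+$.

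The main obstacle is the reverse direction of the factorization: one must show that $G \in \exp(\ib_3)$ alone forces the frozen-strand generators into the infinitesimal braids and that the resulting morphism is an isomorphism rather than merely a map. This amounts to translating the variable-swapping symmetry of $F$ into group-likeness of the frozen datum $g$ in $\exp(\ib_2)$, and then using the presentation-level correspondence to upgrade the morphism to an isomorphism. The symmetry hypothesis is essential here, since without it the frozen data need not be compatible with the infinitesimal-braid structure even when the KV associator is a genuine Drinfeld associator.
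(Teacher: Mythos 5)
Your proposal has genuine gaps in both halves. For the existence of the moperad map, you never specify the images of the frozen-strand generators, and that is exactly where the content lies: in \cref{prop: symmetric KV solutions give KV solutions} the paper sets $\varphi^1_F(\Psi^{0,1,2}) \eqdef F$ itself and $\varphi^1_F(E^{0,1}) \eqdef \Theta^{0,1} = \exp(\operatorname{ad}_{\bch(x_0,x_1)})$. With these values the mixed pentagon~\eqref{eqn:MP} is immediate from the definition of $G_F$ (no symmetry needed there), whereas the octagon~\eqref{eqn:O} and right pentagon~\eqref{eqn:RP} are the substantial part: the paper verifies them through a lengthy computation using the hexagon identities of \cref{prop:equations-KV-associator} together with the identities $\Theta = \etw^{2,1}\etw^{1,2}$, $e^{\inner} = F\Theta F^{-1}$, and Yang--Baxter from \cref{lemma: properties of B}. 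Your one-sentence assertion that these relations ``follow from the two KV equations together with the variable-swapping invariance'' skips all of this. Moreover, your claim that ``the operadic component $\varphi_F$ is available for any KV solution'' is false: for a general $F \in \solkv(2)$ the automorphism $G_F$ satisfies only the pentagon (\cref{prop:pentagon-KV-associator}); the two hexagons---and hence the operad map of \cref{lemma: symmetric KV solutions define operad maps}---already require the symmetry hypothesis (\cref{prop:equations-KV-associator}).

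For the factorization, your forward direction agrees with the paper, but the converse---which you yourself flag as ``the main obstacle''---is left open, and your proposed strategy is not the one that succeeds. Since the image of $\Psi^{0,1,2}$ under $\varphi^1_F$ is $F$ itself, showing that ``symmetry forces the frozen-strand images into $\exp(\ib)$'' amounts to proving directly that the KV solution $F$ is an exponential of infinitesimal braids; that is essentially the conclusion one is after, not a lemma extractable from the symmetry of $F$ alone. The paper's proof of \cref{cor: moperad map factors if KV Ass is Drinf Ass} proceeds differently: given $G_F = \iota(f)$ with $(1,f)$ a Drinfeld associator, it factors the operad map $\varphi_F$ through $\CD$ (\cref{prop:operad-factors-Drinfeld-associator}), applies the shift functor to obtain a moperad isomorphism $\hPaB^+ \to \CD^+$, and precomposes with the inclusion $(\rho^1,\id)\colon \hPaB^1 \hookrightarrow \hPaB^+$ of \cref{lemma: inclusion PaB1 to PaB+}, which sends $E^{0,1} \mapsto R^{1,0}R^{0,1}$ and $\Psi^{0,1,2} \mapsto \Phi^{0,1,2}$; it then matches the generator values of this composite with those of $(\varphi^1_F,\varphi_F)$, using $\Theta^{0,1} = \etw^{1,0}\etw^{0,1}$. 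This explicit construction---rather than an appeal to the correspondence of \cref{thm: assoc^1} applied to a hypothetical tuple $(\mu,f,g)$---is what produces the isomorphism; in your plan the datum $g$ is never actually exhibited.
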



\vspace{1mm}

\textbf{Convention.} Throughout we fix a field $\K$ of characteristic zero. In diagrams, all morphisms are drawn from bottom to top.

\textbf{Acknowledgements.}
We thank Anton Alekseev, Dror Bar-Natan, Benjamin Enriquez, Florian Naef, Muze Ren, and Pavol \v{S}evera for useful discussions. 
We would like to thank Najib Idrissi and Hidekazu Furusho for comments on an earlier draft.
We are grateful to the Universit\'e de Gen\`eve and the SwissMAP research station in Les Diablerets where part of this work was carried out. The second author is grateful to the Sydney Mathematical Research Institute for a two month research visit during which this project was initiated.

\vspace{1mm}

\textbf{Plan of the paper.}
The structure of the paper is as follows. Section 1 reviews the well known example of Drinfeld associators realized as homomorphic expansions for the operad of parenthesized braids, and the Grothendieck--Teichm\"uller groups as symmetries of these homomorphic expansions. 
Section 2 introduces moperads, the moperad structure of parenthesized braids with a frozen strand, and the expansion problem for this structure. 
Section 3 introduces genus zero Kashiwara--Vergne solutions and the associated Lie algebras and prounipotent groups from an operadic perspective.
In Section 4, we present the main construction of KV solutions from Drinfeld associators through the lens of moperad expansions. We establish the compatibility of these KV solutions with the actions of the Grothendieck--Teichm\"uller groups. 
Section 5 constructs module maps on $\hPaB^1$ from KV associators, and shows that when a KV associator is in the image of $\exp(\ib_3)$, the corresponding module map factors through a moperad equivalence $\hPaB^1 \to \CD^+$. 
Finally, two appendices provide supplementary background and technical details on operadic composition of tangential and special derivations, and cohomological computations.



\tableofcontents

\section{Operad expansions: the operad of parenthesized braids and Drinfeld associators}
In this section we review the correspondence between Drinfeld associators and homomorphic expansions for the operad of parenthesized braids. To do so, we recall the necessary definitions of operads, the operad of parenthesized braids, completions, chord diagrams, and associators. Although much of this material is standard, we include it in order to fix notation and establish the conceptual framework for the new results that follow.

\subsection{Operads}
Let $(\mathrm{C},\otimes, I)$ be a cocomplete closed symmetric monoidal category such that the monoidal product $\otimes$ commutes with small colimits in each variable. The examples of such categories used in this paper are the categories of Lie algebras,  vector spaces, Hopf algebras, (prounipotent) groups and groupoids, and sets. 

Let $\Sigma_n \eqdef \Aut(\undn)$ denote the symmetric group on the set $\undn \eqdef\{1,\ldots,n\}$.  Given two permutations $\sigma \in \Sigma_m$ and $\tau \in \Sigma_n$ and $1\leq i\leq m$, one can \emph{insert} $\tau$ into the entry of $\sigma$ labeled $i$ (in one-line notation), and shift all the larger entries appropriately to maintain bijectivity. This defines a new permutation $\sigma \circ_i \tau \in \Sigma_{m+n-1}$. In formulas: 
\begin{equation}
\label{def: composition of permutations}
    (\sigma \circ_i\tau)(k) \eqdef 
    \begin{cases}
        \sigma(k) & \text{if }  1 \leqslant k < \sigma^{-1}(i), \,\sigma(k)<i, \\
        \sigma(k)+n-1 & \text{if }  1 \leqslant k < \sigma^{-1}(i), \,\sigma(k)>i, \\
        \tau(k-\sigma^{-1}(i)+1)+(i-1) & \text{if } \sigma^{-1}(i)\leqslant k \leqslant \sigma^{-1}(i)+n-1, \\
        \sigma(k-n+1)  & \text{if } \sigma^{-1}(i)+n\leqslant k \leqslant m+n-1, \,\sigma(k-n+1)<i, \\
        \sigma(k-n+1)+n-1  & \text{if } \sigma^{-1}(i)+n\leqslant k \leqslant m+n-1, \,\sigma(k-n+1)>i.
    \end{cases}
\end{equation}

\medskip 
A \defn{sequence} in a symmetric monoidal category $\mathrm{C}$ is an $\mathbb{N}$-graded collection of objects in $$\calP=(\calP(0),\calP(1),\ldots, \calP(n),\ldots).$$ A \defn{symmetric sequence} in $\mathrm{C}$ is a sequence $\calP=\{\calP(n)\}_{n\geq 0}$ in which each entry is equipped with a right action of the symmetric group \[\begin{tikzcd}
\calP(n)\otimes \Sigma_n\arrow[r] & \calP(n).
\end{tikzcd}\]
\begin{definition}
\label{defn: operad}
A (symmetric) \defn{operad} $\mathcal{P}$ in $\mathrm{C}=(\mathrm{C},\otimes, I)$ consists of a (symmetric) sequence $\mathcal{P}=\{ \mathcal{P}(n) \}_{n\geq 0}$, along with:
\begin{itemize}[leftmargin=*,label=\labelitemii]
    \item A distinguished unit map $\id:I \to \mathcal{P}(1)$;
    \item A family of a partial composition maps
    \[
    \circ_i : \mathcal{P}(n) \otimes \mathcal{P}(m) \longrightarrow \mathcal{P}(n+m-1),\; 1\leq i\leq n.
    \]  
\end{itemize}    
These partial composition operations are required to satisfy 
\begin{itemize}
    \item[(i)] unitality: $\mu \circ_i \id = \id \circ_1 \mu = \mu \quad$ for all $\mu \in \calP(n)$ ,
    \end{itemize}

\begin{itemize}
    \item [(ii)] associativity: For all $\lambda \in \mathcal{P}(\ell)$, $\mu \in \mathcal{P}(m)$, and $\nu \in \mathcal{P}(n)$, we have
\[(\lambda\circ_{j}\mu) \circ_{i} \nu=\begin{cases}
(\lambda \circ_i \nu) \circ_{j+n-1} \mu, & \text{if } 1 \leq i \leq j - 1, \\
\lambda \circ_j (\mu \circ_{i-j+1} \nu), & \text{if } j \leq i \leq j + m - 1, \\
(\lambda \circ_{i-m+1} \nu) \circ_j \mu, & \text{if } j + m \leq i \leq \ell + m - 1.
\end{cases}\]
\end{itemize}
\begin{itemize}
    \item[(iii)]and, if $\mathcal{P}$ is symmetric, equivariance : $\mu^\sigma \circ_{\sigma^{-1}(i)} \nu^\tau
              = (\mu \circ_{i} \nu)^{\sigma \circ_i \tau}$ for all $\mu \in \calP(m)$, $\nu \in \calP(n)$, $\sigma\in \Sigma_m$ and $\tau\in \Sigma_n$\ .
\end{itemize}

\end{definition}

\medskip 

In order to define the operadic composition on KV-solutions we require a group-colored refinement of the usual notion of operad. In this setting, operations come equipped with input and output types, and composition is defined only when these types coincide.  Let $\mathfrak{C}$ be a small groupoid (or group, or finite set), whose objects we refer to as \defn{colors}.  A \defn{$\mathfrak{C}$-colored sequence} in a monoidal category $\mathrm{C}$ is a collection of objects in $\mathrm{C}$, indexed by finite lists of colors $(c_1,\ldots,c_n;c_0)$ in $\mathfrak{C}$:
\[
\mathcal{P} = \{ \mathcal{P}(c_1, \ldots, c_n; c_0) \}_{(c_1,\ldots,c_n;c_0)\in \mathrm{Ob}(\mathfrak{C})^{n+1}}.
\] If $\mathfrak C$ is a groupoid containing nontrivial isomorphisms, we require covariant functoriality in each input color and contravariant functoriality in the output color. In other words, for morphisms $f_i:c_i\to c_i'$ and $g:c_0\to c_0'$ in $\mathfrak C$, there are structure maps
\[
\mathcal P(c_1,\dots,c_n;c_0')
\longrightarrow
\mathcal P(c_1',\dots,c_n';c_0),
\] functorial in all variables.

\medskip

\begin{definition}
\label{defn: colored operad}
A (symmetric) \defn{$\mathfrak{C}$-colored operad} $\mathcal{P}$ in $\mathrm{C}=(\mathrm{C},\otimes, I)$ consists of a $\mathfrak{C}$-colored (symmetric) sequence $\mathcal{P}=\{ \mathcal{P}(c_1, \ldots, c_n; c_0) \}$, along with:
\begin{itemize}[leftmargin=*,label=\labelitemii]
    \item For each $c \in \mathrm{Ob}(\mathfrak{C})$, a unit map $\id_c:I \to \mathcal{P}(c; c)$;
    \item A family of a partial composition maps
    \[
    \circ_i : \mathcal{P}(c_1,\dots,c_n;c_0) \otimes \mathcal{P}(d_1,\dots,d_m;c_i) \longrightarrow \mathcal{P}(c_1,\dots,c_{i-1},d_1,\dots,d_m,c_{i+1},\dots,c_n; c_0),\; 1\leq i\leq n,
    \]  which satisfy the $\mathfrak{C}$-colored versions of the unitality, associativity and, if relevant, equivariance axioms in Definition~\ref{defn: operad} (cf. \cite[Definition 1.1]{bm_coloured_operads}, \cite[Section 3]{Petersen_groupoid_coloured_operads})
\end{itemize}    

\end{definition}

\begin{remark}
In the case where the groupoid $\mathfrak{C}$ has a single object and no non-identity morphisms (i.e., $\mathfrak{C} = \{*\}$), a $\mathfrak{C}$-colored (symmetric) operad reduces to an ordinary (symmetric) operad. 
\end{remark}

A \defn{morphism} $\varphi : \mathcal{P} \to \mathcal{Q}$ of $\mathfrak{C}$-colored (symmetric) operads in $\mathrm{C}$ is a morphism of $\mathfrak{C}$-colored (symmetric) sequences in $\mathrm{C}$ which preserves all unit and composition maps.

\begin{example}
Let $G$ be a group, viewed as a groupoid with one object. 
Define a non-symmetric $G$-colored operad $\mathcal{P}$ in $\mathrm{Set}$ as follows. For a tuple $(g_1,\ldots,g_n; g_0)$ in $G^{n+1}$, let
\[
\mathcal{P}(g_1,\ldots,g_n; g_0) := 
\begin{cases}
\{*\}, & \text{if } g_1 \cdots g_n = g_0, \\
\emptyset, & \text{otherwise}.
\end{cases}
\]
Composition is defined by substitution: if $\mu \in \mathcal{P}(g_1,\ldots,g_n; g_0)$ and $\nu \in \mathcal{P}(h_1,\ldots,h_m; g_i)$, then $\mu \circ_i \nu$ is defined and equal to the unique element of
\[
\mathcal{P}(g_1,\ldots,g_{i-1}, h_1,\ldots,h_m, g_{i+1},\ldots,g_n; g_0),
\]
provided $h_1 \cdots h_m = g_i$. Unit elements are given by $\id_g \in \mathcal{P}(g; g)$ for all $g \in G$. This operad has no symmetric group actions, and is thus a non-symmetric operad colored by the group $G$.
\end{example}

\subsection{The operad of parenthesized braids}
In this section we recall the braid groups and their associated operadic structures. 
The \defn{braid group} on $n$ strands, denoted $\Br_n$, is the fundamental group of the space of unordered configurations of $n$ points in the complex plane. The \defn{pure braid group} on $n$ strands, denoted $\PB_n$, is the fundamental group of the space of ordered configurations of $n$ points in the complex plane. Equivalently, $\PB_n$ is the kernel of the natural surjection
\[
\Br_n \to \Sigma_n
\]
sending a braid to its underlying permutation. The braid group $\Br_n$ admits the standard Artin presentation
\[
\Br_n=\left<\beta_1,\ldots,\beta_{n-1}\mid  \beta_i\beta_j=\beta_j\beta_i 
\ \text{when} \ |i-j|\geq 2,\ \beta_i\beta_{i+1}\beta_i =\beta_{i+1}\beta_i\beta_{i+1}\right>.
\]
A standard generating set for $\PB_n$ is given by the braids
\[
x_{ij}\eqdef \beta_{j-1}\cdots \beta_{i+1}\beta_i^2\beta_{i+1}^{-1}\cdots \beta_{j-1}^{-1},
\] for $1\leq i<j\leq n.$ Braids are usually drawn in three dimensions, with the vertical direction representing the parameter of the loop, as shown in Figure~\ref{fig:Braids}. The standard generator $\beta_i$ is drawn as the braid in which strand $i$ crosses over strand $i+1$. Composition in the braid group is represented by drawing braids from bottom to top and writing braid words from left to right.

\begin{figure}[ht!]
    \includegraphics[width=12cm]{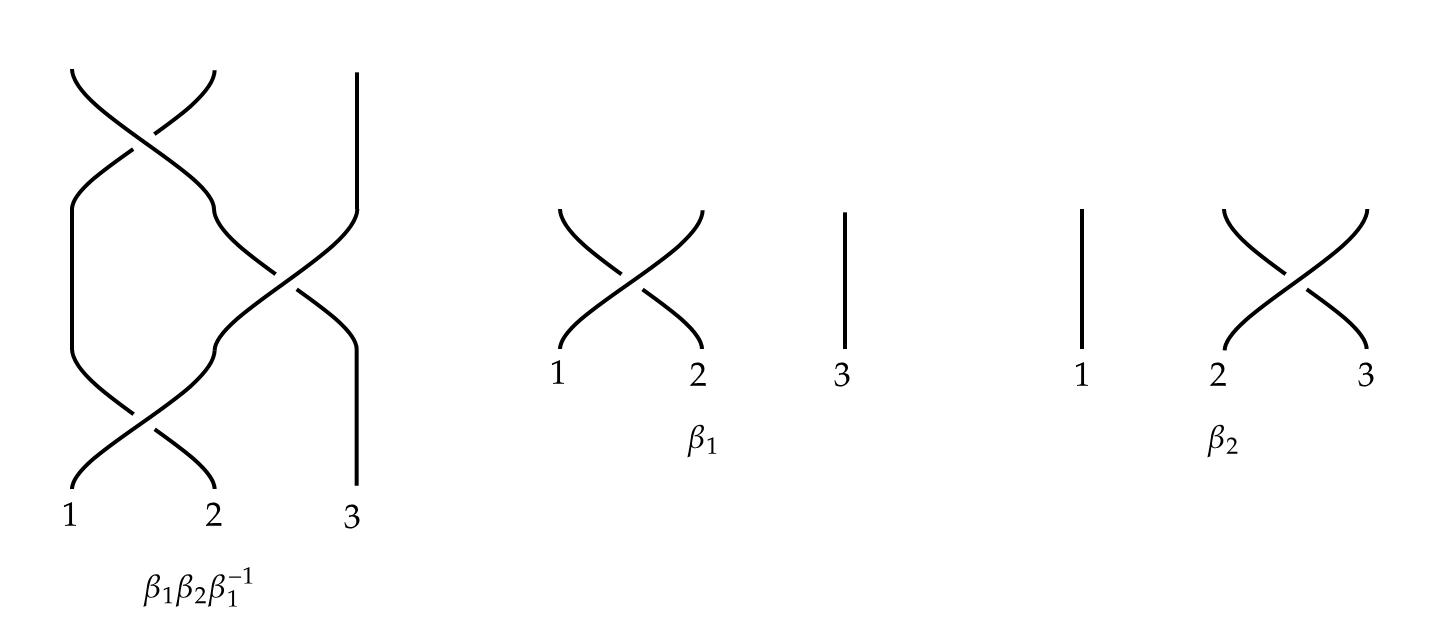}
    \caption{A braid in $\Br_3$, and the generators of $\Br_3$. The braid on the left is drawn from bottom to top and can be written in terms of generators as $\beta_1\beta_2\beta_1^{-1}$.}
    \label{fig:Braids}
\end{figure}

For $n\geq 1$, let $\Omega(n)$ denote the set of fully parenthesized permutations of the set $\{1,\ldots,n\}$. For example, $(1(32))$ and $((12)3)$ are elements of $\Omega(3)$. Alternatively, the set $\Omega(n)$ is in bijection with the set of planar binary rooted trees whose leaves are labeled with the set $\{1,\ldots,n\}$, see Figure~\ref{fig:Tree} and \cite[Chapter~6.1.2]{FresseBook1}. 

\begin{figure}[ht!]
    \includegraphics[width=5cm]{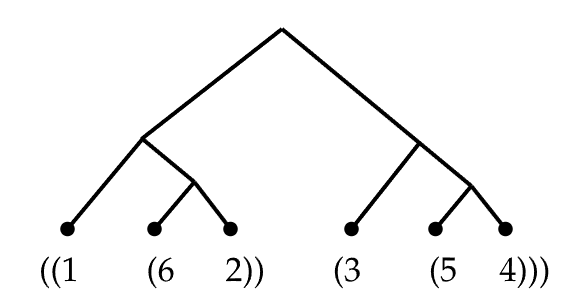}
    \caption{A parenthesized permutation represented by a planar binary rooted tree with labeled leaves.}\label{fig:Tree}
\end{figure}

For each $n\geq 1$, there is a natural action of the symmetric group $\Sigma_n$ on $\Omega(n)$ given by the multiplication of permutations under the natural map 
\[u:\Omega(n)\rightarrow \Sigma_n,\] 
which forgets parentheses. For instance, $u\big((53)((42)1)\big)=(53421)$ -- keeping in mind 
that permutations are written in one-line notation and the parentheses denote parenthesizations, not cycles.
The symmetric sequence $\Omega=\{\Omega(n)\}_{n\geq 1}$ forms an operad in the category of sets with operadic composition 
\[\begin{tikzcd}
\Omega(n)\times \Omega(m)\arrow[r, "\circ_i"] & \Omega(n+m-1)
\end{tikzcd}
\] 
given by inserting a parenthesized permutation $w'\in\Omega(m)$ into the entry labeled $i$ in a parenthesized permutation $w\in\Omega(n)$. At the level of permutations insertion is defined as in \eqref{def: composition of permutations}, and the parenthesization of $w'$ is declared innermost. 
For example, we have $(1(32))\circ_3 (21) = (1((43)2))$. 
See \cite[Chapter~6.1]{FresseBook1} for more details.

\begin{definition}
\label{def: operad PaB} 
The \defn{operad of parenthesized braids}, denoted~$\PaB$, is an operad in groupoids defined as follows.
Let $\PaB(0) \eqdef \{*\}$ be the trivial groupoid. 
For each $n\geq 1$, the groupoid $\PaB(n)$ has object set
\[
\ob(\PaB(n)) \eqdef \Omega(n),
\]
and for $w_1,w_2\in \Omega(n)$ the morphism set $\Hom_{\PaB(n)}(w_1,w_2)$
consists of those braids $\beta\in \Br_n$ whose underlying permutation is $u(w_2)^{-1}u(w_1)$. See Figure~\ref{fig:ParBraid} for an example.  Composition in the groupoid $\PaB(n)$ is induced by multiplication in the braid group $\Br_n$.

The symmetric group $\Sigma_n$ acts on each groupoid $\PaB(n)$ via composition of permutations.  
The resulting symmetric sequence $\PaB=\{\PaB(n)\}_{n\geq 0}$ forms an operad in groupoids. The operadic composition functors
\[
\begin{tikzcd}
\PaB(m)\times \PaB(n) \arrow[r, "\circ_i"] & \PaB(m+n-1)
\end{tikzcd}
\]
are defined on objects by the operadic composition of parenthesized permutations. On morphisms, composition is defined by inserting a parenthesized braid on $n$ strands into the strand labeled $i$ of a parenthesized braid on $m$ strands, as shown in Figure~\ref{fig:ParBraidOperad}. See \cite[Chapter~6]{FresseBook1} or~\cite{Calaque2025} for more details.
\end{definition}

\begin{figure}[ht!]
    \includegraphics[width=8cm]{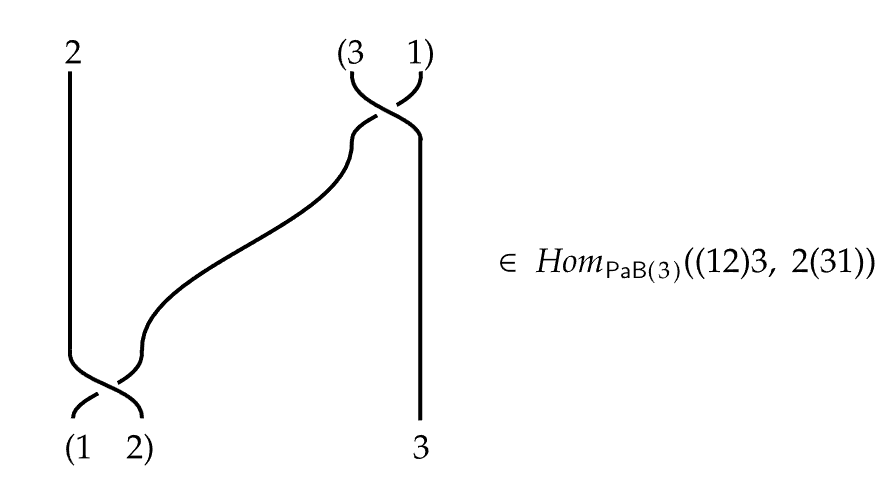}
    \caption{An example of a parenthesized braid.}\label{fig:ParBraid}
\end{figure}

\begin{figure}[ht!]
    \includegraphics[width=12cm]{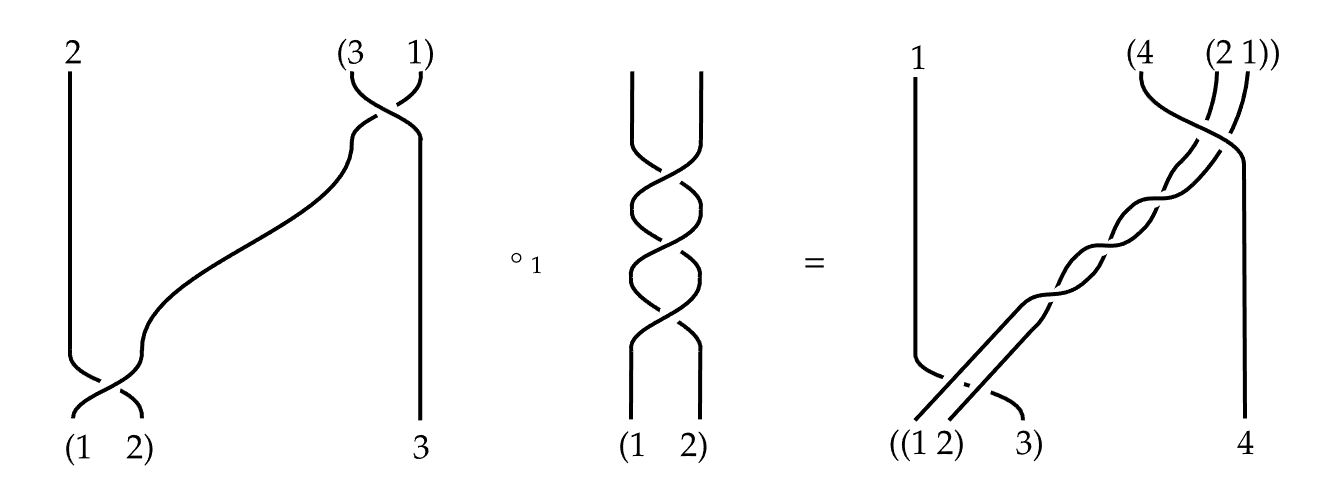}
    \caption{The operadic composition of parenthesized braids.}\label{fig:ParBraidOperad}
\end{figure}

\begin{remark}
\label{rem:composition}
A note on conventions: in the groupoid $\PaB$ we write composition in function notation, from right to left. Thus, for composable braids $\beta_1$ and $\beta_2$, the composite $\beta_2\circ\beta_1$ means ``first apply $\beta_1$, then apply $\beta_2$.'' In a braid diagram, this corresponds to drawing $\beta_1$ below $\beta_2$. In the usual multiplicative notation for braid groups, the same composite is written $\beta_1\beta_2$. Accordingly, for any object $w\in \Omega(n)$, the automorphism group $\Aut_{\PaB(n)}(w)$ is canonically identified with $\PB_n^{op}$.
\end{remark}

The operad $\PaB$ admits a finite presentation with generators
\[
R=R^{1,2}\in\Hom_{\PaB(2)}((12),(21))
\qquad\text{and}\qquad
\Phi=\Phi^{1,2,3}\in\Hom_{\PaB(3)}((12)3,1(23)),
\]
shown in Figure~\ref{fig:PaBGens}. 
In other words, every morphism in the groupoid $\PaB(n)$ can be written as a combination of operadic and categorical compositions of the generating braid $R^{1,2}$, the associativity isomorphism $\Phi^{1,2,3}$, identity morphisms, and the permutations and inverses of these morphisms. 
Note that $R^{1,2}$ is the generating braid $\beta_1\in\Br_2$, viewed as a morphism in the groupoid $\PaB(2)$, while $\Phi^{1,2,3}$ is the identity braid viewed as a non-identity morphism $((12)3)\to(1(23)).$

\begin{figure}[ht!]
    \includegraphics[width=8cm]{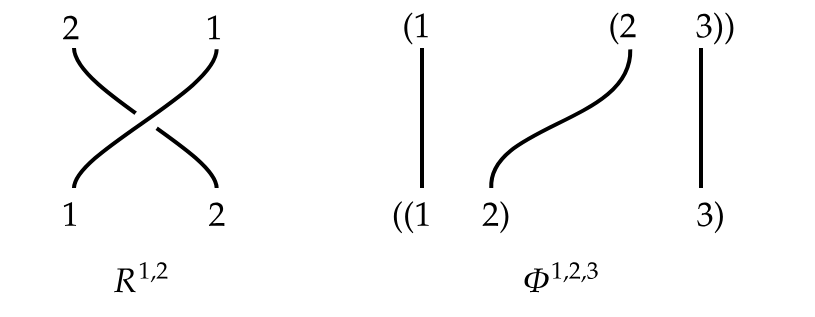}
    \caption{The generating morphisms of the operad $\PaB$.}\label{fig:PaBGens}
\end{figure}

\begin{remark}
Throughout, we use cosimplicial notation for morphisms in $\PaB$ in order to align with key sources and simplify the bookkeeping of permutations. In this notation, the superscript records the indices of the strands present at the source of a braid, omitting identity strands. In figures, this means that the strands are numbered at the bottom, as shown in Figure~\ref{fig:Cosimp}. For example, the notation $R^{1,2}$ indicates in particular that $R^{1,2}$ has source $(12)$, whereas $R^{2,1}$ has source $(21)$, and $R^{2,3}=\id_{(12)}\circ_2 R$ has source $1(23)$. Double indices indicate doubled strands; for example,
\[
R^{1,23}=R^{1,2}\circ_2 \id_{(12)},
\]
as in Figure~\ref{fig:Cosimp}. Including $\emptyset$ in the superscript, as in $\Phi^{\emptyset,1,2}$, denotes composition with the trivial object $*\in\PaB(0)$, namely
\[
\Phi^{\emptyset,1,2}= \Phi^{1,2,3}\circ_1 *.
\]
This has the effect of deleting a strand in the relevant braid. For more details, see the discussion of the unitary extension of $\PaB$ in \cite[Chapter 6]{FresseBook1}. 
\end{remark}

\begin{figure}[ht!]
    \includegraphics[width=8cm]{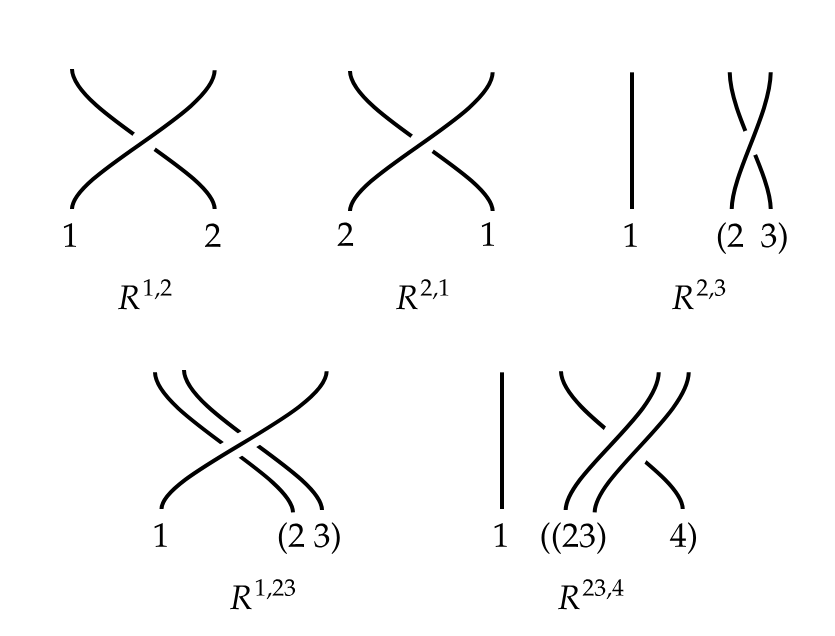}
    \caption{Examples of cosimplicial notation.}\label{fig:Cosimp}
\end{figure}

Now we are ready to state the finite presentation or coherence theorem for $\PaB$:

\begin{theorem}[{\cite[Theorem~6.2.4]{FresseBook1}}]
\label{thm: presentation pab}
The parenthesized braids operad $\PaB$ is generated by the object $(12)\in \PaB(2)$ and the morphisms
\[
R \eqdef R^{1,2}\in \Hom_{\PaB(2)}((12),(21))
\qquad\text{and}\qquad
\Phi \eqdef \Phi^{1,2,3}\in \Hom_{\PaB(3)}(((12)3),1(23)),
\]
subject to the following relations:
\begin{align}
\tag{U}
\label{eqn:U}
\Phi^{\emptyset,1,2}=\Phi^{1,\emptyset,2}=\Phi^{1,2,\emptyset}=\id_{1,2}
& \quad \mathrm{in}~\Hom_{\PaB(2)}\big((12),(12)\big), \\
\Phi^{1,2,34}\Phi^{12,3,4}
=\Phi^{2,3,4} \Phi^{1,23,4} \Phi^{1,2,3}
& \quad \mathrm{in}~\Hom_{\PaB(4)}\left(((12)3)4,1(2(34))\right), \label{eqn:P}\tag{P} \\
R^{1,3}\Phi^{2,1,3}R^{1,2}= \Phi^{2,3,1} R^{1,23} \Phi^{1,2,3}
& \quad \mathrm{in}~\Hom_{\PaB(3)}\left((12)3,2(31)\right),
\label{eqn:H1}\tag{H1} \\
R^{1,3}(\Phi^{1,3,2})^{-1} R^{2,3} = (\Phi^{3,1,2})^{-1} R^{12,3} (\Phi^{1,2,3})^{-1}
& \quad \mathrm{in}~\Hom_{\PaB(3)}\left(1(23),(31)2\right). \label{eqn:H2}\tag{H2}
\end{align}
\end{theorem}

\subsection{Completion and homomorphic expansions}\label{subsec:compform}

Let $\K$ be a field containing $\mathbb{Q}$, and let $\mathsf{G}$ be a discrete group. The \defn{prounipotent completion} of $\mathsf{G}$ over $\K$, denoted $\mathsf{G}_{\K}$, is the universal prounipotent group over $\K$ equipped with a homomorphism from $\mathsf{G}$.

This completion is constructed from the group algebra $\K[\mathsf{G}]$, which is naturally a cocommutative Hopf algebra. Let $I \subset \K[\mathsf{G}]$ denote the augmentation ideal, that is, the kernel of the counit
\[
\epsilon \colon \K[\mathsf{G}] \to \K,
\qquad
\epsilon(g)=1 \ \text{for all } g\in \mathsf{G}.
\]
The \defn{$I$-adic completion} of $\K[\mathsf{G}]$ is the inverse limit
\[
\K[\mathsf{G}]^\wedge_I := \varprojlim_n \K[\mathsf{G}]/I^n.
\]
The prounipotent completion of $\mathsf{G}$ is then the group of \defn{group-like elements} in the completed Hopf algebra $\K[\mathsf{G}]^\wedge_I$:
\[
\mathsf{G}_{\K}
:=
\{g \in \K[\mathsf{G}]^\wedge_I \mid \Delta(g)=g\otimes g,\ \epsilon(g)=1\}.
\]

The associated Lie algebra is the Lie algebra of primitive elements in $\K[\mathsf{G}]^\wedge_I$:
\[
\mathfrak{g}
:=
\operatorname{Prim}(\K[\mathsf{G}]^\wedge_I)
=
\{x \in \K[\mathsf{G}]^\wedge_I \mid \Delta(x)=x\otimes 1 + 1\otimes x,\ \epsilon(x)=0\}.
\]
Since $\K[\mathsf{G}]^\wedge_I$ is complete and cocommutative, the exponential and logarithm series converge and define mutually inverse bijections between primitive and group-like elements. In particular, every element $g\in \mathsf{G}_{\K}$ can be written uniquely in the form $g=e^x$ for some $x\in \mathfrak{g}$. The prounipotent completion of $\mathsf{G}$ can equivalently be realized as the unipotent radical of the proalgebraic completion of $\mathsf{G}$.

\begin{remark}
There is a closely related Lie algebra associated to any group $\mathsf{G}$, defined using its lower central series. Recall that the \defn{lower central series} of $\mathsf{G}$ is given inductively by
\[
\Gamma_1(\mathsf{G}) := \mathsf{G},
\qquad
\Gamma_{n+1}(\mathsf{G}) := [\Gamma_n(\mathsf{G}),\mathsf{G}].
\]
Each quotient $\Gamma_n(\mathsf{G})/\Gamma_{n+1}(\mathsf{G})$ is abelian, and the associated graded object
\[
\gr_\Gamma(\mathsf{G})
:=
\bigoplus_{n\geq 1}\Gamma_n(\mathsf{G})/\Gamma_{n+1}(\mathsf{G})
\]
inherits a Lie bracket induced by the group commutator.

The relationship between this graded Lie algebra and the Lie algebra $\mathfrak g=\operatorname{Prim}(\K[\mathsf{G}]^\wedge_I)$
of the prounipotent completion depends on additional hypotheses. In particular, if $\mathsf{G}$ is $1$-formal over $\K$, then one can identify the Lie algebra $\mathfrak g=\operatorname{Prim}(\K[\mathsf{G}]^\wedge_I)$ with the degree completion of the Lie algebra determined by the lower central series.
\end{remark}

\begin{example}
\label{example: completed free groups}

Let $\mathsf{F}_n$ denote the free group on $n$ generators. Its lower central series quotients $\Gamma_k(\mathsf{F}_n)/\Gamma_{k+1}(\mathsf{F}_n)$
are torsion-free, and the associated graded Lie algebra
\[
\gr_\Gamma(\mathsf{F}_n)
=
\bigoplus_{k \geq 1} \Gamma_k(\mathsf{F}_n)/\Gamma_{k+1}(\mathsf{F}_n)
\]
is isomorphic to the free Lie algebra $\mathfrak{lie}_n$ on $n$ generators, with each generator in degree one. It follows that the prounipotent completion $(\mathsf{F}_n)_\K$ is naturally isomorphic to the prounipotent group associated to the degree completion of $\mathfrak{lie}_n$, namely
\[
(\mathsf{F}_n)_\K \cong \exp(\widehat{\mathfrak{lie}}_n).
\]
Here $\widehat{\mathfrak{lie}}_n$ denotes the degree completion of the free Lie algebra $\mathfrak{lie}_n$. The exponential map identifies $\widehat{\mathfrak{lie}}_n$ with the group-like elements in the completed universal enveloping algebra, which in this case agrees with the completed group algebra $\K[\mathsf{F}_n]^\wedge_I$. See, for example, \cite[Proposition~8.4.1]{FresseBook1} for further details.
\end{example}

\subsubsection{Associated graded of a group}
Given a filtered algebra $A = F_0 \supseteq F_1 \supseteq F_2 \supseteq \cdots,$ its \defn{associated graded algebra} is
\[
\gr A \eqdef \bigoplus_{n\geq 0} F_n/F_{n+1},
\]
with multiplication induced by the filtration. When the filtration is complete, one may also consider the completed \defn{associated graded algebra}
\[
\widehat{\gr}A \eqdef \prod_{n\geq 0} F_n/F_{n+1}.
\]

Now let $\mathsf{G}$ be a group, and let $I\subset \K[\mathsf{G}]$ be the augmentation ideal. The powers $I^n$ define a decreasing filtration on the group algebra $\K[\mathsf{G}]$, and this induces a complete filtration on its $I$-adic completion $\K[\mathsf{G}]^\wedge_I$. The corresponding completed associated graded algebra is
\[
\widehat{\gr}\big(\K[\mathsf{G}]\big) \eqdef \prod_{n\geq 0} I^n/I^{n+1}.
\]
It inherits the structure of a graded cocommutative Hopf algebra. We write $\grp\big(\widehat{\gr}(\K[\mathsf{G}])\big)$ for its group of group-like elements.

\begin{definition}\label{def:HomExp}
A \defn{homomorphic expansion} for a group $\mathsf{G}$ is a filtered Hopf algebra isomorphism
\[
\widehat{Z}\colon \K[\mathsf{G}]^\wedge_I \longrightarrow \widehat{\gr}\big(\K[\mathsf{G}]\big).
\]
Such an isomorphism induces an isomorphism of prounipotent groups
\[
\widehat{Z}\colon \mathsf{G}_\K \xrightarrow{\cong}
\grp\big(\widehat{\gr}(\K[\mathsf{G}])\big).
\]
\end{definition}

Informally, the existence of a homomorphic expansion means that the prounipotent completion of $\mathsf{G}$ is completely determined by the associated graded structure coming from the
augmentation filtration.

\begin{example}
\label{example: associated graded of completed free groups}
Let $\mathsf{F}_n$ be the free group on $n$ generators. As in Example~\ref{example: completed free groups}, its prounipotent completion is naturally isomorphic to the prounipotent group associated to the degree-completed free Lie algebra: $(\mathsf{F}_n)_\K \cong \exp(\widehat{\mathfrak{lie}}_n).$ On the other hand, the associated graded Hopf algebra of the group algebra is naturally isomorphic to the universal enveloping algebra of the graded Lie algebra arising from the lower central series: $\gr_I \K[\mathsf{F}_n] \cong  U\!\left(\gr_\Gamma(\mathsf{F}_n)\otimes \K\right).$
Since $\gr_\Gamma(\mathsf{F}_n)\cong \mathfrak{lie}_n$, passing to completions gives an isomorphism of completed graded Hopf algebras $\widehat{\gr}\,\K[\mathsf{F}_n] \cong \widehat{U}(\mathfrak{lie}_n).$ Taking group-like elements and using the exponential map therefore yields an isomorphism
\[
(\mathsf{F}_n)_\K \xrightarrow{\cong}
\grp\!\left(\widehat{\gr}\,\K[\mathsf{F}_n]\right).
\]
Thus free groups admit homomorphic expansions in the sense of Definition~\ref{def:HomExp}.
\end{example}

\subsubsection{Prounipotent completion of groupoids and operads}

Prounipotent completion extends from groups to groupoids. Let $\mathrm{G}$ be a groupoid. Its prounipotent completion $\mathrm{G}_{\K}$ is defined objectwise: it has the same objects as $\mathrm{G}$, and for each object $x$ the automorphism group is replaced by its prounipotent completion
\[
\Aut_{\mathrm{G}_{\K}}(x):=\Aut_{\mathrm{G}}(x)_{\K}.
\]
More generally, if $x$ and $y$ lie in the same connected component of $\mathrm{G}$, then $\Hom_{\mathrm{G}}(x,y)$ is a torsor under the automorphism groups of $x$ and $y$, and $\Hom_{\mathrm{G}_{\K}}(x,y)$ is obtained by completing this torsor accordingly. 
The resulting completion is described in detail in \cite[\S9.1]{FresseBook1}. For the purposes of this paper, the main point is that prounipotent completion is functorial and behaves well with respect to monoidal products.

\begin{prop}[{\cite[Proposition~9.2.2]{FresseBook1}}]
\label{prop: completion is monoidal}
The completion functor
\[
(-)_{\K}\colon \mathsf{Grpd}\to \mathsf{Grpd}_{\K}
\]
is symmetric monoidal.
\end{prop}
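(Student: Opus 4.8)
The plan is to show that $\mathbf{gr}_\K$ is \emph{strong} symmetric monoidal, where both $(\mathsf{Grp},\times)$ and $(\mathsf{Grp}_\K,\times)$ carry their Cartesian products of (pro)groupoids and have the terminal groupoid $*$ as unit. The strategy is to factor the construction of $\mathbf{gr}_\K$ through its three constituent stages — the $\K$-linear envelope $\K[-]$, the $I$-adic completion, and the passage to group-like elements — and to check that each stage carries products to (completed) tensor products and back, so that the composite intertwines the two Cartesian structures.

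First I would treat the linearization. For any pair of sets $S,T$ there is a canonical isomorphism $\K[S\times T]\cong \K[S]\otimes_\K\K[T]$, and applying this hom-wise gives an isomorphism of $\K$-linear categories $\K[\mathrm{G}\times\mathrm{H}]\cong \K[\mathrm{G}]\otimes\K[\mathrm{H}]$ compatible with composition and comultiplication. This makes $\K[-]$ a strong symmetric monoidal functor from $(\mathsf{Grp},\times)$ to $\K$-linear coalgebra-enriched categories with $\otimes_\K$.

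Next I would analyze the completion. Writing $I_\mathrm{G},I_\mathrm{H}$ for the augmentation ideals of $\Hom_{\K[\mathrm{G}]}(x,x)$ and $\Hom_{\K[\mathrm{H}]}(y,y)$, the augmentation ideal of the tensor product is $I_\mathrm{G}\otimes\K[\mathrm{H}]+\K[\mathrm{G}]\otimes I_\mathrm{H}$. The key technical point is that the powers of this ideal are cofinal with the product filtration $\sum_{p+q=n}I_\mathrm{G}^p\otimes I_\mathrm{H}^q$, so that the $I$-adic completion of the tensor product agrees with the completed tensor product of the completions, $\widehat{\Hom_{\K[\mathrm{G}\times\mathrm{H}]}(-,-)}\cong \widehat{\Hom_{\K[\mathrm{G}]}(-,-)}\,\widehat{\otimes}\,\widehat{\Hom_{\K[\mathrm{H}]}(-,-)}$. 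Since $\K\supseteq\mathbb{Q}$ and the coalgebras are cocommutative, the exponential and logarithm converge, and I would pass to the Lie-algebraic description: the primitive elements of a completed tensor product of cocommutative Hopf algebras decompose as $\operatorname{Prim}(\widehat A\,\widehat\otimes\,\widehat B)=\operatorname{Prim}(\widehat A)\oplus\operatorname{Prim}(\widehat B)$, embedded via $x\mapsto x\otimes 1$ and $y\mapsto 1\otimes y$. Exponentiating, the group-like elements factor as a direct product of those of $\widehat A$ and $\widehat B$. Applied object-wise, this yields the required natural isomorphism $\mathbf{gr}_\K(\mathrm{G})\times\mathbf{gr}_\K(\mathrm{H})\xrightarrow{\cong}\mathbf{gr}_\K(\mathrm{G}\times\mathrm{H})$ of prounipotent groupoids, with the unit handled by $\mathbf{gr}_\K(*)\cong *$ since $\K[*]=\K$ has only the trivial group-like element. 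Alternatively, the same Lie-algebra identification follows from the fact that the lower central series of a product groupoid is the product of the lower central series, so that $\operatorname{gr}_\Gamma(\mathrm{G}\times\mathrm{H})\cong\operatorname{gr}_\Gamma(\mathrm{G})\oplus\operatorname{gr}_\Gamma(\mathrm{H})$.

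Finally I would verify the coherence axioms. The associativity, unit, and symmetry constraints for $\mathbf{gr}_\K$ are inherited from the corresponding constraints for $\otimes_\K$ via the natural isomorphisms above, so the pentagon, triangle, and hexagon diagrams commute formally once naturality of the comparison map is checked. The main obstacle is the completion stage: one must carefully justify the cofinality of the two filtrations on the tensor product, together with the decomposition of primitives in the completed tensor product, since these underlie both the identification of the completion with a completed tensor product and the factorization of group-like elements. The remaining verifications are formal consequences of the strong monoidality of $\K[-]$ and the coherence of $\otimes_\K$ on $\K$-vector spaces.
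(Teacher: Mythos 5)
The paper does not actually prove this statement: it is imported verbatim from Fresse's book (Proposition~9.2.2 there), so there is no in-paper argument to compare yours against. Your proposal is, in substance, a correct sketch of the standard proof, and it is essentially the route taken in the cited reference: factor $\mathbf{gr}_{\K}$ as linearization, $I$-adic completion, and passage to group-like elements; use $\K[\mathrm{G}\times\mathrm{H}]\cong\K[\mathrm{G}]\otimes\K[\mathrm{H}]$; identify the $I$-adic completion of a tensor product with the completed tensor product of the completions; and split the group-likes (equivalently, primitives) of a completed tensor product of complete cocommutative Hopf algebras. The two technical points you flag are exactly the ones needing care, and both are standard: the filtration comparison is in fact an equality rather than mere cofinality, since the ideals $X=I_\mathrm{G}\otimes\K[\mathrm{H}]$ and $Y=\K[\mathrm{G}]\otimes I_\mathrm{H}$ commute, so $(X+Y)^n=\sum_{p+q=n}I_\mathrm{G}^p\otimes I_\mathrm{H}^q$, giving $\widehat{A\otimes B}\cong\widehat{A}\,\widehat{\otimes}\,\widehat{B}$ directly; and the decomposition of primitives (hence of group-likes via $\exp$/$\log$, which converge because $\mathbb{Q}\subseteq\K$) is Quillen's result on complete Hopf algebras, while your alternative remark $\Gamma_n(\mathrm{G}\times\mathrm{H})=\Gamma_n(\mathrm{G})\times\Gamma_n(\mathrm{H})$ yields the same Lie-algebra splitting at once. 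Two small points to tighten if you write this up: the completion in the paper is defined hom-wise on groupoids, so the K\"unneth and splitting arguments should be run on each hom-coalgebra $\Hom((x,y),(x',y'))=\Hom(x,x')\otimes\Hom(y,y')$, not only on endomorphism algebras of objects; and you should note that your comparison isomorphism is compatible with the tower of finite-stage quotients, so that it is genuinely a morphism of prounipotent groupoids, i.e.\ a map in $\mathsf{Grp}_{\K}$. Both verifications are routine.
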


Proposition~\ref{prop: completion is monoidal} allows us to apply prounipotent completion aritywise to obtain completions of colored operads in groupoids.

\begin{definition}
Let $\calP=\{\calP(n)\}_{n\ge 0}$ be an operad in groupoids. The \defn{prounipotent completion} of $\calP$ is the operad in prounipotent groupoids defined aritywise by $\calP_{\K}:=\{\calP(n)_{\K}\}_{n\ge 0}.$ Since prounipotent completion is symmetric monoidal, the operadic composition maps
\[
\circ_i\colon \calP(n)\times \calP(m)\longrightarrow \calP(n+m-1)
\]
induce composition maps
\[
\begin{tikzcd}
\calP(n)_{\K}\times \calP(m)_{\K}\arrow[r,"\widehat{\circ}_i"] & \calP(n+m-1)_{\K}
\end{tikzcd}
\]
which endow $\calP_{\K}$ with the structure of an operad.
\end{definition}

\subsubsection{Associated graded structures for groupoids and operads}
Let $\calP=\{\calP(n)\}_{n\ge 0}$ be a (colored) operad in groupoids. Applying the $\K$-linear envelope aritywise gives a (colored) operad in $\K$-linear categories
\[
\K[\calP]:=\{\K[\calP(n)]\}_{n\ge 0}.
\]
The augmentation filtrations on the endomorphism groups induce aritywise completed associated graded objects, and we write
\[
\widehat{\gr}\,\K[\calP]:=\{\widehat{\gr}\,\K[\calP(n)]\}_{n\ge 0}
\]
for the resulting (colored) operad in completed graded $\K$-linear categories.

\begin{definition}
Let $\calP$ be a (colored) operad in groupoids. A \defn{homomorphic expansion} for $\calP$ is a filtered isomorphism of (colored) operads in $\K$-linear categories
\[
\widehat{Z}\colon \K[\calP]^\wedge \xrightarrow{\cong} \widehat{\gr}\,\K[\calP].
\]
Passing to group-like morphisms in each arity induces an isomorphism of (colored) operads in prounipotent groupoids
\[
\mathcal Z\colon \calP_{\K}\xrightarrow{\cong} \grp\bigl(\widehat{\gr}\,\K[\calP]\bigr).
\]
\end{definition}
\subsection{Expansions in action: chord diagrams and Drinfeld associators}\label{subsec:chorddiag}

Kohno proved in \cite{Kohno1985} that the Lie algebra of the prounipotent completion of the pure braid group $\PB_n$ is the Lie algebra now known as the \defn{Drinfeld--Kohno Lie algebra} $\mathfrak{t}_n$, also called the Lie algebra of \defn{infinitesimal braids}. 

The Lie algebra $\mathfrak{t}_n$ is generated by symbols $t_{ij}=t_{ji}$ for $1\leq i\neq j\leq n$, subject to the relations
\[
[t_{ij},t_{ik}+t_{jk}] = 0
\qquad\text{and}\qquad
[t_{ij},t_{kl}] = 0
\]
whenever $i,j,k,l$ are distinct. The grading is determined by declaring each generator to have degree one. Throughout the paper we work with the graded completion, which we denote by $\ib_n$ rather than $\widehat{\ib}_n$ in order to simplify notation. Accordingly, the prounipotent completion $(\PB_n)_\K$ is naturally identified with the prounipotent group $\exp(\ib_n)$, with $x_{ij}$ corresponding to $e^{t_{ij}}$; see \cite{Kohno1985} or the sketch of Theorem~10.0.7 in \cite{FresseBook1}.

\begin{definition}
\label{def: operad of inft braids}
By convention, we set $\ib_0=\ib_1=\{0\}$. For $n\geq 2$, there is a natural right action of the symmetric group $\Sigma_n$ on each Lie algebra $\ib_n$ which permutes the indices of the generators, that is, ~$(t_{ij})^{\sigma} \eqdef t_{\sigma^{-1}(i)\sigma^{-1}(j)}$, for some $\sigma\in\Sigma_n$.  The symmetric sequence $\ib=\{\ib_n\}_{n\geq 0}$ forms and operad in the category of Lie algebras with partial composition maps 
\[\begin{tikzcd}
\ib_n\oplus \ib_m\arrow[r, "\circ_{\alpha}"] & \ib_{n+m-1},
\end{tikzcd}\] $1\leq \alpha\leq n$ defined on generators $t_{ij}\in\ib_n$ and $t_{kl}\in\ib_m$ as $t_{ij} \circ_\alpha t_{kl} \eqdef t_{ij} \circ_\alpha 0 +0 \circ_\alpha t_{kl}$ with
\[ 
t_{ij} \circ_\alpha 0 
\eqdef 
\begin{cases}
    t_{(i+m-1)(j+m-1)} & \text{ if } \alpha < i,\\
    \sum_{\beta=1}^{m}t_{(i+\beta-1)(j+m-1)} & \text{ if } \alpha = i,\\
    t_{i(j+m-1)} & \text{ if } i<\alpha<j,\\
    \sum_{\beta=1}^{m}t_{i(j+\beta-1)} & \text{ if } \alpha=j,\\
    t_{ij} & \text{ if } \alpha > j,
\end{cases}
\]
and $0 \circ_\alpha t_{kl} \eqdef t_{(k+\alpha-1)(l+\alpha-1)}$ for all $\alpha$. Note that $0 \in \ib_1$ acts as an identity for the operadic $\circ_\alpha$ compositions.
\end{definition}

Passing from a degree-complete Lie algebra to the group-like elements in its completed universal enveloping algebra defines a functor from degree-complete Lie algebras to prounipotent groups. Equivalently, one may pass from a degree-complete Lie algebra $\mathfrak{g}$ to the prounipotent group $\exp(\mathfrak{g})$, with group law given by the Baker--Campbell--Hausdorff formula. Applied arity-wise, this induces a functor from operads in degree-complete Lie algebras to operads in prounipotent groups.

In particular, the prounipotent groups $\exp(\ib_n)$, for $n\geq 0$, assemble into an operad called the \defn{operad of chord diagrams}. This terminology comes from the usual diagrammatic realization of the completed universal enveloping algebra of $\ib_n$, whose elements are represented by chord diagrams on $n$ vertical strands, with each generator $t_{ij}$ drawn as a horizontal chord joining strand $i$ to strand $j$, and multiplication given by vertical stacking.

\begin{definition}
For $n\geq 0$, let $\mathsf{CD}(n)\eqdef \exp(\ib_n)$ denote the prounipotent group of group-like elements in the completed universal enveloping algebra of $\ib_n$.\footnote{Since $\ib_n$ is degree-complete, the exponential map identifies $\ib_n$ with the group-like elements in its completed universal enveloping algebra.} The right action of $\Sigma_n$ on $\ib_n$ induces a right action on $\mathsf{CD}(n)$. The resulting symmetric sequence $\mathsf{CD}=\{\mathsf{CD}(n)\}_{n\geq 0}$ forms an operad in prounipotent groups, with operadic composition given by \[e^x\circ_i e^y \eqdef e^{x\circ_i y}.\]
\end{definition}

The associated graded operad of the operad $\PaB$ is the operad of {\em parenthesized} chord diagrams built on $\CD$.

\begin{definition}\label{defn: PaCD}
Let $\PaCD(0)=\{*\}$ be the trivial groupoid. For $n\geq 1$, define $\PaCD(n)$ to be the prounipotent groupoid with object set $\ob(\PaCD(n))=\Omega(n)$ and with morphism spaces \[\Hom_{\PaCD(n)}(w_1,w_2)=\exp(\ib_n)\] for all $w_1,w_2\in\Omega(n)$. Categorical composition is given by multiplication in the prounipotent group $\exp(\ib_n)$, and identity morphisms are given by the unit element. The symmetric group $\Sigma_n$ acts on $\PaCD(n)$ by permuting objects and, on morphisms, through its action on $\ib_n$. The resulting symmetric sequence $\PaCD=\{\PaCD(n)\}_{n\geq 0}$ forms an operad in prounipotent groupoids. On objects, the operadic composition is induced by the operadic composition of parenthesized permutations, and on morphisms it is given by the operadic composition in $\CD$, namely \[e^x\circ_i e^y \eqdef e^{x\circ_i y}.\]
\end{definition}

In \cite{BN98}, Bar-Natan defines a \emph{homomorphic expansion} for $\PaB$ to be an object-fixing isomorphism of operads $\hPaB \to \PaCD$, and shows that such expansions are classified by Drinfeld associators. On the other hand, every operad equivalence $\varphi\colon \hPaB\to \CD$ extends uniquely to an operad isomorphism
\[
\varphi'\colon \hPaB\to \PaCD
\]
which is the identity on objects (\cite[Theorem~10.3.12]{FresseBook1}). Thus, operad equivalences $\hPaB\to \CD$ and homomorphic expansions of $\PaB$ encode the same data. Since in later sections we work only with the operad $\CD$, we will formulate the correspondence with Drinfeld associators in terms of operad equivalences $\hPaB\to \CD$. We now recall the definition of a Drinfeld associator.

\begin{definition}
\label{def:DrinfAss}
A \defn{Drinfeld associator} is a pair $(\mu,f)\in \K^{\times}\times \exp(\lie_2)$ satisfying the following equations:
\begin{gather}
    f(\xi_1, \xi_2)f(\xi_2, \xi_1)=1, \label{inversion}\tag{I} \\
    e^{\mu \xi_1/2} f(\xi_3, \xi_1) e^{\mu \xi_3/2} f(\xi_2, \xi_3) e^{\mu \xi_2/2} f(\xi_1, \xi_2) = 1, \label{hexagon}\tag{H}
\end{gather}
whenever $e^{\xi_1+\xi_2+\xi_3}=1$ in $\exp(\mathfrak{t}_3)$, and
\begin{equation}
\label{pentagon}
\tag{P}
f(t_{23}, t_{34})f(t_{12}+ t_{13}, t_{24}+t_{34})f(t_{12}, t_{23})
=
f(t_{12}, t_{23}+ t_{24}) f(t_{13} + t_{23}, t_{34})
\end{equation}
in $\exp(\mathfrak{t}_4)$. Equation~\eqref{pentagon} is called the \defn{pentagon} equation, and equation~\eqref{hexagon} is called the \defn{hexagon} equation.
\end{definition}

\begin{remark}
\label{rmk: Hexagon in Associator}
The hexagon equation is often written in the form
\begin{equation}
\label{eq:DA-Hex} \tag{H'}
e^{\mu t_{12}/2} f(t_{13}, t_{12}) e^{\mu t_{13}/2} f(t_{23}, t_{13}) e^{\mu t_{23}/2} f(t_{12}, t_{23}) = 1
\end{equation}
in $\exp(\ib_3)$, which is equivalent to \eqref{hexagon}. Since the element $t_{12}+t_{13}+t_{23}$ generates the center of $\ib_3$, one obtains further equivalent forms of the hexagon equation, and similarly of the pentagon equation, by adding central elements of $\ib_3$ or $\ib_4$ to the inputs of $f$. For example, a version of the hexagon equation involving only the variables $t_{12}$ and $t_{13}$ is
\[
e^{\mu t_{12}/2} f(t_{13}, t_{12}) e^{\mu t_{13}/2} f(-t_{12}-t_{13}, t_{13}) e^{\mu (-t_{12}-t_{13})/2} f(t_{12}, -t_{12}-t_{13}) = 1.
\]

The hexagon equation \eqref{eq:DA-Hex} is also equivalent to the pair of equations
\begin{gather}
e^{(t_{12}+t_{13})/2}=f(t_{23},t_{31})^{-1} e^{t_{13}/2} f(t_{21}, t_{13}) e^{t_{12}/2} f(t_{12},t_{23})^{-1}, \label{eq:DA-Hex1}\tag{H1} \\
e^{(t_{13}+t_{23})/2} = f(t_{31},t_{12}) e^{t_{13}/2} f(t_{13},t_{32})^{-1} e^{t_{23}/2} f(t_{12},t_{23}), \label{eq:DA-Hex2}\tag{H2}
\end{gather}
provided \eqref{inversion} holds. Conversely, the equations \eqref{eq:DA-Hex1} and \eqref{eq:DA-Hex2} together imply the inversion relation \eqref{inversion} (\cite[\S 4]{Drin}). Thus one may equivalently define Drinfeld associators as pairs $(\mu,f)$ satisfying \eqref{eq:DA-Hex1}, \eqref{eq:DA-Hex2}, and \eqref{pentagon}, or as pairs satisfying \eqref{inversion}, \eqref{eq:DA-Hex1}, \eqref{eq:DA-Hex2}, and \eqref{pentagon}.
\end{remark}

\medskip

The following theorem is proved in \cite{BN98} (in a slightly different context), and also in \cite[Proposition 10.2.7]{FresseBook1}. Conceptually, it is a consequence of Theorem~\ref{thm: presentation pab}, since an operad equivalence is determined by its values on the generators of $\hPaB$.

\begin{thm}\label{cor: operadic associators}
There is a bijection between the set of operad equivalences $\varphi\colon \hPaB \longrightarrow \mathsf{CD}$ and the set of Drinfeld associators. Under this correspondence,
\[
\varphi(R^{1,2}) = e^{\frac{\mu t_{12}}{2}} \in\exp(\ib_2)
\qquad \text{and} \qquad
\varphi(\Phi^{1,2,3}) = f(t_{12},t_{23}) \in \exp(\ib_3),
\]
and these elements satisfy the equations of Definition~\ref{def:DrinfAss}. In particular, for any operad equivalence $\varphi$, the pair $(\mu, f(t_{12},t_{23}))$ is a Drinfeld associator, and conversely every Drinfeld associator arises in this way.
\end{thm}

\subsection{The Grothendieck--Teichm\"{u}ller groups}\label{subsec:GTgroups}

The set of Drinfeld associators is a bi-torsor under the actions of two pro-algebraic groups: the Grothendieck--Teichm\"uller group $\gt$ and the graded Grothendieck--Teichm\"uller group $\grt$. Their prounipotent parts are denoted $\gt_1$ and $\grt_1$, respectively. We follow the conventions of \cite[Chapters~10 and~11]{FresseBook1}, and we will freely identify $(\mathsf{F}_2)_{\K}$ with $\exp(\lie_2)$ via the isomorphism $(\mathsf{F}_2)_{\K} \cong \exp(\lie_2)$ given by sending 
\[x_1 \mapsto e^{\xi_1}, \quad \text{and} \quad x_2 \mapsto e^{\xi_2} 
\] (cf. Example~\ref{example: completed free groups}).

\begin{definition}\label{defn: GT}
The \defn{Grothendieck--Teichm\"uller group}, denoted $\gt$, is the pro-algebraic group consisting of all pairs $(\lambda,f)\in \K^{\times}\times (\mathsf{F}_2)_{\K}$ satisfying
\begin{gather}
    f(x_1,x_2) f(x_2,x_1)= 1, \label{H1 for GT} \\
    f(x_1,x_2)x_1^{\mu}f(x_3,x_1)x_3^{\mu}f(x_2,x_3)x_2^{\mu} =1
    \quad \text{in } (\mathsf{F}_2)_{\K},
    \quad \text{where } x_3x_2x_1 = 1 \text{ and } \lambda = 2\mu + 1,  \label{H2 for GT} \\
    f(x_{23},x_{34})f(x_{13}x_{12},x_{34}x_{24})f(x_{12},x_{23})
    =
    f(x_{12},x_{24}x_{23})f(x_{23}x_{13},x_{34})
    \quad \text{in } (\PB_{4})_{\K}. \label{P for GT}
\end{gather}
The group law is given by
\[
(\lambda_1,f_1)\ast (\lambda_2,f_2)
=
\bigl(\lambda_1 \lambda_2,\,
f_{1}(x_1,x_2)\, f_2(x_1^{\lambda_{1}}, f_1^{-1}x_2^{\lambda_1}f_{1})\bigr).
\]
\end{definition}

The subgroup of $\gt$ consisting of all pairs $(\lambda,f)$ with $\lambda=1$ is denoted by $\gt_1$. This is the prounipotent part of $\gt$. This subgroup is sometimes referred to as the \defn{acyclotomic} Grothendieck--Teichmüller group, as it coincides with the kernel of the cyclotomic character of the profinite version of the Grothendieck-Teichm\"uller group (\cite{LS15}, \cite{Drin}). 

The group $\gt$ acts freely and transitively on the set of Drinfeld associators. Given a Drinfeld associator $(\mu,f(\xi_1,\xi_2))\in \K^{\times}\times \exp(\lie_2)$ and an element $(\lambda,g(x_1,x_2))\in \gt$, the action is
\begin{equation}\label{eqn: action of gt on Drinf ass}
(\mu, f ) \ast (\lambda,g)
=
(\mu\lambda,\, f(\xi_1,\xi_2)\, g(e^{\mu \xi_1}, f^{-1}e^{\lambda \xi_2}f)).
\end{equation}
Note that this is a left group action, but we write the group element on the right in accordance with our function-composition convention.

Via the identification of operad equivalences $\varphi\colon \hPaB\to \CD$ with Drinfeld associators, the group $\gt$ identifies with the automorphism group of the operad $\hPaB$.

\begin{thm}[{\cite{BN98}, \cite[Theorem~11.1.7]{FresseBook1}}]
\label{thm: GT as automorphisms}
Let $\Aut_{0}(\hPaB)$ denote the set of automorphisms $\vartheta\colon\hPaB\to \hPaB$ that act as the identity on objects. Then there is an isomorphism of pro-algebraic groups
\[
\Aut_{0}(\hPaB)\cong \gt.
\]
\end{thm}

The pair $(\lambda, f) \in \gt$ uniquely determines the automorphism $\vartheta \in \Aut_{0}(\hPaB)$ given on generators by
\begin{align*}
    \vartheta(R^{1,2}) &= (R^{2,1}R^{1,2})^{\nu} \cdot R^{1,2} , \\
    \vartheta(\Phi^{1,2,3}) &= f(x_{12},x_{23})\cdot \Phi^{1,2,3},
\end{align*}
where $\nu=(\lambda -1)/2$. Conversely, any such assignment defines an automorphism of $\hPaB$.

The \defn{graded Grothendieck--Teichm\"uller group}, denoted $\grt$, also acts on the set of Drinfeld associators. Its prounipotent part is denoted $\grt_1$.

\begin{definition}
The \defn{graded Grothendieck--Teichm\"uller group} $\grt$ is the pro-algebraic group defined as the semidirect product
\[
\grt=\K^{\times}\ltimes \grt_1,
\]
where $\grt_1$ consists of all elements $g(\xi_1,\xi_2)\in \exp(\lie_2)\subset \exp(\ib_3)$ satisfying
\begin{align}
g(\xi_1, \xi_2) &= g(\xi_2, \xi_1)^{-1}, \\
g(\xi_3, \xi_1)\, g(\xi_2, \xi_3)\, g(\xi_1, \xi_2)&= 1
\quad \text{in } \exp(\ib_3),
\quad \text{with } \xi_1 + \xi_2 +\xi_3 = 0, \\
g(t_{12}, t_{23} + t_{24})\, g(t_{13} + t_{23}, t_{34})
&=
g(t_{23}, t_{34})\, g(t_{12} + t_{13}, t_{24} + t_{34})\, g(t_{12}, t_{23})
\quad \text{in } \exp(\ib_4).
\end{align}
The action of $\K^{\times}$ on $\grt_1$ is given by
\[
(\lambda\cdot g)(\xi_1,\xi_2)=g(\lambda \xi_1,\lambda \xi_2).
\]
The group law in $\grt=\K^{\times}\ltimes \grt_1$ is
\[
(\lambda_1,g_1)\ast (\lambda_2,g_2)
=
\bigl(\lambda_1 \lambda_2,\,
g_{1}(\xi_1,\xi_2)\,g_2(\lambda_1 \xi_1, g_1^{-1}\cdot \lambda_1 \xi_2 \cdot g_{1})\bigr).
\]
\end{definition}

The group $\grt$ also acts freely and transitively on the set of Drinfeld associators. Given a Drinfeld associator $(\mu,f)$ and an element $(\lambda, g)\in \grt$, the action is
\begin{equation}
 (\lambda, g)\ast (\mu, f)= (\lambda \mu,\, g(\xi_1,\xi_2)\, f(\lambda \xi_1, g^{-1}\cdot \lambda \xi_2 \cdot g)).
\end{equation}

The following theorem characterizes $\grt$ as the automorphism group of the operad of parenthesized chord diagrams.

\begin{thm}[{\cite{BN98}, \cite[Theorem~10.3.10]{FresseBook1}}]
Let $\Aut_0(\PaCD)$ denote the set of operad isomorphisms $\vartheta\colon\PaCD\to\PaCD$ that act as the identity on objects. Then there is an isomorphism of pro-algebraic groups
\[
\Aut_{0}(\PaCD)\cong \grt.
\]
\end{thm}

\section{Moperad expansions: The moperad of parenthesized braids with a frozen strand}
Expansions of the moperad of parenthesized braids with a frozen strand were described in \cite{calaque20moperadic} in the study of an operadic interpretation of cyclotomic associators. In the present paper, we use a closely related variant of this moperad, adapted for our study of Kashiwara--Vergne solutions. The precise relationship with the formulation of \cite{calaque20moperadic} is explained in Remark~\ref{remark: difference in presentations of PaB1} later in this section.

\subsection{Moperads}
In this section we recall some preliminaries on moperads. For further details, see for instance \cite{Willwatcher2016Moperads}.

\begin{definition}
Let $\mathcal{P}$ be a one-colored symmetric operad in a symmetric monoidal category $\mathrm{C}$. A \defn{right $\mathcal{P}$-module} consists of a symmetric sequence $\mathcal{M} = \{\mathcal{M}(n)\}_{n \geq 0}$ in $\mathrm{C}$ together with structure maps
\[
\circ_i : \mathcal{M}(k) \otimes \mathcal{P}(m) \longrightarrow \mathcal{M}(k + m - 1), \quad \text{for } 1 \leq i \leq k,
\]
called \emph{partial right actions}, which satisfy the following axioms:
\begin{itemize}
    \item[(i)] For all $\mu \in \mathcal{M}(k)$, we have $\mu \circ_i \id = \mu$.
    
    \item[(ii)] For all $\lambda \in \mathcal{M}(\ell)$, $\mu \in \mathcal{P}(m)$, and $\nu \in \mathcal{P}(n)$, we have
    \[
    (\lambda\circ_{j}\mu) \circ_{i} \nu=\begin{cases}
    (\lambda \circ_i \nu) \circ_{j+n-1} \mu, & \text{if } 1 \leq i \leq j - 1, \\
    \lambda \circ_j (\mu \circ_{i-j+1} \nu), & \text{if } j \leq i \leq j + m - 1, \\
    (\lambda \circ_{i-m+1} \nu) \circ_j \mu, & \text{if } j + m \leq i \leq \ell + m - 1,
    \end{cases}
    \]
    
    \item[(iii)] For all $\mu \in \mathcal{M}(k)$, $\nu \in \mathcal{P}(m)$, and $\sigma \in \Sigma_k$, $\tau \in \Sigma_m$, we have
    \[
    \mu^\sigma \circ_{\sigma^{-1}(i)} \nu^\tau
              = (\mu \circ_{i} \nu)^{\sigma \circ_i \tau},
    \]
    where $\sigma \circ_i \tau$ denotes the block permutation obtained by inserting $\tau$ into the $i$th slot of $\sigma$.
\end{itemize}
\end{definition}

\begin{example}
\label{ex:operad-self-module}
Let $\mathcal{P}$ be an operad in the category $\mathrm{C}$. Then $\mathcal{P}$ can be regarded as a right $\mathcal{P}$-module via its own operadic composition. Explicitly, we define the structure maps
\[
\circ_i : \mathcal{P}(k) \otimes \mathcal{P}(m) \longrightarrow \mathcal{P}(k + m - 1),
\]
for $1 \leq i \leq k$ using the operad composition. This action satisfies all the axioms of a right $\mathcal{P}$-module.
\end{example}

\begin{definition}
A \defn{morphism of right $\mathcal{P}$-modules} $\varphi: \mathcal{M} \to \mathcal{N}$ is a morphism of symmetric sequences such that the diagram
\[
\begin{tikzcd}
\mathcal{M}(k) \otimes \mathcal{P}(m) \arrow[r, "\circ_i"] \arrow[d, "\varphi(k) \otimes \id"'] & \mathcal{M}(k+m-1) \arrow[d, "\varphi(k+m-1)"] \\
\mathcal{N}(k) \otimes \mathcal{P}(m) \arrow[r, "\circ_i"] & \mathcal{N}(k+m-1)
\end{tikzcd}
\]
commutes for all $k,m \geq 0$ and $1 \leq i \leq k$.

If $\mathcal{P}$ and $\mathcal{Q}$ are operads in $\mathrm{C}$, then a \defn{morphism} from a right $\mathcal{P}$-module $\mathcal{M}$ to a right $\mathcal{Q}$-module $\mathcal{N}$ is a pair $(\varphi',\varphi)$, where $\varphi : \mathcal{P} \to \mathcal{Q}$ is an operad morphism and $\varphi' : \mathcal{M} \to \mathcal{N}$ is a morphism of symmetric sequences compatible with the module structures via $\varphi$. That is, for all $k,m \geq 0$ and $1 \leq i \leq k$, the diagram
\[
\begin{tikzcd}
\mathcal{M}(k)\otimes\mathcal{P}(m) \arrow[d, "\varphi'(k)\otimes \varphi(m)"'] \arrow[r, "\circ_i"] & \mathcal{M}(k+m-1) \arrow[d, "\varphi'(k+m-1)"] \\
\mathcal{N}(k)\otimes \mathcal{Q}(m) \arrow[r, "\circ_i"] & \mathcal{N}(k+m-1)
\end{tikzcd}
\]
commutes. In other words, $\varphi'$ intertwines the right $\mathcal{P}$-action on $\mathcal{M}$ with the right $\mathcal{Q}$-action on $\mathcal{N}$ via the operad map $\varphi$. For an extensive treatment of right modules over operads and their morphisms, see~\cite{FresseModule09}.
\end{definition}

\begin{definition}
\label{def:moperad} 
Let $\mathcal{P}$ be a one-colored symmetric operad in $\mathrm{C}$. A $\mathcal{P}$-\defn{moperad} is a monoid object $\calM$ in the category of right $\mathcal{P}$-modules. Equivalently, it consists of a symmetric sequence $\calM=\{\calM(n)\}$ together with two types of partial composition maps:
\begin{enumerate}[leftmargin=*]
\item An associative and unital monoid composition
\[
\begin{tikzcd}
 \circ_0: \calM(k) \otimes \calM(m) \longrightarrow \calM(k+m)
\end{tikzcd}
\]
which makes $\calM$ into a monoid object in the category of symmetric sequences.

\item A right $\mathcal{P}$-module structure
\[
\begin{tikzcd}
 \circ_i: \calM(k) \otimes \mathcal{P}(m) \longrightarrow \calM(k+m-1)
\end{tikzcd}
\]
for $1 \leq i \leq k$.
\end{enumerate}
These two operations are required to be compatible in the following sense:
\begin{enumerate}[leftmargin=*]
    \item[(3)] For any $\lambda \in \calM(k)$, $\mu \in \calM(p)$, and $\nu \in \calP(n)$, we have
   \[
   (\lambda\circ_0\mu) \circ_{i} \nu=
   \begin{cases}
    \lambda \circ_0 (\mu \circ_i \nu), & \text{if } 1 \leq i \leq p, \\
    (\lambda \circ_{i-p} \nu) \circ_0 \mu, & \text{if } p+1 \leq i \leq p+k.
    \end{cases}
    \]
\end{enumerate}

See \cite[Definition 9]{Willwatcher2016Moperads} for more details. 
\end{definition}

\begin{example}
\label{example: shifted moperad}
Given an operad $\mathcal{P}$, one may build a $\mathcal{P}$-moperad $\mathcal{P}^+$ by shifting the operad structure. More precisely, set $\mathcal{P}^+(k) \eqdef \mathcal{P}(k+1)$ for $k\geq 0$, where we visualize operations in $\mathcal{P}(k+1)$ as rooted trees with leaves labeled $\{0,1,\ldots,k\}$. The $\Sigma_k$-action on $\mathcal{P}^+(k)$ is obtained by restricting the $\Sigma_{k+1}$-action on $\mathcal{P}(k+1)$ to permutations that fix $0$. The monoid composition $\circ_0:\mathcal{P}^+(k)\otimes\mathcal{P}^+(n)\to \mathcal{P}^+(k+n)$ is given by operadic composition in the $0$th input, that is, by the map $\circ_0:\mathcal{P}(k+1)\otimes\mathcal{P}(n+1)\to \mathcal{P}(k+n+1)$. Similarly, the right $\mathcal{P}$-module structure maps $\circ_i:\mathcal{P}^+(k)\otimes\mathcal{P}(n)\to \mathcal{P}^+(k+n-1)$ are given by the operadic compositions $\mathcal{P}(k+1)\otimes\mathcal{P}(n)\to \mathcal{P}(k+n)$ for $1\leq i\leq k$.
\end{example}

In a moperad $\calM$, an operation in $\calM(k)$ may be visualized as a planar rooted tree whose leaves are labeled by $\{0,1,\ldots,k\}$, with the distinguished leaf labeled by $0$ drawn as the leftmost leaf. The $\circ_0$-composition is given by inserting an operation of $\calM$ into the distinguished leaf labeled by $0$. For $1\leq i\leq k$, the $\circ_i$-compositions are given by inserting an operation of $\calP$ into the leaf labeled by $i$.

\begin{figure}[H]
    \includegraphics[width=10cm]{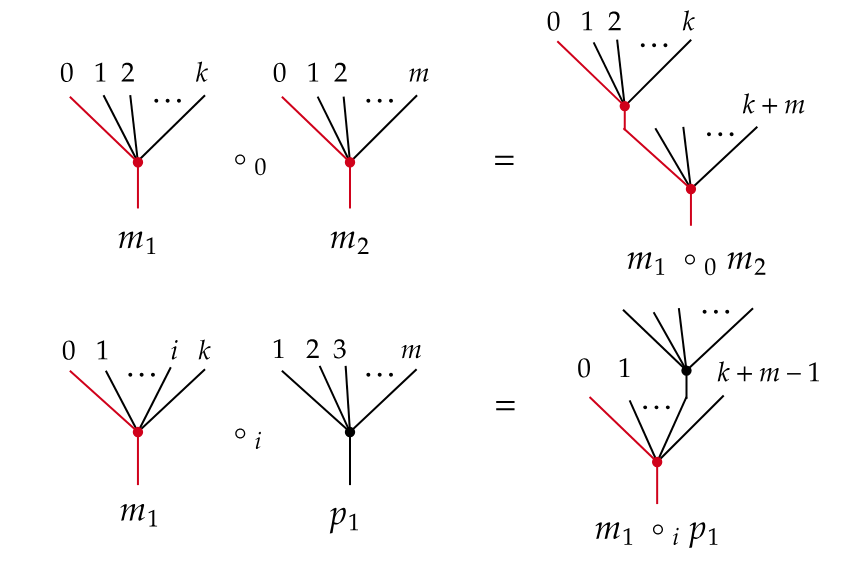}
    \caption{Moperad composition illustrated on rooted trees: here $m_1$ is an operation in $\calM(k)$, $m_2$ is an operation in $\calM(m)$, and $p_1$ is an operation in $\mathcal{P}(m)$.}
    \label{fig:moperad composition}
\end{figure}

\begin{remark}
The picture in Figure~\ref{fig:moperad composition} suggests an equivalent description of a $\calP$-moperad $\calM$ as a two-colored operad in which one color is distinguished, or frozen. Let $\mathfrak{C}=\{\textcolor{red}{a},m\}$ be a discrete set of two colors, and define a $\mathfrak{C}$-colored symmetric sequence $\calO=\{\calO(c_1,\ldots,c_n;c_0)\}$ by
\begin{itemize}[label=\labelitemii]
    \item $\calO(c_1,\ldots,c_n;c_0)=\calP(n)$ if $(c_1,\ldots,c_n;c_0)=(m,\ldots,m;m)$;
    \item $\calO(c_1,\ldots,c_n;c_0)=\calM(n)$ if $(c_1,\ldots,c_n;c_0)=(\textcolor{red}{a},m,\ldots,m;\textcolor{red}{a})$;
    \item $\calO(c_1,\ldots,c_n;c_0)=0$ otherwise, where $0$ denotes the initial object of $\mathrm{C}$.
\end{itemize}
The symmetric group $\Sigma_n$ acts on $\calP(n)$ in the usual way. On $\calM(n)$, the action factors through the subgroup of permutations fixing the distinguished input of color $\textcolor{red}{a}$, that is, through the subgroup $\{\sigma\in \Sigma_n\mid \sigma(1)=1\}\cong \Sigma_{n-1}$. In other words, $\calO$ has two types of operations: algebra operations in the color $m$, governed by the operad $\calP$, and module operations with one distinguished input and output of color $\textcolor{red}{a}$ and all remaining inputs of color $m$. Composition is then defined exactly as for a $\mathfrak{C}$-colored operad, subject to these color and symmetry constraints; see \cite[Definition~8]{Willwatcher2016Moperads}.
\end{remark}

\begin{definition}\label{def:MopMap}
A \defn{map of $\calP$-moperads} $\varphi:\calM\rightarrow \mathcal{N}$ is a map of right $\mathcal{P}$-modules that is compatible with the monoid structures, that is,
\[
\varphi(x\circ_0 y)=\varphi(x)\circ_0\varphi(y)
\]
for all $x\in \calM(k)$ and $y\in \calM(m)$. We write $\Mop_{\calP}(\mathrm{C})$ for the category of $\calP$-moperads in $\mathrm{C}$.

More generally, if $\calP$ and $\calQ$ are operads, then a \defn{map} from a $\calP$-moperad $\calM$ to a $\calQ$-moperad $\mathcal{N}$ is a pair $(\varphi',\varphi)$, where $\varphi:\calP\rightarrow \calQ$ is a map of operads and $\varphi':\calM\rightarrow \mathcal{N}$ is a map of symmetric sequences such that $(\varphi',\varphi)$ is a map of right modules and
\[
\varphi'(x\circ_0 y)=\varphi'(x)\circ_0\varphi'(y)
\]
for all $x\in \calM(k)$ and $y\in \calM(m)$. We write $\Mop(\mathrm{C})$ for the category of moperads in $\mathrm{C}$, where the underlying operad is allowed to vary.
\end{definition}

\begin{remark}
The shift construction of Example~\ref{example: shifted moperad} induces a functor
\[
(-)^{+}:\Op(\mathrm{C})\rightarrow \Mop(\mathrm{C}),
\]
which sends an operad $\calP$ to the $\calP$-moperad $\calP^+$. This functor is faithful: every map of operads $\varphi:\calP\rightarrow \calQ$ induces a map of moperads
\[
(\varphi^+, \varphi):\calP^+\rightarrow\calQ^+,
\]
and distinct operad maps induce distinct moperad maps. However, not every moperad map $\calP^+\rightarrow \calQ^+$ arises by shifting an operad map $\calP\rightarrow\calQ$. This is analogous to the fact that a module homomorphism between rings, viewed as modules, need not preserve multiplication.
\end{remark}

\subsection{Completion and homomorphic expansions for moperads}

Let $\calP$ be an operad and let $\calM=\{\calM(n)\}_{n\geq 0}$ be a $\calP$-moperad. The prounipotent completion of $\calM$ is the $\calP_{\K}$-moperad $\calM_{\K}$ obtained by applying prounipotent completion aritywise:
\[
\calM_{\K}=\{\calM(n)_{\K}\}_{n\geq 0}.
\]
The structure maps of $\calM$ induce the corresponding structure maps on $\calM_{\K}$. In particular, the monoid composition gives maps
\[
\circ_0:\calM(k)_{\K}\otimes \calM(m)_{\K}\longrightarrow \calM(k+m)_{\K},
\]
and the right $\calP$-module structure gives, for each $1\leq i\leq k$, maps
\[
\circ_i:\calM(k)_{\K}\otimes \calP(m)_{\K}\longrightarrow \calM(k+m-1)_{\K}.
\]
These endow $\calM_{\K}$ with the structure of a $\calP_{\K}$-moperad. 

Similarly, applying the $\K$-linear envelope aritywise gives a moperad in $\K$-linear categories
\[
\K[\calM]:=\{\K[\calM(n)]\}_{n\geq 0}
\]
over the operad $\K[\calP]$. The augmentation filtrations induce aritywise completed associated graded objects, and we write
\[
\widehat{\gr}\,\K[\calM]:=\{\widehat{\gr}\,\K[\calM(n)]\}_{n\geq 0}
\]
for the resulting moperad in completed graded $\K$-linear categories over $\widehat{\gr}\,\K[\calP]$.

\begin{definition}
Let $\calP$ be an operad in groupoids, and let $\calM$ be a moperad in groupoids over $\calP$. A \defn{homomorphic expansion} for $\calM$ is a pair $(\widehat{Z}_{\calM},\widehat{Z}_{\calP})$, where $\widehat{Z}_{\calP}$ is a homomorphic expansion for $\calP$ and
\[
\widehat{Z}_{\calM}\colon \K[\calM]^\wedge \xrightarrow{\cong} \widehat{\gr}\,\K[\calM]
\]
is a filtered isomorphism of moperads in $\K$-linear categories compatible with $\widehat{Z}_{\calP}$. Passing to group-like morphisms in each arity then induces an isomorphism of moperads in prounipotent groupoids
\[
\calM_{\K}\xrightarrow{\cong} \grp\bigl(\widehat{\gr}\,\K[\calM]\bigr).
\]
\end{definition}

\subsection{Parenthesized braids with a frozen strand}
The \defn{braid group with a frozen strand}, denoted $\Br_n^1$, is the fundamental group of the configuration space of $n$ unordered points in the punctured plane $\C\setminus\{0\}$. It admits the following finite presentation; see \cite[\S~3]{AET10}.

\begin{prop}
\label{prop:PresForBn1}
The group $\Br_n^1$ is generated by $X_1,X_2,\ldots,X_n$ and $\beta_1,\ldots,\beta_{n-1}$, subject to the relations
\begin{align}
\beta_{i}\beta_{i+1}\beta_{i}&=\beta_{i+1}\beta_{i}\beta_{i+1}, \label{eq:BraidRel1}  \\
\beta_i\beta_j&=\beta_j\beta_i  \quad \text{when} \quad |i-j|\geq 2, \label{eq:BraidRel2} \\
\beta_iX_i\beta_i^{-1}&=X_{i+1},\label{eq:XiXi+1}  \\
\beta_iX_{i+1}\beta_i^{-1}&=X_{i+1}^{-1}X_{i}X_{i+1}, \label{eq:XiComp}\\
\beta_iX_{j}\beta_{i}^{-1}&=X_{j}  \quad \text{when} \quad j\notin\{i,i+1\}. \label{eq:XBetaComm}
\end{align}
\end{prop}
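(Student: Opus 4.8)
The plan is to realize $\Br_n^1$ as a split extension of the ordinary braid group by a free group, and then read off the presentation from the standard presentation of a semidirect product.

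First I would set up the relevant Fadell--Neuwirth fibrations. Writing $\mathrm{Conf}_{1,n}(\C)$ for the space of configurations consisting of $n$ unordered points together with one distinguished point in $\C$, the map that forgets the $n$ unordered points exhibits $\mathrm{Conf}_{1,n}(\C)$ as a fibration over $\C$ whose fiber is the unordered configuration space of $n$ points in $\C\setminus\{0\}$; since $\C$ is contractible this identifies $\pi_1(\mathrm{Conf}_{1,n}(\C))$ with $\Br_n^1$. Forgetting instead the distinguished point is a fibration over $\mathrm{UConf}_n(\C)$ whose fiber is $\C$ minus $n$ points, a wedge of $n$ circles. Its long exact sequence yields a short exact sequence
\[
1 \longrightarrow \F_n \longrightarrow \Br_n^1 \longrightarrow \Br_n \longrightarrow 1,
\]
where $\F_n$ is free of rank $n$, generated by the loops $X_1,\dots,X_n$ in which the distinguished point (equivalently, the puncture at $0$) encircles the $i$-th strand, and $\Br_n$ is generated by the half-twists $\beta_1,\dots,\beta_{n-1}$. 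A continuous section placing the distinguished point far from the other $n$ points splits the sequence, so $\Br_n^1\cong \F_n\rtimes\Br_n$.

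Next I would invoke the standard presentation of a semidirect product $N\rtimes Q$: its generators are those of $N$ and of $Q$, and its relations are the relations of $N$, the relations of $Q$, and one action relation $q a q^{-1}=\phi(q)(a)$ for each pair of generators. Here $N=\F_n$ contributes no relations, $Q=\Br_n$ contributes precisely the braid relations \eqref{eq:BraidRel1} and \eqref{eq:BraidRel2}, and the action relations are the conjugation relations \eqref{eq:XiXi+1}, \eqref{eq:XiComp}, and \eqref{eq:XBetaComm}. It therefore remains to identify the conjugation action $\phi\colon\Br_n\to\Aut(\F_n)$ with the one recorded in these relations: this is the Artin action of the mapping class group on the fundamental group of the punctured disk, under which $\beta_i$ fixes every $X_j$ with $j\neq i,i+1$, sends $X_i$ to $X_{i+1}$, and sends $X_{i+1}$ to $X_{i+1}^{-1}X_iX_{i+1}$.

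The main obstacle is this last identification of the action, which must be carried out carefully with the geometry of the windings. I would verify it by a direct picture computation of how the half-twist on strands $i,i+1$ drags the loops $X_i,X_{i+1}$, checking both that the assignment respects the braid relations (so that it defines a homomorphism $\Br_n\to\Aut(\F_n)$) and that it matches the prescribed formulas. A useful consistency check is that $\beta_i$ preserves the boundary loop: from \eqref{eq:XiXi+1} and \eqref{eq:XiComp} one computes $X_iX_{i+1}\mapsto X_iX_{i+1}$, so the total winding $X_1\cdots X_n$ is fixed, exactly as expected for a mapping class supported away from the boundary. Once the action is pinned down, the semidirect-product presentation is literally the list \eqref{eq:BraidRel1}--\eqref{eq:XBetaComm}, completing the proof.
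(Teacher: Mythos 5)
Your proof is correct, but note that the paper itself does not prove \cref{prop:PresForBn1}: it simply cites \cite[\S 3]{AET10}, so there is no in-paper argument to compare against. Your route is a genuine, self-contained derivation, and it inverts the paper's logic: the paper takes the presentation as given and then deduces the decomposition $\Br_n^1\cong \F_n\rtimes\Br_n$ of Equation \eqref{eq:iso-Bn1-FnBn} from it (no relations among the $X_i$, a clean copy of $\Br_n$ on the $\beta_i$), whereas you first establish the split short exact sequence $1\to\F_n\to\Br_n^1\to\Br_n\to 1$ topologically and then read the presentation off the standard semidirect-product presentation. This buys an argument independent of any prior knowledge of presentations of surface braid groups, at the cost of two steps you should make explicit: (i) left-exactness of your sequence uses the vanishing of $\pi_2$ of the unordered configuration space of $\C$, i.e.\ asphericity of planar configuration spaces (Fadell--Neuwirth), which you invoke silently when passing from the long exact sequence to the short one; and (ii) the computation of the conjugation action — which is where relations \eqref{eq:XiXi+1}--\eqref{eq:XBetaComm} actually come from — is deferred to a picture argument; that is the real content of the proposition, though it is standard (Artin's description of the braid action on $\pi_1$ of the punctured disk), and your check that $X_iX_{i+1}$, hence the total winding $X_1\cdots X_n$, is preserved is a good sanity test of the conventions. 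Two smaller remarks: you do not need to verify separately that your formulas define a homomorphism $\Br_n\to\Aut(\F_n)$, since the action is by construction conjugation inside $\Br_n^1$ through the section; and your parenthetical identification of ``the puncture encircles strand $i$'' (the natural generator of the fiber in your second fibration) with the paper's generator $X_i$, ``strand $i$ encircles the puncture,'' deserves a sentence — the two loops are homotopic because both represent the local full twist of the pair, performed in a small disk avoiding the remaining points.
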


\begin{figure}[ht!]
    \includegraphics[width=15cm]{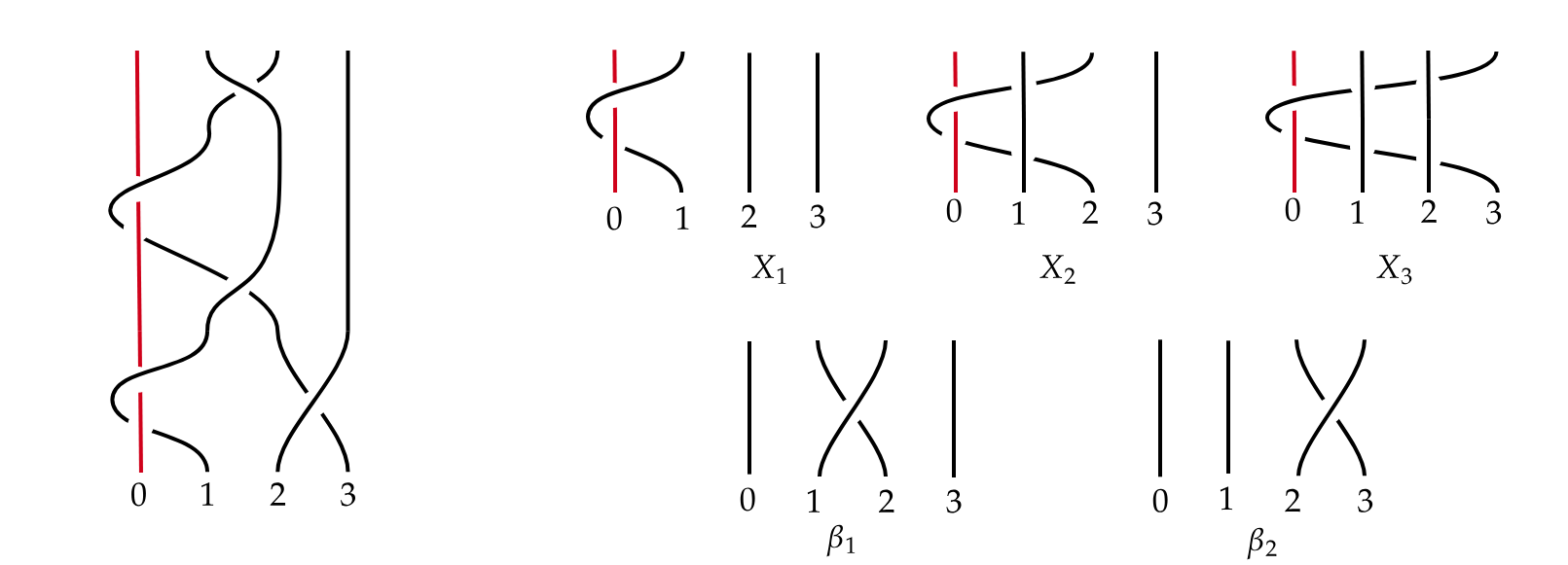}
    \caption{A braid with a frozen strand in $\Br^1_3$, and the generators of $\Br^1_3$. The frozen strand (puncture) is drawn leftmost and shown in red. The generator $X_1$ represents the loop in which the first point circles once around the puncture in the positive direction, and $X_k=\beta_{k-1}X_{k-1}\beta_{k-1}^{-1}$ for $k\geq 2$. The braid on the left can, for example, be written as $\beta_2X_1X_2$.}
    \label{fig:FrozenBraids}
\end{figure}

There is a natural homomorphism $\iota:\Br_n^1\to \Br_{n+1}$ obtained by viewing the puncture as an additional strand. To describe it, consider the standard presentation of $\Br_{n+1}$ with generators $\beta_0,\beta_1,\ldots,\beta_{n-1}$. Define
\[
\iota(\beta_i)\eqdef \beta_i \qquad \text{for } 1\leq i\leq n-1,
\]
and
\[
\iota(X_i)\eqdef \beta_{i-1}\beta_{i-2}\cdots\beta_1\beta_0^{2}\beta_1^{-1}\cdots\beta_{i-2}^{-1}\beta_{i-1}^{-1}
\qquad \text{for } 1\leq i\leq n.
\]
See Figure~\ref{fig:FrozenBraids} for the geometric intuition. On configuration spaces, this corresponds to replacing the puncture by an additional marked point.

\begin{prop}
\label{prof: Bn1 includes in Bn+1}
The map $\iota : \Br^1_n \hookrightarrow \Br_{n+1}$ defined above is an injective group homomorphism.
\end{prop}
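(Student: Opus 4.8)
The statement asks for two things: that the assignment on generators respects the defining relations of $\Br^1_n$ from \cref{prop:PresForBn1}, so that $\iota$ is a well-defined group homomorphism, and that the resulting map is injective. I would ultimately prefer a geometric argument that settles both at once, but let me first record the direct algebraic verification of well-definedness.

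Checking the relations amounts to computing with the images $\iota(\beta_i)=\beta_i$ and $\iota(X_i)=\beta_{i-1}\cdots\beta_1\beta_0^2\beta_1^{-1}\cdots\beta_{i-1}^{-1}=A_{0,i}$, the pure braid in which strand $i$ encircles strand $0$. Relations \eqref{eq:BraidRel1} and \eqref{eq:BraidRel2} hold trivially, being Artin relations among $\beta_1,\dots,\beta_{n-1}$ already present in $\Br_{n+1}$, and \eqref{eq:XiXi+1} holds by construction, since by definition $\iota(X_{i+1})=\beta_i\,\iota(X_i)\,\beta_i^{-1}$. The two substantive relations \eqref{eq:XiComp} and \eqref{eq:XBetaComm} encode the conjugation (Artin) action of the half-twists on the $A_{0,j}$, and I would verify them by induction on the indices using only the braid relations. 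For \eqref{eq:XBetaComm} with $j<i$ the elements commute because $\iota(X_j)$ only involves $\beta_0,\dots,\beta_{j-1}$, all of distance at least $2$ from $\beta_i$; the case $j\geq i+2$ follows by induction after pushing $\beta_i$ inward using $\beta_i\beta_{i+1}\beta_i=\beta_{i+1}\beta_i\beta_{i+1}$ and far-commutativity. After substituting $\iota(X_{i+1})=\beta_i\iota(X_i)\beta_i^{-1}$, relation \eqref{eq:XiComp} becomes equivalent to $(\beta_i\,\iota(X_i))^2=(\iota(X_i)\,\beta_i)^2$; its base case $i=1$ reads $\beta_1\beta_0^2\beta_1\beta_0^2=\beta_0^2\beta_1\beta_0^2\beta_1$ and follows by grouping each side as two copies of the word $\beta_0\beta_1\beta_0=\beta_1\beta_0\beta_1$, while the inductive step, writing $\iota(X_i)=\beta_{i-1}\iota(X_{i-1})\beta_{i-1}^{-1}$ and using the inductive hypothesis, reduces to the braid identity $\beta_{i-1}\beta_i^{-1}\beta_{i-1}^{-1}=\beta_i^{-1}\beta_{i-1}^{-1}\beta_i$.

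For injectivity I would argue geometrically. Let $E$ be the space of configurations of one labelled point $p_0$ together with $n$ unlabelled points of $\C$, and let $B=\mathrm{UConf}_{n+1}(\C)$ be the space of $n+1$ unlabelled points; forgetting the label on $p_0$ defines a degree-$(n+1)$ covering $\pi\colon E\to B$, the one corresponding to the inclusion $\Sigma_n\leq\Sigma_{n+1}$ of the stabilizer of the distinguished point. Translating $p_0$ to the origin gives a homotopy equivalence $E\simeq \mathrm{UConf}_n(\C\setminus\{0\})$, whence $\pi_1(E)\cong\Br^1_n$, while $\pi_1(B)\cong\Br_{n+1}$. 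Because a covering map induces an injection on fundamental groups, $\pi_*\colon\Br^1_n\hookrightarrow\Br_{n+1}$ is injective, with image the index-$(n+1)$ subgroup of braids whose underlying permutation fixes $0$.

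It then remains to identify $\pi_*$ with $\iota$, which I would do by tracing the generators: the loop $X_i$, in which the $i$th point travels once around the puncture $p_0$, is sent to the pure braid $A_{0,i}$, exactly the word defining $\iota(X_i)$, and the half-twist $\beta_i$ of two unlabelled points is sent to $\beta_i$. Since $\pi_*$ is a genuine homomorphism sending the generators to these words, the defining relations of $\Br^1_n$ are automatically satisfied by their images, so this identification also reproves well-definedness. I expect the main obstacle to lie precisely in the direct verification of \eqref{eq:XiComp} and \eqref{eq:XBetaComm}: the covering-space argument disposes of injectivity cleanly and, via $\iota=\pi_*$, makes well-definedness conceptually transparent, but carrying out the combinatorial check by hand instead demands careful bookkeeping with the braid relations, best organised as the induction on strand indices sketched above.
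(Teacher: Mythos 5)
Your proof is correct. Note that the paper offers no actual proof of this proposition: it explicitly leaves the verification to the reader, with only the hint that one should ``consider the puncture in the plane as an additional point'' at the level of fundamental groups of configuration spaces. Your covering-space argument is exactly that hint made rigorous: the degree-$(n+1)$ cover $\pi\colon E\to \mathrm{UConf}_{n+1}(\mathbb{C})$ obtained by labelling one point, together with the homotopy equivalence $E\simeq\mathrm{UConf}_n(\mathbb{C}\setminus\{0\})$ given by translating the labelled point to the origin, yields injectivity for free and identifies the image as the index-$(n+1)$ subgroup of braids whose underlying permutation fixes $0$; matching generators then gives $\pi_*=\iota$, which also disposes of well-definedness. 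Your supplementary algebraic check is likewise sound, and I verified its key steps: relation \eqref{eq:XiComp}, after substituting \eqref{eq:XiXi+1}, is indeed equivalent to $(\beta_i\,\iota(X_i))^2=(\iota(X_i)\,\beta_i)^2$; the base case follows by rewriting both sides as the square of $\beta_0\beta_1\beta_0=\beta_1\beta_0\beta_1$; and the inductive step---using far-commutativity of $\beta_i$ with $\iota(X_{i-1})$ and the inductive hypothesis in the form ``$\beta_{i-1}$ commutes with $\iota(X_{i-1})\beta_{i-1}\iota(X_{i-1})$''---reduces precisely to $\beta_{i-1}\beta_i^{-1}\beta_{i-1}^{-1}=\beta_i^{-1}\beta_{i-1}^{-1}\beta_i$, which is a consequence of the braid relation, exactly as you claim. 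One small simplification: for \eqref{eq:XBetaComm} with $j\geq i+2$ no induction is needed, since a single application of $\beta_i\beta_{i+1}\beta_i=\beta_{i+1}\beta_i\beta_{i+1}$ (and its inverse) to the $\beta_{i+1}\beta_i$-segments of $\iota(X_j)$, followed by far-commutativity of $\beta_{i+1}$ with $\beta_0,\dots,\beta_{i-1}$, already gives $\beta_i\,\iota(X_j)\,\beta_i^{-1}=\iota(X_j)$.
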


\begin{remark}\label{rmk:AltPres}
From the presentation of $\Br^1_n$ in \cref{prop:PresForBn1}, relation~\eqref{eq:XiXi+1} shows that it is enough to take $X_1$ together with the generators $\beta_1,\ldots,\beta_{n-1}$. This yields an equivalent presentation with generators $\{X_1,\beta_1,\ldots,\beta_{n-1}\}$, in which relations \eqref{eq:BraidRel1} and \eqref{eq:BraidRel2} remain unchanged, relation~\eqref{eq:XiXi+1} is omitted, and relations \eqref{eq:XiComp} and \eqref{eq:XBetaComm} are replaced by
\begin{align}
\beta_1X_1\beta_1^{-1} &=X_1^{-1}\beta_1^{-1}X_1\beta_1X_1, \label{eq:XiCompMod}\\
\beta_iX_{1}\beta_{i}^{-1}&=X_{1}  \quad \text{when} \quad i\neq 1. \label{eq:XBetaCommMod}
\end{align}
\end{remark}

Notice that in the presentation of \cref{prop:PresForBn1}, relations~\eqref{eq:BraidRel1} and~\eqref{eq:BraidRel2} are the standard braid relations, while relations~\eqref{eq:XiXi+1}--\eqref{eq:XBetaComm} describe how the generators $X_i$ interact with the crossings $\beta_i$. In particular, there are no relations among the generators $X_1,\ldots,X_n$. Consequently, these elements generate a normal subgroup of $\Br_n^1$ isomorphic to the free group $\F_n$. On the other hand, the subgroup generated by $\beta_1,\ldots,\beta_{n-1}$ is naturally isomorphic to $\Br_n$, and its intersection with the subgroup generated by the $X_i$ is trivial. It follows that
\begin{equation}
\label{eq:iso-Bn1-FnBn}
\Br_n^1\cong \F_n\rtimes \Br_n.
\end{equation}
This is a classical result, also discussed in \cite[Proposition~15]{AET10}, and it will play an important role in later constructions.

\begin{figure}[ht!] 
\includegraphics[width=7cm]{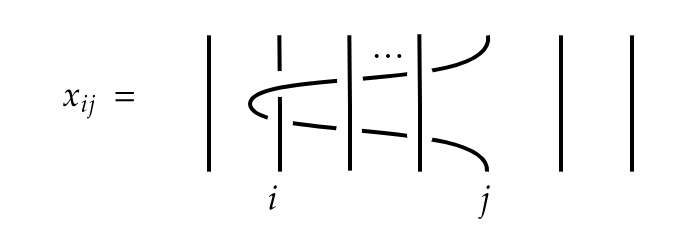} 
\caption{A pure braid generator.} \label{fig:PureGen} 
\end{figure}
The decomposition \eqref{eq:iso-Bn1-FnBn} is closely related to the classical action of the pure braid group $\PB_n$ on the free group $\F_n$. Recall that $\PB_n$ is generated by the elements
\begin{equation}\label{eq:PureBraidGens}
x_{ij}=\beta_{j-1}\cdots\beta_{i+1}\beta_i^2\beta_{i+1}^{-1}\cdots\beta_{j-1}^{-1},
\qquad 1\leq i<j\leq n,
\end{equation}
shown in Figure~\ref{fig:PureGen}. The standard split extension
\[
1\longrightarrow \F_n \longrightarrow \PB_{n+1}\longrightarrow \PB_n \longrightarrow 1
\]
gives the classical semidirect product decomposition
\[
\PB_{n+1}\cong \F_n\rtimes \PB_n,
\]
where the action of $\PB_n$ on $\F_n=\langle y_1,\dots,y_n\rangle$ is given by the usual Artin action:
\[
x_{ij}(y_k)=
\begin{cases}
y_i y_j y_i y_j^{-1} y_i^{-1}, & \text{if } k=i,\\[4pt]
y_i y_j y_i^{-1}, & \text{if } k=j,\\[4pt]
y_k, & \text{if } k\neq i,j.
\end{cases}
\]

This is directly analogous to \eqref{eq:iso-Bn1-FnBn}. In fact, the decomposition $\Br_n^1\cong \F_n\rtimes \Br_n$ restricts to an isomorphism
\begin{equation}\label{eq:PB1}
\PB_n^1\cong \F_n\rtimes \PB_n,
\end{equation} 
where $\PB_n^1$ denotes the pure braid group with a frozen strand.

\begin{lemma}
\label{lemma: iso PB frozen to PB n+1}
There is a natural isomorphism of groups $\PB_n^1\cong \PB_{n+1}$.
\end{lemma}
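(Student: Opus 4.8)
The plan is to prove the isomorphism geometrically, from the configuration-space models of the two groups, and then to read off the induced map on generators so as to identify it with the pure part of the inclusion $\iota$ of \cref{prof: Bn1 includes in Bn+1}. Recall that $\PB_{n+1} = \pi_1(\text{Conf}_{n+1}(\C))$ while, by definition, $\PB_n^1 = \pi_1(\text{Conf}_n(\C \setminus \{0\}))$. The key observation is that normalizing the position of one point trivializes the translation freedom of $\text{Conf}_{n+1}(\C)$. Concretely, I would define
\[
\Phi \colon \text{Conf}_{n+1}(\C) \longrightarrow \C \times \text{Conf}_n(\C \setminus \{0\}), \qquad (z_0, z_1, \ldots, z_n) \longmapsto \bigl(z_0,\ (z_1 - z_0, \ldots, z_n - z_0)\bigr),
\]
with inverse $(w, (y_1, \ldots, y_n)) \mapsto (w, y_1 + w, \ldots, y_n + w)$. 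Both maps are continuous, and $\Phi$ is well defined since $z_i \neq z_0$ forces $z_i - z_0 \neq 0$ while $z_i \neq z_j$ forces $z_i - z_0 \neq z_j - z_0$; hence $\Phi$ is a homeomorphism.

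Since $\C$ is contractible, passing to fundamental groups yields $\PB_{n+1} \cong \pi_1(\C) \times \PB_n^1 \cong \PB_n^1$, which is the desired isomorphism. To make the statement natural—and to connect it with the algebraic structures already in play—I would then identify the map on generators. Under $\Phi$ the distinguished point $z_0$ plays the role of the puncture, so the loop $X_i$ in which the $i$-th point circles the puncture corresponds to the pure braid generator $x_{0,i}$ linking strands $0$ and $i$, while the copy of $\PB_n$ generated by the crossings $\beta_1, \ldots, \beta_{n-1}$ among strands $1, \ldots, n$ is carried across untouched. This is exactly the effect of $\iota$ on pure braids, since $\iota(X_i) = \beta_{i-1} \cdots \beta_1 \beta_0^2 \beta_1^{-1} \cdots \beta_{i-1}^{-1} = x_{0,i}$; thus the isomorphism may equivalently be described as the restriction of $\iota$ to pure braid subgroups, and it intertwines the semidirect product decomposition $\PB_n^1 \cong \F_n \rtimes \PB_n$ of \eqref{eq:PB1} with the Fadell–Neuwirth splitting $\PB_{n+1} \cong \F_n \rtimes \PB_n$ obtained by forgetting strand $0$.

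The only point requiring genuine care is surjectivity onto $\PB_{n+1}$: one must check that the free subgroup $\langle x_{0,1}, \ldots, x_{0,n}\rangle$ is precisely the Fadell–Neuwirth fiber of the map $\PB_{n+1} \to \PB_n$ that forgets strand $0$, and that $\iota$ sends the complementary $\PB_n$ isomorphically onto the copy of $\PB_n$ on strands $1, \ldots, n$. The homeomorphism $\Phi$ makes both facts automatic, so the chief virtue of the topological argument is that it sidesteps an otherwise fiddly comparison of the two $\F_n \rtimes \PB_n$ actions that a purely presentation-theoretic proof would demand.
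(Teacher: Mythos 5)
Your proof is correct, but it takes a different route from the paper's. The paper's argument is group-level: it uses the inclusion $\PB_n^1 \hookrightarrow \PB_{n+1}$ obtained by viewing the puncture as an extra point (injectivity being inherited from the map $\iota$ of \cref{prof: Bn1 includes in Bn+1}, whose verification the paper leaves to the reader), and then argues surjectivity by observing that any pure braid loop can be homotoped so that one chosen point stays fixed. Your proof works at the space level: the translation homeomorphism $\Phi \colon \text{Conf}_{n+1}(\C) \to \C \times \text{Conf}_n(\C \setminus \{0\})$ is exactly a rigorous packaging of that ``fix one point by homotopy'' idea, and since $\C$ is contractible it delivers injectivity and surjectivity simultaneously, with no reliance on the earlier proposition. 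What the paper's approach buys is brevity, given that $\iota$ is already in place; what yours buys is self-containedness and an explicit isomorphism whose effect on generators ($X_i \leftrightarrow x_{0,i}$, crossings to crossings) you then check agrees with the restriction of $\iota$ --- which also substantiates the word ``natural'' in the statement and the compatibility with the two $\F_n \rtimes \PB_n$ splittings. Both arguments are sound; yours is the more complete one.
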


Let $\Sigma_n^+$ denote the group of automorphisms of the set $\{0,1,\ldots,n\}$. Let $\Sigma_n^1\subset \Sigma_n^+$ denote the stabilizer subgroup of $0$, which is canonically isomorphic to $\Sigma_n$. Given $\sigma \in \Sigma_m^1$ and $\tau \in \Sigma_n$, we can \emph{insert} $\tau$ into the position labeled $i$ of $\sigma$, for $1\leq i\leq m$, using the formulas \eqref{def: composition of permutations}. If $\sigma \in \Sigma_m^1$ and $\tau\in\Sigma_n^1$, then $\tau$ can be inserted into the position labeled $0$ of $\sigma$:
\begin{equation}\label{def: moperad composition of permutations}
(\sigma \circ_0 \tau)(k) \coloneqq
\begin{cases}
0 & \text{if } k = 0, \\
\tau(k) & \text{if } 1 \leq k \leq n, \\
\sigma(k - n) + n & \text{if } n+1 \leq k \leq n+m.
\end{cases}
\end{equation}
The result is a permutation of $\{0,1,\ldots,m+n\}$ which again fixes $0$, and hence lies in $\Sigma^1_{m+n}$. Let $\Omega^1(n)$ denote the set of maximally parenthesized permutations in $\Sigma_n^1$. For example, $0((23)1)$, $(01)(32)$, and $(0(12))3$ are elements of $\Omega^1(3)$.

\begin{definition}
\label{def:MoperadParMerm} 
The symmetric sequence $\Omega^1=\{\Omega^1(n)\}_{n\geq 0}$ forms an $\Omega$-moperad in sets, called the \defn{moperad of parenthesized permutations}. The monoid structure
\[
\begin{tikzcd}
\Omega^1(m) \times \Omega^1(n) \arrow[r, "\circ_0"] & \Omega^1(m+n)
\end{tikzcd}
\]
is given by
\[
(0p)\circ_0(0q)\eqdef ((0q)\,\overline{p}),
\]
for $p\in \Omega(m)$ and $q\in \Omega(n)$, where $\overline{p}$ denotes the parenthesized permutation obtained from $p$ by shifting each label by $n$. The right $\Omega$-module structure
\[
\begin{tikzcd}
\Omega^1(m) \times \Omega(n) \arrow[r, "\circ_i"] & \Omega^1(m+n-1)
\end{tikzcd}
\]
for $1 \leq i \leq m$, is given by replacing the entry labeled $i$ in an element of $\Omega^1(m)$ by a parenthesized permutation in $\Omega(n)$, and then relabeling appropriately. The moperad $\Omega^1$ comes with maps of symmetric sequences
\[
\Omega \longrightarrow \Omega^1 \overset{u}{\longrightarrow} \Sigma^1,
\]
where the first map adjoins the label $0$ in front of an element of $\Omega$, and the second map forgets the parenthesization.
\end{definition}

Next, we define the moperad of parenthesized braids, $\PaB^1$, one of our central objects of study.

\begin{definition}\label{def:Moperad of Parenthesized Braids}
The \defn{moperad of parenthesized braids}, denoted $\PaB^1$, is the $\PaB$-moperad in groupoids defined as follows. Let $\PaB^1(0)\eqdef\{*\}$ be the trivial groupoid. For each $n\geq 1$, let $\PaB^1(n)$ be the groupoid with object set
\[
\ob(\PaB^1(n))\eqdef \Omega^1(n),
\]
and with morphisms $\beta\in\Hom_{\PaB^1(n)}(p,q)$ given by elements of $\Br_n^1$ whose underlying permutation is $u(q)^{-1}u(p)$. Composition in $\PaB^1(n)$ is given by multiplication in $\Br_n^1$ (written right to left; see Remark~\ref{rem:composition}).

The group $\Sigma_n$, identified with the subgroup of $\Sigma_n^+$ fixing $0$, acts on $\PaB^1(n)$ by permuting the nonzero labels. The resulting symmetric sequence $\PaB^1=\{\PaB^1(n)\}_{n\geq 0}$ forms a $\PaB$-moperad in groupoids as follows.
\begin{enumerate}[leftmargin=*]
\item The monoid structure
\[
\begin{tikzcd}
\PaB^1(n)\times \PaB^1(m)\arrow[r, "\circ_0"] & \PaB^1(n+m)
\end{tikzcd}
\]
is given intuitively by inserting the second braid into the frozen $0$-th strand of the first. On objects, $\circ_0$ is the monoid composition of parenthesized permutations. On morphisms, identifying a morphism in arity $n$ with a braid on $n+1$ strands, $\circ_0$ is given by insertion into the $0$-th strand. See Figure~\ref{fig:PaBMonoid} for an example.

\item The right $\PaB$-module structure is given by functors
\[
\begin{tikzcd}
\PaB^1(n)\times \PaB(m)\arrow[r, "\circ_i"] & \PaB^1(n+m-1),
\end{tikzcd}
\]
for $1\leq i\leq n$. On objects, $\circ_i$ is the right action of $\Omega$ on $\Omega^1$ from Definition~\ref{def:MoperadParMerm}. On morphisms, $\circ_i$ is defined by viewing a morphism in arity $n$ as a braid on $n+1$ strands indexed from $0$ to $n$, and inserting into the strand labeled $i$.
\end{enumerate}
\end{definition}

\begin{figure}[ht!]
\includegraphics[width=9cm]{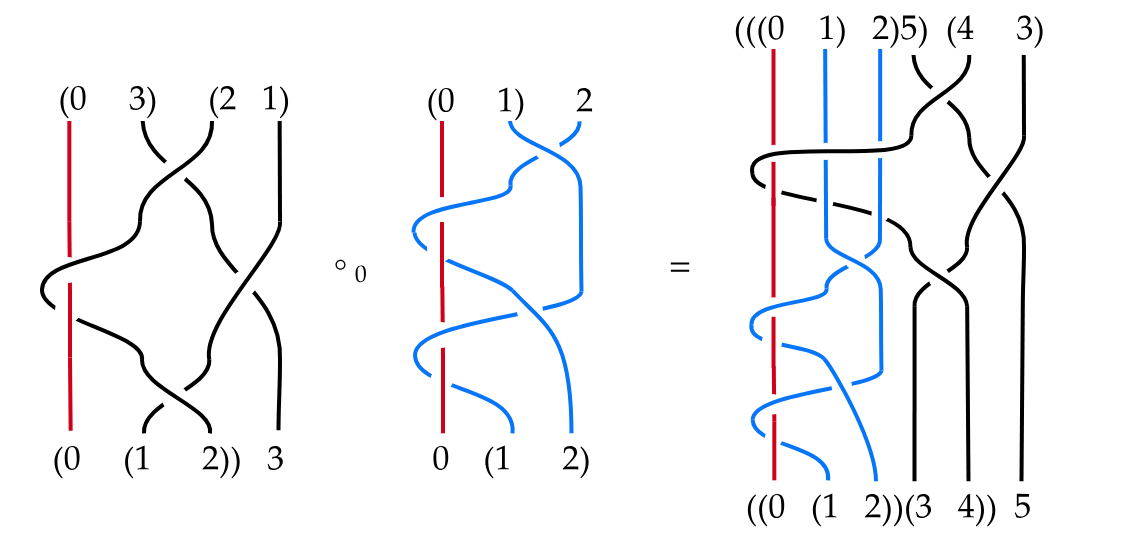}
    \caption{The monoid structure of  $\PaB^1$.}\label{fig:PaBMonoid}
\end{figure}

Like the operad $\PaB$, the $\PaB$-moperad $\PaB^1$ admits a finite presentation \cite[Theorem 3.4]{calaque20moperadic}. In other words, every morphism in $\PaB^1$ can be obtained from a finite number of generating morphisms via operadic, monoidal, and categorical compositions, permutations, and $\PaB$-actions; subject to finitely many relations. 

There is a natural inclusion of symmetric sequences $\PaB\hookrightarrow \PaB^1$ obtained by adjoining a frozen strand labeled $0$ on the left. Under this inclusion, the generator $R^{1,2}$ is sent to $\id_{(01)}\circ_1 R^{1,2}$. Visually, this amounts to adding a frozen strand indexed by $0$, outermost in the parenthesization. We denote the image of $\PaB\subset \PaB^1$, and in particular the generators $R^{1,2}$ and $\Phi^{1,2,3}$, by the same symbols.

The two generating morphisms introduced below describe how the frozen strand interacts with the remaining strands. The groupoid $\PaB^1(1)$ has a single object $(01)$, and its automorphism group
\[
\Hom_{\PaB^1(1)}((01),(01))\cong \Br_1^1\cong \mathbb{Z}
\]
is freely generated by an automorphism $E^{0,1}$, shown in Figure~\ref{fig:PaB1Gens}. The second generator is the associativity isomorphism
\[
\Psi^{0,1,2}\in \Hom_{\PaB^1(2)}((01)2,0(12)),
\]
also shown in Figure~\ref{fig:PaB1Gens}.

\begin{figure}[ht!]
    \includegraphics[width=7cm]{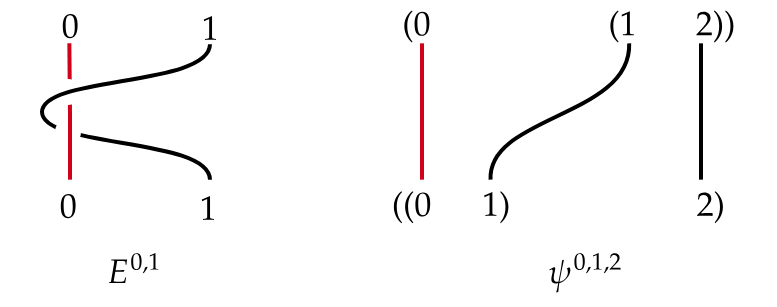}
\caption{The generators of $\PaB^1$ as a $\PaB$-moperad.}\label{fig:PaB1Gens}
\end{figure}

\begin{theorem}[{\cite[Theorem~3.4]{calaque20moperadic}}]
\label{presenation of PaB1}
As a $\PaB$-moperad, $\PaB^1$ is generated by the object $(01)$ and the morphisms
\[
E^{0,1}\in \Hom_{\PaB^1(1)}((01),(01))
\qquad\text{and}\qquad
\Psi^{0,1,2}\in \Hom_{\PaB^1(2)}((01)2,0(12)),
\]
subject to the relations
\begin{align}
\label{eqn:cU}
\tag{U}
\Psi^{0,\emptyset,2} = \Psi^{0,1,\emptyset}= \id_{(01)}
& \quad \mathrm{in}~\Hom_{\PaB^1(1)}\big((01),(01)\big), \\
\label{eqn:MP}
\tag{MP}
\Psi^{0,1,23} \Psi^{01,2,3} =  \Phi^{1,2,3} \Psi^{0,12,3} \Psi^{0,1,2}
& \quad \mathrm{in}~\Hom_{\PaB^1(3)}\big(((01)2)3,0(1(23))\big), \\
\label{eqn:RP}
\tag{RP}
(\Psi^{0,1,2})^{-1}R^{2,1}E^{0,21}R^{1,2}\Psi^{0,1,2}= E^{01,2} E^{0,1}
& \quad \mathrm{in}~\Hom_{\PaB^1(2)}\big((01)2,(01)2\big), \\
\label{eqn:O}
\tag{O}
E^{01,2} = (\Psi^{0,1,2})^{-1} R^{2,1} \Psi^{0,2,1} E^{0,2} (\Psi^{0,2,1})^{-1} R^{1,2} \Psi^{0,1,2}
& \quad \mathrm{in}~\Hom_{\PaB^1(2)}\big((01)2,(01)2\big).
\end{align}
\end{theorem}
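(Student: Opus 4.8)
The plan is to follow the template of Fresse's proof of the presentation of $\PaB$ (\cref{thm: presentation pab}): build the $\PaB$-moperad $\calQ$ freely generated by the object $(01)$ and the morphisms $E^{0,1}, \Psi^{0,1,2}$ subject to \eqref{eqn:cU}, \eqref{eqn:MP}, \eqref{eqn:RP}, \eqref{eqn:O} (together with the inherited $\PaB$-relations via the inclusion $\PaB\hookrightarrow\PaB^1$), produce the canonical moperad map $\calQ \to \PaB^1$, and show it is an isomorphism. The essential external inputs are the presentation of $\PaB$, the group presentation of the morphism groups $\Br_n^1$ (\cref{prop:PresForBn1} and \cref{rmk:AltPres}), and the semidirect product decomposition $\Br_n^1 \cong \F_n \rtimes \Br_n$ of \eqref{eq:iso-Bn1-FnBn}. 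The first, routine step is \emph{soundness}: check that \eqref{eqn:cU}--\eqref{eqn:O} genuinely hold in $\PaB^1$ by drawing the braids with a frozen strand and verifying the identities in the groups $\Br_n^1$.

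\emph{Generation.} On objects, every parenthesized permutation fixing $0$ is obtained from $(01)$ by iterated $\circ_0$ and $\circ_i$, as is immediate from the monoid-plus-right-module description of $\Omega^1$ (\cref{def:MoperadParMerm}) by induction on arity. On morphisms I would use \eqref{eq:iso-Bn1-FnBn}: the $\Br_n$-factor is the image of $\PaB$ and is generated by $R, \Phi$; the free factor $\F_n = \langle X_1, \ldots, X_n\rangle$ is generated by $E^{0,1}$ alone, since $X_1 = E^{0,1}$ and $X_{k} = \beta_{k-1} X_{k-1}\beta_{k-1}^{-1}$ realizes the remaining generators by conjugation with braids coming from $R$. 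The isomorphisms $\Psi^{0,1,2}$ and $\Phi^{1,2,3}$ together with their moperadic instances then supply every morphism between distinct objects, so the functor $\calQ(n) \to \PaB^1(n)$ is bijective on objects and surjective on morphisms for each $n$.

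\emph{Completeness.} It remains to prove faithfulness, i.e.\ that the listed relations generate, as a moperadic congruence, every relation holding in $\PaB^1$. I would split this into a coherence statement and a group-theoretic matching. First, a Mac Lane--type coherence argument shows that \eqref{eqn:MP} together with the pentagon \eqref{eqn:P} of $\PaB$ forces any two composites of $\Psi^{\pm 1}, \Phi^{\pm 1}$ with the same source and target to coincide; this normalizes every morphism of $\calQ$ to one supported on a fixed left-parenthesized object. Fixing that object identifies the remaining data with the group generators $X_1, \beta_1, \ldots, \beta_{n-1}$ of $\Br_n^1$, and the task reduces to recovering the presentation of \cref{prop:PresForBn1}. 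Here the braid relations \eqref{eq:BraidRel1}--\eqref{eq:BraidRel2} come from $\PaB$ and \eqref{eq:XiXi+1} is the defining identity already used in generation; the conjugation relations \eqref{eq:XiComp} and \eqref{eq:XBetaComm}, for \emph{all} indices and arities, must be deduced from the arity-two relations \eqref{eqn:RP} and \eqref{eqn:O} by transporting them through the module maps $\circ_i$ and the monoid maps $\circ_0$, and, since $\F_n$ is free, there are no further relations among the $X_i$ to account for.

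The main obstacle is precisely this last propagation together with the frozen-strand coherence. Unlike the ordinary Mac Lane/Joyal--Street setting for $\PaB$, the coherence here must accommodate the non-symmetric frozen strand and the monoid multiplication $\circ_0$, so one must verify that no independent reparenthesization relation involving $\Psi$ is missed. One must also check that the octagon \eqref{eqn:O}, which encodes the interaction of the frozen loop $E^{0,1}$ with the braiding $R$ in arity two, reproduces, after insertion into arbitrary strands, exactly the family of conjugation relations \eqref{eq:XiComp}--\eqref{eq:XBetaComm}; establishing this equivalence in all arities --- rather than only in the arity-two base case --- is where I expect essentially all of the technical work to lie.
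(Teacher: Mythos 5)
A preliminary but important point: the paper contains no proof of this statement. Theorem~\ref{presenation of PaB1} is quoted from Calaque--Gonzalez \cite[Theorem~3.4]{calaque20moperadic} (with the half-twists in \eqref{eqn:RP} restored, as the remark following the theorem explains), so there is no in-paper argument to compare yours against; I can only judge your outline on its own viability. Its first two stages are fine: soundness is a genuine check in $\Br_n^1$ (indeed this is exactly the check the authors allude to when correcting \eqref{eqn:RP}), and your generation step is essentially \cref{l:generators-normal-form} of the paper --- every $X_k$ is a conjugate of an insertion of $E^{0,1}$ by words in $R$, $\Phi$, $\Psi$, and the reassociations connect all objects of a given arity.

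The completeness half, however, has a genuine gap, and also a concrete misstep. The misstep: after normalizing to a fixed object, the automorphism group you obtain is $\PB_n^1\cong \F_n\rtimes\PB_n$, \emph{not} $\Br_n^1$ --- a generator $\beta_i$ changes the underlying permutation and is therefore not an automorphism of any object of $\PaB^1(n)$. So ``fixing that object'' does not hand you the generators $X_1,\beta_1,\ldots,\beta_{n-1}$ of \cref{prop:PresForBn1}; to exploit that presentation you must either work $\Sigma_n$-equivariantly with the whole connected groupoid (as Fresse does for $\PaB$, via the fibration over the symmetric groups), or re-derive a presentation of the pure group $\PB_n^1$ from \cref{prop:PresForBn1}. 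This is repairable, but it is not the argument you wrote. The gap: the two statements you explicitly defer --- (i) coherence in the actegory setting, i.e.\ that \eqref{eqn:cU}, \eqref{eqn:MP} and the pentagon force the subgroupoid of reassociations to be thin, and above all (ii) that inserting the arity-two relations \eqref{eqn:RP} and \eqref{eqn:O} along the moperadic compositions $\circ_i$ and $\circ_0$ reproduces every conjugation relation \eqref{eq:XiComp}--\eqref{eq:XBetaComm} in every arity, and nothing more --- are not technical residue to be ``expected'' at the end; point (ii) in particular \emph{is} the theorem. Item (i) is at least a known result (coherence for module categories over monoidal categories), so citing it would be acceptable, but item (ii) is precisely where one must produce the inverse homomorphism $\PB_n^1\to\Aut_{\calQ}(l_n^1)$ by verifying all defining relations in the quotient moperad, and no part of your text does this even in arity three. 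As written, the proposal is a correct strategy statement with the proof omitted.
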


\medskip

\begin{remark}
Here, as above, we use cosimplicial notation to describe monoid and operadic compositions more concisely. For example, $E^{01,2}$ denotes the result of applying a $\circ_0$-composition to $E^{0,1}$ with an identity braid: \[E^{01,2}=E^{01}\circ_0 \id_{(01)}.\] As before, the notation $\emptyset$ in relation~\eqref{eqn:cU} denotes composition with the arity-zero operation $*\in \PaB(0)$, which has the effect of deleting a strand.
\end{remark}
\medskip

\begin{remark}\label{remark: difference in presentations of PaB1}
Our presentation of $\PaB^1$ differs slightly from that of \cite[Theorem~3.4]{calaque20moperadic} because we use the blackboard framing in the definition of $E^{0,21}$, and more generally when inserting into strands with nonzero winding number around the puncture. By contrast, \cite{calaque20moperadic} uses the annular framing, which adds a full twist to $E^{0,21}$ relative to the blackboard framing; see Figure~\ref{fig:framing}. The corresponding operadic composition on the associated graded side is therefore also slightly different; see Remarks~3.1,~3.6, and~5.1 of \cite{calaque20moperadic}. As a result, our relation~\eqref{eqn:RP} contains additional half-twists compared to the formulation in \cite{calaque20moperadic}. The two versions are equivalent. The annular framing is more natural from the perspective of cyclotomic actions, whereas the blackboard framing is better suited to compatibility with the KV problem.
\end{remark}

\begin{figure}[ht!]
    \centering
    \includegraphics[width=0.5\linewidth]{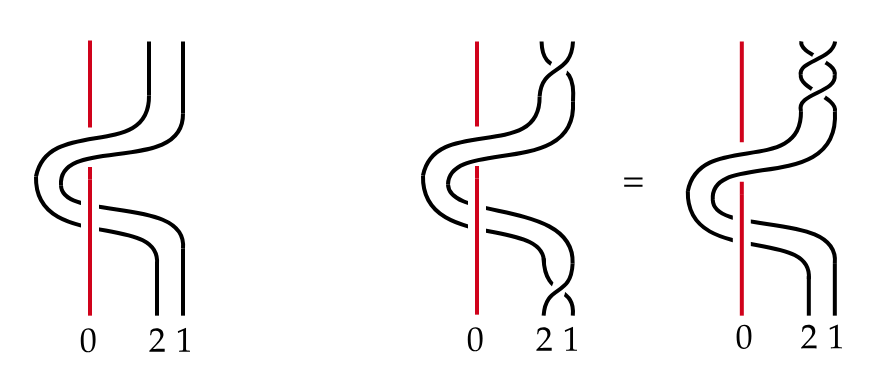}
    \caption{On the left, $E^{0,21}$ using the blackboard framing convention. On the right $E^{0,21}$ in the annular framing convention of \cite{calaque20moperadic}.}
    \label{fig:framing}
\end{figure}

At each parenthesized permutation $w\in \ob(\PaB^1(n))$ there is an isomorphism
\[
\Aut_{\PaB^1(n)}(w)\cong \PB_n^1 \cong \F_n\rtimes \PB_n.
\]
Elements of $\PB_n^1$, viewed as morphisms in $\PaB^1(n)$, can be expressed in terms of the moperad generators of $\PaB^1$, although such expressions are only unique modulo the relations of Theorem~\ref{presenation of PaB1}. Recall that the free group $\F_n$ embeds in $\PB_n^1$, with generators represented by the elements $X_1,\ldots,X_n$, as in Figure~\ref{fig:Xis}. These elements will play an important role, and the following lemma describes how to express them in terms of the generators of $\PaB^1$.

\begin{figure}
   \includegraphics[width=12cm]{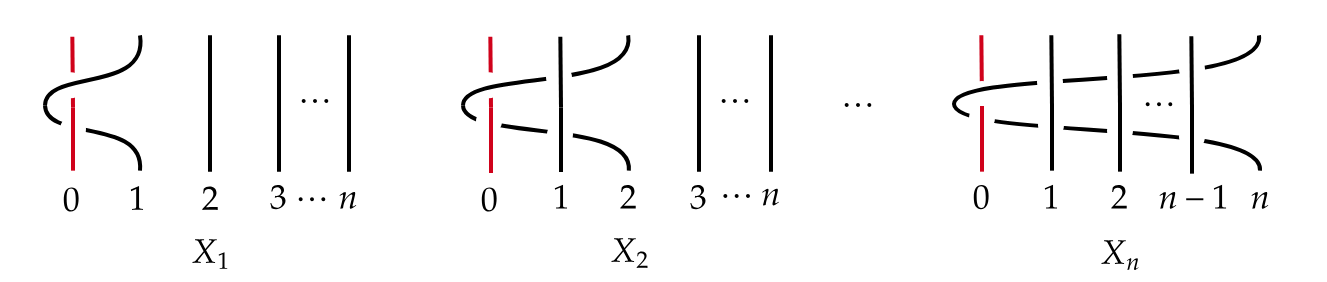}
    \caption{The free group generators $X_1,\ldots,X_n$ in $\PB^1_n$.}\label{fig:Xis}
\end{figure}

\begin{notation}
For a permutation $\sigma\in \Sigma_n$, write
\[
r_n(\sigma)\eqdef (i_1(i_2\ldots(i_{n-1}i_n)))
\qquad\text{and}\qquad
l_n(\sigma)\eqdef (((i_1i_2)i_3\ldots)i_n),
\]
where $\sigma=(i_1,\ldots,i_n)$ in one-line notation. Thus $r_n(\sigma)$ and $l_n(\sigma)$ denote the rightmost and leftmost parenthesizations of $\sigma$, respectively.

Define
\[
r_n^1(\sigma)\eqdef \id_{(01)}\circ_1 r_n(\sigma)\in \ob(\PaB^1(n))
\]
and
\[
l_n^1(\sigma)\eqdef \id_{(01)}\circ_1 l_n(\sigma)
= \left(0\left((i_1i_2)i_3\ldots\right)i_n\right).
\]
Finally, let $l_n^0(\sigma)$ denote the fully left-parenthesized form of the concatenated word $0\sigma$.

When $\sigma$ is the identity permutation, we omit it from the notation; for example, we write $r_n^1$ for $r_n^1(\id)$.
\end{notation}

\begin{lemma}
\label{l:generators-normal-form}
For each $1\leq k\leq n$, let $P_k$ denote the reassociation morphism from $l_n^0(k\,1\,2\ldots\,k-1\,k+1\ldots n)$ to $l_n^1(k\,1\,2\ldots\,k-1\,k+1\ldots n),$
given by
\begin{equation}
\label{eq:Pk}
P_k \eqdef \Psi^{0,l_{n-1}(k\,1\,2\ldots\,k-1\,k+1\ldots n-1),n}\cdots \Psi^{0,((k\,1)\,2),3}\Psi^{0,(k\,1),2}\Psi^{0,k,1}.
\end{equation}
Let $Q_k$ denote the morphism from $l_n^1(k\,1\,2\ldots\,k-1\,k+1\ldots n)$ to $r_n^1$ which reassociates to the right while moving strand $k$ under the strands $1,\ldots,k-1$, namely
\begin{multline}
\label{eq:Qk}
Q_k \eqdef \Phi^{n-2,n-1,n}\cdots(\Phi^{k-1,k,k+1}\cdots\Phi^{2,(3(4\ldots(k-1\;k))),k+1}\Phi^{1,(2(3\ldots(k-1\;k))),k+1})\cdot\\
(R^{k-1,k})^{-1}\cdots((R^{4,k})^{-1}\Phi^{3,k,4}\Phi^{2,(3\;k),4}\Phi^{1,2(3\;k),4})((R^{3,k})^{-1}\Phi^{2,k,3}\Phi^{1,(2\;k),3})((R^{2,k})^{-1}\Phi^{1,k,2})(R^{1,k})^{-1}.
\end{multline}
Then the free group generators $X_1,\ldots,X_n$, viewed as automorphisms in $\Hom_{\PaB^1(n)}(r_n^1,r_n^1)$, are given by
\[
X_k
=
Q_kP_k\bigl(\id_{l_n^0(1\,2\ldots k-1\,k+1\ldots n)}\circ_0 E^{0,k}\bigr)P_k^{-1}Q_k^{-1}.
\]
\end{lemma}

\begin{proof}
The identity
\[
X_k=Q_kP_k\bigl(\id_{l_n^0(1\,2\ldots k-1\,k+1\ldots n)}\circ_0 E^{0,k}\bigr)P_k^{-1}Q_k^{-1}
\]
is an equality in $\Hom_{\PaB^1(n)}(r_n^1,r_n^1)\cong \PB_n^1$. Thus there are two things to check:
\begin{enumerate}[leftmargin=*]
    \item the morphisms on the right-hand side, as defined in \eqref{eq:Pk} and \eqref{eq:Qk}, form a well-defined composition in $\PaB^1$; and
    \item under the identification $\Hom_{\PaB^1(n)}(r_n^1,r_n^1)\cong \PB_n^1$, the resulting morphism represents the same braid as $X_k$.
\end{enumerate}

The first point is a straightforward combinatorial verification, illustrated in Figure~\ref{fig:Xk}.

For the second point, note that all associators are trivial as braids. After forgetting the parenthesization, the claimed identity therefore reduces to
\[
X_k=(R^{k-1,k})^{-1}(R^{k-2,k})^{-1}\cdots(R^{1,k})^{-1}E^{0,k}R^{1,k}\cdots R^{k-2,k}R^{k-1,k}.
\]
This is precisely the usual expression for $X_k\in \Br_n^1$: it is obtained by conjugating the basic loop $E^{0,k}$ by the successive crossings that move strand $k$ past strands $1,\ldots,k-1$. Hence both sides define the same element of $\PB_n^1$.
\end{proof}

\begin{figure}[ht!]
    \includegraphics[width=9cm]{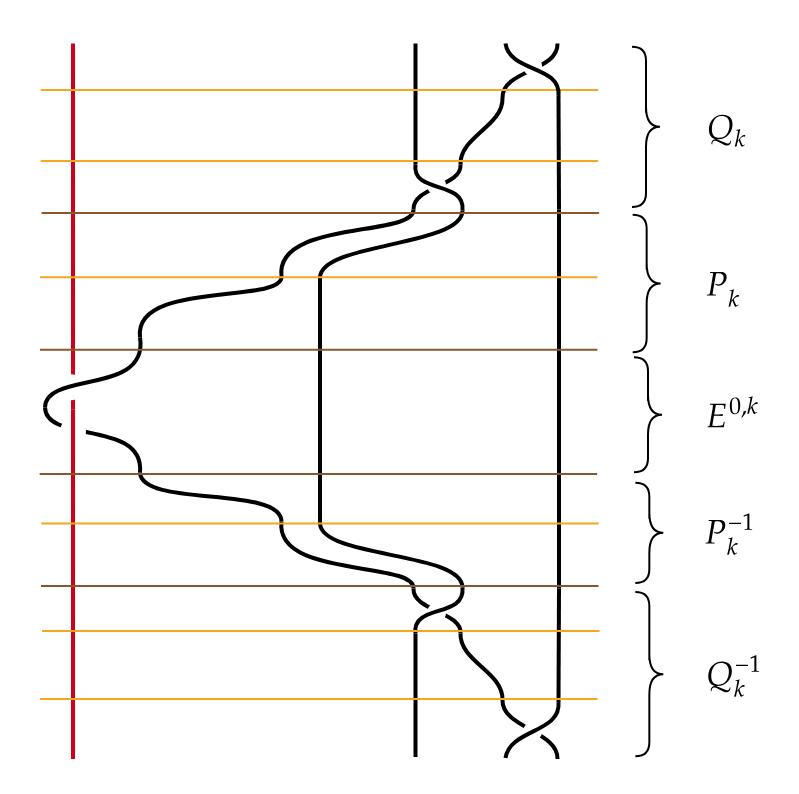}
    \caption{The expression of $X_3$ in $\PaB^1_3$ in terms of the generators.}
    \label{fig:Xk}
\end{figure}

\subsection{Associators and moperad expansions} \label{subsec:assocmoperad}
The isomorphism $\PB_n^1\cong \PB_{n+1}$ is mirrored on the graded side by an isomorphism of Lie algebras $\ib_n^1\cong \ib_{n+1}$ obtained by reindexing the generators of the Drinfeld--Kohno Lie algebra discussed in Section~\ref{subsec:chorddiag}. Thus $\ib_n^1$ may be described as the Lie algebra generated by $\{t_{ij}\mid 0\le i,j\le n,\ i\neq j\}$, subject to the usual Drinfeld--Kohno relations. In fact, with the shift notation of Example~\ref{example: shifted moperad}, we identify $\ib_n^+$ with $\ib_n^1$.

\begin{definition}\label{def: moperad of shifted chord diagrams}
Let $\Sigma_n^+$ denote the group of automorphisms of the set $\{0,1,\ldots,n\}$, and let $\Sigma_n\subset \Sigma_n^+$ be the subgroup fixing $0$. The action of $\Sigma_n$ on $\ib_n^+$ is given by
\[
(t_{ij})^\sigma \eqdef t_{\sigma^{-1}(i)\sigma^{-1}(j)},
\]
$\sigma\in \Sigma_n.$ With this action, the symmetric sequence $\ib^+=\{\ib_n^+\}_{n\ge 0}$ forms an $\ib$-moperad in degree-complete Lie algebras. Both the monoid product
\[
\ib_n^+\circ_0 \ib_m^+ \longrightarrow \ib_{n+m}^+
\]
and the right $\ib$-module structure maps
\[
\ib_n^+\circ_i \ib_m \longrightarrow \ib_{n+m-1}^+,
\]
$1\le i\le n,$ are induced by the operadic composition in $\ib$ from Definition~\ref{def: operad of inft braids}.

Now define $\mathsf{CD}^+(n)\eqdef \exp(\ib_n^+) \cong \exp(\ib_{n+1}).$ The resulting symmetric sequence $\mathsf{CD}^+=\{\mathsf{CD}^+(n)\}_{n\ge 0}$ is the $\mathsf{CD}$-\defn{moperad of chord diagrams}. It is a $\mathsf{CD}$-moperad in prounipotent groups, with monoid product and right $\mathsf{CD}$-action induced from the moperad structure on $\ib^+$ via
\[
e^x\circ_0 e^y \eqdef e^{x\circ_0 y}
\qquad \text{and} \qquad
e^x\circ_i e^y \eqdef e^{x\circ_i y}.
\]
\end{definition}

As in the operadic description of associators (\cref{cor: operadic associators}), we use the presentation of $\PaB^1$ as a $\PaB$-moperad to classify moperad equivalences
\[
(\varphi^1,\varphi):\hPaB^1\longrightarrow \mathsf{CD}^+.
\]
Such equivalences induce unique homomorphic expansions valued in $\PaCD^+$, by an argument similar to that of \cite[Theorem~10.3.12]{FresseBook1}. The following theorem is a simplified version of \cite[Theorem~5.5]{calaque20moperadic}, where the authors give a moperadic description of Enriquez's cyclotomic associators.

\begin{thm}
\label{thm: assoc^1}
A moperad equivalence
\[
(\varphi^1,\varphi):\hPaB^1\longrightarrow \mathsf{CD}^+
\]
is uniquely determined by the values
\[
\varphi(R^{1,2}) = e^{\frac{\mu t_{12}}{2}} \in \exp(\ib_2)
\qquad \text{and} \qquad
\varphi(\Phi^{1,2,3}) = f(t_{12}, t_{23}) \in \exp(\ib_3),
\]
which determine an operad equivalence $\varphi: \hPaB\to \mathsf{CD}$ and hence a Drinfeld associator, together with the values
\[
\varphi^1(E^{0,1}) = e^{\mu t_{01}} \in \exp(\ib_1^+)
\qquad \text{and} \qquad
\varphi^1(\Psi^{0,1,2}) = g(t_{01},t_{12}) \in \exp(\ib_2^+),
\]
where $g$ satisfies the equations
\begin{equation}
\label{eq: MP moperad iso}
\tag{MP}
g(t_{01}, t_{12} + t_{13}) \cdot g(t_{02} + t_{12}, t_{23})
= f(t_{12}, t_{23}) \cdot g(t_{01} + t_{02}, t_{13} + t_{23}) \cdot g(t_{01}, t_{12})
\quad \text{in } \exp(\mathfrak{t}_3^+).
\end{equation}

\begin{equation}
\label{eq: hexagons moperad iso}
\tag{O}
g(t_{01}, t_{12})^{-1} \cdot e^{\frac{\mu t_{12}}{2}} \cdot g(t_{02}, t_{12}) \cdot e^{\mu t_{02}}
\cdot g(t_{02}, t_{12})^{-1} \cdot e^{\frac{\mu t_{12}}{2}} \cdot g(t_{01}, t_{12}) \cdot e^{\mu t_{01}}
= 1
\quad \text{in } \exp(\mathfrak{t}_2^+).
\end{equation}
\end{thm}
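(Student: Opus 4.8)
The plan is to leverage the finite presentation of $\hPaB^1$ from \cref{presenation of PaB1}, which reduces the classification of moperad isomorphisms to a choice of generator images subject to the images of the four defining relations \eqref{eqn:cU}, \eqref{eqn:MP}, \eqref{eqn:RP} and \eqref{eqn:O}. By the universal property of a moperad presentation, specifying a moperad morphism $(\varphi^1,\varphi)\colon\hPaB^1\to\CD^+$ is the same as choosing images for $R^{1,2}$, $\Phi^{1,2,3}$, $E^{0,1}$ and $\Psi^{0,1,2}$ in $\CD^+$ that send each relation to an identity. The operad component is already pinned down: by \cref{cor: operadic associators}, the pair $(\varphi(R^{1,2}),\varphi(\Phi^{1,2,3}))=(e^{\mu t_{12}/2},f(t_{12},t_{23}))$ defines an operad isomorphism $\hPaB\to\CD$ precisely when $(\mu,f)$ is a Drinfeld associator. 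What remains is to determine which values $\varphi^1(E^{0,1})$ and $\varphi^1(\Psi^{0,1,2})=g(t_{01},t_{12})$ extend this to a moperad morphism, and then to promote the resulting morphism to an isomorphism.

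First I would translate each moperadic relation into an equation in the relevant $\exp(\ib_n^+)$, using the cosimplicial calculus for the monoid composition $\circ_0$ and the right $\CD$-action $\circ_i$ from \cref{def: moperad of shifted chord diagrams} to compute the image of every decorated symbol; for instance $\Psi^{0,1,23}\mapsto g(t_{01},t_{12}+t_{13})$, $\Psi^{01,2,3}\mapsto g(t_{02}+t_{12},t_{23})$, $\Psi^{0,2,1}\mapsto g(t_{02},t_{12})$, $E^{01,2}\mapsto e^{\mu(t_{02}+t_{12})}$ and $E^{0,21}\mapsto e^{\mu(t_{01}+t_{02})}$. The degeneracy relation \eqref{eqn:cU} then reduces to the unitality conditions $g(x,0)=g(0,y)=1$, which hold automatically for group-like $g$. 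The mixed pentagon \eqref{eqn:MP}, living in $\PaB^1(3)$, translates termwise into the stated equation \eqref{eq: MP moperad iso}. The two remaining relations \eqref{eqn:RP} and \eqref{eqn:O} both lie in $\Aut_{\hPaB^1(2)}((01)2)\cong\widehat{\PB}_2^1\cong\exp(\ib_2^+)$: taking the degree-one part of \eqref{eqn:RP} forces the coefficient of $E^{0,1}$ to coincide with the associator parameter, giving $\varphi^1(E^{0,1})=e^{\mu t_{01}}$, while the full content of \eqref{eqn:RP} and \eqref{eqn:O}, after eliminating $E^{01,2}$ and using the centrality of $t_{01}+t_{02}+t_{12}$ in $\ib_2^+$, collapses to the single octagon \eqref{eq: hexagons moperad iso}.

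It then remains to upgrade a morphism satisfying \eqref{eq: MP moperad iso} and \eqref{eq: hexagons moperad iso} to an isomorphism, and to read the correspondence in both directions. Arity-wise, both $\hPaB^1(n)$ and $\CD^+(n)$ have automorphism groups isomorphic to $\widehat{\PB}_{n+1}\cong\exp(\ib_{n+1})$, and $\varphi^1$ induces, at each object, a morphism of prounipotent groups compatible with the lower central series filtration. Since every generator is sent to itself modulo terms of strictly higher filtration degree, the associated graded map is the identity, so $\varphi^1$ is a filtered isomorphism; alternatively one bootstraps from the operad isomorphism $\varphi$ through the semidirect product decomposition $\PB_n^1\cong\F_n\rtimes\PB_n$ of \eqref{eq:PB1}. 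Conversely, any moperad isomorphism restricts along the inclusion $\hPaB\hookrightarrow\hPaB^1$ to an operad isomorphism, hence to a Drinfeld associator $(\mu,f)$, and is determined on the two extra generators by a pair $(e^{\mu t_{01}},g)$ forced to obey the translated relations; this yields the claimed bijection.

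The main obstacle is the precise cosimplicial translation and simplification of relations \eqref{eqn:RP} and \eqref{eqn:O}. One must track the doubled and permuted strands hidden in symbols such as $E^{0,21}$, $R^{2,1}$ and $\Psi^{0,2,1}$, keep the parenthesized source and target objects consistent along each composite in $\PaB^1(2)$, and handle the central element $t_{01}+t_{02}+t_{12}$ correctly so that the two a priori distinct relations in $\widehat{\PB}_2^1$ condense to exactly one octagon of the stated eight-term shape. This central bookkeeping, where it is easiest to slip by a central factor, is precisely what fixes the final form of \eqref{eq: hexagons moperad iso} and is the step I expect to require the most care.
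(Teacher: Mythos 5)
The first thing to note is that the paper itself contains no proof of this statement: it is imported as a simplified version of Theorem~5.5 of \cite{calaque20moperadic}. So your proposal can only be judged on its own merits; the route you chose (the presentation of \cref{presenation of PaB1}, termwise translation of the relations into $\exp(\ib_n^+)$, then an associated-graded argument for invertibility) is certainly the intended one, and your translations of \eqref{eqn:MP} and of the generator images are correct.

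The genuine gap is precisely the step you defer as ``central bookkeeping'': carried out honestly, it does \emph{not} land on the stated eight-term octagon, so the assertion that \eqref{eqn:RP} and \eqref{eqn:O} ``condense to exactly'' \eqref{eq: hexagons moperad iso} is the step that fails. Concretely, writing $\varphi^1(E^{0,1})=e^{\nu t_{01}}$, one gets $\varphi^1(E^{01,2})=e^{\nu(t_{02}+t_{12})}$, $\varphi^1(E^{0,21})=e^{\nu(t_{01}+t_{02})}$, $\varphi^1(E^{0,2})=e^{\nu t_{02}}$. Using centrality of $c=t_{01}+t_{02}+t_{12}$ and $[t_{01},t_{02}+t_{12}]=0$, relation \eqref{eqn:RP} is equivalent to $g^{-1}e^{(\mu-\nu)t_{12}}g=1$, i.e.\ it forces $\nu=\mu$ and then becomes vacuous; it does not combine with \eqref{eqn:O} at all. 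Relation \eqref{eqn:O} then translates verbatim into
\[
g(t_{01},t_{12})^{-1}\,e^{\mu t_{12}/2}\,g(t_{02},t_{12})\,e^{\mu t_{02}}\,g(t_{02},t_{12})^{-1}\,e^{\mu t_{12}/2}\,g(t_{01},t_{12})\;=\;e^{\mu(t_{02}+t_{12})},
\]
whose right-hand side differs from the $e^{-\mu t_{01}}$ demanded by \eqref{eq: hexagons moperad iso} by the nontrivial central element $e^{\mu c}$. This is not a slip you can absorb by rearranging: for any Drinfeld associator, the genuine moperad isomorphism of \cref{prop: moperad isos are associators} has $g=f$, and applying $\varphi$ to the braid identity $R^{2,01}R^{01,2}=(\Phi^{0,1,2})^{-1}R^{2,1}\Phi^{0,2,1}R^{2,0}R^{0,2}(\Phi^{0,2,1})^{-1}R^{1,2}\Phi^{0,1,2}$ established in \cref{lemma: inclusion PaB1 to PaB+} confirms that the displayed product really equals $e^{\mu(t_{02}+t_{12})}$. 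So an honest execution of your plan proves the bijection with this \emph{centrally corrected} octagon; the stated equation agrees with it only modulo the center of $\ib_2^+$, which is the same convention under which \eqref{eq:DA-Hex} is written (note that its degree-one part already fails literally in $\exp(\ib_3)$, since $t_{12}+t_{13}+t_{23}\neq 0$). Your write-up must either carry this central factor explicitly or state and justify that convention. A second, smaller error: \eqref{eqn:cU} is not ``automatic for group-like $g$'' --- $g=e^{t_{01}}$ is group-like with $g(x,0)\neq 1$. The condition $g(t_{01},0)=1$ is instead a consequence of \eqref{eq: MP moperad iso}, obtained by applying to both sides the strand-deletion Lie algebra map $\ib_3^+\to\ib_2^+$ that kills strand $2$; this is how \eqref{eqn:cU} gets absorbed into the classification.
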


\medskip

Every moperad equivalence $(\varphi^1,\varphi):\hPaB^1 \longrightarrow \mathsf{CD}^+$ lies over an operad equivalence $\varphi:\hPaB\rightarrow \CD$, which is in turn indexed by a Drinfeld associator. Conversely, we now show that any operad equivalence $\varphi:\hPaB\rightarrow \CD$ gives rise to a one-parameter family of moperad equivalences $(\varphi^1,\varphi):\hPaB^1 \longrightarrow \mathsf{CD}^+$. The first ingredient is the following inclusion of moperads.

\begin{lemma}
\label{lemma: inclusion PaB1 to PaB+}
The group inclusion $\Br_n^1 \hookrightarrow \Br_{n+1}$ extends to an injective map of $\PaB$-moperads $$(\rho^1,\id):\PaB^1 \hookrightarrow \PaB^+.$$ 
\end{lemma}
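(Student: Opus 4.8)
The plan is to realize a braid with a frozen strand as a genuine braid on one extra strand, labelled $0$, and to check that this realization respects all of the moperad structure. On objects I would take $\rho^1 \colon \ob(\PaB^1(n)) = \Omega^1(n) \hookrightarrow \Omega(n+1) = \ob(\PaB^+(n))$ to be the tautological inclusion of the parenthesized permutations of $\{0,1,\dots,n\}$ fixing $0$ into all parenthesized permutations of $\{0,1,\dots,n\}$; on morphisms I would take the injective homomorphism $\iota \colon \Br_n^1 \hookrightarrow \Br_{n+1}$ of \cref{prof: Bn1 includes in Bn+1}. The accompanying operad map is $\id \colon \PaB \to \PaB$, so that $(\rho^1,\id)$ is a candidate morphism of $\PaB$-moperads.

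First I would check that $\rho^1$ is a functor $\PaB^1(n)\to\PaB^+(n)$ at each arity. A morphism $\beta\colon p\to q$ in $\PaB^1(n)$ has underlying permutation $u(q)^{-1}u(p)\in\Sigma_n^1$; since $\iota$ sends each braiding generator $\beta_i$ to $\beta_i$ and each $X_i$ to a pure braid, the underlying permutation of $\iota(\beta)$ is the image of the $\Br_n$-factor of $\beta$ under the decomposition \eqref{eq:iso-Bn1-FnBn}, which fixes $0$ and equals $u(q)^{-1}u(p)$; hence $\iota(\beta)$ is a morphism $\rho^1(p)\to\rho^1(q)$ in $\PaB^+(n)$, and $\rho^1$ is well defined on morphisms. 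Composition and identities are preserved because composition in both groupoids is group multiplication and $\iota$ is a homomorphism, and $\Sigma_n$-equivariance holds because the action on $\PaB^+(n)=\PaB(n+1)$ is exactly the restriction of the $\Sigma_{n+1}$-action to permutations fixing $0$, matching the action permuting the $n$ non-frozen strands of $\PaB^1(n)$.

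Next I would check compatibility with the two moperad operations. By \cref{def:Moperad of Parenthesized Braids}, both the right $\PaB$-action $\circ_i$ ($1\le i\le n$) and the monoid composition $\circ_0$ on $\PaB^1$ are defined by viewing a frozen-strand morphism of arity $n$ as a braid on the $n+1$ strands $\{0,1,\dots,n\}$ and then operadically inserting in $\PaB$ into the $i$-th, respectively $0$-th, strand; by \cref{example: shifted moperad} these are also precisely the structure maps of $\PaB^+$, namely the operadic compositions $\PaB(n+1)\circ_i\PaB(m)$ and $\PaB(n+1)\circ_0\PaB(m+1)$. Since $\rho^1$ on morphisms is exactly the passage to the $(n+1)$-strand realization, the required intertwinings reduce to $\iota(\alpha\circ_i\gamma)=\iota(\alpha)\circ_i\gamma$ and $\iota(\alpha\circ_0\beta)=\iota(\alpha)\circ_0\iota(\beta)$ for operadic insertion in $\PaB$, together with the objectwise matching already built into \cref{def:MoperadParMerm}. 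The module identity is essentially tautological, as $\gamma$ is inserted into a strand $i\ge 1$ untouched by the frozen strand.

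The hard part will be the monoid identity $\iota(\alpha\circ_0\beta)=\iota(\alpha)\circ_0\iota(\beta)$, since here one must reconcile the abstractly presented homomorphism $\iota$ with the geometric ``insert into the frozen strand'' operation defining the monoid structure of $\PaB^1$. I would settle this on generators: under $\rho^1$ the generator $E^{0,1}$ maps to the full twist of strands $0$ and $1$ (as $\iota(X_1)=\beta_0^2$) and the associativity morphism $\Psi^{0,1,2}\in\PaB^1(2)$ maps to $\Phi^{0,1,2}\in\PaB(3)=\PaB^+(2)$, after which one verifies that the defining relations \eqref{eqn:cU}, \eqref{eqn:MP}, \eqref{eqn:RP} and \eqref{eqn:O} of \cref{presenation of PaB1} are carried to relations that already hold in $\PaB^+$ by the presentation of $\PaB$ in \cref{thm: presentation pab}; geometrically this is the statement that concatenating two punctured-plane configurations at their punctures corresponds to operadic $\circ_0$-insertion of the associated $(n+1)$- and $(m+1)$-strand braids. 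Once this is established, injectivity of $\rho^1$ is immediate from the injectivity of $\iota$ and of the inclusion on objects, and therefore $(\rho^1,\id)$ is an injective morphism of $\PaB$-moperads.
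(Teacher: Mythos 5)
Your proposal is correct and takes essentially the same route as the paper: the paper likewise defines $(\rho^1,\id)$ via the presentation of \cref{presenation of PaB1}, sending $E^{0,1}\mapsto R^{1,0}R^{0,1}$ and $\Psi^{0,1,2}\mapsto\Phi^{0,1,2}$, and then checks the defining relations. Do note that the step you defer with ``one verifies'' is the entire substance of the paper's proof: showing that \eqref{eqn:O} and \eqref{eqn:RP} are carried to identities valid in $\PaB^+$ requires explicit computations with both hexagon relations \eqref{eqn:H1} and \eqref{eqn:H2} together with the braid relations, not merely a formal appeal to the presentation of $\PaB$ in \cref{thm: presentation pab}.
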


\begin{proof}
We define the map of $\PaB$-moperads $(\rho^1,\id): \PaB^1\rightarrow \PaB^+$ to be the identity on objects. Guided by the presentation of the $\PaB$-moperad $\PaB^1$ in \cref{presenation of PaB1} and by the subgroup inclusion $\Br_n^1\hookrightarrow \Br_{n+1}$ from Proposition~\ref{prof: Bn1 includes in Bn+1}, we set
\[
\rho^1(E^{0,1}) \eqdef R^{1,0}R^{0,1}
\quad \text{and} \quad
\rho^1(\Psi^{0,1,2}) \eqdef \Phi^{0,1,2}
\]
in $\Hom_{\PaB^+(1)}((01),(01))$ and $\Hom_{\PaB^+(2)}((01)2,0(12))$, respectively. Here $R^{0,1}$ in $\PaB^+(1)$ corresponds to $R^{1,2}$ in $\PaB(2)$, and $\Phi^{0,1,2}$ in $\PaB^+(2)$ corresponds to $\Phi^{1,2,3}$ in $\PaB(3)$.

To show that these assignments define a well-defined map of moperads, it suffices to check that the defining relations of $\PaB^1$ are preserved. Under $\rho^1$, the mixed pentagon relation \eqref{eqn:MP} becomes the pentagon relation \eqref{eqn:P}. Likewise, the relations \eqref{eqn:O} and \eqref{eqn:RP} reduce to consequences of the hexagon relations \eqref{eqn:H1} and \eqref{eqn:H2}.

More precisely, using~\eqref{eqn:H1} and~\eqref{eqn:H2} we have
\begin{align*}
\rho^{1}\left(E^{01,2}\right) 
= (R^{2,01})(R^{01,2}) 
&=
\left(
(\Phi^{0,1,2})^{-1}R^{2,1}\Phi^{0,2,1}R^{2,0}(\Phi^{2,0,1})^{-1}
\right) 
\left(\Phi^{2,0,1}R^{0,2}(\Phi^{0,2,1})^{-1}R^{1,2}\Phi^{0,1,2} 
\right)\\
&=
(\Phi^{0,1,2})^{-1}R^{2,1}\Phi^{0,2,1}R^{2,0}R^{0,2}(\Phi^{0,2,1})^{-1}R^{1,2}\Phi^{0,1,2} \\
&= \rho^{1}\left((\Psi^{0,1,2})^{-1} R^{2,1} \Psi^{0,2,1} E^{0,2} (\Psi^{0,2,1})^{-1} R^{1,2} \Psi^{0,1,2})\right),
\end{align*} 
which shows that $\rho^1$ preserves the relation~\eqref{eqn:O} from Theorem~\ref{presenation of PaB1}.
To show that $\rho^{1}$ preserves equation \eqref{eqn:RP}, using first the braid relations, we compute:
\begin{align*}
    \rho^1\left(E^{01,2}E^{0,1}\right) 
    &= R^{2,01}R^{01,2}R^{1,0}R^{0,1} = R^{2,01}R^{1,0}R^{0,1}R^{01,2} \\
    &=\underbrace{(\Phi^{0,1,2})^{-1}R^{2,1}\Phi^{0,2,1}R^{2,0}(\Phi^{2,0,1})^{-1}}_{\eqref{eqn:H1}}R^{1,0}R^{0,1}\underbrace{\Phi^{2,0,1}R^{0,2}(\Phi^{0,2,1})^{-1}R^{1,2}\Phi^{0,1,2}}_{\eqref{eqn:H2}} \\
    &=(\Phi^{0,1,2})^{-1}R^{2,1}\underbrace{\Phi^{0,2,1}R^{2,0}(\Phi^{2,0,1})^{-1}R^{1,0}\Phi^{2,1,0}}_{\eqref{eqn:H2}}\underbrace{(\Phi^{2,1,0})^{-1}R^{0,1}\Phi^{2,0,1}R^{0,2}(\Phi^{0,2,1})^{-1}}_{\eqref{eqn:H1}}R^{1,2}\Phi^{0,1,2}\\
    &=(\Phi^{0,1,2})^{-1}R^{2,1}R^{21,0}R^{0,21}R^{1,2}\Phi^{0,1,2} =
    \rho^{1}\left((\Psi^{0,1,2})^{-1}R^{2,1}E^{0,21}R^{1,2}\Psi^{0,1,2}\right).
\end{align*}
The lemma follows. 
\end{proof}

\begin{remark}
The moperad $\PaB^1$ is not equivalent to the shifted moperad $\PaB^+$. Indeed, $\PaB^+$ contains morphisms that permute the frozen strand with the other strands, whereas $\PaB^1$ does not. For example, the morphism
\[
R^{0,1}\colon (01)\to (10)
\]
exists in $\PaB^+$ but not in $\PaB^1$.
\end{remark}

\begin{prop}
\label{prop: moperad isos are associators}
Every equivalence of operads $\varphi:\hPaB\to \CD$ extends to an equivalence of moperads
\[
(\varphi^1,\varphi):\hPaB^1\longrightarrow \mathsf{CD}^+.
\]
\end{prop}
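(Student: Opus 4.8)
The plan is to reduce the statement to the classification of moperad isomorphisms in \cref{thm: assoc^1}. By \cref{cor: operadic associators}, the operad isomorphism $\varphi$ corresponds to a Drinfeld associator $(\mu,f)$ with $\varphi(R^{1,2}) = e^{\mu t_{12}/2}$ and $\varphi(\Phi^{1,2,3}) = f(t_{12},t_{23})$. According to \cref{thm: assoc^1}, extending $\varphi$ to a moperad isomorphism is then the same as choosing a value $\varphi^1(\Psi^{0,1,2}) = g(t_{01},t_{12})$ (the value $\varphi^1(E^{0,1}) = e^{\mu t_{01}}$ being determined by $\mu$) such that $g$ satisfies the mixed pentagon \eqref{eq: MP moperad iso} and octagon \eqref{eq: hexagons moperad iso} equations. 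I claim the correct choice is simply $g \eqdef f$.

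To both motivate this choice and verify the required equations in one stroke, I would construct the candidate $\varphi^1$ as a composite. Applying the shift functor of \cref{example: shifted moperad} to $\varphi$ gives a moperad isomorphism $\varphi^+ \colon \hPaB^+ \to \CD^+$, and precomposing with the moperad inclusion $(\rho^1,\id) \colon \hPaB^1 \hookrightarrow \hPaB^+$ of \cref{lemma: inclusion PaB1 to PaB+} yields a moperad morphism $\varphi^1 \eqdef \varphi^+ \circ \rho^1$ lying over $\varphi$. Evaluating on the generators from \cref{presenation of PaB1} via $\rho^1(E^{0,1}) = R^{1,0}R^{0,1}$ and $\rho^1(\Psi^{0,1,2}) = \Phi^{0,1,2}$ and the shifted reindexing, one computes $\varphi^1(E^{0,1}) = \varphi^+(R^{1,0}R^{0,1}) = e^{\mu t_{01}/2}e^{\mu t_{01}/2} = e^{\mu t_{01}}$ (using that $\ib_1^+$ is abelian) and $\varphi^1(\Psi^{0,1,2}) = \varphi^+(\Phi^{0,1,2}) = f(t_{01},t_{12})$, confirming $g = f$. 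Being a composite of moperad morphisms, $\varphi^1$ automatically respects the defining relations \eqref{eqn:MP} and \eqref{eqn:O} of $\PaB^1$, and pushing these relations forward is exactly what produces equations \eqref{eq: MP moperad iso} and \eqref{eq: hexagons moperad iso} for $g = f$. Concretely, \eqref{eq: MP moperad iso} with $g=f$ is the pentagon \eqref{pentagon} for $f$ after the reindexing $i \mapsto i+1$ (with the two sides interchanged), while \eqref{eq: hexagons moperad iso} with $g=f$ is precisely the octagonal consequence of the two hexagons \eqref{eqn:H1}--\eqref{eqn:H2} whose preservation under $\rho^1$ is already carried out in the proof of \cref{lemma: inclusion PaB1 to PaB+}.

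The one subtlety deserving care, and the place I expect the main obstacle, is twofold. First, although $\rho^1$ is not surjective (for example $R^{0,1}$ does not lie in its image), the composite $\varphi^1 = \varphi^+ \circ \rho^1$ is nonetheless an isomorphism; this is not visible from the factorization, and is instead obtained by feeding the admissible generating data $(\mu, f, e^{\mu t_{01}}, f)$ into the bijection of \cref{thm: assoc^1}, which outputs an honest moperad isomorphism agreeing with $\varphi^1$ on generators. Second, the genuinely nontrivial verification is the octagon \eqref{eq: hexagons moperad iso}, whose eight terms do not visibly collapse to the three-term hexagon; the point is that this collapse is exactly the braid-group manipulation already established in \cref{lemma: inclusion PaB1 to PaB+}, so that once $g = f$ is recognized as the image of $\Phi^{0,1,2}$ under $\varphi^+$, both defining equations are inherited for free and \cref{thm: assoc^1} delivers the desired isomorphism $(\varphi^1,\varphi) \colon \hPaB^1 \to \CD^+$.
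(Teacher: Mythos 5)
Your proposal is correct and takes essentially the same route as the paper: the paper likewise defines $(\varphi^1,\varphi)=(\varphi^{+},\varphi)(\rho^1,\id)$ using the shift functor together with the inclusion of \cref{lemma: inclusion PaB1 to PaB+}, computes the same generator values $\varphi^1(E^{0,1})=e^{\mu t_{01}}$ and $\varphi^1(\Psi^{0,1,2})=f(t_{01},t_{12})$, and then appeals to \cref{thm: assoc^1} to conclude these values determine an isomorphism. The only difference is one of packaging: the paper re-verifies the octagon \eqref{eq: hexagons moperad iso} by an explicit two-fold application of the hexagon \eqref{eq:DA-Hex} inside $\CD^+$, whereas you inherit both \eqref{eq: MP moperad iso} and \eqref{eq: hexagons moperad iso} from the fact that the composite is a moperad morphism, the underlying braid computation having already been done in \cref{lemma: inclusion PaB1 to PaB+}.
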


\begin{proof}
Given an equivalence of operads $\varphi:\hPaB \longrightarrow \mathsf{CD}$, the shift construction yields an equivalence of moperads $(\varphi^+,\varphi):\hPaB^+\rightarrow \CD^+.$  Composing with the inclusion of \cref{lemma: inclusion PaB1 to PaB+}, we define
\[
(\varphi^1,\varphi)\eqdef (\varphi^{+},\varphi)\circ (\rho^1,\id):
\hPaB^1\longrightarrow \CD^+.
\]
To show that this is a moperad equivalence, it remains to verify that the values of $\varphi^1(E^{0,1})$ and $\varphi^1(\Psi^{0,1,2})$ satisfy the equations of \cref{thm: assoc^1}.

Since $(\rho^{1},\id):\hPaB^1\rightarrow \hPaB^+$ sends the associativity morphism $\Psi^{0,1,2}$ to the shifted associator $\Phi^{0,1,2}$, we have
\[
\varphi^1(\Psi^{0,1,2})=\varphi^+(\Phi^{0,1,2})=f(t_{01},t_{12})\in\exp(\ib_2^+).
\]
It follows that $\varphi^1(\Psi^{0,1,2})$ satisfies equation~\eqref{eq: MP moperad iso}.

Moreover, using the hexagon equation~\eqref{eq:DA-Hex}, we obtain
\[
\varphi^1(E^{0,1})
=
\varphi^+(R^{1,0})\varphi^+(R^{0,1})
=
e^{\frac{\mu t_{10}}{2}}e^{\frac{\mu t_{01}}{2}}
=
e^{\mu t_{01}}
\in \exp(\ib_1^+),
\]
since $t_{10}=t_{01}$. This satisfies
\begin{align*}
 e^{\mu t_{01}}
&=e^{\frac{\mu t_{01}}{2}}e^{\frac{\mu t_{10}}{2}}\\
&=\underbrace{f(t_{01},t_{12})^{-1}e^{\frac{-\mu t_{12}}{2}}f(t_{12},t_{02})^{-1}e^{\frac{-\mu t_{02}}{2}}f(t_{02},t_{01})^{-1}}_{\eqref{eq:DA-Hex}}
\cdot
\underbrace{f(t_{01},t_{02})^{-1}e^{\frac{-\mu t_{02}}{2}}f(t_{02},t_{12})^{-1}e^{\frac{-\mu t_{12}}{2}}f(t_{12},t_{01})^{-1}}_{\eqref{eq:DA-Hex}}\\
&=
f(t_{01},t_{12})^{-1}e^{\frac{-\mu t_{12}}{2}}f(t_{12},t_{02})^{-1}e^{-\mu t_{02}}f(t_{02},t_{12})^{-1}e^{\frac{-\mu t_{12}}{2}}f(t_{12},t_{01})^{-1}.
\end{align*}
Using $f(t_{12},t_{01})^{-1}=f(t_{01},t_{12})$ and $f(t_{12},t_{02})^{-1}=f(t_{02},t_{12})$, we conclude that $\varphi^1(E^{0,1})$ satisfies equation~\eqref{eq: hexagons moperad iso}. Hence $(\varphi^1,\varphi)$ defines the required moperad equivalence.
\end{proof}

In fact, for a fixed operad equivalence $\varphi:\hPaB \to \CD$, the set of moperad equivalences
\[
(\varphi^1,\varphi):\hPaB^1 \to \CD^+
\]
extending $\varphi$ forms a one-parameter family. Indeed, by Theorem~\ref{thm: assoc^1}, once the value of $\varphi(\Phi^{1,2,3})$ is fixed, there remains only a one-parameter family of possible values for $\varphi^1(\Psi^{0,1,2})$, as we explain below. This observation appears in a more specialized form in \cite[Section~5]{Enriquez2007cyclotomic}, in the context of cyclotomic associators, and was communicated to the authors by Benjamin Enriquez.

\begin{prop}
\label{Enriquez-wisdom}
If $f(x,y), g(x,y) \in \exp(\lie_2)$ satisfy equation (\ref{eq: MP moperad iso}) from Theorem~\ref{thm: assoc^1}, then $g(x,y)=e^{sy}f(x,y)$ for some $s \in \K$.
\end{prop}

\begin{proof}
Consider the projection $\pi_0: \exp(\mathfrak{t}_3^+) \rightarrow \exp(\mathfrak{t}_3)$ defined by sending $t_{ij}\mapsto t_{ij}$ if $i>0$, and $t_{0j}\mapsto 0$ if $i=0$. Applying $\pi_0$ to equation~\eqref{eq: MP moperad iso}, we obtain
\[
g(0, t_{12} + t_{13}) \cdot g(t_{12}, t_{23}) 
= f(t_{12}, t_{23}) \cdot g(0, t_{13} + t_{23}) \cdot g(0, t_{12}).
\]
Since $g(x,y)\in \exp(\lie_2)$, the specialization $g(0,y)$ lies in the one-generator subgroup $\exp(\K y)$, and hence has the form $g(0,y)=e^{sy}$ for some $s\in \K$. Substituting this into the previous equation gives
\[
e^{s(t_{12} + t_{13})} \cdot g(t_{12}, t_{23})
= f(t_{12}, t_{23}) \cdot e^{s(t_{13} + t_{23})} \cdot e^{st_{12}}.
\]
Now $z=t_{12}+t_{13}+t_{23}$ is central in $\ib_3$, so this becomes
\[
e^{s(z-t_{23})}\cdot g(t_{12}, t_{23})
=
f(t_{12}, t_{23})\cdot e^{s(z-t_{12})}\cdot e^{st_{12}},
\]
and therefore $g(t_{12}, t_{23}) = e^{st_{23}} \cdot f(t_{12}, t_{23}).$
\end{proof}

As an immediate consequence, moperad equivalences $\hPaB^1\to \CD^+$ are classified by Drinfeld associators together with one additional scalar parameter.

\begin{cor}
\label{associators1-associators-k}
There is a natural bijection
\[
\bigl\{(\varphi^1,\varphi): \hPaB^1 \overset{\simeq}{\longrightarrow} \CD^+\bigr\}
\cong
\bigl\{\varphi: \hPaB \overset{\simeq}{\longrightarrow} \CD\bigr\}\times \K.
\]
\end{cor}
\medskip

As in the previous section, we identify compositions of morphisms in $\Aut_{\PaB^1(2)}\bigl(0(12)\bigr)\cong \PB_2^1$ with braid multiplication. The following key relation will later be essential in relating moperad maps to Kashiwara--Vergne solutions.
\begin{lemma}\label{lemma: KV1 for braids}
In the automorphism group $\Aut_{\PaB^1(2)}\bigl(0(12)\bigr)\cong \PB_2^1,$
we have $X_2\circ X_1=E^{0,12}.$
\end{lemma}

\begin{proof}
Since
\[
\Aut_{\PaB^1(2)}\bigl(0(12)\bigr)
=
\Hom_{\PaB^1(2)}\bigl(0(12),0(12)\bigr)
\cong \PB_2^1 \subseteq \Br_2^1,
\]
it suffices to show that the two automorphisms correspond to the same braid in $\Br_2^1$. Geometrically, this is illustrated by the loop homotopy in Figure~\ref{fig:BraidKV1}. For an algebraic verification, we use the inclusion $\PaB^1\hookrightarrow \PaB^+$ from Lemma~\ref{lemma: inclusion PaB1 to PaB+}, and hence work in $\Br_3$ with strands indexed by $0,1,2$. Then
\begin{align*}
X_2X_1
&= \beta_1^{-1}\beta_0^2\beta_1\beta_0^2 \\
&= \beta_1^{-1}\beta_0\beta_1\beta_0\beta_1\beta_0 \\
&= \beta_1^{-1}\beta_1\beta_0\beta_1^2\beta_0 \\
&= \beta_0\beta_1^2\beta_0 \\
&= E^{0,12},
\end{align*}
as required.
\end{proof}

\begin{figure}[ht!]
\includegraphics[width=6cm]{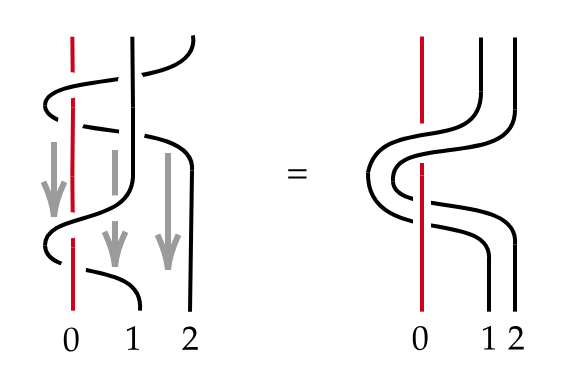}
\caption{A loop homotopy showing that $X_2X_1=E^{0,12}$.}
\label{fig:BraidKV1}
\end{figure}

\subsection{A variation on the Grothendieck--Teichm\"uller groups}\label{sec: GT1}
The Grothendieck--Teichm\"uller group $\gt$ is defined by pairs $(\lambda,f)$ satisfying Drinfeld's duality, hexagon, and pentagon relations (Definition~\ref{defn: GT}). Such a pair induces an automorphism of the free group $\F_2$ via
\[
x \mapsto f(x,y)x^\lambda f(x,y)^{-1}
\qquad \text{and} \qquad
y \mapsto y^\lambda .
\]
In the discrete setting, these relations are highly rigid: Drinfeld~\cite[Proposition~4.1]{Drin} shows that if $\lambda\in 2\mathbb Z+1$ and $f\in F_2$, then necessarily $(\lambda,f)=(1,1)$ or $(-1,1)$. After prounipotent completion, however, one obtains the much richer pro-algebraic Grothendieck--Teichm\"uller group from Definition~\ref{defn: GT}, whose prounipotent part is the subgroup with $\lambda=1$.

Enriquez introduced a cyclotomic analogue of the Grothendieck--Teichm\"uller group by adjoining an additional module-type datum to the pair $(\lambda,f)$. This Grothendieck--Teichm\"uller module group, denoted $\gtm$, is defined by quadruples $(\lambda,\mu,f,g)$ satisfying the usual Grothendieck--Teichm\"uller relations on $(\lambda,f)$, together with an octagon relation and a mixed pentagon relation. As observed just before Proposition~5.3 of~\cite{Enriquez2007cyclotomic}, the octagon relation implies that the assignment
\[
x \mapsto g(x,y)x^\mu g(x,y)^{-1},
\qquad
y \mapsto y^\lambda,
\]
defines an automorphism of $\F_2$, sending $x^{-1}y^{-1}$ to a conjugate of $(x^{-1}y^{-1})^\mu$. Moreover, the defining relations force $\mu=\lambda$, so that one may equivalently describe $\gtm$ using triples $(\lambda,f,g)$. This leads to the following prounipotent version of~\cite[Proposition~5.4]{Enriquez2007cyclotomic}.

\begin{definition}
\label{defn: GTRM}
The group $\gtm$ consists of invertible triples
\[
(\lambda,f,g)\in \K^\times \times (\F_2)_\K \times (\F_2)_\K
\]
such that $(\lambda,f)\in \gt$ and $g\in (\F_2)_\K$ satisfy the following equations:
\begin{gather}
g(x_{01}, x_{13}x_{12})\, g(x_{12}x_{02}, x_{23})
=
f(x_{12},x_{23})\, g(x_{02}x_{01},x_{23}x_{13})\, g(x_{01},x_{12})
\quad \text{in } (\PB_3^1)_\K,
\label{GT^1:mixed pentagon} \tag{MP}
\\
g(x,y)^{-1} y^{\frac{\lambda - 1}{2}} g(z,y) z^{\nu} g(z,y)^{-1} y^{\frac{\lambda +1}{2}} g(x,y) x^{\nu} = 1
\quad \text{in } (\F_2)_\K,
\quad \text{with } zyx = 1,\ \lambda = \nu + 1.
\label{GT^1:octagon} \tag{O}
\end{gather}
\end{definition}
The product of two elements $(\lambda_1, f_1, g_1), (\lambda_2, f_2, g_2) \in \gtm$ is given by
\begin{equation}
\label{GRTM: composition law}
    (\lambda_1, f_1, g_1) * (\lambda_2, f_2, g_2) \eqdef (\lambda, f, g),
\end{equation}
where $\lambda \eqdef \lambda_1 \lambda_2$, $f \eqdef f_{1} \cdot f_2(x_1^{\lambda_{1}}, f_1^{-1}x_2^{\lambda_1}f_{1})$ and $g \eqdef  g_{1} \cdot g_2(x_1^{\nu_{1}}, g_1^{-1}x_2^{\lambda_1}g_{1} )$ with $\lambda_i = \nu_i +1$ for $i = 1,2$.

\medskip

Enriquez further observed that, once the Grothendieck--Teichm\"uller datum $(\lambda,f)$ is fixed, the additional module datum $g$ is determined up to a single scalar parameter. More precisely, in the discrete setting, Enriquez shows in \cite[Proposition~5.3]{Enriquez2007cyclotomic} that the defining relations are again highly rigid: necessarily $f=1$, $\lambda=\pm1$, and $g(x,y)=y^s$ for a unique integer $s$, so that $\gtm \cong \mathbb Z \rtimes \mathbb Z/2\mathbb Z$. The following is the proalgebraic analogue of \cite[Proposition~5.4]{Enriquez2007cyclotomic}.

\begin{prop}
Every element $(\lambda,f,g)\in \gtm$ can be uniquely written in the form
\[
(\lambda,f,g)=(\lambda,f,y^s f)
\]
for some $(\lambda,f)\in \gt$ and some $s\in \K$. Equivalently, there is an isomorphism of prounipotent groups
\[
\gtm \cong \gt \ltimes \K .
\]
\end{prop}

\begin{remark}
The semidirect product decomposition $\gtm \cong \gt \ltimes \K$ suggests that $\gtm$ should be viewed as a one-parameter, module-type extension of $\gt$. This invites comparison with the conjectural description of the symmetric Kashiwara--Vergne group as an extension of the Grothendieck--Teichm\"uller group by $\K$. We return to this perspective in a later section, where in \cref{thm:KV-action-F-012} we construct an injective group homomorphism
\[
\gtm \hookrightarrow \kvs(2).
\]
\end{remark}

\medskip

Every element $(\lambda,f,g)\in \gtm$ induces a moperad automorphism $(\vartheta^1,\vartheta):\hPaB^1 \to \hPaB^1$ acting trivially on objects. The operad component $\vartheta:\hPaB\to\hPaB$ is the automorphism corresponding to the Grothendieck--Teichm\"uller element $(\lambda,f)\in\gt$, determined by
\[
\vartheta(R^{1,2}) = x_{12}^{\frac{\lambda-1}{2}} \cdot R^{1,2}
\qquad \text{and} \qquad
\vartheta(\Phi^{1,2,3}) = \Phi^{1,2,3}\cdot f(x_{12},x_{23}),
\]
and these values satisfy the pentagon and hexagon relations of \cref{thm: presentation pab}. To extend $\vartheta$ to the moperad $\hPaB^1$, one sets
\[
\vartheta^1(E^{0,1}) = (E^{0,1})^\lambda
\qquad \text{and} \qquad
\vartheta^1(\Psi^{0,1,2}) = \Psi^{0,1,2}\cdot g(x_{01},x_{12}).
\]
The mixed pentagon equation~\eqref{GT^1:mixed pentagon} and octagon equation~\eqref{GT^1:octagon} are precisely the relations needed for these assignments to define a moperad automorphism; see \cite[Proposition~4.8]{calaque20moperadic}.

The following theorem is the special case of \cite[Theorem~5.13]{calaque20moperadic} in which the group $\Gamma$ is taken to be trivial. Its proof ultimately builds on \cite[Theorem~11.1.7]{FresseBook1}.

\begin{thm}
\label{thm: GT1 as PaB1 automorphisms}
Let $\Aut_0(\hPaB^1)$ denote the group of moperad automorphisms
\[
(\vartheta^1,\vartheta):\hPaB^1 \to \hPaB^1
\]
that act as the identity on objects. Then there is an isomorphism of prounipotent groups
\[
\Aut_0(\hPaB^1)\cong \gtm.
\]
\end{thm}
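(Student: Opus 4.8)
The plan is to exhibit mutually inverse group homomorphisms between $\gt^1$ and $\Aut_0(\hPaB^1)$, using the finite presentation of the $\PaB$-moperad $\hPaB^1$ from \cref{presenation of PaB1} together with the operadic description $\Aut_0(\hPaB)\cong\gt$ of \cref{thm: GT as automorphisms}. Since $\hPaB^1$ is generated as a $\PaB$-moperad by $E^{0,1}$ and $\Psi^{0,1,2}$ (on top of the generators $R^{1,2},\Phi^{1,2,3}$ of the underlying operad), any object-fixing automorphism $(\vartheta^1,\vartheta)$ is determined by its values on these four morphisms, so the whole argument reduces to bookkeeping on generators subject to the relations \eqref{eqn:cU}, \eqref{eqn:MP}, \eqref{eqn:RP}, \eqref{eqn:O}.

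First I would treat the forward map $\Theta\colon \gt^1\to\Aut_0(\hPaB^1)$. Given $(\lambda,f,g)\in\gt^1$, the pair $(\lambda,f)\in\gt$ determines an object-fixing operad automorphism $\vartheta\colon\hPaB\to\hPaB$ by \cref{thm: GT as automorphisms}, and I extend it by $\vartheta^1(E^{0,1})=(E^{0,1})^\lambda$ and $\vartheta^1(\Psi^{0,1,2})=\Psi^{0,1,2}\cdot g(x_{01},x_{12})$, exactly as in the construction preceding the theorem. Well-definedness amounts to checking that this assignment preserves the four defining relations of \cref{presenation of PaB1}: relation \eqref{eqn:cU} is immediate; preservation of \eqref{eqn:MP} and \eqref{eqn:O} is equivalent to the mixed pentagon \eqref{GT^1:mixed pentagon} and octagon \eqref{GT^1:octagon} of \cref{defn: GTRM} holding for $(\lambda,f,g)$, which is the content of \cite[Proposition~4.8]{calaque20moperadic}; and \eqref{eqn:RP} then follows from \eqref{eqn:O} together with the hexagon relations already satisfied by $\vartheta$, exactly as in the computation in the proof of \cref{lemma: inclusion PaB1 to PaB+}. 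Injectivity of $\Theta$ is clear, since $\lambda$, $f$ and $g$ can be read back off from the values of $\vartheta^1$ on $E^{0,1}$, of $\vartheta$ on $\Phi^{1,2,3}$, and of $\vartheta^1$ on $\Psi^{0,1,2}$ respectively.

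Next I would prove surjectivity, which is where the real content lies. Starting from an arbitrary $(\vartheta^1,\vartheta)\in\Aut_0(\hPaB^1)$, the underlying operad automorphism $\vartheta$ is object-fixing and hence yields $(\lambda,f)\in\gt$. Because $\vartheta^1$ fixes objects, $\vartheta^1(E^{0,1})$ lies in $\Aut_{\hPaB^1(1)}((01))\cong\widehat{\Z}_\K$ and so equals $(E^{0,1})^{a}$ for a unique $a\in\K^\times$, while $\vartheta^1(\Psi^{0,1,2})$ is again a morphism $(01)2\to 0(12)$ and hence equals $\Psi^{0,1,2}\cdot h$ with $h\in\Aut_{\hPaB^1(2)}((01)2)\cong\widehat{\F}_2\rtimes\widehat{\PB}_2$ by \eqref{eq:PB1}. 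The two nontrivial points are (i) that $h$ in fact lies in the subgroup generated by $x_{01}$ and $x_{12}$, so that $h=g(x_{01},x_{12})$ for some $g\in\widehat{\F}_2$, and (ii) that $a=\lambda$. Both are forced by the relations: applying $\vartheta^1$ to \eqref{eqn:O} and \eqref{eqn:RP} couples the scalar on $E^{0,1}$ to the $\gt$-scalar on $R^{1,2}$, yielding $a=\lambda$ by comparing leading terms, and simultaneously constrains $h$ to $\langle x_{01},x_{12}\rangle$. Preservation of \eqref{eqn:MP} and \eqref{eqn:O} is then precisely the statement that $(\lambda,f,g)$ satisfies \eqref{GT^1:mixed pentagon} and \eqref{GT^1:octagon}, so $(\lambda,f,g)\in\gt^1$ and $\Theta(\lambda,f,g)=(\vartheta^1,\vartheta)$.

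Finally I would verify that $\Theta$ is a homomorphism. On the operad part this is the group law of $\gt$ provided by \cref{thm: GT as automorphisms}; the new input is the behaviour on $\Psi^{0,1,2}$. Computing $(\vartheta^1_1\circ\vartheta^1_2)(\Psi^{0,1,2})=\Psi^{0,1,2}\cdot g_1(x_{01},x_{12})\cdot g_2\big(\vartheta^1_1(x_{01}),\vartheta^1_1(x_{12})\big)$ requires the induced action of $\vartheta^1_1$ on the generators $x_{01},x_{12}$ of $\widehat{\PB}^1_2$, which I would extract from $\vartheta^1_1(E^{0,1})=(E^{0,1})^{\lambda_1}$, the $\gt$-action on $R^{1,2}$, and the normal forms of \cref{l:generators-normal-form}; matching the outcome against $g=g_1\cdot g_2(x_1^{\mu_1},g_1^{-1}x_2^{\lambda_1}g_1)$ recovers the composition law \eqref{GRTM: composition law}. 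The main obstacle throughout is the surjectivity step, and specifically the exponent and subgroup bookkeeping of (i)–(ii): one must show that the moperad relations leave no freedom beyond a triple $(\lambda,f,g)$ with $g$ in the free part, a point that mirrors \cite[Proposition~4.8]{calaque20moperadic} and the companion treatment in \cite{RS2025KVandribbonmodules}. Once the set-theoretic bijection and the homomorphism property are in hand, $\Theta$ is a filtration-compatible bijective group homomorphism, hence an isomorphism of prounipotent groups.
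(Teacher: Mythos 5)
Before assessing your argument, note what the paper itself does here: it does \emph{not} prove this theorem. It only records the forward construction (the values of $(\vartheta^1,\vartheta)$ on $R^{1,2},\Phi^{1,2,3},E^{0,1},\Psi^{0,1,2}$, with well-definedness attributed to \cite[Proposition~4.8]{calaque20moperadic}) and then states that the theorem "can be found in \cite{RS2025KVandribbonmodules} or derived as a consequence of \cite[Proposition 4.8]{calaque20moperadic}". So your proposal is being compared against a citation, not a proof. Your forward map coincides with the paper's construction, and your overall architecture — generators and relations from \cref{presenation of PaB1}, injectivity by reading off $(\lambda,f,g)$, surjectivity as the main content, then the composition law — is the natural one and is sound in outline.

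There is, however, a genuine gap in your surjectivity step, at precisely the point you single out as claim (i). You assert that applying $(\vartheta^1,\vartheta)$ to \eqref{eqn:O} and \eqref{eqn:RP} "constrains $h$ to $\langle x_{01},x_{12}\rangle$". Those two relations cannot do this. Decompose $\Aut_{\hPaB^1(2)}((01)2)\cong\widehat{\PB}^1_2\cong \widehat{\F}_2\times\widehat{\Z}$, where $\widehat{\F}_2=\langle x_{01},x_{12}\rangle$ and the $\widehat{\Z}$-factor is generated by the central full twist $c=x_{01}x_{02}x_{12}$, and write $h=g(x_{01},x_{12})\,c^{k}$. In \eqref{eqn:O}, after applying $\vartheta^1$, the element $h$ and its permuted copy $h^{(21)}$ occur only in the patterns $h^{-1}(\cdots)h$ and $h^{(21)}(E^{0,2})^{a}(h^{(21)})^{-1}$, and in \eqref{eqn:RP} only as $h^{-1}(\cdots)h$, while the other sides of these relations contain no $h$ at all; since $c$ is central, every factor $c^{\pm k}$ cancels identically, so \eqref{eqn:O} and \eqref{eqn:RP} are blind to the central component of $h$ and cannot force $k=0$. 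The relation that does the job is the unit relation \eqref{eqn:cU}, which your surjectivity argument never invokes: applying $\vartheta^1$ to $\Psi^{0,\emptyset,2}=\Psi^{0,1,\emptyset}=\id_{01}$ forces both strand-deletion images of $h$ to be trivial, and since deleting strand $1$ sends $c\mapsto x_{02}$ while deleting strand $2$ sends $c\mapsto x_{01}$, this kills $k$ and leaves $h=g(x_{01},x_{12})$ with $g(x,1)=g(1,y)=1$. Your companion claim (ii) is fine: abelianizing the image of \eqref{eqn:O} gives $a([x_{02}]+[x_{12}])$ on one side and $a[x_{02}]+\lambda[x_{12}]$ on the other, whence $a=\lambda$. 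With \eqref{eqn:cU} inserted into the surjectivity step the proof goes through; as written, that step fails. (A smaller caveat: in the homomorphism check, the exponents $x_1^{\mu_1}$ versus $x_2^{\lambda_1}$ in the composition law \eqref{GRTM: composition law} reflect the half-twist/full-twist discrepancy between $R^{1,2}$ and $E^{0,1}$, so the bookkeeping you defer there does require care, though it is only bookkeeping.)
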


Via this identification, the group $\gtm$ acts freely and transitively on the set of moperad equivalences:
\[
(\varphi^1,\varphi):\hPaB^1 \longrightarrow \mathsf{CD}^+,
\]
by precomposition. Concretely, by Theorem~\ref{thm: assoc^1}, such a moperad equivalence is uniquely determined by a triple $(\mu,f,g)$, and the action of an element $(\lambda,f',g')\in \gtm$ is given by
\begin{equation}
(\mu,f,g)\ast(\lambda,f',g')
\eqdef
\bigl(\mu\lambda,\,
f\cdot f'(e^{\mu\xi_1},f^{-1}e^{\mu\xi_2}f),\,
g\cdot g'(e^{\mu\xi_1},g^{-1}e^{\lambda\xi_2}g)
\bigr).
\end{equation}

This picture admits a graded counterpart. Every moperad equivalence $(\varphi^1,\varphi):\hPaB^1 \longrightarrow \mathsf{CD}^+$
extends uniquely to a homomorphic expansion, that is, to a moperad isomorphism $(\Tilde{\varphi}^1,\Tilde{\varphi}):\hPaB^1 \to \PaCD^+$
acting trivially on objects. Accordingly, one obtains the graded Grothendieck--Teichm\"uller module group
\[
\grtm \cong \Aut_0(\PaCD^+),
\]
defined as the group of object-fixing automorphisms of the shifted moperad of parenthesized chord diagrams. This graded group was introduced and studied by Enriquez; see \cite[Section~7]{Enriquez2007cyclotomic}. For further discussion of $\grtm$ and related constructions, see \cite{enriquez2012mixed} or \cite{calaque20moperadic}.

\section{Genus Zero Kashiwara--Vergne Solutions}\label{sec: background on tder and KV solutions}
This section recalls the necessary background on genus zero Kashiwara--Vergne (KV) solutions, following Alekseev, Kawazumi, Kuno, and Naef \cite{AKKN_genus_zero, AKKN18highergenus}. We describe tangential and special automorphisms of free Lie algebras, together with a moperad of tangential automorphisms that provides a natural framework for studying KV solutions. This moperad mirrors the moperad arising from splittings of pure braid groups and braid groups with frozen strands described in the previous section.

Throughout, we write $\ass_n \eqdef \K\langle\langle x_1,\ldots,x_n\rangle\rangle$ for the degree-completed \defn{free associative algebra} on generators $x_1,\ldots,x_n$. The degree-completed \defn{free Lie algebra} $\lie_n$ is the Lie subalgebra of $\ass_n$ generated by $x_1,\ldots,x_n$, with Lie bracket given by the commutator $[f,g]=fg-gf$. The completed universal enveloping algebra of $\lie_n$ is naturally identified with $\ass_n$, that is, $\widehat{U}(\lie_n)=\ass_n$. We identify the group-like elements in the completed Hopf algebra $\widehat{U}(\lie_n)=\ass_n$ with the prounipotent group $\exp(\lie_n)\cong (\F_n)_\K$.

\begin{definition}
A \defn{tangential derivation} of $\lie_n$ is a derivation $u$ of $\lie_n$ such that
\[
u(x_i)=[x_i,a_i]
\]
for some $a_i\in \lie_n$. A \defn{special derivation} of $\lie_n$ is a tangential derivation $u$ satisfying
\[
u\Bigl(\sum_{i=1}^n x_i\Bigr)=0.
\]
\end{definition}

The collection of all tangential derivations of $\lie_n$ forms a Lie algebra, denoted $\tder_n$, with bracket given by
\[
[u,v](x_k)\eqdef u(v(x_k))-v(u(x_k));
\]
see \cite[Proposition~3.4]{AT12}. There is an isomorphism of vector spaces $\tder_n \oplus \mathfrak{a}_n \cong \lie_n^{\oplus n},$ where $\mathfrak{a}_n$ is the $n$-dimensional abelian Lie algebra generated by $x_1,\ldots,x_n$. Accordingly, we often represent a tangential derivation by a tuple of Lie words
\[
u\eqdef (a_1,\ldots,a_n)\in \lie_n^{\oplus n};
\]
see \cite[Definition~3.2]{AT12}. In this notation, if $u=(a_1,\ldots,a_n)$ and $v=(b_1,\ldots,b_n)$, then $([u,v])_k=[a_k,b_k]+u(b_k)-v(a_k).$ The vector space of special derivations $\sder_n$ is closed under this bracket and therefore forms a Lie subalgebra of $\tder_n$.

\medskip 

\begin{definition}\label{def: tangential automorphism} The group of \defn{tangential automorphisms} is the prounipotent group $\TAut_n \eqdef \exp(\tder_n)$ associated to the Lie algebra $\tder_n$. The group of \defn{special automorphisms}, denoted $\SAut_n \eqdef \exp(\sder_n)$ is exponentiated from the Lie algebra $\sder_n$. 
\end{definition}

As with tangential derivations, we can identify (as a set) elements of $\TAut_n$ with $(\exp(\lie_n))^{\times n}$, and write $F \in \TAut_n$ as a tuple $F=(f_1,f_2,\ldots,f_n)$, where each $f_i\in\exp(\lie_n)$.  There is an action of $\TAut_n$ on $\exp(\lie_n)$ given by the map $\rho:\TAut_n\rightarrow \Aut(\exp(\lie_n))$ defined on the generators~$x_i$ by \[x_i\mapsto f_i^{-1}x_if_i.\]  The group law on $\TAut_n$ is then given by (see for instance~\cite[Section~2.4]{AKKN_genus_zero})\begin{equation}
\label{eq: group law on TAut}
    (F\cdot G)_i \eqdef f_i(\rho(F)g_i),
\end{equation}
where $F\cdot G$ stands for the product in $\TAut_n$ and $f_i(\rho(F)g_i)$ is calculated in $\exp(\lie_n)$. 

\begin{definition} \label{def: cyc}
The complete, graded vector space of \defn{cyclic words} is the linear quotient  
$$\cyc_n \eqdef \ass_n/[\ass_n,\ass_n] = \ass_n/\left<ab-ba\mid \forall \; a,b\in\ass_n\right>.$$  
We denote the natural projection map by $\trace:\ass_n\rightarrow\cyc_n$. 
\end{definition} 

The adjoint action of $\tder_n$ on $\lie_n$ extends to an action of $\tder_n$ on the vector space $\ass_n$ by the Leibniz rule. This, in turn, descends to $\cyc_n$. 

The non-commutative divergence map gives another relationship between $\tder_n$ and $\cyc_n$. To define it, we note that each element $a\in\ass_n$ has a unique decomposition $$a=a_0+\partial_1(a)x_1+\ldots +\partial_n(a)x_n=a_0+\sum_{i=1}^{n}\partial_i(a)x_i$$ for some $a_0\in\K$ and $\partial_i(a)\in\ass_n$ for $1\leq i\leq n$.  In practice, $\partial_i$ picks the words of a sum that end in $x_i$ and deletes their last letter $x_i$, as well as all other words. This enables the definition of the non-commutative \emph{divergence}, a $1$-cocycle of $\tder_n$ (\cite[Proposition~3.20]{AT12}):

\begin{definition}\label{def AT div}
The non-commutative \defn{divergence} is the linear map $j:\tder_n\rightarrow \cyc_n$ defined on a tangential derivation $u=(a_1,\ldots,a_n)$ by $$j(u) \eqdef \trace\left(\sum_{i=1}^{n} \partial_i(a_i)x_i\right).$$ 
\end{definition} 
The divergence map is a $1$-cocycle of the Lie algebra $\tder_n$. In particular, for any pair $u,v$ of tangential derivations, we have $j([u,v])=u\cdot j(v)-v\cdot j(u)$. Here we write $\cdot$ for the natural action of $\tder_n$ on $\cyc_n$.  Integrating the divergence cocycle leads to the non-commutative Jacobian map: 

\begin{definition} 
The non-commutative \defn{Jacobian} is the map $J:\TAut_n\rightarrow\cyc_n$ given by setting \[J(1)=0 \quad \text{and} \quad \frac{d}{dt}\Big|_{t=0}J(e^{tu}g)=j(u)+u\cdot J(g)\] for $g\in \TAut_n$ and $u\in\tder_n$. 
\end{definition} 

The map $J$ is an additive group $1$-cocycle, that is, for any $G,H\in\TAut_n$, we have $J(G\cdot H)=J(G)+G\cdot J(H)$.

\begin{remark}
Our notation $(j,J)$ matches that of \cite{AET10}, and corresponds to $(div,j)$ in \cite{AT12}.
\end{remark}



\subsection{Operadic structures on $\lie$, $\tder$, $\sder$, $\TAut$, $\SAut$}\label{subsec:lieoperad} In this section, we define operadic structures on the spaces of tangential and special derivations, and the corresponding groups of tangential and special automorphisms. We summarize the main constructions and results below; detailed proofs and illustrative examples are provided in Appendix~\ref{sec:Operadapp}. 

We make particular note that while the collection of tangential derivations $\tder = \{\tder_n\}$ only assembles into an operad in vector spaces, the collection of special derivations $\sder = \{\sder_n\}$ assembles into an operad in Lie algebras. Moreover, in Theorem~\ref{thm:operad-SolKV}, we show that the operadic composition of genus zero KV solutions yields another KV solution, making them a colored operad in sets.

We begin by defining a linear operad structure on the free Lie algebra on $n$ generators:

\begin{definition}\label{def:LieOperad}
There is a natural right $\Sigma_n$ action on $\lie_n$ given by permuting the variables of a Lie word $f=f(x_1,\ldots,x_n)\in\lie_n$. 
In symbols, for $\sigma \in \Sigma_n$ and $f \in \lie_n$ we define \[f^\sigma(x_1,\ldots,x_n) \eqdef f(x_{\sigma^{-1}(1)},\ldots,x_{\sigma^{-1}(n)}).\]
Moreover, for $1 \leq i \leq m$ the partial composition map $\circ_i : \lie_m \oplus \lie_n \to \lie_{m+n-1}$ is defined for $f \in \lie_m$ and $g \in \lie_n$ by 
\[
(f \circ_i g)(x_1,\ldots,x_{m+n-1}) 
\eqdef 
f(x_1, \ldots, x_{i-1},\sum_{j=i}^{i+n-1}x_j,x_{i+n},\ldots,x_{m+n-1}) + g(x_i, \ldots,x_{i+n-1}).
\]
We denote by $\lie$ the symmetric sequence of free Lie algebras with these partial compositions, and call this the $\K$-\defn{linear operad of free Lie algebras}, see Proposition~\ref{prop:operad-lie} below.
\end{definition}

\begin{prop}
\label{prop:operad-lie}
    The symmetric sequence of free Lie algebras $\lie$ with the partial compositions defined above forms a linear operad.
    The same formulas endow the universal enveloping algebra $\ass$ and the vector space of cyclic words $\cyc$ with a linear operad structure, such that the quotient map $\trace: \ass \to \cyc$ is a map of linear operads.
\end{prop}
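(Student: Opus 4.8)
The plan is to establish the operad axioms first on the free associative algebras $\ass$, then to restrict to the suboperad $\lie$ and descend to the quotient $\cyc$. The organising observation is that each partial composition decomposes additively as
\[
f \circ_i g = \delta_i(f) + \iota_i(g),
\]
where $\delta_i \colon \ass_m \to \ass_{m+n-1}$ is the substitution $f \mapsto f(x_1,\ldots,x_{i-1},\, x_i + \cdots + x_{i+n-1},\, x_{i+n},\ldots,x_{m+n-1})$ and $\iota_i \colon \ass_n \to \ass_{m+n-1}$ is the shift $g \mapsto g(x_i,\ldots,x_{i+n-1})$. Both $\delta_i$ and $\iota_i$ are (continuous) algebra homomorphisms, being induced by assignments of generators, and the same formulas restricted to $\lie$ send Lie elements to Lie elements. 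This exhibits the composition as a linear (additive) operad structure of exactly the type already used for $\ib$ in \cref{def: operad of inft braids}, with unit $0 \in \ass_1$ (respectively $0 \in \lie_1$): indeed when $n=1$ we get $f \circ_i 0 = \delta_i(f) = f$, while $0 \circ_1 g = \iota_1(g) = g$, mirroring the role of $0 \in \ib_1$.

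Equivariance and unitality will be immediate from the definitions, so the content lies in associativity. I would substitute the decomposition into the two sides of each of the three cases of axiom (ii) of \cref{defn: colored operad}; by additivity every resulting term becomes a composite of $\delta$'s and $\iota$'s applied to one of $\lambda,\mu,\nu$. The identities then reduce to a short list of commutation relations among these maps: that a summing substitution of a summing substitution is the summing substitution over the merged block of indices, that $\delta$ and $\iota$ supported on disjoint blocks commute up to the evident reindexing, and that $\iota \circ \iota = \iota$. Each holds because ``a sum of a sum is a sum over the union of indices'' and because shifts compose to shifts.

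I expect the main obstacle to be purely the index bookkeeping across these three cases: tracking the block shifts $k \mapsto k+n-1$ and the like so that the reindexed composites on the two sides agree literally rather than merely morally. This is routine but delicate, and is the computation carried out in \cref{sec:Operadapp}.

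Given the operad structure on $\ass$, the suboperad claim for $\lie$ follows at once from the observation above that $\delta_i$ and $\iota_i$ preserve Lie elements, so the partial compositions restrict and $\lie \hookrightarrow \ass$ is a map of linear operads. Finally, to descend to $\cyc = \ass/[\ass,\ass]$ I would show that the commutator subspaces are compatible with composition. Since $\delta_i$ and $\iota_i$ are algebra homomorphisms, they satisfy $\delta_i([u,v]) = [\delta_i(u),\delta_i(v)]$ and likewise for $\iota_i$, hence carry commutators to commutators. Therefore replacing $f$ by $f+c$ with $c \in [\ass_m,\ass_m]$ changes $f \circ_i g$ by $\delta_i(c) \in [\ass_{m+n-1},\ass_{m+n-1}]$, and replacing $g$ by $g+c'$ with $c' \in [\ass_n,\ass_n]$ changes it by $\iota_i(c') \in [\ass_{m+n-1},\ass_{m+n-1}]$. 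Thus $([f],[g]) \mapsto [f \circ_i g]$ is well defined, endowing $\cyc$ with partial compositions for which $\trace$ is by construction a morphism; the operad axioms for $\cyc$ then follow from those for $\ass$ by surjectivity of $\trace$, and the $\Sigma_n$-actions descend since permuting variables preserves commutators.
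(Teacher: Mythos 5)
Your proposal is correct, but it is organized in the opposite direction from the paper. The paper works bottom-up: it first proves that $\lie$ is a linear operad by direct verification, writing out the substituted Lie words explicitly on both sides of each of the three associativity cases of \cref{defn: colored operad} and checking equivariance by a similar computation; it then gets the operad structure on $\ass$ essentially for free, by invoking that the universal enveloping algebra functor is monoidal so that $\widehat{U}(\lie_n)=\ass_n$ inherits the same substitution formulas; finally the descent to $\cyc$ and the fact that $\trace$ is an operad map is \cref{lem:trace-is-operadic}, whose proof is a one-line check. You instead work top-down: you establish the axioms on $\ass$ first, via the decomposition $f\circ_i g=\delta_i(f)+\iota_i(g)$ into substitution algebra homomorphisms, then restrict to $\lie$ (since $\delta_i$ and $\iota_i$ send Lie elements to Lie elements, the axioms hold on the subspace), and then descend to $\cyc$. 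Each route has its advantages. Your homomorphism decomposition makes the passage to $\cyc$ genuinely cleaner than the paper's: the observation that $\delta_i$ and $\iota_i$, being algebra maps, carry $[\ass,\ass]$ into $[\ass,\ass]$ is exactly the well-definedness argument that the paper's \cref{lem:trace-is-operadic} leaves as ``immediate.'' It also packages associativity conceptually as commutation relations among substitutions and shifts — though be aware that the real content is still the index bookkeeping you defer, which is precisely what the paper's proof writes out in full, so your reduction does not shorten the honest work so much as reorganize it. Conversely, the paper's route avoids re-verifying any axioms on $\ass$ by citing functoriality, at the cost of a more computational proof for $\lie$ itself. One small slip: ``$\iota\circ\iota=\iota$'' is not what you mean — the correct identity is $\iota_j\circ\iota_{i-j+1}=\iota_i$, i.e.\ shifts compose to shifts (as you say immediately afterwards), not that $\iota$ is idempotent.
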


The proof is deferred to Appendix~\ref{sec:Operadapp}.

\begin{remark}
Notably $\lie$ is not an operad in Lie algebras, and as such, the operadic composition is non-canonical: the sum $\sum_{j=i}^{i+n-1} x_j$ could be replaced with any associative Lie expression in the same variables~$x_j$, such as the Baker--Campbell--Hausdorff series. 
See Remark \ref{rem:Pavol-wisdom} for more detail.
\end{remark}

Using the linear isomorphism $\tder_n \oplus \mathfrak{a}_n \cong \lie_n^{\oplus n}$, the linear operad structure on $\lie=\{\lie_n\}_{n\geq 0}$ generalizes to a linear operad of tangential derivations.  For $u=(a_1,\ldots,a_n) \in \tder_n$ and $\sigma \in \Sigma_n$ define
\[u^\sigma 
\eqdef (a_{\sigma^{-1}(1)}^\sigma,\ldots, a_{\sigma^{-1}(n)}^\sigma),\]
where $a_j^\sigma$ denotes the $\Sigma_n$ action on $\lie$.
This is the canonical action induced by the action on $\lie$ in the sense that it is the unique action which makes the following diagram commute:
\begin{equation} \label{diag:sigma-n-action}
    \begin{tikzcd}
\lie_n \arrow[r, "(-)^\sigma"] \arrow[d, "u"'] & \lie_n \arrow[d, "u^{\sigma}"] \\
\lie_n \arrow[r, "(-)^\sigma"']                & \lie_n     \end{tikzcd}
\end{equation}

For $1 \leq i \leq m$, define the composition map
\[ \circ_i : \tder_m \oplus \tder_n \to \tder_{m+n-1}\] to be the $\K$-linear map such that for $u=(a_1,\ldots, a_m)$ and $
v=(b_1,\ldots, b_n)$, the composition is
\begin{equation}\label{eq:tderOperadComp}
u \circ_i v \eqdef (a_1\circ_i 0,\ldots,a_{i-1}\circ_i 0,a_i \circ_i b_1,\ldots,a_i \circ_i b_n, a_{i+1}\circ_i 0,\ldots,a_m \circ_i 0).
\end{equation} 
Here, the $\circ_i$ on the right hand side denotes operadic composition in the operad $\lie$, and $0$ stands for $0_n \in \lie_n$.

The following proposition follows from the arguments presented in~\cite[Section 3.3]{AT12}. See also \cite{AET10} and \cite[Section 7]{AKKN_genus_zero}. 

\begin{prop}
\label{prop:tder-operad}
The family $\tder=\{ \tder_n\}_{n \geq 0}$ of tangential derivations endowed with the above $\circ_i$ operations and $\Sigma_n$-actions forms a $\K$-linear operad.  The non-commutative divergence $j : \tder \to \cyc$ is then a map of linear operads. 
\end{prop}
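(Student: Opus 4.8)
The plan is to deduce both assertions from the operad structure on $\lie$ established in Proposition~\ref{prop:operad-lie}, combined with careful bookkeeping of the tuple indices. I would begin by pinning down the unit. Since $\lie_1 = \mathfrak{a}_1 = \K x_1$, the identification $\tder_1 \oplus \mathfrak{a}_1 \cong \lie_1$ forces $\tder_1 = 0$, so the operadic unit can only be $0 \in \tder_1$. For the additive composition in $\lie$ the unit is likewise $0 \in \lie_1$: directly from the defining formula, $f \circ_i 0 = f$ (the single-variable sum is trivial and the $\lie$-summand $0$ contributes nothing) and $0 \circ_1 f = f$. Consequently $u \circ_i 0 = (a_1 \circ_i 0,\ldots,a_m \circ_i 0) = (a_1,\ldots,a_m) = u$ and $0 \circ_1 u = u$, which gives the unit axiom for $\tder$.

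For associativity I would verify each of the three cases of axiom (ii) for triples $u = (a_1,\ldots,a_\ell)$, $v = (b_1,\ldots,b_m)$, $w = (c_1,\ldots,c_n)$. The observation that drives the proof is that every entry of a double composite such as $(u \circ_j v)\circ_i w$ is a $\lie$-operadic composite of entries of $u$, $v$, $w$ with zeros inserted into the inactive slots; schematically each component has one of the shapes $a_p \circ 0$, $a_p \circ b_q$, $a_p \circ (b_q \circ c_r)$, or $(a_p \circ c_r)$. In every case the required equality of tuples reduces, entry by entry, to the parallel/nested/parallel trichotomy of the associativity axiom in $\lie$, once one checks that the ``active slot'' (the position carrying the nontrivial inner word) lands in the same place on both sides. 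Equivariance is handled the same way: by definition $u^\sigma$ permutes both the tuple positions and the variables, and the characterizing diagram~\eqref{diag:sigma-n-action} together with equivariance in $\lie$ shows that $(-)^\sigma$ respects the composition; the identity $u^\sigma \circ_{\sigma^{-1}(i)} v^\tau = (u \circ_i v)^{\sigma \circ_i \tau}$ then follows from $\lie$-equivariance and the block-permutation formula~\eqref{def: composition of permutations}.

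For the second assertion I would show directly that $j$ intertwines the $\circ_i$ operations. Writing $u \circ_i v = (c_1,\ldots,c_{m+n-1})$ and splitting $j(u \circ_i v) = \trace\big(\sum_k \partial_k(c_k)x_k\big)$ into the three index-blocks, the key ingredient is a chain rule for the non-commutative partial derivatives under the affine substitution $x_i \mapsto s \eqdef \sum_{j'=i}^{i+n-1} x_{j'}$: for a word of the form $a(\ldots,s,\ldots)$, each $\partial_{j'}$ with $i \le j' \le i+n-1$ recovers one and the same series $(\partial_i a)$ evaluated after substitution, so that $\sum_{j'} \partial_{j'}\big(a(\ldots,s,\ldots)\big)\,x_{j'} = (\partial_i a)(\ldots,s,\ldots)\cdot s$, while $\partial_{i+k-1}$ of an inserted word $b_q(x_i,\ldots,x_{i+n-1})$ returns $(\partial_k b_q)$ in the shifted variables. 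Summing the three blocks and applying $\trace$, the entries assemble (modulo commutators) into $\trace\big(\sum_p \partial_p(a_p)x_p\big)$ with $x_i$ replaced by $s$, plus $\trace\big(\sum_q \partial_q(b_q)x_q\big)$ in the shifted variables, which is precisely $j(u)\circ_i j(v)$ for the induced additive operad structure on $\cyc$. Compatibility with the symmetric group is the more routine check $j(u^\sigma)=j(u)^\sigma$, following from $\partial_k(f^\sigma) = (\partial_{\sigma^{-1}(k)}f)^\sigma$ and cyclic invariance of $\trace$.

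I expect the two genuinely delicate points to be the following. First, the index bookkeeping in the associativity verification: one must track the active slot of each double composite through the trichotomy so that the reduction to the $\lie$-axioms is clean rather than merely plausible. Second, the non-commutative chain rule underlying $j(u\circ_i v) = j(u)\circ_i j(v)$, where one must be careful that the relevant terms coincide only after passing to cyclic words, and that the substitution $x_i \mapsto s$ distributes correctly across $\partial_k$ and the Leibniz rule. Both amount to the computations of \cite[Section~3.3]{AT12}, so I would relegate the full verifications to Appendix~\ref{sec:Operadapp}, as the paper does.
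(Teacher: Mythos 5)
Your proposal is correct and follows essentially the same route as the paper's own proofs (Proposition~\ref{prop:tder-operadapp} and Lemma~\ref{lem:divergence-is-operadic} in Appendix~\ref{sec:Operadapp}): unit $0 \in \tder_1$, entry-by-entry reduction of associativity and equivariance to the corresponding axioms in $\lie$, and for the divergence the key observation that all non-commutative partials in the active block agree with the substituted $\partial_i(a_i)$, so the block sums assemble under $\trace$ into $j(u)\circ_i j(v)$. The only cosmetic difference is that the paper routes the divergence computation through the operadicity of $\trace:\ass\to\cyc$ and the two claims $j(u\circ_i 0)=\trace(f\circ_i 0)$, $j(0\circ_i v)=\trace(0\circ_i g)$, whereas you compute $j(u\circ_i v)$ directly in three index blocks; the underlying chain-rule identity is identical.
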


The collection of special derivations $\sder = \{\sder_n\}$ carries a stronger operadic structure than $\tder$: it forms an operad in Lie algebras. We state the main theorems describing this structure below; full proofs are deferred to Appendix~\ref{sec:Operadapp}.

\begin{theorem}\label{thm:sderbraid}
    The family $\sder$ of special derivations forms a linear suboperad of $\tder$, and with the operadic structure restricted from $\tder$ it is an operad in Lie algebras. The family $\ib\eqdef \{\ib_n\}_{n \geq 0}$ of infinitesimal braids injects into $\sder$ as a sub-operad in Lie algebras.
\end{theorem}

In analogy with tangential derivations, non-canonical operadic structures can be defined for the groups of tangential automorphisms $\TAut = \{ \TAut_n \}_{n \geq 0}$, to produce operads in sets.

\begin{prop}
\label{operad of TAut}
The collection $\TAut = \{ \TAut_n \}_{n \geq 0}$ has the structure of an operad in sets. 
The symmetric group action is given for $F=\exp(u) \in \TAut_n$ and $\sigma \in \Sigma_n$ by $F^\sigma \eqdef \exp(u^\sigma)$.
The partial composition maps are defined for $F=\exp(u), G=\exp(v)$ with $u \in \tder_m$ and $v \in \tder_n$, by the formula
\[
\begin{tikzcd}
\TAut_m \times \TAut_n \arrow[r, "\circ_i"] & \TAut_{m+n-1}
\end{tikzcd}
\]
\[
(F, G) 
\mapsto 
\exp(u \circ_i 0) \, \exp(0 \circ_i v) .
\] 
\end{prop}

In particular, we have $F \circ_i G = (F \circ_i 1) \, (1 \circ_i G)$, which in usual cosimplicial notation reads $$ 
F^{1,\ldots,i-1,\,i(i+1)\cdots(i+n-1),\,i+n,\ldots,m+n-1} \circ
G^{i,i+1,\ldots,i+(n-1)}.$$
As explained in \cref{sec:operadic-cohomology}, this cosimplicial structure is also present at the level of $\tder$.
In particular, if $F=\exp{(u)} \in \TAut_n, u \in \tder_n$, then $F^{i,(i+1)\hdots j}=\exp{(u^{i,(i+1)\hdots j})}$, etc. 
This construction is also natural in the sense that if $a \in \lie_n$, then we have $(F(a))^{i,(i+1)\hdots j}=F^{i,(i+1)\hdots j}(a^{i,(i+1)\hdots j})$ for instance.
Note that even though partial compositions $\circ_i$ are not group homomorphisms in general, composition with the unit $-\circ_i 1$ and $1 \circ_i -$ are, as are the symmetric group actions.

\begin{remark}
\label{rem:integration-not-compatible-with-operad}
The operad structure on $\TAut$ defined above agrees with\footnote{The order of composition in our definition is opposite to that of \cite{AKKN_genus_zero}, as we follow the definition of the set of KV solutions in \cite{AET10}, which is the inverse of that of \cite{AT12} and \cite{AKKN_genus_zero}.} with the operad composition used in \cite{AKKN_genus_zero} to construct Kashiwara--Vergne solutions corresponding to genus zero surfaces with more than three boundary components. 
It is not, however, the operad structure one would obtain by exponentiation of the partial compositions of $\tder$:
\[
\exp(u) \circ_i \exp(v) \neq \exp(u \circ_i v).
\]
where $u \in \tder_n$ and $v \in \tder_m$, and the composition on the right-hand side is as in Equation~\eqref{eq:tderOperadComp}.
These notions do, however coincide when restricted to $\sder$ and $\SAut$, where there are canonical operad structures in the categories of Lie algebras and prounipotent groups, respectively.
\end{remark}

Although the operadic structure on $ \TAut $ is non-canonical, it is natural in the sense of~\cite[Section 3]{AT12}: for any $ F \in \TAut_n $, the identity $ J(F^{i,j}) = J(F)^{i,j} $ holds for all pairs $ i, j $. 
As we compute in \cref{sec:operadic-cohomology}, this implies that the Jacobian cocycle is compatible with the operadic composition. 

\begin{prop}
\label{prop:Jacobian-operad}
    The Jacobian $ J \colon \TAut \to \cyc$ is a morphism of operads (in sets).
\end{prop}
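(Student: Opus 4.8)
The plan is to combine three facts established above: that $J$ is an additive group $1$-cocycle, $J(G\cdot H)=J(G)+G\cdot J(H)$; that $J$ is natural with respect to the cosimplicial doubling and embedding operations, $J(F^{i,(i+1)\cdots j})=J(F)^{i,(i+1)\cdots j}$ (the identity recorded in the remark preceding this proposition); and that the divergence $j\colon\tder\to\cyc$ is a morphism of linear operads (\cref{prop:tder-operad}), from which the $\Sigma_n$-equivariance of $J$ is inherited by integrating the defining cocycle relation. Preservation of units is immediate, since $\TAut_1$ is trivial and $J(\id)=J(1)=0$, which is the unit of the additive linear operad $\cyc$. Thus the proposition reduces to the composition identity $J(F\circ_i G)=J(F)\circ_i J(G)$ for $F\in\TAut_m$, $G\in\TAut_n$, $1\le i\le m$, together with the routine equivariance $J(F^\sigma)=J(F)^\sigma$.

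For the composition identity I would start from the defining formula of the operad structure on $\TAut$ (\cref{operad of TAut}),
\[
F\circ_i G = F^{1,\ldots,i-1,\,i(i+1)\cdots(i+n-1),\,i+n,\ldots,m+n-1}\cdot G^{i,i+1,\ldots,i+n-1},
\]
abbreviating the two factors as $F^{\mathrm{bl}}$ (the block-doubling of $F$ at strand $i$) and $G^{\mathrm{em}}$ (the embedding of $G$ into the block). Applying the cocycle property to this product gives
\[
J(F\circ_i G)=J\!\left(F^{\mathrm{bl}}\right)+F^{\mathrm{bl}}\cdot J\!\left(G^{\mathrm{em}}\right).
\]
Naturality of $J$ identifies the two Jacobians with the cosimplicial images $J(F)^{\mathrm{bl}}$ and $J(G)^{\mathrm{em}}$ of $J(F)\in\cyc_m$ and $J(G)\in\cyc_n$. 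Since the composition in the linear operad $\cyc$ is additive, $J(F)\circ_i J(G)=J(F)^{\mathrm{bl}}+J(G)^{\mathrm{em}}$, so the identity will follow once I show that $F^{\mathrm{bl}}$ acts trivially on $J(G)^{\mathrm{em}}$ in $\cyc_{m+n-1}$.

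This last point is the heart of the argument. The element $J(G)^{\mathrm{em}}$ is a cyclic word in the block variables $x_i,\ldots,x_{i+n-1}$ alone. On the other hand, the doubling operation sends the conjugator data $F=(f_1,\ldots,f_m)$ to an automorphism $F^{\mathrm{bl}}$ whose conjugator on each of the doubled strands $x_i,\ldots,x_{i+n-1}$ is one and the same element $g\eqdef f_i(x_1,\ldots,x_{i-1},\,x_i+\cdots+x_{i+n-1},\,x_{i+n},\ldots)$, the block substitution applied to $f_i$. Consequently $F^{\mathrm{bl}}$ acts on any word in $x_i,\ldots,x_{i+n-1}$ by the single conjugation $w\mapsto g^{-1}wg$, which becomes the identity after projecting to $\cyc_{m+n-1}=\ass_{m+n-1}/[\ass_{m+n-1},\ass_{m+n-1}]$. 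Hence $F^{\mathrm{bl}}\cdot J(G)^{\mathrm{em}}=J(G)^{\mathrm{em}}$, and assembling the pieces yields $J(F\circ_i G)=J(F)^{\mathrm{bl}}+J(G)^{\mathrm{em}}=J(F)\circ_i J(G)$.

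I expect the main obstacle to be the justification that the doubling $F^{\mathrm{bl}}$ really assigns the \emph{same} conjugator $g$ to every doubled strand, so that its action on block-variable words is by a global conjugation. This requires unwinding the explicit form of the $-\circ_i 1$ operation on tangential automorphisms developed in \cref{sec:Operadapp}; once this structural fact is in hand, the vanishing in $\cyc$ is immediate from the definition of cyclic words. By contrast, the equivariance statement $J(F^\sigma)=J(F)^\sigma$ is comparatively routine, following from the equivariance of the divergence $j$ recorded in \cref{prop:tder-operad} together with the integral definition of $J$.
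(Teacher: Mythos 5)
Your proposal is correct and takes essentially the same route the paper intends: the paper states \cref{prop:Jacobian-operad} without a written proof, relying on the naturality identity $J(F^{i,j})=J(F)^{i,j}$ and its citations, and the intended argument—which the paper does spell out in the structurally identical proof of \cref{thm:operad-SolKVapp}—is precisely your combination of the cocycle property, naturality, and the observation that $F^{\mathrm{bl}}$ conjugates every doubled generator by one and the same element, so that its action on block-variable words becomes trivial after passing to $\cyc$. The step you flagged as the main obstacle is indeed immediate from the formula for $u \circ_i 0$ in \eqref{eq:tderOperadComp}: all components in positions $i,\ldots,i+n-1$ are equal to $a_i \circ_i 0$, so the conjugator on the block is the single element $f_i \circ_i 0$, and $\trace(g^{-1}wg)=\trace(w)$ finishes the argument.
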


\subsection{Moperad structures for $\sder$ and $\tder$}\label{sec: moperad of dervations}
Since $\sder$ is an operad in Lie algebras (Theorem~\ref{thm:sder-operad}), following the general shift construction presented in Example~\ref{example: shifted moperad}, one obtains a shifted $\sder$-moperad in Lie algebras, denoted $\sder^+$. The same statement integrates to groups, that is, $\SAut=\{\SAut_n\}_{n\geq 0}$ is an operad in prounipotent groups, and there is a corresponding shifted $\SAut$-moperad, denoted $\SAut^+$. 
While $\tder$ is not an operad in Lie algebras, in this section we develop an $\sder$-moperad in Lie algebras, denoted $\tder^1$, which injects into the linear moperad $\tder^+$. The construction is inspired by the $\sder$-moperad $\sder^1$, a sub-moperad of $\sder^+$, below. Both constructions integrate to moperads in prounipotent groups.

\begin{definition}
Let $\sder^1_n$ denote the semidirect product of Lie algebras $\sder^1_n \eqdef \lie_n\rtimes \sder_n$.    
For $n\geq 1$, write elements of $\sder_n^1$ as pairs $({a},u)$, where $a=a(x_1,\ldots, x_n)$ is a Lie word in $\lie_n$ and $u$ is a derivation in~$\sder_n$.
Then $[(a,u),(b,v)]=([a,b]+u(b)-v(a),[u,v])$.

Similarly, let $\SAut_n^1=(\F_n)_{\K} \rtimes \SAut_n$ denote the corresponding semidirect product of groups, consisting of pairs $(w,e^u)$, where $w=w(x_1,...,x_n)$ is a word in $(\F_n)_{\K}$, and $e^u\in \SAut_n$. If $w,w' \in (\F_n)_{\K}$ and $e^u, e^{u'} \in \SAut_n$ then $(w,e^u)(w',e^{u'})= \left(w (e^u\cdot w'), e^u e^{u'}\right)$.

The symmetric group $\Sigma_n$ acts on $\sder^1_n$ by permuting the variables: $\sigma(a,u) \eqdef (a^\sigma, u^\sigma)$. 
This action induces an action of $\Sigma_n$ on $\SAut^1_n$ also by permuting the variables.  
\end{definition}

We aim to define an $\sder$-moperad structure on $\sder^1$.
To do so, we define a Lie algebra inclusion
$$\kappa: \lie_n\langle x_1,\ldots,x_n\rangle \hookrightarrow \sder_{n+1}=\sder_{\lie_{n+1}\langle x_0,x_1,...,x_n\rangle}
.$$
We define $\kappa$ on the generators by setting $\kappa(x_i)=t^{0,i}=(x_i,0,...,0,x_0,0,...,0)$, where $x_i$ is placed in the 0-th component and $x_0$ in the $i$-th component. 
The map $\kappa$ extends uniquely to a Lie algebra homomorphism on $\lie_n$. For $a\in \lie_n$, the 0-th component of $\kappa(a)$ is always $a$, and $a$ does not involve the variable $x_0$, hence, the map~$\kappa$ is injective.

\begin{remark}
    There is an illuminating visual for understanding the $\kappa$-value of a Lie word, shown in Figure~\ref{fig:TreeReRooting}. Namely, a Lie word in $\lie_n$ is naturally represented as a rooted binary tree with leaves labeled with the numbers $\{1,2,...,n\}$ (each label may be used multiple times), and the root labeled $0$. Redefining the root of such a tree to be one of the leaves then gives a new Lie word, this time in the $(n+1)$ variables $\{x_0,x_1,...,x_n\}$. 
    We obtain an element in $\lie_{n+1}^{\oplus (n+1)}$ by summing over all ways of rooting the tree, and placing each resulting Lie word in the component numbered by the label of the new root, as shown in Figure~\ref{fig:TreeReRooting}. To finish, $\lie_{n+1}^{\oplus (n+1)}$ maps into $\tder_{n+1}$ in the natural way. This in particular implies that the 0-th component of $\kappa(a)$ is always $a$. The fact that the image of $\kappa$ is in $\sder_{n+1}$ follows from the fact that $\kappa$ is a Lie algebra map and the images of the generators lie in $\sder_{n+1}$. 
\end{remark}

The following proposition follows diagrammatically from \cite[Theorem 3.28]{WKO2}, or algebraically from Drinfeld's Lemma  \cite[Lemma after Proposition 6.1]{Drin}.

\begin{prop}
\label{prop:KappaMakesSense}
    For $u=(u_1,...,u_n)\in \sder_n$, denote $u^+ \eqdef (0,u_1,...,u_n)\in \sder_n^+\cong \sder_{n+1}$. Then, for any $a\in \lie_n$, we have $\kappa(u(a))=[u^+,\kappa(a)]$.
\end{prop}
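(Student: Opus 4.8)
The plan is to prove the identity by showing that the inner derivation $\operatorname{ad}(u^+) = [u^+, -]$ of $\sder_{n+1}$ preserves the Lie subalgebra $\kappa(\lie_n)$, and then to identify the element $[u^+, \kappa(a)]$ inside $\kappa(\lie_n)$ by recording how it conjugates the single generator $x_0$. Thus I would isolate two ingredients: (i) that $\kappa(\lie_n)$ is $\operatorname{ad}(u^+)$-stable, and (ii) that an element of $\kappa(\lie_n)$ is completely determined by its action on $x_0$. Granting (i), the identity collapses to a one-line check via (ii).

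First I would record the feature of $\kappa$ noted in the Remark preceding the statement: for every $a \in \lie_n$ the $0$-th component of $\kappa(a)$ is $a$ itself, so that $\kappa(a)(x_0) = [x_0, a]$. Since $a \mapsto [x_0, a]$ is injective on $\lie_n$ (in the free Lie algebra $\lie_{n+1}$ a nonzero word not involving $x_0$ cannot commute with $x_0$) and $\kappa$ is injective, any $X \in \kappa(\lie_n)$ is recovered from $X(x_0)$; this is ingredient (ii). For the identification itself I would use that $u^+(x_0) = 0$ (its $0$-component vanishes) and that $u^+$ restricts on the subalgebra $\lie_n = \langle x_1, \dots, x_n \rangle \subset \lie_{n+1}$ to $u$ (both act on $x_j$, $j \geq 1$, by $x_j \mapsto [x_j, u_j]$), to compute
\[
[u^+, \kappa(a)](x_0) = u^+\big(\kappa(a)(x_0)\big) - \kappa(a)\big(u^+(x_0)\big) = u^+\big([x_0,a]\big) = [x_0, u^+(a)] = [x_0, u(a)] = \kappa(u(a))(x_0).
\]
Hence, once we know $[u^+, \kappa(a)] \in \kappa(\lie_n)$, ingredient (ii) forces $[u^+, \kappa(a)] = \kappa(u(a))$, which is the assertion.

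It remains to establish ingredient (i). Because $\kappa$ is a Lie homomorphism, $\kappa(\lie_n)$ is the subalgebra of $\sder_{n+1}$ generated by the elements $t_{0,i} = \kappa(x_i)$, and because $\operatorname{ad}(u^+)$ is a derivation and $\kappa(\lie_n)$ is closed under bracket, stability on all of $\kappa(\lie_n)$ reduces, by induction on bracket length, to the generators: it suffices to show $[u^+, t_{0,i}] \in \kappa(\lie_n)$ for each $i$. This single containment is the crux. It is the infinitesimal form of the fact that the free fiber $\F_n \hookrightarrow \PB_{n+1}$ of the Fadell--Neuwirth fibration is normalized by the shifted braids; at the level of special derivations this is exactly the content of \cite[Theorem 3.28]{WKO2}, equivalently of Drinfel'd's Lemma \cite[Lemma after Proposition~6.1]{Drin}, which I would invoke directly.

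The hard part is precisely this containment $[u^+, t_{0,i}] \in \kappa(\lie_n)$. Proving it by hand would mean computing $\kappa$ on an arbitrary Lie word via the tree re-rooting description of the Remark and matching the components of $[u^+, t_{0,i}]$ in $\sder_{n+1}$ by hand; this is the genuinely combinatorial input and the reason the cited results are the natural tools. Everything else --- the reduction of $\operatorname{ad}(u^+)$-stability to the generators and the identification of the resulting element through its value on $x_0$ --- is formal.
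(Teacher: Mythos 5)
Your argument is correct, but it is organized genuinely differently from the paper's treatment: the paper gives no proof beyond citing \cite[Theorem~3.28]{WKO2} (diagrammatically) or Drinfel'd's Lemma \cite{Drin} (algebraically) for the full identity, followed by a sanity-check example, whereas you derive the identity formally from a strictly weaker input, namely the single containment $[u^+,\kappa(x_i)]\in\kappa(\lie_n)$. Both of your formal steps check out: the reduction of $\operatorname{ad}(u^+)$-stability to generators is a valid induction on bracket length (Jacobi identity plus the fact that $\kappa(\lie_n)$ is a Lie subalgebra, extended degree-by-degree to the completed setting), and the identification step is sound because $\kappa(b)(x_0)=[x_0,b]$, $u^+(x_0)=0$, $u^+$ restricts to $u$ on $\lie_n$, and a nonzero element of $\lie_n$ cannot commute with $x_0$ inside $\lie_{n+1}$ (by Shirshov--Witt, two commuting elements of a free Lie algebra span a subalgebra of rank at most one). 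Your route buys two things the paper's citation does not: it weakens the black box from a precise formula to a qualitative normalization statement, and it isolates exactly where speciality of $u$ enters. Indeed, your value-on-$x_0$ computation is valid for \emph{every} tangential $u$, and it is precisely the containment that fails without speciality: for the non-special $u=(0,x_1)\in\tder_2$ one computes $[u^+,\kappa(x_2)]=([x_2,x_1],\,0,\,[x_1,x_0])$, which has the correct $0$-th component but does not lie in $\kappa(\lie_2)$ (compare $\kappa([x_2,x_1])=([x_2,x_1],[x_0,x_2],[x_1,x_0])$), so the proposition itself fails for such $u$. Two small caveats: your claim that the containment is ``exactly the content'' of the cited results is loose --- those references are more naturally read as giving the full equivariance statement, which is what the paper cites them for, so you are invoking them for a weaker consequence rather than an equivalent statement; and since that crux is still outsourced, your proof, like the paper's, is not self-contained.
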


\begin{example}
    We check $\kappa(u(a))=[u^+,\kappa(a)]$ for $u=(x_2,x_1)$ and $a=x_2$. For this calculation, recall that in $\tder_{n}$ if $u=(u_1,...,u_n)$ and $v=(v_1,...,v_n)$, then $([u,v])_k=[u_k,v_k]+u(v_k)-v(u_k)$. We compute both sides: 
    \begin{align*}
\kappa(u(x_2)) =\kappa([x_2,x_1]) &=([x_2,x_1],[x_0,x_2],-[x_0,x_1]),\\
    [u^+,\kappa(x_2)] =[(0,x_2,x_1),(x_2,0,x_0)] &=([x_2,x_1],-[x_2,x_0],[x_1,x_0]),
    \end{align*}
    and therefore $\kappa(u(x_2))=[u^+,\kappa(x_2)]$ as expected.
\end{example}

\begin{figure}
    \includegraphics[width=10cm]{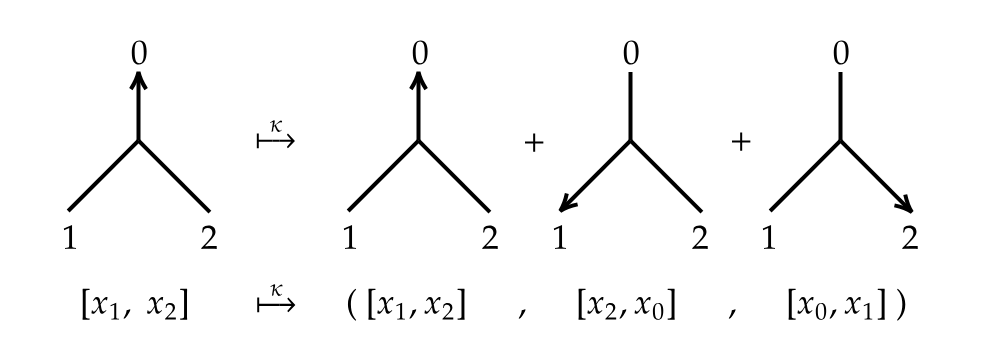}
    \caption{The value of $\kappa([x_1,x_2])\in \sder_{2+1}$, calculated by re-rooting the tree.}\label{fig:TreeReRooting}
\end{figure}

\begin{lemma}
\label{lem:iota}
There are injective Lie algebra homomorphisms
$$\iota_n:\sder_n^1 \hookrightarrow \sder^+_n\cong \sder_{n+1},$$ given on $a \in \lie_n$ and $u=(u_1,\ldots,u_n) \in \sder_n$ by 
\[ \iota_n(a,u) \eqdef \kappa(a)+ (0,u_1,\ldots,u_n).\]
\end{lemma}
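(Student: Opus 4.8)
The plan is to establish three things about $\iota_n$: that it is well-defined (lands in $\sder_{n+1}$), that it is injective, and that it preserves brackets. The crucial engine for the bracket computation will be \cref{prop:KappaMakesSense}, together with the fact that $\kappa$ is a Lie algebra map.

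First I would check well-definedness and injectivity, which are both immediate from the structure of the target. For well-definedness, $\kappa(a)\in\sder_{n+1}$ by the remark preceding \cref{prop:KappaMakesSense}, and $u^+=(0,u_1,\ldots,u_n)$ is special because $u^+(x_0)=0$ and $u^+\big(\sum_{i=0}^n x_i\big)=\sum_{i=1}^n[x_i,u_i]=u\big(\sum_{i=1}^n x_i\big)=0$ since $u\in\sder_n$; hence $\iota_n(a,u)=\kappa(a)+u^+\in\sder_{n+1}$. For injectivity, the key point is that the $0$-th component of $\kappa(a)$ is exactly $a$ (as noted in the remark) while the $0$-th component of $u^+$ is $0$, so the $0$-th component of $\iota_n(a,u)$ is $a$. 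Thus $\iota_n(a,u)=0$ forces $a=0$, whence $\kappa(a)=0$ and so $u^+=0$, giving $u=0$.

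Next I would record the auxiliary fact that the shift $(-)^+\colon\sder_n\to\sder_{n+1}$ is itself a Lie algebra homomorphism, i.e.\ $[u,v]^+=[u^+,v^+]$. This follows from the componentwise bracket formula $([u,v])_k=[u_k,v_k]+u(v_k)-v(u_k)$: the $0$-th component of $[u^+,v^+]$ vanishes, and for $k\geq 1$ one uses that the components $u_i,v_i$ are Lie words in $x_1,\ldots,x_n$ not involving $x_0$, so $u^+$ and $u$ act identically on them, giving $u^+(v_k)=u(v_k)$ and $v^+(u_k)=v(u_k)$, hence $([u^+,v^+])_k=([u,v])_k$.

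Finally I would verify the homomorphism property directly. On the one hand, using that $\kappa$ is a Lie map together with $\kappa(u(b))=[u^+,\kappa(b)]$ and $\kappa(v(a))=[v^+,\kappa(a)]$ from \cref{prop:KappaMakesSense},
\[
\iota_n\big([(a,u),(b,v)]\big)=\kappa([a,b]+u(b)-v(a))+[u,v]^+
=[\kappa(a),\kappa(b)]+[u^+,\kappa(b)]-[v^+,\kappa(a)]+[u,v]^+.
\]
On the other hand, expanding by bilinearity,
\[
[\iota_n(a,u),\iota_n(b,v)]=[\kappa(a)+u^+,\kappa(b)+v^+]
=[\kappa(a),\kappa(b)]+[\kappa(a),v^+]+[u^+,\kappa(b)]+[u^+,v^+].
\]
Using $[\kappa(a),v^+]=-[v^+,\kappa(a)]$ and $[u^+,v^+]=[u,v]^+$ from the previous step, the two expressions agree term by term. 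The only real subtlety, and thus the step I would treat most carefully, is matching the cross terms of the semidirect-product bracket against the brackets $[u^+,\kappa(b)]$ and $[v^+,\kappa(a)]$ via \cref{prop:KappaMakesSense}; once that correspondence is in place the computation is routine bookkeeping. This completes the proof that $\iota_n$ is an injective Lie algebra homomorphism.
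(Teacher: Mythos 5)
Your proposal is correct and follows essentially the same route as the paper's proof: decompose $\iota_n$ as $\kappa$ plus the shift $(\cdot)^+$, use \cref{prop:KappaMakesSense} and the fact that $\kappa$ is a Lie algebra map to match the semidirect-product bracket, and deduce injectivity from the $0$-th component. The only difference is that you explicitly verify the identity $[u,v]^+=[u^+,v^+]$ and the well-definedness of the target, both of which the paper uses implicitly — a harmless (indeed welcome) amount of extra detail.
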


\begin{proof}
    The map $\iota_n$ can be written as the sum of $\kappa:\lie_n \to \sder_{n+1}$, and the natural inclusion $(\cdot)^+:\sder_n \hookrightarrow \sder^+_n$ given by $(u_1,...,u_n)^+ \eqdef (0,u_1,...u_n)$. 
    In $\sder_n^1,$ we have $[(a,u),(b,v)]= ([a,b]+u(b) - v(a), [u,v])$. Hence, we have
    \begin{align*}
        \iota_n([(a,u),(b,v)])&=\kappa([a,b])+\kappa(u(b)) -\kappa(v(a))+ [u,v]^+\\
        &=[\kappa(a),\kappa(b)]+[u^+,\kappa(b)]+[\kappa(a),v^+]+[u^+,v^+]\\
        &=[\kappa(a)+u^+,\kappa(b)+v^+]=[\iota_n(a,u),\iota_n(b,v)].
    \end{align*}
    Here, in the second line we used~\cref{prop:KappaMakesSense}.
    To show that $\iota_n$ is injective, assume that $\iota_n(a,u)=0$. It follows immediately from the definition and injectivity of $\kappa$ that $a=0$, and then we have $\iota_n(0,u)=0$, which implies $u=0$ by the injectivity of the shift construction $(\cdot)^+$.
\end{proof}

\begin{theorem}
\label{thm:Sder1Moperad}
The maps $\{\iota_n\}_{n \geq 0}$ naturally assemble to a map of symmetric sequences in Lie algebras $\iota: \sder^1 \to \sder^+$, and the image $\iota(\sder^1)$ 
    of $\sder^1$ under the map $\iota$ is an $\sder$-submoperad of $\sder^{+}$.
\end{theorem}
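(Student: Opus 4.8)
The proof splits along the two assertions. For the symmetric-sequence claim, the plan is to check that each $\iota_n$ intertwines the $\Sigma_n$-action on $\sder^1_n$ with the action on $\sder^+_n=\sder_{n+1}$, where the latter is the restriction of the $\Sigma_{n+1}$-action to the stabilizer $\Sigma^1_n$ of the index $0$. Since $\iota_n=\kappa+(-)^+$ is a sum of two componentwise-defined maps, it suffices to treat the two summands separately. For the shift, inserting a zero into the $0$-slot is fixed by any $\sigma\in\Sigma^1_n$, so $(u^+)^\sigma=(u^\sigma)^+$ is immediate. For $\kappa$, because $\kappa$ is a Lie algebra homomorphism and $\sigma$ acts by automorphisms, it is enough to verify $\kappa(x_i)^\sigma=\kappa(x_i^\sigma)$ on generators; both sides equal $t^{0,\sigma^{-1}(i)}$ once one uses $\sigma(0)=0$. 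Hence $\iota_n(a^\sigma,u^\sigma)=\iota_n(a,u)^\sigma$ and $\iota$ is a map of symmetric sequences in Lie algebras.

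For the submoperad claim I would first reduce to a closure property. By \cref{lem:iota} each $\iota_n$ is an injective Lie algebra homomorphism, and by the previous paragraph $\iota$ respects the symmetric-sequence structure; since $\sder^+$ is already an $\sder$-moperad, it is enough to prove that the image $\iota(\sder^1)$ is closed under the two structure maps of $\sder^+$, namely the monoid composition $\circ_0$ and the right $\sder$-action $\circ_i$ for $1\le i\le n$. Once closure is in hand, the moperad axioms hold automatically on $\iota(\sder^1)$ because they hold in $\sder^+$, and the induced operations transport back to $\sder^1$ through the bijection $\iota$. By \cref{example: shifted moperad} both structure maps on $\sder^+$ are instances of the partial composition of the operad $\sder$, so in each case the method is to write $\iota_n(a,u)=\kappa(a)+(0,u_1,\dots,u_n)$ in components indexed $0,\dots,n$, apply the explicit partial-composition formula of $\sder$, and exhibit the output as $\iota$ of an explicit element of $\sder^1$.

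The module closure $\circ_i$ for $1\le i\le n$ is the more direct case: inserting $w\in\sder_m$ into an ordinary strand expands a genuine variable $x_i$ that appears in both $a$ and the $u_j$, while the frozen variable $x_0$ is untouched. Here the induced action is essentially the componentwise operadic composition on $\lie$ and on $\sder$, and closure follows from the naturality of $\kappa$ under this expansion, which the re-rooting description together with \cref{prop:KappaMakesSense} makes transparent: the $0$-component becomes the expansion of $a$, and the remaining components organize into $\iota_{n+m-1}$ of the expected pair. The monoid closure $\circ_0$ is where I expect the main obstacle. There the composition inserts the second factor into the frozen $0$-strand, so the substitution $x_0\mapsto x_0+x_1+\dots+x_m$ interacts with $\kappa(a)$ in \emph{every} component at once, and $x_0$ is a variable that occurs in $\sder_{n+1}$ only through the image of $\kappa$.

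The simplification that resolves this, and which I would isolate as a lemma, is that each nonzero component $\kappa(a)_i$ with $i\ge 1$ is homogeneous of degree one in $x_0$, since re-rooting replaces the old root by a single $0$-labelled leaf. Consequently the substitution splits additively,
\[
\kappa(a)_i(x_0+x_1+\dots+x_m,\dots)=\kappa(a)_i(x_0,\dots)+\kappa(a)_i(x_1+\dots+x_m,\dots),
\]
where the first summand recombines with the re-rooting of the output's $0$-component to produce $\kappa$ of the composite Lie word $b(x_1,\dots,x_m)+a(x_{m+1},\dots,x_{m+n})$, while the second summand is free of $x_0$ and is absorbed into the $\sder$-part. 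Tracking this bookkeeping shows the result lies in $\iota_{n+m}(\sder^1_{n+m})$ and simultaneously reads off the induced monoid composition on $\sder^1$. Establishing the degree-one property, the additive splitting, and the reassembly through the re-rooting formula is the technical heart of the argument; once it is done, both closures and hence the submoperad statement follow.
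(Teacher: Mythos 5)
Your proposal is correct, and its skeleton --- $\Sigma_n$-equivariance, then closure of $\iota(\sder^1)$ under the right $\sder$-action and under the monoid product, with the module case resting on the identity $\kappa(a\circ_i 0)=\kappa(a)\circ_i 0$ for $1\leq i\leq m$ --- is the same as the paper's. The real divergence is at the $\circ_0$ step, and there your route is not merely different but necessary: the paper treats $\circ_0$ in parallel with $\circ_i$, and its chain of equalities implicitly uses $\kappa(b)\circ_0 0=\kappa(b\circ_0 0)$, which is \emph{false}. Already for $m=n=1$ and $a=x_1$ one computes, with components indexed $0,1,2$,
\[
\kappa(x_1)\circ_0 0=(x_2,\,x_2,\,x_0+x_1)=t^{0,2}+t^{1,2},
\qquad
\kappa(x_1\circ_0 0)=\kappa(x_2)=(x_2,\,0,\,x_0)=t^{0,2},
\]
reflecting the fact that doubling the frozen strand doubles every chord ending on it. Consequently the formula the paper's proof ends with, $\iota_m(a,u)\circ_0\iota_n(a',u')=\iota_{m+n}(a\circ_0 0+a',\,u\circ_0 u')$, is incorrect --- in this example the true value is $\iota_2\bigl(x_2,(x_2,x_1)\bigr)$, not $\iota_2(x_2,0)$ --- although the closure statement, i.e.\ the theorem itself, survives. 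Your observation that each component $\kappa(a)_j$, $j\geq 1$, is homogeneous of degree one in $x_0$, and the resulting additive splitting under $x_0\mapsto x_0+x_1+\cdots+x_n$, is exactly the mechanism that produces the correct correction terms: the $x_0$-summand reassembles into $\kappa$ of the new $0$-component, and the remainder lands in the $\sder$-part. So your approach buys correctness at precisely the point where the paper's shortcut fails.

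Two pieces of bookkeeping are still needed to turn your sketch into a complete proof. First, the leftover $v\circ_0 v'-\kappa\bigl((v\circ_0 v')_0\bigr)$ contains, besides the $x_0$-free summands produced by your splitting, the duplicated copies of the shifted word $a\circ_0 0$ sitting in the expanded slots $1,\ldots,n$: the partial composition \eqref{eq:tderOperadComp} copies the $0$-component of $v$ into every new slot, so position $k$ with $1\leq k\leq n$ acquires $a\circ_0 0+u'_k$ rather than just $u'_k$. These terms do not come from the substitution into $\kappa(a)_j$, but they are $x_0$-free and belong to the same leftover; in the example above the correction $t^{1,2}=(0,x_2,x_1)$ has one entry of each type. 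Second, ``absorbed into the $\sder$-part'' requires the leftover tuple to be a \emph{special} derivation of $\lie_{m+n}$, not merely $x_0$-free. This follows from a short argument you should record: the leftover is a difference of elements of $\sder_{m+n+1}$, hence special there; its $0$-component vanishes, so it kills $x_0$ and therefore also kills $x_1+\cdots+x_{m+n}$; and since no component involves $x_0$, it restricts to a tangential, hence special, derivation of $\lie\langle x_1,\ldots,x_{m+n}\rangle$. With these two points added your argument is complete, and it simultaneously yields the corrected formula for the induced monoid composition on $\sder^1$.
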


\begin{proof}
Observe that the map $\iota_n$ commutes with the action of $\Sigma_n$. We need to show that the image $\iota(\sder^1)$ is closed under the monoid product and the $\sder$-action of the moperad $\sder^+$.
    Let $v=(v_0,v_1,...,v_m)=\iota_m((a,u))$ be an arbitrary element in the image of $\iota$. 
    Then, in particular, $v_0=a\in \lie\langle x_1,...,x_m\rangle$, and therefore $v-\kappa(a)\in \sder_m=\sder_{\lie\langle x_1,...,x_m\rangle}$.
    If $v-\kappa(a)=(0,u_1,...,u_m)$, then $u=(u_1,...,u_m)$. 

    Now for some $w\in \sder_n$, we compute $v\circ_i w$, where $1\leq i \leq m$. 
    Recall from Equation~\eqref{eq:tderOperadComp},~\cref{def:LieOperad}, and Section~\ref{sec:Operad_sder} that
    \[
    v \circ_i w = v \circ_i 0 + 0 \circ_i w = v^{1,\ldots,i-1,i(i+1)\cdots(i+n-1),i+n,\ldots,m+n-1}+ w^{i,i+1,...,i+n-1}.
    \]
    Since $v_0\in \lie\langle x_1,...,x_m\rangle $, therefore, $(v \circ_i w)_0=v_0 \circ_i 0\in \lie\langle x_1,...,x_{m+n-1}\rangle$. Thus, $(v \circ_i w)_0\in \lie_{m+n-1}$. 
    
    Observe that $\kappa(a \circ_i 0)=\kappa(a)\circ_i 0$. Indeed, since both sides are Lie algebra maps, it is enough to check this for the generators, which in turn is straightforward. 
    Thus, $\kappa(v_0 \circ_i 0)=\kappa(v_0)\circ_i 0$. 
    
    Now, we compute
    \[
    v\circ_i w-\kappa({v}_0\circ_i 0)=
    \left(
    v \circ_i 0-\kappa(v_0)\circ_i 0
    \right)
    +0\circ_i w
    =\iota(0,u) \circ_i 0 +0 \circ_i w.
    \]
    Therefore, we have
    \[
    v\circ_i w= \kappa(a\circ_i 0) + \iota(0,u) \circ_i 0 +0 \circ_i w = \iota(
    a \circ_i 0, u \circ_i 0 +0 \circ_i w)=\iota\left(a \circ_i 0, u\circ_i w\right),
    \]
    which is in the image of $\iota$.

    Now let $v=(v_0,v_1,...,v_m)=\iota_m((a,u))$, and $v'=(v'_0,v'_1,...,v'_n)=\iota_n((a',u'))$. 
    We check that $v\circ_0 v' \in \sder^+_{m+n}$ is in the image of $\iota$. 
    We have 
    \[
    v\circ_0 v'= v\circ_0 0+(v'_0,v'_1,...,v'_n,0,...,0).
    \]
    The 0-th component of $v\circ_0 v'$ is $(v_0\circ_0 0+v'_0)$. 
    We compute
    \begin{align*}
    v\circ_0 v'-\kappa(v_0 \circ_0 0+v'_0)
    &=\left(v \circ_0 0-\kappa(v_0\circ_0 0)\right)+\left((v_0',...,v_n',0,...,0)-\kappa(v_0')\right)\\
    &=(v-\kappa(v_0))\circ_0 0+0 \circ_0 \left(v'-\kappa(v'_0)\right) \\
    &=\iota(0,u) \circ_0 0 + 0 \circ_0 \iota(0,u')=\iota(0,u\circ_0u').
    \end{align*}
    Therefore, we have $v\circ_0v'=\iota\left(v_0 \circ_0 0+v'_0,u\circ_0u'\right)$, finishing the proof.     
\end{proof}

This structure integrates to prounipotent groups, as follows.
The operad $\sder =\{\sder_n\}_{n \geq 0}$ is an operad in degree completed Lie algebras, meaning that, in each arity, the group-like elements of $\widehat{U}(\sder_n)$ are identified with $\exp(\sder_n)$.  
Let $\SAut^1_n \eqdef (\F_n)_{\K} \rtimes \SAut_n$. 
There is an injective group homomorphism $\exp(\iota):\SAut^1_n \to \SAut^+_n\cong \SAut_{n+1}$. 
The symmetric sequence $\{\SAut^+_{n}\}_{n \geq 0}$ is an $\SAut$-moperad in prounipotent groups, and this restricts to make $\SAut^1$ an $\SAut$-moperad. 

\begin{example}
    For readers familiar with chord diagrams in the sense of finite type invariants in knot theory, we note that these structures are both part of a moperad in Hopf algebras assembled of chord diagrams on $(n+1)$ strands, as shown in Figure~\ref{fig:moperad}. For readers otherwise, this may still be helpful as a visualization.

    \begin{figure}
        \includegraphics[width=14cm]{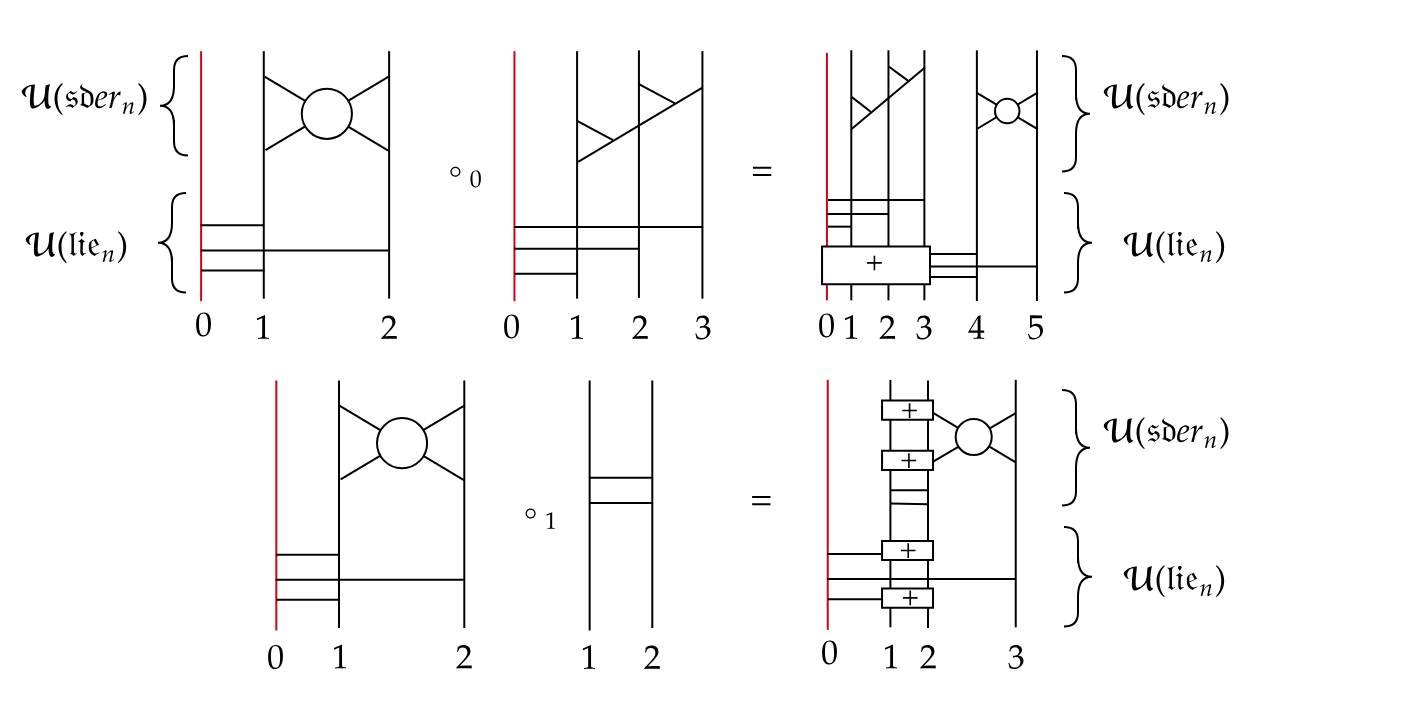}
        \caption{Moperad operations for chord diagrams.}\label{fig:moperad}
    \end{figure}
\end{example}

Expanding on Theorem~\ref{thm:sderbraid}, which states that infinitesimal braids form a suboperad of $\sder$, we have the following.

\begin{lemma}\label{lemma: image of CD+ in SAut1}
The image of $\ib^+_n\hookrightarrow \sder^+_n$ lies in $\iota(\sder^1_n)$.  
\end{lemma}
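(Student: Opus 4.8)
The plan is to reduce the claim to a check on the Lie-algebra generators of $\ib^+_n$. Recall that $\ib^+_n = \ib^1_n \cong \ib_{n+1}$ is, as a degree-complete graded Lie algebra, topologically generated by the infinitesimal braid generators $\{t_{ij} : 0 \le i < j \le n\}$, and that by \cref{lem:iota} the map $\iota_n : \sder^1_n \to \sder^+_n$ is an injective Lie algebra homomorphism, so its image $\iota_n(\sder^1_n)$ is a Lie subalgebra of $\sder^+_n$. Since $\iota_n$ is degree-preserving and each graded piece of $\sder^1_n \cong \lie_n \rtimes \sder_n$ is finite-dimensional, this image is moreover closed. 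Hence it suffices to show that every generator $t_{ij}$ lies in $\iota_n(\sder^1_n)$: the smallest closed Lie subalgebra of $\sder^+_n$ containing all the $t_{ij}$ is precisely $\ib_{n+1}$.

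First I would split the generators into two families according to whether the frozen index $0$ appears. For the generators $t_{0j}$ with $1 \le j \le n$, I would unwind the inclusion $\ib_{n+1} \hookrightarrow \sder_{n+1}$ of \cref{thm:sderbraid}: the derivation $t_{0j}$ is the special derivation whose $0$-th component is $x_j$ and whose $j$-th component is $x_0$, with all other components zero. This is exactly the tuple $t^{0,j}$ defining $\kappa(x_j)$, so $t_{0j} = \kappa(x_j) = \iota_n(x_j, 0)$, which lies in the image.

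For the generators $t_{ij}$ with $1 \le i < j \le n$, I would observe that the corresponding special derivation of $\lie_{n+1}$ does not involve the variable $x_0$ at all: its $0$-th component vanishes, its $i$-th component is $x_j$ and its $j$-th component is $x_i$. This is precisely the image under the shift $(\cdot)^+ : \sder_n \hookrightarrow \sder^+_n$ of the generator $t_{ij} \in \ib_n \subseteq \sder_n$, the inclusion $\ib_n \subseteq \sder_n$ being again supplied by \cref{thm:sderbraid}. Thus $t_{ij} = (t_{ij})^+ = \iota_n(0, t_{ij})$, which lies in the image since $\kappa(0) = 0$.

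Having exhibited every generator in $\iota_n(\sder^1_n)$, the lemma follows from closure of the image under brackets and limits. I expect the only genuinely delicate point to be the bookkeeping in the two middle paragraphs: matching the derivation-tuple description of each $t_{ij}$ under $\ib_{n+1} \hookrightarrow \sder_{n+1}$ against the definitions of $\kappa$ and of the shift $(\cdot)^+$, and checking that the index conventions are mutually consistent. The reduction to generators via the closed-subalgebra argument, and the two identifications themselves, are otherwise immediate from the definitions.
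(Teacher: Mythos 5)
Your proposal is correct and follows essentially the same route as the paper's proof: reduce to the generators of $\ib^+_n$ using that the relevant maps are Lie algebra homomorphisms, then identify $t_{0j}$ with $\kappa(x_j)=\iota_n(x_j,0)$ and $t_{ij}$ (for $1\le i<j\le n$) with $(t_{ij})^+=\iota_n(0,t_{ij})$. Your extra care about closedness of the image and topological generation in the completed setting is a sound elaboration of a step the paper leaves implicit, but it does not constitute a different argument.
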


\begin{proof} 
Since the inclusion $\ib_n^+ \hookrightarrow \sder_n^+$ is a Lie algebra map, it is enough to check this for the generators.

If $0<i<j$, then $t_{i,j}$ maps to $t^{i,j}=(0,...,0,x_j,0,...,0,x_i,0,...,0)$, with $x_j$ in component $i>0$ and $x_i$ in component $j>0$. Therefore, $t^{i,j}\in \sder_n$, so $t^{i,j}=\iota\left(0,t^{i,j}\right)$.

On the other hand, $t_{0,i}$ maps to $t^{0,i}=(x_i,0,...,0,x_0,0,...,0)$, with $x_i$ in the 0-th component and $x_0$ in the $i$-th component. Then
$t^{0,i}=\kappa(x_i)$, therefore, $t^{0,i}=\iota(x_i,0)$.
\end{proof}

Finally, we introduce an $\sder$-moperad in Lie algebras, $\tder^1$, consisting of tangential derivations. The conceptual difference from the previous construction is that $\tder$ itself does not form an operad in Lie algebras, and hence there is only a shifted moperad to speak of in a linear sense. We denote this linear shifted moperad by $\tder^+$. It is the case, however, that tangential derivations admit a right $\sder$-module structure in Lie algebras. 

\begin{prop}
\label{prop:TderSderModule}
The symmetric sequence $\tder=\{\tder_n\}_{n\geq 0}$ forms a right $\sder$-module in Lie algebras, where the action $\circ_i: \tder_m \oplus \sder_n \to \tder_{n+m-1}$ is given by restricting the $i$-th linear partial compositions of $\tder$. 
\end{prop}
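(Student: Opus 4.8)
The plan is to build on the two structural facts already in hand: that $\tder$ is a $\K$-linear operad (\cref{prop:tder-operad}) and that $\sder$ is a suboperad which is moreover an operad in Lie algebras (\cref{thm:sderbraid}). First I would dispense with the underlying \emph{linear} right-module structure, which is immediate: restricting the partial compositions $\circ_i\colon \tder_m \oplus \tder_n \to \tder_{m+n-1}$ of the ambient linear operad to the subspaces $\tder_m \oplus \sder_n$ yields maps satisfying the unit, associativity and equivariance axioms of a right module, exactly as any operad restricts to a module over a suboperad (cf.\ \cref{ex:operad-self-module}). The entire content of the proposition is therefore the \emph{Lie-algebraic} refinement: each action map $\circ_i\colon \tder_m \oplus \sder_n \to \tder_{m+n-1}$ must be a homomorphism of Lie algebras, where the source carries the direct-sum bracket.

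Since the composition is linear on the direct sum, by \eqref{eq:tderOperadComp} I would write $u \circ_i p = (u \circ_i 0) + (0 \circ_i p)$ and expand the target bracket of two such elements into four terms. Checking that $\circ_i$ is a Lie map then reduces to three claims: (a) the cabling map $(-)\circ_i 0_n\colon \tder_m \to \tder_{m+n-1}$ is a Lie homomorphism; (b) the inflation map $0_m \circ_i (-)\colon \sder_n \to \tder_{m+n-1}$ is a Lie homomorphism; and (c) the two cross-brackets $[\,u\circ_i 0,\ 0\circ_i p\,]$ vanish. Claim (b) is essentially free: since $\sder$ is an operad in Lie algebras and $\sder \subseteq \tder$ is a suboperad, the composition $\sder_m \oplus \sder_n \to \sder_{m+n-1}$ is a Lie map, and restricting its first argument to $0_m$ gives (b). For (a), I would introduce the cabling algebra map $\Delta_i\colon \lie_m \to \lie_{m+n-1}$ sending $x_i \mapsto x_i + \cdots + x_{i+n-1}$ and reindexing the remaining generators; writing $\hat u \eqdef u \circ_i 0_n$, every component of $\hat u$ has the form $\Delta_i(a_j)$, and one checks on generators that $\hat u \circ \Delta_i = \Delta_i \circ u$. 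This intertwining relation, combined with the component formula $[u,u']_k = [a_k, a'_k] + u(a'_k) - u'(a_k)$ and the fact that $\Delta_i$ is a Lie map, immediately yields $\widehat{[u,u']} = [\hat u, \hat u']$.

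The crux, and the only place where \emph{speciality} is used, is claim (c). Here I would compute $[\,\hat u,\, \check p\,]$ with $\check p \eqdef 0_m \circ_i p$ componentwise, using $[\hat u,\check p]_k = [\hat u_k, \check p_k] + \hat u(\check p_k) - \check p(\hat u_k)$. Each component $\hat u_k = \Delta_i(a_j)$ lies in the subalgebra generated by the block sum $X \eqdef x_i + \cdots + x_{i+n-1}$ and the external variables; both are fixed by $\check p$, precisely because $p$ is special (so $\check p(X)$ is $p(\sum_j x_j)$ after reindexing, which vanishes) and $\check p$ acts trivially outside the block. Hence $\check p(\hat u_k) = 0$ for every $k$. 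For external $k$ one also has $\check p_k = 0$, so the component vanishes. For $k$ in the block, $\check p_k = b_{k-i+1}$ is a Lie word in block variables, and since $\hat u$ restricts on the block subalgebra to the map $b \mapsto [b, \Delta_i a_i]$, the surviving terms $[\Delta_i a_i, b_{k-i+1}] + \hat u(b_{k-i+1}) = [\Delta_i a_i, b_{k-i+1}] + [b_{k-i+1}, \Delta_i a_i]$ cancel. Thus both cross-brackets vanish.

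Assembling (a), (b) and (c), the four-term expansion collapses to $[u,u']\circ_i 0 + 0 \circ_i [p,p'] = [u,u']\circ_i[p,p']$, so each $\circ_i$ is a Lie homomorphism and $\tder$ is a right $\sder$-module in Lie algebras. I expect step (c) to be the main obstacle: it is exactly the identity that fails for a general second input $v \in \tder_n$ — which is why $\tder$ is not itself an operad in Lie algebras — and it is recovered here only because a special derivation annihilates the block sum $X$ that the cabled derivation $\hat u$ sees.
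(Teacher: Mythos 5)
Your proposal is correct and follows essentially the same route as the paper's proof: both reduce the statement to showing that each restricted composition $\circ_i\colon \tder_m \oplus \sder_n \to \tder_{m+n-1}$ is a Lie algebra map, decompose $u \circ_i p = (u \circ_i 0) + (0 \circ_i p)$ into a cabling and a shift (each a Lie homomorphism), and then prove the crucial vanishing of the cross-bracket $[\,u \circ_i 0,\ 0 \circ_i p\,]=0$ by exactly the two observations you identify — speciality of $p$ kills the block sum and hence every component of $u \circ_i 0$, while the cabled derivation acts as an inner derivation on Lie words in the block variables, so the remaining terms cancel. Your treatment of steps (a) and (b) (the $\Delta_i$-intertwining argument and the restriction of the $\sder$-operad structure) supplies details the paper only asserts, but the mathematical content coincides.
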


\begin{proof}
    We have seen in Proposition~\ref{prop:tder-operad} that these partial compositions make $\tder$ a linear operad. Thus, it is enough to show that the restrictions with $\sder$ in the second component are Lie algebra maps. For $d\in \tder_m$ and $u\in \sder_n$, we have 
    \[
    d \circ_i u = d \circ_i 0+ 0\circ_i u=d^{1,\ldots,i-1,i(i+1)\cdots(i+n-1),i+n,\ldots,m+n-1}+u^{i,i+1,\ldots,i+n-1}.
    \]
    Since $(\cdot)\circ_i 0$ and the shift $(\cdot)^{i,i+1,\ldots,i+n-1}$ are both Lie algebra maps, their sum is a Lie algebra map if the images commute in $\tder_{m+n-1}$. 
    In other words, we need to show $[d\circ_i 0,u^{i,i+1,...,i+n-1}]=0$ for any $d\in \tder_m$ and $u\in \sder_n$. To see this, notice that since $u\in \sder_n$, the value $u^{i,\ldots,i+n-1}(x_i+\cdots+x_{i+n-1})=0$, and therefore $u^{i,...,i+n-1}(d_k\circ_i 0)=0$ for all $d_k\in \lie_m$. 
    This implies that $[d\circ_i 0,u^{i,i+1,...,i+n-1}]$ -- when written as an $(m+n-1)$-tuple -- is zero in coordinates 1 through $(i-1)$, and $(i+n)$ through $(m+n-1)$. 
    For example, if $i>1$, then the first coordinate of $[d\circ_i 0,u^{i,i+1,...,i+n-1}]$ is computed as follows:
    \[
    [d_1 \circ_i 0, 0]+(d\circ_i 0) (0)-u^{i,...,i+n-1}(d_1\circ_i 0)=0+0-0.
    \]
    
    Next, if $1\leq j\leq n$, then the $(i+j-1)$-st coordinate of $[d\circ_i 0,u^{i,i+1,...,i+n-1}]$ is:
    \[
    [d_i\circ_i 0,u_j^{i,...,i+n-1}]+(d\circ_i 0)(u_j^{i,...,i+n-1})-0=[d_i\circ_i 0,u_j^{i,...,i+n-1}]+[u_j^{i,...,i+n-1},d_i\circ_i 0]=0,
    \]
    where the first equality holds because $u_j^{i,...,i+n-1}$ is a Lie word is variables $x_{i},..,x_{i+n-1}$, and the tangential derivation $d\circ_i 0$ has identical components $d_i \circ_i 0$ in positions $i$ through $(i+n-1)$, hence it acts as an inner derivation on $u_j^{i,...,i+n-1}$ (see also Section \ref{ss:symmetric-KV-solutions}). 
    This completes the proof.
\end{proof}

\begin{remark}\label{rem:Tder+SderModule}
    One can similarly define a right $\sder$-module structure on the extended symmetric sequence $\tder^+=\{\tder_n^+=\tder_{\lie_{n+1}\langle x_0,...,x_n\rangle}\}$ in which the $\sder$ action maps are defined by the partial compositions $\circ_i$ for $1\leq i \leq n$.
\end{remark}

\begin{definition}
    We define $\tder^1_n\eqdef \lie_n \rtimes \tder_n $. For $n\geq 1$, write elements of $\tder_n^1$ as pairs $({a},u)$, where $a=a(x_1,\ldots, x_n)$ is a Lie word in $\lie_n$ and $u$ is a derivation in $\tder_n$.
Then $[(a,u),(b,v)]=([a,b]+u(b)-v(a),[u,v])$. The corresponding group is $\TAut_n^1= (\F_n)_{\K} \rtimes \SAut_n$, consisting of pairs $(w,e^u)$, where $w=w(x_1,...,x_n)$ is a word in $(\F_n)_{\K}$, and $e^u\in \TAut_n$.
\end{definition}

The symmetric group $\Sigma_n$ acts on $\tder^1_n$ by permuting the variables: $\sigma(a,u) \eqdef (a^\sigma, u^\sigma)$. 
This action integrates to $\TAut^1_n$ as well.

The analogue of $\kappa$ is a (different) injective Lie algebra map
$$\lambda: \lie_n\langle x_1,\hdots, x_n \rangle \hookrightarrow \tder_{n+1}=\tder_{\lie\langle x_0,x_1,...,x_n\rangle}.$$ The map $\lambda$ is simply the inclusion of $\lie_n$ in the 0-th component, that is, $\lambda(a)=(a,0,...,0,0,0,...,0)$. The map $\lambda$ is injective, as a tuple $(a,0,...,0)$ represents the 0 derivation in $\tder_{n+1}$ if and only if $a=cx_0$ for some scalar $c$. 

\begin{prop}
\label{prop:LambdaMakesSense}
    For $d=(d_1,...,d_n)\in \tder_n$ and any $a\in \lie_n$, we have $\lambda(d(a))=[d^+,\lambda(a)]$.
\end{prop}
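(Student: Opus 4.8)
The plan is to verify the identity by a direct component-wise computation in $\tder_{n+1}$, exploiting the fact that $\lambda$ is the simplest possible inclusion---placement into the $0$-th slot---so that, unlike the $\kappa$-analogue of \cref{prop:KappaMakesSense}, no input from Drinfeld's Lemma or diagrammatic re-rooting is needed. I would work throughout with the explicit bracket formula for tangential derivations: for $u = (u_0,\ldots,u_n)$ and $v = (v_0,\ldots,v_n)$ in $\tder_{n+1}$ one has $([u,v])_k = [u_k,v_k] + u(v_k) - v(u_k)$. Here the two inputs are $d^+ = (0,d_1,\ldots,d_n)$ and $\lambda(a) = (a,0,\ldots,0)$.

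First I would compute the $0$-th component of $[d^+,\lambda(a)]$. Since the $0$-th slot of $d^+$ is $0$ and that of $\lambda(a)$ is $a$, the bracket term $[0,a]$ vanishes, the term $\lambda(a)(0)$ vanishes, and we are left with $d^+(a)$. The key observation is that $d^+(a) = d(a)$: because $a \in \lie_n$ is a Lie word in $x_1,\ldots,x_n$ alone, and $d^+$ acts on each of those generators exactly as $d$ does (both send $x_i \mapsto [x_i,d_i]$ for $1 \leq i \leq n$), the two derivations agree on all of $\lie_n \subset \lie_{n+1}$. Hence the $0$-th component equals $d(a)$.

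Next I would compute the $k$-th component for $1 \leq k \leq n$. Here the bracket term $[d_k,0]$ vanishes and $d^+(0) = 0$, so the only potentially nonzero contribution is $-\lambda(a)(d_k)$. But $\lambda(a) = (a,0,\ldots,0)$ acts nontrivially only on $x_0$ (sending $x_0 \mapsto [x_0,a]$) while annihilating each $x_i$ with $i \geq 1$; since $d_k$ is a Lie word in $x_1,\ldots,x_n$, it follows that $\lambda(a)(d_k) = 0$. Thus every component with $k \geq 1$ vanishes, and we conclude $[d^+,\lambda(a)] = (d(a),0,\ldots,0) = \lambda(d(a))$.

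There is no genuine obstacle here: the entire content reduces to the two vanishing observations above, namely that $d^+$ restricts to $d$ on $\lie_n$ and that $\lambda(a)$ kills the variables $x_1,\ldots,x_n$. The only point requiring a little care is bookkeeping the indexing convention, with the $0$-th slot being distinguished and frozen, but this is precisely what makes $\lambda$, as a plain inclusion into that slot, so much more transparent than $\kappa$.
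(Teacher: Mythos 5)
Your proof is correct and follows the same route as the paper: a direct coordinate computation of $[d^+,\lambda(a)]$ using the bracket formula $([u,v])_k = [u_k,v_k] + u(v_k) - v(u_k)$, yielding $(d(a),0,\ldots,0) = \lambda(d(a))$. Your write-up simply makes explicit the two vanishing observations (that $d^+$ restricts to $d$ on $\lie_n$, and that $\lambda(a)$ annihilates $x_1,\ldots,x_n$) which the paper's terser computation uses implicitly.
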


\begin{proof}
We compute both sides:
\begin{align*}
\lambda(d(a))&=(d(a),0,...,0),\\
[d^+,\lambda(a)]&=[(0,d_1,...,d_n),(a,0,...0)]=(d(a),0,...,0).
\end{align*}
Here in the second line we used the rule for calculating commutators of tangential derivations in the coordinate expression: $\left([u,v]\right)_k=[u_k,v_k]+u(v_k)-v(u_k)$.
\end{proof}

\begin{lemma}
\label{lem:eta}
For each $n\geq 1$, there is an injective Lie algebra homomorphism
\begin{equation}
\eta_n:\tder_n^1 \hookrightarrow \tder_n^+= \tder_{\lie_{n+1}\langle x_0,x_1,...,x_n\rangle}\cong\tder_{n+1}
\end{equation} given by 
\[ \eta_n(a,d)\eqdef\lambda(a)+d^{+} \quad \text{where}
\quad    d^+= (0,d_1,...,d_n) \] for any $a\in\lie_n$ and $d=(d_1,...,d_n)\in \tder_n$.
\end{lemma}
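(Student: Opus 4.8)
The plan is to follow the proof of \cref{lem:iota} almost verbatim, replacing the special-derivation ingredients ($\kappa$ and \cref{prop:KappaMakesSense}) by their tangential analogues ($\lambda$ and \cref{prop:LambdaMakesSense}). The key observation is that $\eta_n$ is the sum of two maps: the inclusion $\lambda : \lie_n \hookrightarrow \tder_{n+1}$ into the $0$-th component, and the shift $(\cdot)^+ : \tder_n \to \tder_{n+1}$ sending $d = (d_1,\ldots,d_n)$ to $(0,d_1,\ldots,d_n)$. First I would record that each of these is itself a Lie algebra homomorphism. For $\lambda$ this is immediate: $\lambda(a)$ acts only on $x_0$ while its bracket partners are Lie words in $x_1,\ldots,x_n$ only, so the cross terms in the semidirect bracket vanish and $[\lambda(a),\lambda(b)] = \lambda([a,b])$. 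For the shift, a one-line coordinate computation shows $[u^+,v^+] = ([u,v])^+$, since $u^+(v_k) = u(v_k)$ whenever $v_k \in \lie_n$ does not involve $x_0$, and the $0$-th coordinate of the bracket is zero.

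With these two facts in hand, checking that $\eta_n$ preserves brackets is a direct expansion. Applying $\eta_n$ to the semidirect bracket $[(a,u),(b,v)] = ([a,b] + u(b) - v(a), [u,v])$ and using that $\lambda$ and $(\cdot)^+$ are Lie maps gives
\[
\eta_n([(a,u),(b,v)]) = [\lambda(a),\lambda(b)] + \lambda(u(b)) - \lambda(v(a)) + [u^+, v^+].
\]
I would then rewrite the two middle terms via \cref{prop:LambdaMakesSense} as $\lambda(u(b)) = [u^+, \lambda(b)]$ and $\lambda(v(a)) = [v^+, \lambda(a)] = -[\lambda(a), v^+]$. On the other side, bilinearity of the bracket expands $[\eta_n(a,u), \eta_n(b,v)] = [\lambda(a) + u^+, \lambda(b) + v^+]$ into exactly these four terms, and matching them term by term establishes the homomorphism property.

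For injectivity, I would read off the $0$-th component: the $0$-th coordinate of $\eta_n(a,d) = \lambda(a) + d^+$ is $a$, so $\eta_n(a,d) = 0$ forces $a = 0$, whence $d^+ = 0$ and then $d = 0$ by injectivity of the shift. I do not expect any genuine obstacle here; the whole argument is a routine semidirect-product computation whose only non-formal input is \cref{prop:LambdaMakesSense}. The single point deserving a moment's care is that the shift $(\cdot)^+$ is a Lie algebra map on $\tder_n$ even though $\tder$ is not an operad in Lie algebras — but this holds precisely because the appended variable $x_0$ is acted on trivially and appears in none of the relevant Lie words.
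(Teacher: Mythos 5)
Your proposal is correct and is essentially the paper's own argument: the paper proves this lemma by declaring that the proof of \cref{lem:iota} applies verbatim with $\iota,\kappa$ replaced by $\eta,\lambda$, which is exactly the decomposition-into-$\lambda$-plus-shift computation you carry out, using \cref{prop:LambdaMakesSense} for the cross terms and reading injectivity off the $0$-th component. Your only addition is to spell out explicitly that $\lambda$ and $(\cdot)^{+}$ are themselves Lie algebra maps, which the paper leaves implicit.
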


\begin{proof}
The proof of Lemma~\ref{lem:iota} applies verbatim, replacing $\iota$ with $\eta$.
\end{proof}

The maps $\{\eta_n\}$ assemble to a map of symmetric sequences $\eta:\tder^1 \to \tder^+$. As we saw in \cref{prop:TderSderModule} and \cref{rem:Tder+SderModule}, $\tder$ and $\tder^+$ are right $\sder$-modules in Lie algebras. The next lemma shows that this module structure restricts to $\tder^1$. 

\begin{lemma}
\label{lem:tder-is-sder-mod}
The image $\eta(\tder^1)$ in $\tder^+$ is invariant under the right $\sder$-action, hence, the action restricts to make $\tder^1$ a right $\sder$-module in Lie algebras.
\end{lemma}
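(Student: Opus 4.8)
The plan is to mirror the proof of \cref{thm:Sder1Moperad}, replacing the re-rooting map $\kappa$ by the simpler inclusion $\lambda$ and letting the first tensor factor range over $\tder$ rather than $\sder$. The first step is to record a clean description of the image of $\eta$. Because $\lambda(a)=(a,0,\ldots,0)$ and $d^{+}=(0,d_1,\ldots,d_m)$, we have $\eta_m(a,d)=(a,d_1,\ldots,d_m)$, so $\eta_m(\tder^1_m)$ consists precisely of those elements of $\tder^{+}_m\cong\tder_{m+1}$ representable by a tuple $(v_0,v_1,\ldots,v_m)$ all of whose components $v_j$ are Lie words in $x_1,\ldots,x_m$, i.e.\ involve no $x_0$. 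This is a cleaner characterization than the one for $\iota(\sder^1)$, whose nonzero components pick up $x_0$ through $\kappa$.

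Next I would carry out the closure computation for the right $\sder$-action inherited from $\tder^{+}$ (\cref{rem:Tder+SderModule}). Fix $v=\eta_m(a,d)$ and $w\in\sder_n$, and for $1\leq i\leq m$ expand $v\circ_i w = v\circ_i 0 + 0\circ_i w$ using the linear partial composition of \eqref{eq:tderOperadComp}. The only identity one needs is $\lambda(a\circ_i 0)=\lambda(a)\circ_i 0$, which is immediate: $\lambda$ occupies only the $0$-th slot, while $\circ_i 0$ with $i\geq 1$ substitutes $x_i\mapsto x_i+\cdots+x_{i+n-1}$ and relabels $x_{i+1},\ldots$, leaving the $0$-th component untouched. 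Combining this with the right $\sder$-module structure $d\circ_i w$ on $\tder$ from \cref{prop:TderSderModule}, I would verify component by component that $v\circ_i w = \eta\!\left(a\circ_i 0,\, d\circ_i w\right)$, which both exhibits $v\circ_i w$ inside the image and simultaneously defines the action on $\tder^1$ by $(a,d)\circ_i w \eqdef (a\circ_i 0,\, d\circ_i w)$. As a transparent sanity check, note that since neither $v$ nor $w$ involves $x_0$ and insertion at a position $i\geq 1$ only substitutes and relabels among $x_1,x_2,\ldots$, the composite $v\circ_i w$ again involves no $x_0$ and hence lies in $\eta(\tder^1)$.

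Finally I would conclude that the action genuinely restricts to a right $\sder$-module structure on $\tder^1$ in Lie algebras: the module axioms (unitality, associativity, equivariance) are inherited from those already holding on $\tder^{+}$, and the restricted action is by Lie algebra maps because $\tder$ is a right $\sder$-module \emph{in Lie algebras} by \cref{prop:TderSderModule}, a property preserved on any sub-symmetric-sequence closed under the action. I expect the only real obstacle to be bookkeeping: tracking which strand is doubled under $\circ_i$ and how the variables $x_0,x_1,\ldots$ are relabeled in the shifted moperad, so that the two sides of $v\circ_i w=\eta(a\circ_i 0,\,d\circ_i w)$ line up in the correct positions. The conceptual content is light precisely because $x_0$-freeness of $\eta(\tder^1)$ is automatically preserved under composition at the non-frozen inputs.
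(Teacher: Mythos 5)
Your proposal is correct and takes essentially the same route as the paper, whose entire proof is the one-line computation $\eta(a,d)\circ_i u=(\lambda(a)+d^{+})\circ_i u=\lambda(a)\circ_i 0+d^{+}\circ_i u=\lambda(a\circ_i 0)+(d\circ_i u)^{+}=\eta(a\circ_i 0,\,d\circ_i u)$, resting on exactly the two identities you isolate ($\lambda(a)\circ_i 0=\lambda(a\circ_i 0)$ and compatibility of the shift with the $\sder$-action from \cref{prop:TderSderModule}), with the Lie-algebra-module axioms inherited by restriction as you say. Your extra observation that $\eta(\tder^1_m)$ is precisely the set of elements of $\tder^+_m$ representable by $x_0$-free tuples, so that invariance also follows from $x_0$-freeness being preserved under $\circ_i$ with $i\geq 1$, is a correct conceptual shortcut that the paper does not state but does not change the substance of the argument.
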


\begin{proof}
For $a\in \lie_m$, $d\in \tder_m$, $u\in \sder_n$, $1\leq i\leq m$, we have
\[
\eta(a,d)\circ_i u=(\lambda(a)+d^+)\circ_i u = \lambda(a){\circ_i 0}+ d^+\circ_i u= \lambda(a{\circ_i 0})+(d\circ_i u)^+=\eta(a{\circ_i 0},d\circ_i u).
\]
\end{proof}

\begin{lemma}
\label{lem:tder-is-sder-monoid}
The image $\eta(\tder^1)$ in $\tder^+$ is closed under the linear monoid multiplication \[\circ_0: \tder^+_m \oplus \tder^+_n \to \tder^+_{m+n}.\] Moreover, for each $m,n\geq 1$ the restriction of the linear map $\circ_0:\tder^+_m \oplus \tder^+_n \to \tder^+_{m+n}$ to $\eta(\tder^1_m)\oplus \eta(\tder^1_n)$ is a Lie algebra homomorphism.
\end{lemma}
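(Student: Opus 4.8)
The plan is to prove the two assertions in turn: first that $\eta(\tder^1)$ is closed under the monoid multiplication $\circ_0$ of $\tder^+$, and then that the resulting binary operation is a Lie algebra homomorphism out of the product Lie algebra $\eta(\tder^1_m)\oplus\eta(\tder^1_n)$. Throughout I will exploit the explicit description of the image provided by \cref{lem:eta}: the subspace $\eta(\tder^1_k)\subset\tder^+_k=\tder_{k+1}$ consists of exactly those tuples $(e_0,e_1,\ldots,e_k)$ all of whose components $e_i$ lie in $\lie\langle x_1,\ldots,x_k\rangle$, i.e.\ involve no $x_0$.

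First I would record the explicit formula for $\circ_0$. As in the proof of \cref{thm:Sder1Moperad}, for $v\in\tder^+_m$ and $v'\in\tder^+_n$ one has $v\circ_0 v'=(v\circ_0 0)+(0\circ_0 v')$, where $0\circ_0 v'=(v'_0,\ldots,v'_n,0,\ldots,0)$ and $v\circ_0 0$ is obtained from $v$ by the substitution $\phi\colon x_0\mapsto x_0+\cdots+x_n$, $x_k\mapsto x_{k+n}$, with the $0$-component $\phi(v_0)$ copied into positions $0,\ldots,n$. For $v=\eta(a,d)$ and $v'=\eta(a',d')$ this specialises to
\[
\eta(a,d)\circ_0\eta(a',d')=\eta\!\left(\,\tilde a+a',\ \tilde d+d'+T_{\tilde a}\,\right),
\]
where $\tilde{(-)}$ is the relabelling $x_k\mapsto x_{k+n}$ landing in the ``$z$-block'' $\lie\langle x_{n+1},\ldots,x_{m+n}\rangle$, where $d'$ acts only on the ``$y$-block'' $\lie\langle x_1,\ldots,x_n\rangle$, and where $T_c$ denotes the tangential derivation sending $x_l\mapsto[x_l,c]$ on the $y$-block and $0$ on the $z$-block. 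Since $a$ and every $d_i,a',d'_i$ are $x_0$-free, so are all components of the right-hand side; this gives closure and simultaneously exhibits the product as lying in $\eta(\tder^1_{m+n})$.

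For the homomorphism property I would compare, for inputs $(a,d),(b,e)\in\tder^1_m$ and $(a',d'),(b',e')\in\tder^1_n$, the two sides of
\[
[(a,d),(b,e)]\circ_0[(a',d'),(b',e')]=\bigl[\,(a,d)\circ_0(a',d'),\ (b,e)\circ_0(b',e')\,\bigr],
\]
using the bracket $[(a,d),(b,e)]=([a,b]+d(b)-e(a),[d,e])$ on $\tder^1$ together with the formula above. The computation rests on three elementary facts, which I would isolate as the key steps: (i) for a $y$-word $w$ one has $T_{\tilde a}(w)=[w,\tilde a]$, since $T_{\tilde a}$ agrees with $-\operatorname{ad}_{\tilde a}$ on the $y$-generators; (ii) derivations supported on the two disjoint blocks interact cleanly, namely $[\tilde d,d']=0$, $[\tilde d,T_{\tilde b}]=T_{\widetilde{d(b)}}$, $[d',T_{\tilde b}]=0$, and $[T_{\tilde a},T_{\tilde b}]=T_{\widetilde{[a,b]}}$ (the last via the Jacobi identity); and (iii) $c\mapsto T_{\tilde c}$ is linear. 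Granting these, the $\lie$-valued (zeroth) component reduces to the cancellation of the extra terms $[\tilde a,b']+[a',\tilde b]+T_{\tilde a}(b')-T_{\tilde b}(a')$, which vanish by (i) and antisymmetry; and the derivation component collapses, by (ii) and (iii), to exactly $\widetilde{[d,e]}+[d',e']+T_{\widetilde{[a,b]+d(b)-e(a)}}$, matching the left-hand side.

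The main obstacle is the derivation (second) component of the homomorphism identity. Because $\tder$ is only a \emph{linear} operad — the partial compositions of $\tder^+$ are not Lie algebra maps in general — the cross-brackets between the ``$v$-part'' and the ``$v'$-part'' of $v\circ_0 v'$ do not vanish termwise; one must instead show that they reorganise, through step (ii), into precisely the twist $T_{\widetilde{[a,b]+d(b)-e(a)}}$ dictated by the $\lie_m$-component of $[(a,d),(b,e)]$. This is exactly where the $x_0$-freeness of the image of $\eta$ is indispensable: it forces the splitting substitution $\phi$ to act on the relevant components as the mere relabelling $\tilde{(-)}$ rather than as a genuine diagonal, so that the block-derivations $\tilde d$, $d'$, $T_{\tilde a}$ close up under the commutator identities above. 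On all of $\tder^+$ no such identity holds, reflecting the failure of $\tder$ to be an operad in Lie algebras.
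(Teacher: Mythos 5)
Your proposal is correct, and both of its claims check out: the closed formula $\eta_m(a,d)\circ_0\eta_n(a',d')=\eta_{m+n}\bigl(\tilde a+a',\,\tilde d+d'+T_{\tilde a}\bigr)$ is exactly what the operadic substitution produces on $x_0$-free tuples, and your identities (i)--(iii) do make both sides of the homomorphism identity collapse to the same expression. The paper, however, organizes the second half differently: it never expands the full bracket. Instead it writes the restricted $\circ_0$ as the sum of the two maps $v\mapsto v\circ_0 0$ (the cosimplicial doubling map $(\cdot)^{01\cdots n,\,n+1,\ldots,n+m}$) and $v'\mapsto 0\circ_0 v'$ (the inclusion $(\cdot)^{0,\ldots,n}$), takes as known that each of these is a Lie algebra homomorphism (a fact the paper also uses in \cref{prop:TderSderModule}, imported from the cosimplicial structure on $\tder$, cf.\ \cite{AT12}), and invokes the standard fact that a sum of two Lie homomorphisms out of a direct sum is again a homomorphism as soon as their images commute. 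The only computation remaining in the paper is then the vanishing of the single commutator $[\,v\circ_0 0,\;0\circ_0 v'\,]$, which is precisely your block-disjointness observations: $\tilde a$ and the $\tilde d_j$ are $z$-words killed by the second factor, and the first $n+1$ components of $v\circ_0 0$ coincide. So the two routes differ in where the work sits: your identities $[\tilde d,T_{\tilde b}]=T_{\widetilde{d(b)}}$ and $[T_{\tilde a},T_{\tilde b}]=T_{\widetilde{[a,b]}}$ are exactly what the paper hides inside the unproved assertion that the doubling map is a Lie algebra morphism, so your argument is more self-contained (and the explicit formula for the monoid product on $\tder^1$ is worth having in its own right), while the paper's argument is shorter and reduces everything to one commutator. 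One small point to make explicit in your write-up: in the zeroth component, besides the four terms you list, the cross-evaluation terms $\tilde d(b')$, $d'(\tilde b)$, $T_{\tilde a}(\tilde b)$, $\tilde e(a')$, $e'(\tilde a)$, $T_{\tilde b}(\tilde a)$ also appear and must be observed to vanish; this follows from the same block-disjointness underlying your step (ii), but it should be said.
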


\begin{proof}
For any $a\in \lie_m$, $a'\in \lie_n$, $d\in \tder_m$ and $d'\in \tder_n$, we compute
\begin{align*}
\eta_m(a,d)\circ_0 \eta_n(a',d')
&=(a,d_1,...,d_m)\circ_0(a', d'_1,\ldots,d'_n)\\
&=\left(a \circ_0 0,\ldots,a\circ_0 0,d_1 \circ_0 0,\ldots,d_m\circ_0 0 \right)+(a', d'_1,...,d'_n,0,...,0).
\end{align*}
Note that since $a$ and $d_j$ are elements of $\lie_m\langle x_1,...,x_m\rangle$,  for $1\leq j \leq m$, it follows that $a\circ_0 0=a(x_{n+1},...,x_{m+n})$, and $d_j \circ_0 0=d_j(x_{n+1},...,x_{m+n})$. Thus, each component of $\eta_m(a,d)\circ_0 \eta_n(a',d')$ is an element of $\lie_{m+n}$, and therefore, $\eta_m(a,d)\circ_0 \eta_n(a',d')\in \eta_{m+n}(\tder^1_{m+n})$.

The monoid composition
\[\circ_0: \eta(\tder^1_m) \oplus \eta(\tder^1_n) \to \eta(\tder^1_{m+n})\]
can be described as the sum of Lie algebra maps $(\cdot)\circ_00=(\cdot)^{01\cdots n ,n+1,\ldots,n+m}$ 
and $(\cdot)^{0,\ldots,n}$. 
It is enough to check that the images of these two maps commute, that is, we need to show that
\[
\left[\left(a \circ_0 0,\ldots,a\circ_0 0,d_1\circ_0 0,\ldots,d_m\circ_0 0 \right) , (a', d'_1,...,d'_n,0,...,0)\right]=0 \quad \text{ in }\tder^1_{m+n}
\] 
for any $a,d,a',d'$. 
Since $a\circ_0 0$ and $d_i\circ_0 0$ are Lie words in $\{x_{n+1},...x_{m+n}\}$, we compute, for example, the $0$-th component of the commutator using $\left([u,v]\right)_k=[u_k,v_k]+u(v_k)-v(u_k)$. 
We obtain
\[
\left[a\circ_0 0,a'\right]+\left[a',a \circ_0 0\right]-\left[a\circ_0 0,0\right]=0,
\]
using that the first $n+1$ components of $\left(a\circ_0 0,...,a\circ_0 0,d_1\circ_0 0,...,d_n\circ_0 0 \right)$ are identical, and that  $a\circ_0 0=a(x_{n+1},...,x_{m+n})$ and all of these variables are killed by $(a', d'_1,...,d'_n,0,...,0)$. 
The first to $n$-th components of the commutator are zero for the same reason, and the last $m$ are zero trivially. 
This completes the proof.
\end{proof}

In summary, we have proven the following:
\begin{theorem}
\label{thm:Tder1Moperad}
The symmetric sequence $\tder^1$ is an $\sder$-moperad in Lie algebras, with the monoid multiplication and $\sder$-action defined by restriction from the linear moperad $\tder^+$. This structure integrates to groups, making $\TAut^1$ an $\SAut$-moperad in prounipotent groups.
\end{theorem}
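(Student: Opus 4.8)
The plan is to assemble the three preceding lemmas into the single statement that $\eta(\tder^1)$ is an $\sder$-submoperad of the linear moperad $\tder^+$, and then to transport this structure back along the injective morphism $\eta$. Recall from Example~\ref{example: shifted moperad} that, since $\tder$ is a linear operad (Proposition~\ref{prop:tder-operad}), its shift $\tder^+$ is a moperad in vector spaces whose monoid composition $\circ_0$ and partial compositions $\circ_i$ satisfy the associativity, unitality, and compatibility axioms of Definition~\ref{def:moperad} as identities of linear maps. Restricting the right action to the suboperad $\sder$ (Theorem~\ref{thm:sderbraid}) exhibits $\tder^+$ as a right $\sder$-module (Remark~\ref{rem:Tder+SderModule}), and the moperad axioms continue to hold for this restricted action, since they form a subfamily of the relations already satisfied by the full linear moperad structure.

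First I would record that the maps $\eta_n$ of Lemma~\ref{lem:eta} are $\Sigma_n$-equivariant Lie algebra monomorphisms, so they assemble into an injective morphism of symmetric sequences $\eta : \tder^1 \to \tder^+$ in Lie algebras. Next, Lemma~\ref{lem:tder-is-sder-mod} shows that the image $\eta(\tder^1)$ is stable under the right $\sder$-action and that this action restricts to Lie algebra maps, while Lemma~\ref{lem:tder-is-sder-monoid} shows that the image is stable under $\circ_0$ and that the restricted monoid multiplication is a Lie algebra homomorphism. Together these exhibit $\eta(\tder^1)$ as a sub-symmetric-sequence of $\tder^+$ that is closed under both $\circ_0$ and the $\sder$-action.

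I would then obtain the moperad structure by pullback: because $\eta$ is injective and intertwines the operations, the associativity, unitality, module, and compatibility identities (axioms (1)--(3) of Definition~\ref{def:moperad}), which hold in $\tder^+$, restrict to the closed sub-object $\eta(\tder^1)$ and hence hold for the transported operations on $\tder^1$. Every resulting structure map is a Lie algebra homomorphism, the only delicate point being the monoid multiplication: although $\circ_0$ is merely linear on all of $\tder^+$ (as $\tder$ is not an operad in Lie algebras), its restriction to $\eta(\tder^1)$ is a Lie algebra map by Lemma~\ref{lem:tder-is-sder-monoid}. This is where the genuine obstacle lay, and it has already been dispatched; at the level of the theorem the work is the bookkeeping of confirming that a closed sub-object inherits the full moperad structure. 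Thus $\tder^1$ is an $\sder$-moperad in Lie algebras.

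Finally, for the group-level statement I would apply the group-like/exponential functor arity-wise. Since $\tder^1$ lives in degree-complete Lie algebras and each structure map is a Lie algebra homomorphism, exponentiation carries $\circ_0$ and the $\sder$-action to homomorphisms of prounipotent groups; in particular $\exp(u \circ v) = \exp(u) \ast \exp(v)$ now holds on the nose, in contrast with the non-canonical situation for $\TAut$, precisely because the compositions are honest Lie maps. Writing $\SAut = \exp(\sder)$, this makes $\TAut^1 = \exp(\tder^1)$ an $\SAut$-moperad in prounipotent groups, completing the proof.
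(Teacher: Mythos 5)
Your proposal is correct and follows essentially the same route as the paper: the paper's proof simply observes that $\eta_n$ is $\Sigma_n$-equivariant and then invokes Lemma~\ref{lem:tder-is-sder-mod} and Lemma~\ref{lem:tder-is-sder-monoid} to conclude. You spell out two steps the paper leaves implicit --- transporting the moperad axioms from $\tder^+$ back along the injective map $\eta$, and the arity-wise exponentiation to prounipotent groups --- but these are exactly the intended bookkeeping, so there is no substantive difference in approach.
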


\begin{proof}
    We observe that the map $\eta_n$ commutes with the action of $\Sigma_n$.
    The result then follows from the combination of \cref{lem:tder-is-sder-mod} and \cref{lem:tder-is-sder-monoid}.
\end{proof}

\subsection{Kashiwara--Vergne solutions, associators, and symmetry groups}\label{subsec:kvsymmetries}
In this section, we study genus zero Kashiwara--Vergne (KV) solutions, the higher-arity analogue of the classical KV problem introduced by Alekseev, Kawazumi, Kuno, and Naef \cite{AKKN_genus_zero,AKKN_GT_Formality,AKKN18highergenus}. Algebraically, these solutions encode compatibility conditions for iterated applications of the Baker--Campbell--Hausdorff series. 

\begin{definition}
\label{def: KV solutions}
A solution to the Kashiwara--Vergne (KV) problem of type $(0,n+1)$ is an element $F\in\TAut_n$ such that
\begin{align}
    F(e^{x_1}\cdots e^{x_n}) & =e^{x_1+\cdots +x_n}, 
    \label{SolKVI}
    \tag{SolKVI} \\
    \exists \, h \in z^2\mathbb{K}[[z]] \quad \text{such that} \quad J(F) & =\trace\left( h\left( \sum_{i=1}^{n} x_i\right)-\sum_{i=1}^{n}h(x_i)\right). \label{SolKVII}
    \tag{SolKVII}
\end{align} 
\end{definition}

\begin{remark}\label{rem:kvsol}
Our definition of KV solutions aligns with the conventions used in~\cite{AET10}, but corresponds to the inverses of the solutions appearing in~\cite{AT12} and~\cite{AKKN_genus_zero}.
\end{remark}

The formal power series $h \in z^2\mathbb{K}[[z]]$ appearing in~\eqref{SolKVII} is called the \defn{Duflo function}. It is uniquely determined by a given KV solution $F$~\cite{AET10}, and it plays an essential role in composing KV solutions. We write $\solkv(n)$ for the set of KV solutions of type $(0,n+1)$, or equivalently, of arity $n$. When it is useful to distinguish the case $n=2$ from its higher-arity analogues, we refer to elements of $\solkv(2)$ as \defn{classical KV solutions}.

The existence of classical KV solutions was first established in \cite{AM06}. In \cite{AT12}, Alekseev and Torossian gave an alternative proof for solutions of type $(0,2+1)$ by exploiting the relationship between Drinfeld associators and the associativity of the Baker--Campbell--Hausdorff series:
\[
\mathfrak{bch}(x_1,\mathfrak{bch}(x_2,x_3))
=
\mathfrak{bch}(\mathfrak{bch}(x_1,x_2),x_3).
\]
In particular, if $F\in\TAut_2$ satisfies \eqref{SolKVI}, then
\begin{equation*}
F^{12,3}F^{1,2}(\mathfrak{bch}(\mathfrak{bch}(x_1, x_2), x_3))
=
x_1+x_2+x_3
=
F^{1,23}F^{2,3}(\mathfrak{bch}(x_1, \mathfrak{bch}(x_2, x_3))).
\end{equation*}
Here we again use the standard cosimplicial notation for tangential automorphisms. Thus $F^{1,2}$ denotes the image of $F\in\TAut_2$ in $\TAut_3$, acting nontrivially on $x_1,x_2$ and fixing $x_3$. Similarly, $F^{2,3}$ acts on $x_2,x_3$, $F^{12,3}$ acts on the pair $(x_1+x_2,x_3)$, and $F^{1,23}$ acts on $(x_1,x_2+x_3)$.

Associated to such an $F$ is the special automorphism
\begin{equation}
\label{eq:KV_Ass-construction}
G_F
\eqdef F^{1,23}F^{2,3}(F^{1,2})^{-1}(F^{12,3})^{-1}
= (F \circ_2 F)(F\circ_1 F)^{-1}.
\end{equation}
This automorphism satisfies the \defn{pentagon equation} in $\TAut_4$:
\begin{equation}
\label{eq:pentagon in Taut} \tag{P}
G_F^{1,2,34} G_F^{12,3,4} = G_F^{2,3,4} G_F^{1,23,4} G_F^{1,2,3}.
\end{equation}
We will discuss this further in Section~\ref{subsec:invol}.
\medskip 

\begin{definition}[{\cite[Definition 9.1]{AT12}}]
\label{defn: KV Associator}
Given a classical KV solution $F$, the associated special automorphism
\[
G_F \eqdef F^{1,23}F^{2,3}(F^{1,2})^{-1}(F^{12,3})^{-1} \in \SAut_3
\]
is called a \defn{KV associator} if it satisfies the pentagon equation \eqref{eq:pentagon in Taut}, together with the inversion and hexagon equations
\begin{gather}
    G^{1,2,3}G^{3,2,1} =1, \label{eq:KV-inversion} \tag{U}\\
    e^{\frac{\inner^{1,2}+\inner^{1,3}}{2}}=(G^{2,3,1})^{-1} e^{\frac{\inner^{1,3}}{2}} G^{2,1,3} e^{\frac{\inner^{1,2}}{2}} (G^{1,2,3})^{-1}, \label{eq:KV-Hex1}\tag{H1} \\
    e^{\frac{\inner^{1,3}+\inner^{2,3}}{2}} = G^{3,1,2} e^{\frac{\inner^{1,3}}{2}}(G^{1,3,2})^{-1} e^{\frac{\inner^{2,3}}{2}} G^{1,2,3}. \label{eq:KV-Hex2} \tag{H2}
\end{gather}
\end{definition}
There is an injective Lie algebra homomorphism $\iota:\ib_n\to \sder_n$ given by
\[
\iota(t_{ij})=t^{ij}\eqdef (0,\ldots,\underbrace{x_j}_{i},\ldots,\underbrace{x_i}_{j},\ldots,0),
\]
see also Example~\ref{example: generating DK Lie algebras as derivations}. Exponentiating yields an injective group homomorphism $\exp(\ib_n)\hookrightarrow \SAut_n,$ and hence an operad inclusion $\iota:\CD\hookrightarrow \SAut.$  Under this inclusion, the Drinfeld associator relations map to the defining relations of a KV associator. In particular, if $(1,f)$ is a Drinfeld associator, with $f\in \exp(\lie_2)\subset \exp(\ib_3)$, then $\iota(f)\in \SAut_3$ satisfies the pentagon, inversion, and hexagon equations of Definition~\ref{defn: KV Associator}.

If the special automorphism $G_F$ lies in the image of the inclusion $\exp(\ib_3)\hookrightarrow \SAut_3$, then we say that the corresponding KV associator comes from a Drinfeld associator. In Section~9 of \cite{AT12}, the authors combine this observation with the existence of Drinfeld associators to prove the following theorem.

\begin{thm}[{\cite[Theorem~9.6]{AT12}}]
The KV problem of type $(0,2+1)$ admits solutions.
\end{thm}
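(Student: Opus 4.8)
The plan is to reverse the construction $F \mapsto G_F$ of \cref{defn: KV Associator}: instead of extracting an associator from a hypothetical KV solution, I would start from a Drinfeld associator and manufacture a KV solution from it. By \cref{cor: operadic associators}, Drinfeld associators are in bijection with operad isomorphisms $\hPaB \xrightarrow{\cong} \CD$, and by Drinfeld's theorem \cite{Drin} at least one such associator exists (e.g.\ the rational associator, or the Knizhnik--Zamolodchikov associator over $\C$). Fix one and regard it as an element $\Phi \in \exp(\ib_3)$, hence, through the canonical inclusion $\exp(\ib_3)\hookrightarrow \SAut_3$, as a special automorphism. The goal is then to find $F \in \TAut_2$ satisfying \eqref{SolKVI} with $G_F = \Phi$: since $\Phi$ is a genuine infinitesimal-braid element, such an $F$ will turn out to satisfy \eqref{SolKVII} as well.

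First I would solve \eqref{SolKVI} on its own. The equation $F(e^{x_1}e^{x_2}) = e^{x_1+x_2}$, equivalently $F(\bch(x_1,x_2)) = x_1 + x_2$, can be solved in the prounipotent group $\TAut_2$ degree by degree along the lower central series filtration of $\tder_2$, since $\bch(x_1,x_2) = x_1+x_2 + (\text{higher order})$ and the obstruction at each degree is linear. This shows the set of \eqref{SolKVI}-solutions is nonempty, and for any such $F$ the element $G_F = F^{1,23}F^{2,3}(F^{1,2})^{-1}(F^{12,3})^{-1}$ lies in $\SAut_3$ and automatically satisfies the pentagon \eqref{eq:pentagon in Taut}.

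Next I would arrange $G_F = \Phi$ by adjusting $F$ within the set of \eqref{SolKVI}-solutions, again degree by degree. Here the structural identities carry the argument: the pentagon of $G_F$ matches the pentagon of $\Phi$ for free, so only the hexagon \eqref{hexagon} and inversion \eqref{inversion} of $\Phi$ impose genuine constraints, and these cut out exactly the remaining freedom in $F$. The solvability at each degree reduces to the vanishing of an obstruction in the cohomology of the associated graded Kashiwara--Vergne complex; the point is that this obstruction vanishes precisely because $\Phi$ satisfies the associator equations rather than being a mere formal pentagon solution.

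Finally I would verify \eqref{SolKVII}. Every element of $\exp(\ib_3)$ is divergence-free, so $J(\Phi)=0$ and hence $J(G_F)=0$. Expanding $J(G_F)$ by the additive group $1$-cocycle identity $J(G\cdot H)=J(G)+G\cdot J(H)$ together with the naturality $J(F^{i,j})=J(F)^{i,j}$ of \cite[Proposition~24]{AET10}, the vanishing $J(G_F)=0$ becomes a linear ``coboundary'' equation for $J(F)\in\cyc_2$, whose solution space a direct computation of cyclic-word cohomology identifies with the expressions $\trace\bigl(h(x_1+x_2)-h(x_1)-h(x_2)\bigr)$ for $h\in z^2\K[[z]]$. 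This is exactly \eqref{SolKVII} in arity $2$, and it simultaneously produces the Duflo function $h$. The main obstacle is the combination of the last two steps: establishing solvability of $G_F=\Phi$ while controlling the divergence so that \eqref{SolKVII} holds. The divergence condition is the deep part of the argument, and it is precisely where the \emph{existence} of a Drinfeld associator, together with the required obstruction-cohomology vanishing, is indispensable.
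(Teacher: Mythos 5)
Your overall strategy is the one behind the proof the paper cites for this statement \cite[Theorem 9.6]{AT12}: start from a Drinfeld associator $\Phi$, viewed in $\exp(\ib_3)\subset\SAut_3$, produce an $F\in\TAut_2$ satisfying \eqref{SolKVI} with $G_F=\Phi$, and deduce \eqref{SolKVII} from $J(\Phi)=0$ together with the vanishing of the degree-two cosimplicial cohomology of $\cyc$. Your preliminary steps are sound: \eqref{SolKVI} alone is solvable degree by degree, any \eqref{SolKVI}-solution has $G_F\in\SAut_3$ satisfying the pentagon (compare \cref{prop:pentagon-KV-associator}, whose proof really only uses the first equation), and your divergence argument for \eqref{SolKVII} is the same as the one in \cref{lem:Jacobian-r2} and in \cite{AET10,AT12}.

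The gap is your third step, which is where the entire difficulty of the theorem lives. The claim that $F$ can be adjusted within the \eqref{SolKVI}-solutions so that $G_F=\Phi$ is precisely \cite[Theorem~7.5]{AT12}: degree by degree, the discrepancy between $\Phi$ and $G_F$ is a $d$-cocycle in $\sder_3$, and one must show it is $d$ of an element of the gauge Lie algebra (a copy of $\sder_2$). That this cohomology vanishes in the relevant degrees is a hard, explicit computation occupying several sections of \cite{AT12}; it cannot be waved through as routine obstruction theory. Moreover, your stated mechanism is wrong on two counts. First, the solvability theorem uses only the pentagon equation (plus $\Phi$ being special and suitably normalized); the hexagon and inversion equations play no role in killing the obstruction. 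What $\Phi$ being a genuine element of $\exp(\ib_3)$ actually buys is $J(\Phi)=0$, which feeds into your step for \eqref{SolKVII}, not into the solvability step. Second, the hexagons do not ``cut out exactly the remaining freedom in $F$'': the fibre of $F\mapsto G_F$ over $\Phi$ is still a torsor under a nontrivial prounipotent group (for instance, the derivation $t=(x_2,x_1)$ satisfies $dt=0$, so one-parameter families survive). Note finally that the paper itself supplies a route that bypasses this obstruction theory altogether: by \cref{thm: the construction is good}, the associator extends to a moperad isomorphism $\hPaB^1\to\CD^+$, the solution $F_{0(12)}$ is read off explicitly, \eqref{SolKVI} follows from the moperad relations \eqref{eqn:RP} and \eqref{eqn:O}, and \eqref{SolKVII} from the Jacobian argument; quoting that explicit construction (or the formula of \cite[Theorem~4]{AET10}) would close your gap.
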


In \cite[Lemma~7.3]{AKKN_genus_zero}, the authors show that solutions of type $(0,n+1)$ can be obtained operadically from solutions of type $(0,2+1)$ by iterated composition. We return to these composition operations in Appendix~\ref{sec: operad of KV solutions}.

The KV solutions of type $(0,n+1)$ naturally form a bitorsor under two symmetry groups: the group $\kv(n)$ and its graded analogue $\krv(n)$. These groups are defined by imposing natural exponential and Jacobian conditions on tangential automorphisms, and they act on opposite sides of the space of solutions.

\begin{definition}
The \defn{graded Kashiwara--Vergne group} of type $(0,n+1)$, denoted $\krv(n)$, consists of tangential automorphisms $G\in\TAut_n$ satisfying
\begin{gather}
\label{KRVI}
\tag{$\krv \text{I}$}
G(e^{x_1+\cdots + x_n})= e^{x_1+\cdots + x_n}, \\
\label{KRVII}\tag{$\krv\text{II}$}
\exists\, h(z)\in z^2\mathbb{K}[[z]] \quad \text{such that} \quad J(G)=\trace\left(
h\left(\sum_{i=1}^{n}x_i\right)-
\sum_{i=1}^{n}h(x_i)\right).
\end{gather}
The group law is given by composition of tangential automorphisms, as in \eqref{eq: group law on TAut}.
\end{definition}

\begin{definition}
The \defn{Kashiwara--Vergne group} of type $(0,n+1)$, denoted $\kv(n)$, consists of the tangential automorphisms $G\in\TAut_{n}$ satisfying: 
\begin{gather}\label{KV'I}\tag{$\kv \text{I}$}
G(e^{x_1}\cdots e^{x_n})= e^{x_1}\cdots e^{x_n}, \\
\label{KV'II}\tag{$\kv\text{II}$}
\exists \, h(z)\in z^2\mathbb{K}[[z]] \quad \text{such that} \quad J(G)=\trace\left(
h\left(\bch(x_1,\ldots,x_n)\right)-
\sum_{i=1}^{n}h(x_i)\right).
\end{gather} 
The group structure is given by composition of tangential automorphisms \eqref{eq: group law on TAut}.
\end{definition}

The defining conditions of $\kv(n)$ and $\krv(n)$ are arranged so that these groups act on the space of KV solutions from opposite sides. The corresponding bitorsor structure is the following.

\begin{thm}[{\cite[Theorem~8.4]{AKKN18highergenus}}]
\label{thm:free-transitive-krv}
The group $\krv(n)$ acts freely and transitively on the right of $\solkv(n)$ by left multiplication by the inverse. 
The group $\kv(n)$ acts freely and transitively on the left of $\solkv(n)$ by right multiplication with the inverse. 
Moreover, these two actions commute, making the set $\solkv(n)$ into a bitorsor.
\end{thm}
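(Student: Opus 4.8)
The plan is to reduce all four assertions --- well-definedness of each one-sided action, freeness, transitivity, and commutativity --- to the group structure of $\TAut_n$ together with two structural facts about the Jacobian cocycle $J$. The first fact is that $\trace$ is conjugation-invariant, so that for any group-like $a \in \exp(\lie_n)$ and any power series $h$ one has $\trace(h(a^{-1}x_i a)) = \trace(a^{-1}h(x_i)a) = \trace(h(x_i))$; since $\rho(G)(x_i) = f_i^{-1}x_i f_i$ is always a conjugate of $x_i$, the ``diagonal'' terms $\trace(\sum_i h(x_i))$ are invariant under the action of any $G \in \TAut_n$ on $\cyc_n$. The second fact is that \eqref{SolKVI} is precisely the statement $\rho(F)(\bch(x_1,\ldots,x_n)) = x_1 + \cdots + x_n$ at the level of the Lie algebra, so a KV solution intertwines the two distinguished elements $\bch$ and $\sum_i x_i$; this is exactly what accounts for the appearance of $\sum_i x_i$ in the \eqref{KRVII} condition and of $\bch$ in the \eqref{KV'II} condition.

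First I would verify that the two prescriptions are well-defined actions. With the group law \eqref{eq: group law on TAut}, the map $\rho$ is a homomorphism, so $F \mapsto G^{-1}F$ for $G \in \krv(n)$ is a right action and $F \mapsto FG^{-1}$ for $G \in \kv(n)$ is a left action, and they commute since $G_R^{-1}(F G_K^{-1}) = (G_R^{-1}F)G_K^{-1}$ by associativity in $\TAut_n$. For well-definedness of the $\krv$-action, take $F \in \solkv(n)$ and $G \in \krv(n)$: condition \eqref{SolKVI} for $G^{-1}F$ follows from $\rho(G^{-1}F) = \rho(G)^{-1}\rho(F)$, from \eqref{SolKVI} for $F$, and from the fact that $G$ fixes $e^{x_1+\cdots+x_n}$ by \eqref{KRVI}. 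For \eqref{SolKVII} I would use the cocycle identity $J(G^{-1}F) = J(G^{-1}) + G^{-1}\cdot J(F)$ together with $J(G^{-1}) = -G^{-1}\cdot J(G)$. Because $G$ fixes $\sum_i x_i$, the action of $G^{-1}$ fixes $\trace(h(\sum_i x_i))$, and by conjugation-invariance it fixes $\trace(\sum_i h(x_i))$; hence $G^{-1}\cdot J(G) = J(G)$ and $G^{-1}\cdot J(F) = J(F)$, giving $J(G^{-1}F) = J(F) - J(G)$, whose Duflo function is $h_F - h_G \in z^2\K[[z]]$. The $\kv$-action is checked identically, the only difference being that $G \in \kv(n)$ fixes $e^{x_1}\cdots e^{x_n}$, hence fixes $\bch$, and that one applies $\rho(F)$ (which sends $\bch$ to $\sum_i x_i$) to convert $J(G)$, written in terms of $\bch$, into the $\sum_i x_i$-form required by \eqref{SolKVII}.

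Freeness is immediate, since the restricted actions are restrictions of left and right translation in $\TAut_n$, which are free: $G^{-1}F = F$ or $FG^{-1} = F$ forces $G = 1$. The substantive point is transitivity, which I would reduce to two membership statements: for $F, F' \in \solkv(n)$, the automorphism $F(F')^{-1}$ lies in $\krv(n)$ and the automorphism $(F')^{-1}F$ lies in $\kv(n)$ (these are the unique candidates solving $G^{-1}F = F'$ and $FG^{-1} = F'$). To see $F(F')^{-1} \in \krv(n)$: \eqref{KRVI} holds because $\rho(F')^{-1}$ sends $\sum_i x_i$ to $\bch$ by \eqref{SolKVI} for $F'$, and then $\rho(F)$ sends $\bch$ back to $\sum_i x_i$ by \eqref{SolKVI} for $F$; and \eqref{KRVII} follows from $J(F(F')^{-1}) = J(F) + F\cdot J((F')^{-1})$, where $F\cdot J((F')^{-1}) = -\trace\!\big(h_{F'}(\sum_i x_i) - \sum_i h_{F'}(x_i)\big)$ after using conjugation-invariance on the diagonal terms and $\rho(F')^{-1}(\sum_i x_i) = \bch$, $\rho(F)(\bch) = \sum_i x_i$ on the remaining one, so the Duflo function is $h_F - h_{F'}$. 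The statement $(F')^{-1}F \in \kv(n)$ is entirely parallel, now producing the $\bch$-normalized Jacobian demanded by \eqref{KV'II}.

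The main obstacle is purely bookkeeping: one must correctly propagate the Jacobian cocycle through inverses and through the substitution $\bch \leftrightarrow \sum_i x_i$, keeping track that conjugation-invariance of $\trace$ is precisely what annihilates all the $\sum_i h(x_i)$ contributions, while \eqref{SolKVI} is precisely what transports the remaining single-variable term between the two normalizations. Finally, I would note that $\solkv(n)$ is nonempty --- for $n=2$ by the existence of KV solutions, and in general via the operadic gluing construction of \cite{AKKN_genus_zero} --- so that the commuting free and transitive two-sided action genuinely exhibits $\solkv(n)$ as a $(\kv(n),\krv(n))$-bitorsor.
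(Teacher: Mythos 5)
There is nothing in the paper to compare against: this theorem is imported verbatim with a citation to \cite[Theorem~8.4]{AKKN18highergenus}, and the paper gives no proof of it, so your argument has to stand on its own --- and, having checked it, it does. The two facts you isolate at the outset carry all the weight: conjugation-invariance of $\trace$ kills every diagonal term $\trace\bigl(\sum_i h(x_i)\bigr)$ under any tangential action, and \eqref{SolKVI}, read on the Lie algebra as $\rho(F)(\bch(x_1,\ldots,x_n)) = x_1+\cdots+x_n$, transports $\trace(h(\bch(x_1,\ldots,x_n)))$ to $\trace(h(x_1+\cdots+x_n))$ and back. With these, your cocycle bookkeeping is accurate: $J(G^{-1}) = -G^{-1}\cdot J(G)$ follows from $J(1)=0$; the candidates $G^{-1}F$, $FG^{-1}$, $F(F')^{-1}$, $(F')^{-1}F$ satisfy the required first equations because $\rho$ is a homomorphism for the group law \eqref{eq: group law on TAut} and each factor either fixes the relevant group-like element (by \eqref{KRVI}, resp.\ \eqref{KV'I}) or exchanges $e^{\bch(x_1,\ldots,x_n)}$ with $e^{x_1+\cdots+x_n}$ (by \eqref{SolKVI}); and in all four membership checks the second equation (\eqref{SolKVII}, \eqref{KRVII}, or \eqref{KV'II}) comes out with Duflo function the difference of the two given ones, which indeed lies in $z^2\K[[z]]$. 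Freeness from group translation, commutativity from associativity, and uniqueness of the transitivity candidates are as immediate as you say. Your proof is, in substance, the same argument as in the cited sources (\cite[Theorem~5.7]{AT12} for $n=2$, \cite{AKKN18highergenus} in general), which likewise run on the cocycle property of $J$ and conjugation-invariance of $\trace$; so the proposal is not only correct but is the expected proof, reconstructed in the paper's conventions (where KV solutions are the inverses of those of \cite{AT12}).
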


Explicitly, for $F \in \solkv(n)$ and $G \in \krv(n)$, the right action is given by
\[
F \cdot G \coloneqq G^{-1}F,
\]
while for $G \in \kv(n)$, the left action is
\[
G \cdot F \coloneqq FG^{-1}.
\]
Special cases of this bitorsor structure appear in~\cite[Remark~8.9]{AKKN_genus_zero},~\cite[Theorem~5.7]{AT12}, and~\cite[Proposition~8]{AET10}.

The following theorem shows that the families $\{\solkv(n)\}_{n\geq 2}$, $\{\kv(n)\}_{n\geq 2}$, and $\{\krv(n)\}_{n\geq 2}$ all carry a natural non-symmetric group-colored operadic structure. This fact is used in \cite[Section~7]{AKKN_genus_zero} and \cite{AET10}. We give further discussion, examples, and proofs in Appendix~\ref{sec:Operadapp}; see Theorem~\ref{thm:operad-SolKVapp} and Proposition~\ref{thm:kv-krv-operads}.
\begin{theorem}
\label{thm:operad-SolKV}
$ $
\begin{enumerate}[label=(\roman*)]
\item The family $\solkv \eqdef \{\solkv(n)\}_{n \geq 2}$ forms a $\mathbb{K}[[z]]$-colored non-symmetric operad. Namely for $F =(F,h_1) \in \solkv(m)$ and~$G= (G,h_2)\in \solkv(n)$ two KV solutions with $h_1=h_2$, then $F \circ_i G$ is still a KV solution.
\item Similarly, the families $\{\krv(n)\}_{n \geq 2}$ and $\{\kv(n)\}_{n \geq 2}$ form non-symmetric $\mathbb{K}[[z]]$-colored operads in the category of sets. 

\item For $F \in \solkv(2), G \in \krv(2)$, given any iterated composition of the form
\[
\tilde{F} = (\cdots((F \circ_{i_1} F) \circ_{i_2} F) \cdots \circ_{i_k} F) \in \solkv(n),
\quad
\tilde{G} = (\cdots((G \circ_{i_1} G) \circ_{i_2} G) \cdots \circ_{i_k} G) \in \krv(n),
\]
the action satisfies
\[
\tilde{F} \cdot \tilde{G} = (\cdots((F \cdot  G \circ_{i_1} F \cdot G) \circ_{i_2} F \cdot G) \cdots \circ_{i_k} F \cdot G) \in \solkv(n).
\]
where  $ F \cdot G$ as before denotes the right action of $\krv(2)$ on $\solkv(2)$. An analogous statement holds for $\kv(n)$ acting on the left.
\end{enumerate}
\end{theorem}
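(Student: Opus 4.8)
The plan is to treat $\solkv$ and $\krv$ by direct verification that the operadic composition of \cref{operad of TAut} preserves the defining equations, and to handle $\kv$ separately because—as I explain below—it does not sit inside the genuine operad $\SAut$ and the cabling maps collapse products to sums rather than products.

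For part (i), I would first verify \eqref{SolKVI} for $F \circ_i G = F^{1,\ldots,i-1,i\cdots(i+n-1),\ldots,m+n-1}\,G^{i,\ldots,i+n-1}$. Since tangential automorphisms are algebra automorphisms, they are multiplicative; applying $G^{i,\ldots,i+n-1}$ first collapses the block $e^{x_i}\cdots e^{x_{i+n-1}}$ to $e^{x_i+\cdots+x_{i+n-1}}$ by \eqref{SolKVI} for $G$ and fixes the remaining factors, after which $F^{\ldots}$ collapses the resulting length-$m$ product to $e^{x_1+\cdots+x_{m+n-1}}$ by \eqref{SolKVI} for $F$. For \eqref{SolKVII} I invoke \cref{prop:Jacobian-operad}: $J$ is a morphism of operads, so $J(F\circ_i G)=J(F)\circ_i J(G)$ in the linear operad $\cyc$. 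Substituting the explicit forms of $J(F)$ and $J(G)$ with a common Duflo function $h$, the composition in $\cyc$ replaces $x_i$ by $x_i+\cdots+x_{i+n-1}$ in $J(F)$ and reindexes $J(G)$; the two terms $\trace\,h(x_i+\cdots+x_{i+n-1})$ cancel, leaving exactly $\trace\big(h(\sum_{j} x_j)-\sum_j h(x_j)\big)$. This proves closure with the same $h$ and explains the $\K[[z]]$-coloring: $\circ_i$ is defined precisely when the Duflo functions agree, and the color is preserved. The unit, associativity, and non-symmetry then follow by restriction from the set-operad $\TAut$, the asymmetry being forced by the ordered product in \eqref{SolKVI}.

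For part (ii), the group $\krv$ is straightforward: \eqref{KRVI} forces $\log G\in\sder_n$, so $\krv(n)\subseteq\SAut_n$, and since $\SAut$ is a genuine operad whose composition agrees with that of $\TAut$ on special automorphisms, $G\circ_i G'\in\SAut_{m+n-1}$ automatically fixes $e^{\sum_j x_j}$, giving \eqref{KRVI}, while \eqref{KRVII} follows from the identical Jacobian cancellation as in (i). The group $\kv$ is the genuine obstacle: because the cabling $(-)^{i\cdots(i+n-1)}$ collapses the block product to the \emph{sum}, whereas $\kv(n)$ is the stabilizer of the ordered product $\Pi_n=e^{x_1}\cdots e^{x_n}$, one checks directly that $G^{i\cdots(i+n-1)}$ does not fix $\Pi_{m+n-1}$, so the naive composition does not land in $\kv$. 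I would instead define the operad structure on $\kv$ by transport along the free and transitive left action $G\cdot F=FG^{-1}$ of \cref{thm:free-transitive-krv}: after fixing a compatible color, declare $G\circ_i G'$ to be the unique element of $\kv(m+n-1)$ with $(G\circ_i G')\cdot(F\circ_i F')=(G\cdot F)\circ_i(G'\cdot F')$, and verify independence of the chosen $F,F'\in\solkv$ by freeness and that the operad axioms transport from $\solkv$.

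For part (iii) I would argue by induction on the number of partial compositions, the base case being a single $\circ_i$. For $\krv$ acting on $\solkv$ the claim reduces to the identity $(G^{-1}F)\circ_i(G^{-1}F)=(G\circ_i G)^{-1}(F\circ_i F)$, which expresses precisely that the failure of the cabling maps to be group homomorphisms on $\TAut$ is compensated by conjugation by $G^{1,2}\in\SAut$; for $\kv$ the analogous identity holds by the very definition of the transported structure above, and the inductive step is a purely formal iteration. I expect this single-composition identity—reconciling the non-multiplicative cabling on $\TAut$ with the group-theoretic action—to be the main obstacle, together with the well-definedness of the transported $\kv$-operad; everything else is bookkeeping with the cosimplicial notation and the cocycle property of $J$.
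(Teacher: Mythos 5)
Your part (i) and the $\krv$ half of part (ii) are correct and essentially the paper's own argument (\cref{thm:operad-SolKVapp}, \cref{thm:kv-krv-operads}): the \eqref{SolKVI} computation is identical, and invoking \cref{prop:Jacobian-operad} for \eqref{SolKVII} is only a repackaging of the paper's cocycle-plus-trace computation, since the proof that $J$ is operadic is exactly the statement that $F\circ_i 1$ acts trivially on $J(1\circ_i G)$. Your observation that \eqref{KRVI} forces $\krv(n)\subseteq\SAut_n$ is a legitimate shortcut, consistent with the paper's remark that the $\TAut$- and $\SAut$-compositions coincide on special automorphisms.

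Where you diverge from the paper is the $\kv$ half of (ii) — and here your obstruction is real; the paper's sketch (``the proof that $\kv$ forms an operad is similar'') does not go through for the composition of \cref{operad of TAut}. To make your claim precise: for $G=(g_1,g_2)\in\kv(2)$, the cabled factor $G\circ_1 1=G^{12,3}$ conjugates $x_1,x_2$ by $A=g_1(x_1+x_2,x_3)$ and $x_3$ by $B=g_2(x_1+x_2,x_3)$; applying the substitution $x_1\mapsto x_1+x_2$, $x_2\mapsto x_3$ to \eqref{KV'I} for $G$ gives $A^{-1}e^{x_1+x_2}A\,B^{-1}e^{x_3}B=e^{x_1+x_2}e^{x_3}$, and combining this with $G^{12,3}(e^{x_1}e^{x_2}e^{x_3})=A^{-1}e^{x_1}e^{x_2}A\,B^{-1}e^{x_3}B$ shows that $G^{12,3}$ fixes $e^{x_1}e^{x_2}e^{x_3}$ if and only if $A$ commutes with $D=e^{x_1}e^{x_2}e^{-x_1-x_2}$. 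Since $\log D=\tfrac{1}{2}[x_1,x_2]+(\text{higher})$ while $\log A$ lies in the graded free Lie subalgebra generated by $x_1+x_2$ and $x_3$, comparing lowest-degree terms (commuting nonzero homogeneous elements of a free Lie algebra are proportional) forces $A=1$. But $\kv(2)$ contains elements with nontrivial first component and Duflo function $0$, e.g.\ $F^{-1}e^{st}F$ for any KV solution $F$ and $t=(x_2,x_1)\in\sder_2$, or the elements produced by \cref{thm:KV-action-F-012}; for such $G$ even $G\circ_1 1$ (a legal colored composition, as $1\in\kv(2)$ also has color $0$) violates \eqref{KV'I}. So the $\kv$ statement genuinely requires a modified composition, and your transported structure — equivalently, cabling by $x_i\mapsto\bch(x_i,\ldots,x_{i+n-1})$ rather than the sum — is the right fix; note you still owe the well-definedness check and some care with colors, since the left action shifts Duflo functions ($FG^{-1}$ has Duflo function $h_F-k_G$).

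For part (iii) on the $\krv$ side, your reduction to the single-composition identity $(G\circ_i G)^{-1}(F\circ_i F)=(G^{-1}F)\circ_i(G^{-1}F)$ matches the paper's, but leaving that identity as an ``expectation'' is a genuine gap: it is not formal. The paper (\cref{prop:KRV-action-compatible}) proves it by establishing the commutation $F^{1,23}(G^{-1})^{2,3}=(G^{-1})^{2,3}F^{1,23}$ (and the analogous commutation of the two $G$-factors, needed to rewrite $(G\circ_2 G)^{-1}$), checked on generators using exactly two facts: $(G^{-1})^{2,3}$ fixes $x_2+x_3$ because $G\in\krv(2)$, and $F^{1,23}$ conjugates $x_2$ and $x_3$ by the same group element, so each automorphism fixes the other's conjugators up to an overall conjugation. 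You should supply this lemma. It is also worth noting that this commutation is precisely what fails on the $\kv$ side, where elements do not fix $x_2+x_3$ — which is consistent with, and in fact another way of seeing, your diagnosis in (ii).
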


\begin{remark}\label{rem:krvsaut}
    Note that by (\ref{KRVI}), we have that $\krv(n) \subseteq \SAut_n$, and therefore in that case $\circ_i$ is a group homomorphism, making $\{\krv(n)\}_{n \geq 2}$ a non-symmetric $\K[[z]]$-colored operad in prounipotent groups. We can achieve the same for $\kv(n)$ by defining $\widetilde{\sder}=\{u\in\tder ~:~u(\bch(x_1,\hdots,x_n))=0\}$ with operadic composition defined the same way as in $\sder$ but with ``$+$'' replaced by ``$\bch$'', and expontentiating that to get $\widetilde{\SAut}_n$, then $\kv(n) \subseteq \widetilde{\SAut}_n$.
\end{remark}

\subsubsection{Symmetric KV solutions}
\label{ss:symmetric-KV-solutions}
An \defn{inner derivation} of a Lie algebra $\mathfrak{g}$ is a derivation arising from the adjoint action of $\mathfrak{g}$ on itself. 
The tangential derivation $t=(x_2,x_1)$ (see also \cref{example: generating DK Lie algebras as derivations}) is equivalently described as the inner derivation given by the adjoint action of $(x_1+x_2)\in\lie_2$. That is because the inner derivation $\inner:=\operatorname{ad}_{x_1+x_2}$ acts on a Lie word $a\in\lie_2$ as 
\[
\inner(a)=[a, x_1+x_2]=[a, x_1]+[a, x_2].
\]   
Indeed, $\inner(x_1)=[x_1,x_2]$ and $\inner(x_2)=[x_2,x_1]$, as in~\cref{example: generating DK Lie algebras as derivations}.

Denote by $\tw$ the tangential derivation $\tw \eqdef (0, x_1) \in \tder_2$, and set $\etw\eqdef\exp(\tw)$. 
The inner automorphism~$e^{\inner}$, together with the tangential automorphism~$\etw$, are used to define an involution on the set of KV solutions of type $(0,2+1)$.

\begin{prop}[{\cite[Proposition~8.4]{AT12}}]
\label{prop:RtauInvolution}
The automorphism $\tau : \solkv(2) \to \solkv(2)$ defined by $$\tau(F) := e^{-\inner/2}F^{2,1}\etw$$ defines an involution on the set of solutions to the KV problem. That is, if $F \in \solkv(2)$, then $\tau(F) \in \solkv(2)$ and $\tau^2(F) = F$. 
\end{prop}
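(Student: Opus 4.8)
The plan is to verify the three assertions in turn: that $\tau(F)$ satisfies \eqref{SolKVI}, that it satisfies \eqref{SolKVII}, and that $\tau^2=\id$. Since all three factors $e^{-\inner/2}$, $F^{2,1}$ and $\etw$ are tangential, $\tau(F)\in\TAut_2$ from the outset, and the only content is the two KV equations and the involution identity. Throughout I use that $\rho\colon\TAut_2\to\Aut(\exp(\lie_2))$ is a group homomorphism (Equation~\eqref{eq: group law on TAut}), so a product in $\TAut_2$ acts by composing the corresponding automorphisms in the same order. I record the actions I will need: $\etw=\exp(0,x_1)=(1,e^{x_1})$ fixes $e^{x_1}$ and sends $e^{x_2}\mapsto e^{-x_1}e^{x_2}e^{x_1}$, hence $\etw(e^{x_1}e^{x_2})=e^{x_2}e^{x_1}$; the superscript $(-)^{2,1}$ is conjugation by the variable swap $s\colon x_1\leftrightarrow x_2$, so $\rho(F^{2,1})=s\,\rho(F)\,s$; and $e^{-\inner/2}$ is the inner automorphism $a\mapsto e^{(x_1+x_2)/2}a\,e^{-(x_1+x_2)/2}$, which fixes $e^{x_1+x_2}$ since $\inner(x_1+x_2)=0$.

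For \eqref{SolKVI} I simply track $e^{x_1}e^{x_2}$ through the three factors: $\etw$ sends it to $e^{x_2}e^{x_1}$; then $\rho(F^{2,1})=s\,\rho(F)\,s$ sends $e^{x_2}e^{x_1}$ to $s\big(\rho(F)(e^{x_1}e^{x_2})\big)=s(e^{x_1+x_2})=e^{x_1+x_2}$, using \eqref{SolKVI} for $F$; and finally $e^{-\inner/2}$ fixes $e^{x_1+x_2}$. Hence $\tau(F)(e^{x_1}e^{x_2})=e^{x_1+x_2}$.

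For \eqref{SolKVII} I use the group $1$-cocycle property of $J$ together with the fact that $J$, as a morphism of symmetric sequences (Proposition~\ref{prop:Jacobian-operad}), is $\Sigma_n$-equivariant. A direct divergence computation gives $j(\tw)=j(\inner)=0$, and integrating the defining ODE of $J$ then yields $J(\etw)=0$ and $J(e^{-\inner/2})=0$. Expanding the cocycle,
\[
J(\tau(F))=J(e^{-\inner/2})+e^{-\inner/2}\cdot\big(J(F^{2,1})+F^{2,1}\cdot J(\etw)\big)=e^{-\inner/2}\cdot J(F^{2,1}).
\]
Equivariance gives $J(F^{2,1})=J(F)^{2,1}$, and $J(F)=\trace\big(h(x_1+x_2)-h(x_1)-h(x_2)\big)$ is visibly invariant under $x_1\leftrightarrow x_2$, so $J(F^{2,1})=J(F)$. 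Because inner automorphisms act trivially on cyclic words ($\trace(gag^{-1})=\trace(a)$), the remaining $e^{-\inner/2}$ acts as the identity on $\cyc_2$, giving $J(\tau(F))=J(F)$. Thus $\tau(F)$ satisfies \eqref{SolKVII} with the same Duflo function $h$, and $\tau(F)\in\solkv(2)$.

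Finally, $\tau^2=\id$, which I expect to be the crux. Using that $(-)^{2,1}$ is a group automorphism of $\TAut_2$, that $\inner$ is swap-invariant, and that $(\etw)^{2,1}=\exp(x_2,0)$, one computes
\[
\tau^2(F)=e^{-\inner/2}\,(\tau(F))^{2,1}\,\etw=e^{-\inner/2}\,\big(e^{-\inner/2}\,F\,\etw^{2,1}\big)\,\etw=e^{-\inner}\,F\,\etw^{2,1}\etw.
\]
It therefore suffices to establish the identity $\etw^{2,1}\etw=F^{-1}e^{\inner}F$ in $\TAut_2$. The left-hand side is conjugation by $g_0:=e^{-x_2}e^{-x_1}=(e^{x_1}e^{x_2})^{-1}$, i.e.\ $a\mapsto g_0\,a\,g_0^{-1}$, as one checks on the generators $e^{x_1},e^{x_2}$. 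For the right-hand side, $e^{\inner}$ is conjugation by $e^{-(x_1+x_2)}$; conjugating an inner automorphism by $F$ in $\TAut_2$ turns it into conjugation by $\rho(F)^{-1}\big(e^{-(x_1+x_2)}\big)$, and \eqref{SolKVI} gives $\rho(F)^{-1}(e^{x_1+x_2})=e^{x_1}e^{x_2}$, so $\rho(F)^{-1}\big(e^{-(x_1+x_2)}\big)=e^{-x_2}e^{-x_1}=g_0$, matching the left-hand side. This is exactly the point where the KV property of $F$ (rather than mere tangentiality) enters. Substituting back, $\tau^2(F)=e^{-\inner}F\,F^{-1}e^{\inner}F=F$. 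The main obstacle is precisely this last identity: getting the bookkeeping of the swap, the homomorphism convention for $\rho$, and the adjoint-action signs all correct so that both sides collapse to conjugation by the same group-like element.
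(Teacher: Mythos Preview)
Your proof is correct and follows the same line as the argument in \cite[Proposition~8.4]{AT12}, which the paper cites without reproducing. The identity $\etw^{2,1}\etw=F^{-1}e^{\inner}F$ that you derive directly is precisely the combination of \eqref{eq:theta-B} and \eqref{eq:ThetaAndT} recorded later in \cref{lemma: properties of B}; your hands-on verification via the common conjugating element $g_0=(e^{x_1}e^{x_2})^{-1}$ is the standard way to see it, and the injectivity of $\rho\colon\TAut_2\hookrightarrow\Aut(\exp(\lie_2))$ (implicit in your ``in $\TAut_2$'') is what promotes the equality of automorphisms to an equality of group elements.
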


\begin{remark}
Due to opposite braiding conventions with respect to those in \cite{AT12}, the automorphism $\etw$ here is denoted $R^{2,1}$ in \cite{AT12}.
\end{remark}

\begin{definition}
\label{def:SolKVsymmetric}
A KV solution $F\in\solkv(2)$ is \defn{symmetric} if $\tau(F)=F$. 
We write $\solkv^\tau(2) \subseteq \solkv(2)$ for the set of symmetric KV solutions. 
\end{definition}

As shown in \cite[Proposition~8.9]{AT12}, the subgroups 
\[
    \krvs(2) \eqdef \{ G \in \krv(2) \ | \ G^{2,1}=G \}
    \quad \text{ and } \quad
    \kvs(2) \eqdef \{ G \in \kv(2) \ | \ G^{2,1}=G \}
    \]
of $\krv(2)$ and $\kv(2)$ make $\solkv^{\tau}(2)$ into a bitorsor.
In particular, combining \cite[Theorem~5.7]{AT12} and \cite[Proposition~8.9]{AT12} we have the following.

\begin{prop}
\label{symmetry-groups-from-KV-solutions}
    Given two symmetric KV solutions $F_1,F_2 \in \solkv^{\tau}(2)$, we have
    \[
    F_1 F_2^{-1} \in \krvs(2) 
    \quad \text{and} \quad
    F_1^{-1} F_2 \in \kvs(2).
    \]
\end{prop}

\begin{proof}
    For convenience, we reprove the $\krvs$ statement directly; the proof for $\kvs$ is similar.
    First, observe that since both $F_1$ and $F_2$ satisfy~\eqref{SolKVI}, we have  
\[
F_1F_2^{-1}(e^{x_1+x_{2}}) 
= F_1(e^{x_1}e^{x_2}) 
=  e^{x_1+x_{2}},
\]  
which shows that $F_1F_2^{-1}$ satisfies~\eqref{KRVI}.
To show that $F_1F_2^{-1}$ satisfies \eqref{KRVII}, we use the $1$-cocycle property of the Jacobian.
We have
\begin{align}
\label{cocycle-step}
J(F_1 F_2^{-1}) = J(F_1) + F_1 J(F_2^{-1})
= J(F_1) - F_1 F_2^{-1} J(F_2).
\end{align}
Since $F_1$ and $F_2$ are KV solutions, we have $J(F_1)=\trace (h_1(x_1+x_2)-h_1(x_1)-h_1(x_2))$ and $J(F_2)=\trace(h_2(x_1+x_2)-h_2(x_1)-h_2(x_2))$ for some Duflo functions $h_1,h_2$ in $z^2\mathbb{K}[[z]]$.
We claim that the action of~$F_1F_2^{-1}$ on~$J(F_2)$ is trivial: 
as~$F_1F_2^{-1}$ satisfies \eqref{KRVI}, it preserves the sum~$x_1+x_2$ and thus the series~$h_2(x_1+x_2)$; moreover, it acts by conjugation on each of~$h_2(x_1)$ and~$h_2(x_2)$, which cancels under the trace. 
Therefore, \eqref{cocycle-step} becomes 
\begin{align*}
J(F_1 F_2^{-1}) 
= J(F_1) - J(F_2)
=\trace(h(x_1+x_2)-h(x_1)-h(x_2)),
\end{align*}
where $h \eqdef h_1 - h_2$ is the difference of the Duflo functions. 
We conclude that $F_1F_2^{-1}$ satisfies~\eqref{KRVII}.

Thus, we have that $F_1F_2^{-1}$ is in $\krv(2)$. 
Using that $F_1$ and $F_2$ are symmetric, one can compute $(F_1F_2^{-1})^{2,1}$ directly as follows:
\begin{align*}
    (F_1F_2^{-1})^{2,1} = (e^{\inner/2}F_1\etw^{-1})(e^{\inner/2}F_2\etw^{-1})^{-1} 
    = e^{\inner/2} F_1 F_2^{-1} e^{-\inner/2} 
    =F_1 F_2^{-1}
\end{align*}
Thus, $ F_1 F_2^{-1} \in \krvs(2)$ as required.
\end{proof}

Out of symmetric KV solutions, in \cite{AT12} the authors build special automorphisms which satisfy pentagon and hexagon equations -- the defining equations of Drinfeld associators, but in a different space -- as follows: 

\begin{prop}[{\cite[Proposition~7.1]{AT12}}]
\label{prop:pentagon-KV-associator}
Let $F\in\solkv(2)$ be a KV solution.
Then, the tangential automorphism
\begin{equation*} 
\label{eq:phi-construction}
G_F \eqdef F^{1,23}F^{2,3}(F^{1,2})^{-1}(F^{12,3})^{-1}
\end{equation*} 
is an element of $\krv(3) \subset \SAut_3$ and satisfies the pentagon equation in $\TAut_4$:
\begin{equation} 
\label{eq:pentagon in Taut}
 G^{1,2,34} G^{12,3,4} =  G^{2,3,4} G^{1,23,4} G^{1,2,3}. \tag{P}
\end{equation}  
\end{prop}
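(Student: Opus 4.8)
The plan is to verify the three parts of the statement in turn—that $G_F$ is special, that it lies in $\krv(3)$, and that it satisfies the pentagon—using throughout the identification
\[
G_F = F^{1,23}F^{2,3}(F^{1,2})^{-1}(F^{12,3})^{-1} = (F\circ_2 F)(F\circ_1 F)^{-1} = dF,
\]
which exhibits $G_F$ as the cosimplicial coboundary of $F$ in the non-abelian cosimplicial group $\{\TAut_n\}$ of \cref{operad of TAut}, with $d$ the differential of \cref{ex:cosimplicial-associator}. Note that no symmetry hypothesis on $F$ is used; only that $F$ is a KV solution.

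First I would prove \eqref{KRVI} and deduce that $G_F\in\SAut_3$. Applying \eqref{SolKVI} first in the slots $(1,2)$ and then in the merged slot shows $(F\circ_1 F)(e^{x_1}e^{x_2}e^{x_3}) = e^{x_1+x_2+x_3}$, and symmetrically $(F\circ_2 F)(e^{x_1}e^{x_2}e^{x_3}) = e^{x_1+x_2+x_3}$; these are exactly the two parenthesizations in the displayed identity preceding \eqref{eq:KV_Ass-construction}, which agree because $\bch$ is associative. Since the action $\rho$ is a group homomorphism (by the group law \eqref{eq: group law on TAut}), composing gives $G_F(e^{x_1+x_2+x_3}) = e^{x_1+x_2+x_3}$, i.e. \eqref{KRVI}. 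Writing $G_F=\exp(u)$ and comparing lowest-degree terms, the identity $G_F(x_1+x_2+x_3)=x_1+x_2+x_3$ forces $u(x_1+x_2+x_3)=0$, so $u\in\sder_3$ and $G_F\in\SAut_3$; this also records the inclusion $\krv(3)\subset\SAut_3$.

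For \eqref{KRVII} I would compute $J(G_F)$ directly. Because $J$ is a morphism of operads (\cref{prop:Jacobian-operad}) and $F$ satisfies \eqref{SolKVII} with Duflo function $h$, the linear operad composition in $\cyc$ gives $J(F\circ_1 F)=J(F)\circ_1 J(F)$ and $J(F\circ_2 F)=J(F)\circ_2 J(F)$, both equal to the single cyclic word $\trace\big(h(x_1+x_2+x_3)-\sum_{i}h(x_i)\big)$. Then the $1$-cocycle property of $J$, together with the facts that $G_F$ fixes $x_1+x_2+x_3$ and acts by conjugation (so $G_F$ fixes this cyclic word, by cyclicity of $\trace$), yields
\[
J(G_F) = J(F\circ_2 F) - G_F\cdot J(F\circ_1 F) = 0.
\]
Hence $G_F$ satisfies \eqref{KRVII} with trivial Duflo function, completing $G_F\in\krv(3)$; equivalently $J(G_F)=J(dF)=d\,J(F)=0$, since $J$ commutes with $d$.

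Finally, the pentagon \eqref{eq:pentagon in Taut} is the cocycle condition $dG_F=1$. The plan is to expand $G_F=dF$ and exploit that the individual cofaces appearing as the superscripts—the slot-expansions $(-)\circ_i 1$ and the insertions $1\circ_i(-)$—are genuine group homomorphisms, because the underlying maps $u\mapsto u\circ_i 0$ on $\tder$ are Lie-algebra homomorphisms, even though the full operadic composition on $\TAut$ is not. This lets one distribute each coface over the product $(F\circ_2 F)(F\circ_1 F)^{-1}$ and then collapse the resulting iterated faces of $F$ using the coface commutation relations that follow from the associativity axiom of \cref{defn: colored operad}. Equivalently, one realizes the five $G_F$-faces as the edge maps $S_{p'}S_p^{-1}$ between the straightening automorphisms $S_p$ attached to the parenthesizations $p$ of $\bch(x_1,x_2,x_3,x_4)$, and observes that the two boundary paths of the associahedron telescope to the same element. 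I expect the main obstacle to be precisely this non-abelian bookkeeping: since the cofaces do not commute, the cancellations must be performed in a carefully chosen order to confirm that every face of $F$ pairs off, leaving the identity.
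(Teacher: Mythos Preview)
Your argument is correct and is essentially the original Alekseev--Torossian proof \cite[Proposition~7.1]{AT12}: verify \eqref{KRVI} and \eqref{KRVII} for $G_F$ by hand, then establish the pentagon by a telescoping argument over the parenthesizations of a four-fold $\bch$ product (your ``straightening automorphisms'' $S_p$).

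The paper, however, gives a different and shorter proof built on the bitorsor structure of \cref{thm:free-transitive-krv}. For membership in $\krv(3)$, it simply invokes \cref{thm:operad-SolKV} to observe that $F\circ_1 F$ and $F\circ_2 F$ both lie in $\solkv(3)$; since $\krv(3)$ acts freely and transitively on $\solkv(3)$, the element $G_F=(F\circ_2 F)(F\circ_1 F)^{-1}$ lies in $\krv(3)$ automatically---no separate verification of \eqref{KRVI} or the Jacobian is needed. For the pentagon, the same trick applies one level up: $(F\circ_1 F)\circ_1 F$ and $F\circ_2(F\circ_2 F)$ are both in $\solkv(4)$, and one checks that each side of \eqref{eq:pentagon in Taut} is an element of $\krv(4)$ carrying the first to the second. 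Free transitivity then forces the two sides to coincide, with no cosimplicial bookkeeping at all. Your approach is more explicit and self-contained; the paper's is more conceptual and entirely computation-free, at the cost of relying on the torsor machinery established earlier.
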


\begin{proof}
We give a short alternative proof. By \cref{thm:operad-SolKV} we know that $F\circ_1 F=F^{12,3}F^{1,2}$ and $F\circ_2 F=F^{1,23}F^{2,3}$ are in $\solkv(3)$.
    It follows that $G_F=(F\circ_2 F)(F \circ_1 F)^{-1}$ is an element of $\krv(3)$.
    Similarly, we have that both $(F\circ_1 F)\circ_1 F$ and $F \circ_2 (F \circ_2 F)$ are elements of $\solkv(4)$.
    Now observe that both sides of~\eqref{eq:pentagon in Taut} are elements of $\krv(4)$ sending $(F\circ_1 F)\circ_1 F$ to $F \circ_2 (F \circ_2 F)$. 
    Since the $\krv(4)$ action on $\solkv(4)$ is free and transitive (\cref{thm:free-transitive-krv}), the two sides must be equal. 
\end{proof}

If $F$ is a symmetric KV solution then more is true: $G_F$ and  $e^{\inner}$ satisfy all of the remaining defining equations of Drinfeld associators in $\TAut_3$, as follows. 

\begin{prop}[{\cite[Proposition~8.11]{AT12}}]
\label{prop:equations-KV-associator}
Let $F\in\solkv^{\tau}(2)$ be a symmetric KV solution, and let $G \eqdef G_F$ be the corresponding solution to the pentagon equation. 
Then $G_F \in \SAut_3$ is a KV-associator (as in Definition~\ref{defn: KV Associator}).
\end{prop}

\begin{prop}
\label{prop:symmetric-iff-associator-symmetric}
    Let $F \in \solkv(2)$ be a KV solution.
    Then we have,
    \[
    \tau(F)=F \iff G_F=G_{\tau(F)}.
    \]
\end{prop}

\begin{proof}
    Suppose that $\tau(F)=F$. 
    Then, we have
    \[
    G_F 
    = F^{1,23}F^{2,3}(F^{1,2})^{-1}(F^{12,3})^{-1}
    = \tau(F)^{1,23}\tau(F)^{2,3}(\tau(F)^{1,2})^{-1}(\tau(F)^{12,3})^{-1}
    =
    G_{\tau(F).}
    \]
    For the reverse direction, suppose that we have $G_F = G_{\tau(F)}$.
    By \cite[Proposition~8.5]{AT12}, we have
    $G_{\tau(F)}=(G_F^{3,2,1})^{-1}$.
    The fact that $F$ is a symmetric KV solution then follows from \cite[Proposition~9.3]{AT12}. 
\end{proof}

Among classical KV solutions, those whose associated KV associators arise from Drinfeld associators carry an additional Grothendieck--Teichm\"uller symmetry. Equivalently, for this distinguished class of KV associators, the natural bitorsor actions of $\kv(2)$ and $\krv(2)$ restrict to the familiar Grothendieck--Teichm\"uller symmetries.

Recall that $\gt_1 \subset \gt$ denotes the subgroup of the Grothendieck--Teichm\"uller group consisting of elements $(\lambda,f)$ with $\lambda=1$, and that $\grt_1$ is its graded analogue. In~\cite[Theorem~9]{AET10} and~\cite[Theorem~4.6]{AT12}, the authors construct injective group homomorphisms
\[
\gt_1 \hookrightarrow \kv(2)
\qquad \text{and} \qquad
\grt_1 \hookrightarrow \krv(2).
\]
These homomorphisms identify the pentagon and hexagon relations defining $\gt_1$ and $\grt_1$ with their counterparts in the KV setting, arising from iterated applications of the Baker--Campbell--Hausdorff formula.

\section{Constructing KV Solutions}\label{section: Construction of KV solutions}
In~\cite{AET10}, the authors give an explicit construction of a classical KV solution $F_\varphi$ associated to a Drinfeld associator $\varphi:\hPaB\to \CD$ with coupling constant $\mu=1$. In this section, we reinterpret that construction in moperadic terms. More precisely, to each moperad equivalence
\[
(\varphi^1,\varphi):\hPaB^1 \longrightarrow \CD^+
\]
we associate a KV solution of type $(0,2+1)$. This point of view makes it transparent that the KV equations arise from the defining relations of the $\PaB$-moperad $\hPaB^1$.

The construction is in fact more general. Given a moperad equivalence
\[
(\varphi^1,\varphi):\hPaB^1 \longrightarrow \CD^+,
\]
we associate to each fully parenthesized permutation $w\in \ob(\hPaB^1(n))$ a tangential automorphism $F_w\in \TAut_n$. We will show that the automorphism $F_{0(12)}$ is a KV solution of type $(0,2+1)$. More generally, if $w$ is a full parenthesization of the identity permutation $12\cdots n$, then the associated automorphism $F_{0w}$ is a KV solution of type $(0,n+1)$.

By Theorem~\ref{presenation of PaB1}, the $\PaB$-moperad $\hPaB^1$ is finitely presented, and hence by Theorem~\ref{thm: assoc^1}, any moperad equivalence
\[
(\varphi^1,\varphi):\hPaB^1 \longrightarrow \CD^+
\]
is uniquely determined by the values
\[
\varphi(R^{1,2})=e^{\mu t_{12}/2} \qquad \text{and} \qquad
\varphi(\Phi^{1,2,3})=f(t_{12},t_{23}),
\]
together with
\[
\varphi^1(E^{0,1})=e^{\mu t_{01}} \qquad \text{and} \qquad
\varphi^1(\Psi^{0,1,2})=g(t_{01},t_{12}).
\]
Here the pair $\bigl(\varphi(R^{1,2}),\varphi(\Phi^{1,2,3})\bigr)$ determines a Drinfeld associator, while the additional datum $g$ is constrained by equations~\eqref{eq: MP moperad iso} and~\eqref{eq: hexagons moperad iso}. Proposition~\ref{Enriquez-wisdom} tells us that, for fixed $\varphi$, there is only a one-parameter family of possible choices of $\varphi^1$, including the shifted solution $g=f$.  The construction below generalizes \cite[Proposition~28]{AET10}.

\begin{construction}
\label{construction: TAut from local data}
Fix a moperad equivalence $(\varphi^1,\varphi):\hPaB^1 \rightarrow \CD^+$ and a parenthesized permutation $w \in \ob(\hPaB^1(n))$.  
We consider the local isomorphism 
\begin{equation}
\label{eq: local iso}
\varphi^1_w:\Aut_{\hPaB^{1}(n)}(w)\cong (\PB_n^1)_{\K}\longrightarrow \exp(\mathfrak{t}_n^1)
\end{equation}
of prounipotent groups induced by $(\varphi^1,\varphi)$.

\begin{enumerate}[leftmargin=*,label=(\alph*)]
\item Using that $\PB_n^1 \cong F_n \rtimes \PB_n$ (Equation~\eqref{eq:PB1}), we restrict the isomorphism \eqref{eq: local iso} to an isomorphism of free groups
\begin{equation}
\label{eq: local iso free groups}
\barphi^1_w: (\F_n)_{\K}\longrightarrow \exp(\lie_n),
\end{equation}
which is completely determined by the image of the generators $X_1,\ldots, X_n$ of $(\F_n)_{\K}$.

\item By \cref{l:generators-normal-form}, as a morphism in $\Aut_{\hPaB^1(n)}(w)$, each generator $X_k$ of $(\F_n)_{\K}$ can be (non-uniquely) represented via operadic, monoidal, and categorical compositions of the generating morphisms of~$\hPaB^1$. 
In particular, $X_k\in\Aut_{\hPaB^1(n)}(w)$ is obtained by conjugating $E^{0,k}\in\Aut_{\hPaB^1(n)}(l^0_n( k \, 1 \, 2 \hdots k-1 \, k+1 \hdots n))$ 
by twists and associativity isomorphisms.

\item 
In turn, this determines the values $\barphi^1_w$ takes on the generators $X_k$. Indeed, a straightforward calculation shows that $\barphi^1_w(X_k) = e^{-z_k}e^{t_{0k}}e^{z_k}$ for some Lie word $z_k\in\ib^+_{n}$, as in Section~\ref{subsec:assocmoperad}. 

\item We rewrite these conjugations as $\barphi^1_w(X_k) = e^{-z_k}e^{t_{0k}}e^{z_k}=e^{-z'_k}e^{t_{0k}}e^{z'_k}$, where $z'_k\in \lie_n$. This is done via the homomorphism $\ib_n^+ \to \tder_n$, given by $t_{ij} \mapsto (0,\ldots,x_j,\ldots,x_i,\ldots,0)$, and $t_{0i}\mapsto(x_i,x_i,\ldots,x_i)$. Here $1\leq i,j\leq n$, and the values $x_j$ and $x_i$ are placed in position $i$ and $j$, respectively. 
Then, the action of $\ib^+_n$ on its Lie subalgebra $\lie_n$  (generated by $t_{01},\hdots,t_{0n}$) agrees with the action of the image in $\tder_n$. 
Hence, $\exp(t_n^+)$ acts on $(\F_n)_{\K}$ by tangential automorphisms, which gives the second equality above. This is \cite[Proposition~20]{AET10}.

\item Completing this construction for each generator $X_1,\ldots, X_n$ of $(\F_n)_{\K}\subset \Aut_{\hPaB^1(n)}(w)$, we find that $(\varphi^1,\varphi)$ restricts to a tangential automorphism of $(\F_n)_{\K}$ (also using $\exp(\lie_n) \cong (\F_n)_{\K}, e^{t_{0i}} \mapsto X_i$). Denote this tangential automorphism by \[F_{w,\varphi^1,\varphi} \eqdef (e^{z'_1},\ldots, e^{z'_n}).\]

\end{enumerate}
\end{construction}

\medskip

\begin{notation}
We denote by $F_{w,\varphi^1,\varphi} \eqdef (e^{z'_1},\ldots, e^{z'_n})$ the tangential automorphism associated to the moperad equivalence $(\varphi^1,\varphi): \hPaB^1 \rightarrow \CD^+$ and a choice of parenthesized permutation $w \in \mathrm{Ob}(\hPaB^1(n))$. To simplify notation, we will often suppress the dependence on $(\varphi^1,\varphi)$ and write $F_w$ in place of $F_{w,\varphi^1,\varphi}$ whenever the context makes the choice of expansion clear.
\end{notation}

\medskip

\begin{example}
\label{ex:classical-KV}
Let $l^0_2=((01)2)$ denote the leftmost parenthesization of the identity permutation in $\ob(\hPaB^1(2))$. The generators of the free group $(\F_2)_{\K}$, viewed as automorphisms in $\Aut_{\hPaB^1(2)}(l^0_2)$, can be presented as
\[
X_1 = 
E^{0,1} 
\quad \text{and}\quad 
X_2= (\Psi^{0,1,2})^{-1} (R^{2,1})^{-1} \Psi^{0,2,1} E^{0,2} (\Psi^{0,2,1})^{-1} R^{1,2} (\Psi^{0,1,2}).
\] 
Applying a moperad equivalence $(\varphi^1,\varphi):\hPaB^1\rightarrow \CD^+$, we compute that 
\begin{gather}
\barphi^1(X_1)=\varphi(E^{0,1})=e^{\mu t_{01}} \quad \text{and}\\
\label{X2 for l2}
\barphi^1(X_2)
=
\left(g(t_{02},t_{12})^{-1}e^{\frac{\mu t_{12}}{2}}g(t_{01},t_{12})\right)^{-1} e^{\mu t_{02}}\left(g(t_{02},t_{12})^{-1}e^{\frac{\mu t_{12}}{2}} g(t_{01},t_{12})\right). 
\end{gather} Where we note that we have written the formula for $\barphi^1(X_2)$ in terms of group multiplication.

Since $t_{01}+t_{02}+t_{12}$ is central in $\mathfrak{t}_{2}^+\cong\ib_3$, we can replace $t_{12}$ by $-t_{01}-t_{02}$ and rewrite \eqref{X2 for l2} as \begin{equation*} \big(g(t_{02},-t_{01}-t_{02})^{-1}e^{\frac{\mu (-t_{01}-t_{02})}{2}}g(t_{01},-t_{01}-t_{02})\big)^{-1} e^{\mu t_{02}}\left(g(t_{02},-t_{01}-t_{02})^{-1}e^{\frac{\mu (-t_{01}-t_{02})}{2}} g(t_{01},-t_{01}-t_{02})\right).
\end{equation*}
Thus, we obtain a tangential automorphism $F_{l^0_2,\varphi^1,\varphi}:(\F_2)_{\K}\rightarrow \exp(\lie_2)\cong(\F_2)_{\K}$ defined by restricting $(\varphi,\varphi^1)$ to $(\F_2)_{\K}$. As a two-tuple, this is given by
$$F_{l^0_2,\varphi^1,\varphi}=\left(1, g(t_{02},-t_{01}-t_{02})^{-1}e^{\frac{-\mu (t_{01}+t_{02})}{2}} g(t_{01},-t_{01}-t_{02})\right).$$ 
\end{example} 

If we let $r^1_2=(0(12))$ denote the rightmost parenthesization of the identity permutation in $\ob(\hPaB^1(2))$, a similar calculation to that in Example~\ref{ex:classical-KV} gives the tangential automorphism: \begin{equation}
\label{AET automorphism}
\tag{$F_{0(12)}$}
F_{r_{2}^{1},\varphi^1,\varphi}=\left(g(t_{01}, -t_{01}-t_{02}), g(t_{02},-t_{01}-t_{02})e^{\frac{-\mu (t_{01}+t_{02})}{2}}\right).
\end{equation} 

For the remainder of this section, we fix an equivalence of completed moperads $(\varphi^1,\varphi):\hPaB^1 \rightarrow \CD^+$ given as usual by the values 
\begin{equation}
\label{eq:values} \tag{$\dagger$}
\varphi(R^{1,2})=e^{\frac{t_{12}}{2}}, \quad \varphi(\Phi^{1,2,3})=f(t_{12},t_{23}), \quad \varphi^1(E^{0,1})=e^{t_{01}}, \quad \text{and} \quad \varphi^1(\Psi^{0,1,2})=g(t_{01},t_{12}).
\end{equation}
One of the main results of this paper (Theorem~\ref{thm: the construction is good}) is that the tangential automorphism $F_{0(12)}$ is a KV solution of type $(0,2+1)$. This is parallel to ~\cite[Theorem~4]{AET10}. We begin by addressing the first KV equation.

\begin{lemma}
\label{lem:KVI-r2}
Let $(\varphi^1,\varphi):\hPaB^1 \rightarrow \CD^+$ be an equivalence of completed moperads as in \eqref{eq:values}.
Then, the associated tangential automorphism $F_{0(12)}$ satisfies the first KV equation from Definition~\ref{def: KV solutions}:
\[ 
F_{0(12)}(e^{x_1}e^{x_2})=e^{x_1+x_2}.
\]
\end{lemma}

\begin{proof}
The morphisms  
\[
 X_1 =  \Psi^{0,1,2} E^{0,1} (\Psi^{0,1,2})^{-1} \quad \text{and} \quad
    X_2 =  (R^{2,1})^{-1}\Psi^{0,2,1} E^{0,2} (\Psi^{0,2,1})^{-1} R^{1,2}.
\]
generate a copy of the free group $(\F_2)_{\K}\subset \Aut_{\hPaB^1(2)}(0(12))$.
Using first the octagon~\eqref{eqn:O}, then the right pentagon~\eqref{eqn:RP} equation we can rewrite the product $X_2X_1$ as:
\begin{eqnarray*}
    X_2X_1 
&=& (R^{2,1})^{-1}\Psi^{0,2,1} E^{0,2}(\Psi^{0,2,1})^{-1} R^{1,2}\Psi^{0,1,2} E^{0,1} (\Psi^{0,1,2})^{-1} \\
&\stackrel{\eqref{eqn:O}}{=}& (R^{2,1})^{-1}\Psi^{0,2,1}E^{0,2}E^{02,1}(\Psi^{0,2,1})^{-1}(R^{1,2})^{-1}
\\
&\stackrel{\eqref{eqn:RP}}{=}& (R^{2,1})^{-1}\Psi^{0,2,1} \left(\Psi^{0,2,1}\right)^{-1} R^{1,2} E^{0,12} R^{2,1} \Psi^{0,2,1} (\Psi^{0,2,1})^{-1}(R^{1,2})^{-1}
\\
&=& E^{0,12}.
\end{eqnarray*} 
Note that in the application of the \eqref{eqn:RP} relation in the second reduction uses that $E^{0,2}$ commutes with $E^{02,1}$: this is an equality in $\Aut_{\PaB^1}((01)2)\cong (\PB^1_2)_{\K} \subseteq (\Br^1_2)_{\K}$, given in Remark~\ref{rmk:AltPres}, relation~ (\ref{eq:XiCompMod}).

    Applying Construction~\ref{construction: element of KV}, we obtain $F_{0(12)}(e^{x_1}e^{x_2})=\barphi^1_{0(12)}(X_2X_1)=\barphi^1_{0(12)}E^{0(12)}=e^{x_1+x_2}$, as claimed.
\end{proof}

\begin{remark}
    Since we write function composition left to right, the isomorphism $\Aut_{\hPaB^{1}(n)}(w)\cong (\PB_n^1)_{\K}$ is strictly speaking an anti-isomorphism. This leads to the (somewhat confusing) equality 
    $\barphi^1_{0(12)}(X_2X_1)=F_{0(12)}(e^{x_1}e^{x_2})$.
    On the left-hand side of this equation, $X_2X_1$ denotes the function composition of morphisms which, under the identification $\Aut_{\hPaB^1(2)}(0(12))\cong (\PB_2^1)_{\K}\cong (\F_{2})_{\K}\rtimes (\PB_2)_{\K},$ is the product of group elements $e^{x_1}e^{x_2}$ in $(\F_2)_{\K}$. 
\end{remark}

The proof of~\cref{lem:KVI-r2} illustrates how the right pentagon~\eqref{eqn:RP} and octagon~\eqref{eqn:O} relations in the $\hPaB$-moperad structure on $\hPaB^1$ correspond to the first KV equation~\eqref{SolKVI}. 
Our goal is now to prove that $F_{0(12)}$ satisfies the second KV equation~\eqref{SolKVII}.
In order to do so, we will use the following generalization of \cite[Theorem~30]{AET10}.
Recall from~\cref{operad of TAut} the operad structure on $\TAut$, given explicitly for $F \in \TAut_m$ and $G \in \TAut_n$ by the formula
\[
F\circ_{i} G
= (F \circ_i 1)(1 \circ_i G) 
= F^{1,\ldots, i-1,i(i+1)\cdots(i+n-1),i+n,\ldots,m+n-1} \circ G^{i,i+1,\ldots,i+(n-1)} \in \TAut_{m+n-1}.
\] 

\medskip

\begin{thm}
\label{thm:30-AET}
    Let $(\varphi^1,\varphi):\hPaB^1 \rightarrow \CD^+$ be an equivalence of completed moperads given by the values \eqref{eq:values}.
    Given parenthesized words $0(w)\in \hPaB^1(m)$, $0(w')\in \hPaB^1(n)$ we have the following equality of the associated tangential automorphisms:
    \[
    F_{0w} \circ_i F_{0w'} = F_{0(w\circ_i w'),}
    \]
    where the composition in the subscript is the partial composition of permutations \eqref{def: composition of permutations}.
\end{thm}

\begin{proof}
Suppose that $w \in \PaB(m)$ and $w' \in \PaB(n)$.
We write $F_{0w}=(\tilde f_1,\ldots,\tilde f_m)$ and $F_{0w'}=(\tilde g_1,\ldots,\tilde g_n)$ with $\tilde f_j \in \exp(\lie_m)$ and $\tilde g_k \in \exp(\lie_n)$, and denote $f_j \eqdef \tilde f_j\circ_i 1$ and $g_k \eqdef 1\circ_i \tilde g_k$. 
With this notation, we must prove that
\begin{equation}
\label{eq:heart-of-thm30}
    F_{0(w \circ_i w')}=( f_1 ,\ldots, f_{i-1}, f_i,\ldots, f_i, f_{i+1},\ldots, f_m)(1,\ldots,1, g_1,\ldots, g_n,1,\ldots,1),
\end{equation}
First observe that since $(\varphi^1,\varphi)$ is a morphism of moperads, the following diagram commutes
\begin{center}
\begin{tikzcd}
\Aut_{\hPaB^+(m)}(0w) \times \Aut_{\hPaB(n)}(w') \arrow[rr, "\varphi^1_{0w}\times \varphi_{w'}"] \arrow[d, "\circ_i"] &  & \exp(\ib_m^+)\times \exp(\ib_n) \arrow[d, "\circ_i"] \\
\Aut_{\hPaB^1(m+n-1)}(0(w \circ_i w')) \arrow[rr, "\varphi^1_{0(w\circ_i w')}"]           & & \exp(\ib^+_{m+n-1}).       
\end{tikzcd}
\end{center}
Restricting to the free group $(\F_m)_{\K} \times \{1\}$ we get the commutative diagram
\begin{center}
\begin{tikzcd}
(\F_m)_{\K} \arrow[rr, "\barphi^1_{0w}"] \arrow[d] &  & \exp(\lie_m) \arrow[d] \\
(\F_{m+n-1})_{\K} \arrow[rr, "\barphi^1_{0(w\circ_i w')}"]           & & \exp(\lie_{m+n-1}).       
\end{tikzcd}
\end{center}
Here, the vertical arrows are explicitly given by
\begin{equation*}
\begin{matrix}
    (\F_m)_{\K} & \longrightarrow & (\F_{m+n-1})_{\K} \\
    X_j & \mapsto & 
    \begin{cases}
        X_j & \text{ if } j < i, \\
        X_{i+n-1}\cdots X_i & \text{ if } j = i, \\
        X_{j+n-1} & \text{ if } j > i,
    \end{cases}
\end{matrix}
\quad \quad
\text{and}
\quad \quad
\begin{matrix}
    \exp(\lie_m) & \longrightarrow & \exp(\lie_{m+n-1}) \\
    e^{x_j} & \mapsto & 
    \begin{cases}
        e^{x_j} & \text{ if } j < i, \\
        e^{\sum_{\ell=i}^{i+n-1}x_\ell} & \text{ if } j = i, \\
        e^{x_{j+n-1}} & \text{ if } j > i.
    \end{cases}
\end{matrix}
\end{equation*}
Specializing to the generators $X_j$ of $(\F_{m+n-1})_{\K}$ with $j \notin \{i,...,i+n-1\}$, that is, the images of $X_j\in (\F_m)_{\K}$ for $j\neq i$, we obtain
\begin{equation*}
    F_{0(w\circ_i w')}(X_j)=
    \begin{cases}
        f_j^{-1}e^{x_j} f_j & \text{ for } j <i, \\
        f_{j-n+1}^{-1}e^{x_j} f_{j-n+1} & \text{ for } j > i+n-1.
    \end{cases}
\end{equation*}
Here the isomorphism $\exp(\lie_{m+n-1})\cong (\F_{m+n-1})_{\K}$ identifies $e^{x_j}$ with $X_j$ for $j=1,\ldots,m+n-1$.
Thus, \eqref{eq:heart-of-thm30} holds for all $X_j$, $j \notin \{i,\ldots,i+n-1\}$.

It remains to show \eqref{eq:heart-of-thm30} for $X_j$, $j \in \{i,\ldots,i+n-1\}$. 
For $X_i \in (\F_m)_{\K}$, write 
\begin{equation}
    F_{0w}(X_i)=\gamma^{-1}e^{x_i}\gamma, \quad \gamma \in \exp(\ib_m^+)
\end{equation} 
as given by part (c) of \cref{construction: TAut from local data}, and recall that we have denoted by 
\begin{equation}
    F_{0w}(X_i)=\tilde f_i^{-1}e^{x_i} \tilde f_i, \quad \tilde f_i \in \exp(\lie_m)
\end{equation}
the action given by part (d) of \cref{construction: TAut from local data}.

\begin{figure}
    \includegraphics[width=7cm]{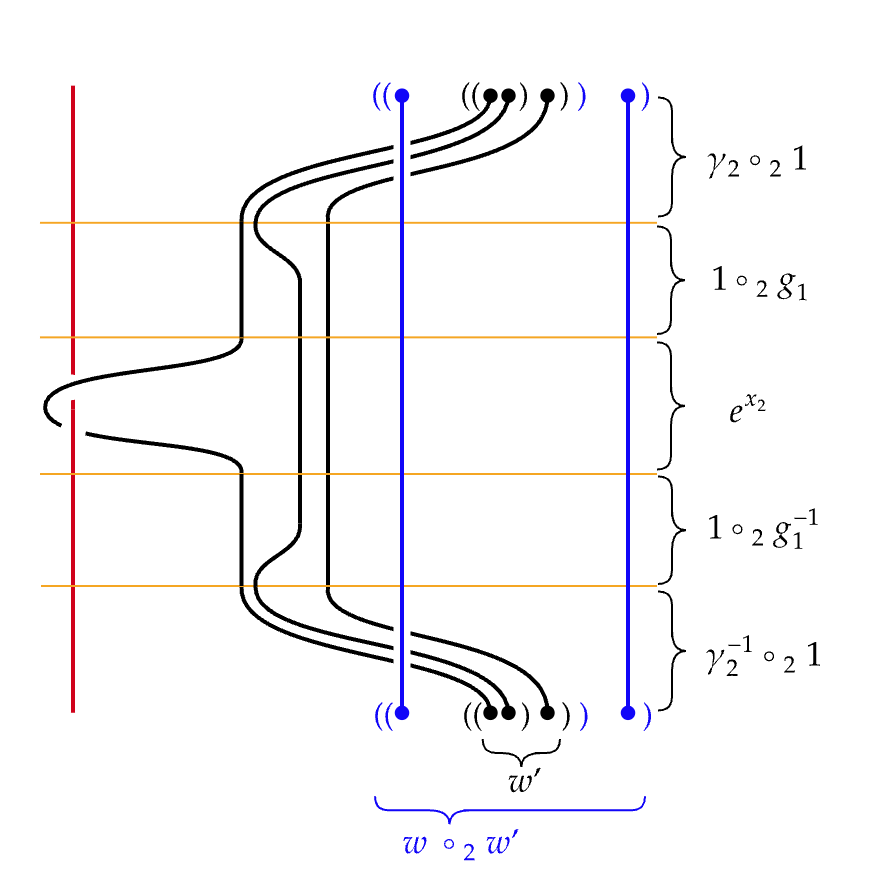}
    \caption{An example for showing the formula \eqref{eq:heart-of-thm30} for $j=i$. In this example, $w=((12)3)$, $w'=((12)3)$, and $i=2$.}\label{fig:TrickyHalf}
\end{figure}

Observe that $\tilde f_i \gamma^{-1}$  commutes with $x_i$, where the product is taken in $\ib^+_m$ via the inclusion $x_i \mapsto t_{0i}$ of $\lie_m$ into $\ib^+_m$.
The centralizer of $t_{0i}$ in $\ib^+_m$ is the set $\{\lambda t_{0i} + a^{0i,1,...,i-1,i+1,...,m}\mid \lambda \in \K \text{ and } a \in \ib_m \}$. This is a short induction argument using that $\ib_{m+1} \cong \lie_m \rtimes \ib_{m}$, as in  \cite[Proposition~51]{AET10}. Hence, we write
\begin{equation}
\label{eq:crucial-lambda}
    \tilde f_i \gamma^{-1}=e^{\lambda x_i}\alpha^{0i,1,2,\ldots,i-1,i+1,\ldots,m}, \quad \text{for some } \alpha \in \exp(\ib_m).
\end{equation}
It is possible to choose $\tilde f_i$ and $\gamma$ such that $\log \tilde f_i$ and $\log \gamma$ have no linear terms in $x_i$ (as adding a linear term in $x_i$ does not change the action on $x_i$).
Thus, taking $\log$ on both sides of Equation~\ref{eq:crucial-lambda} and comparing linear terms we get $\lambda=0$.

Further, the morphism~$X_i$ in $\Aut_{\hPaB^1(m+n)}(0(w\circ_i w'))$, shown in Figure~\ref{fig:TrickyHalf} in an example, is obtained by a composition of the action obtained from~$X_i$ in $\Aut_{\hPaB^1(m)}(0w)$, with $1_{w'}$ inserted into the $i$-th strand, with the action obtained from $X_i$ in $\Aut_{\hPaB^1(n)}(0w')$:
\begin{equation}\label{eq:ActOnXi}
    F_{0(w\circ_i w')}(X_i)=(\gamma\circ_i 1)^{-1}g_1^{-1}e^{x_i} g_1(\gamma\circ_i 1).
\end{equation}
From \eqref{eq:crucial-lambda}, we have that
\[
\gamma\circ_i 1= (\alpha^{0i(i+1)...(i+n-1),1,2,...,i-1,i+1,...,m})^{-1}f_i.
\]
Substituting this into \eqref{eq:ActOnXi}, and observing that $(\alpha^{0i(i+1)...(i+n-1),1,2,...,i-1,i+1,...,m})$ commutes with each $x_j$ for $i\leq j\leq i+n-1$, we obtain 

\begin{align*}
F_{0(w\circ_i w')}(X_i) 
& =f_i^{-1}\alpha^{0i(i+1)...(i+n-1),1,2,...,i-1,i+1,...,m}g_1^{-1}e^{x_i}g_1\left(\alpha^{0i(i+1)...(i+n-1),1,2,...,i-1,i+1,...,m}\right)^{-1}f_i  \\
& = f_i^{-1}g_1^{-1}e^{x_i} g_1 f_i 
 =(F_{0w}\circ_i F_{0w'})(X_i).
\end{align*}

The same argument shows that
\[
F_{0(w\circ_i w')}(X_{j})=f_i^{-1}g_{j-n+1}^{-1}e^{x_j}g_{j-n+1}f_i=F_{0w}\circ_i F_{0w'}(X_j)
\]
for $j=i+1,\ldots,i+n-1$.
Thus we have $F_{0(w \circ_i w')}(X_j)= (F_{0w}\circ_i F_{0w'})(X_j)$ for all $1\leqslant j \leqslant m+n-1$, completing the proof. 
\end{proof}

We illustrate the preceding theorem with an explicit example. 

\begin{example}
\label{example: F23F1,23}
Let $F=F_{0(12)}$ be the tangential automorphism from~\cref{lem:KVI-r2} and write $r_3^1=0(1(23))$ for the rightmost parenthesization of the identity permutation in $\hPaB^1(3)$.  
We calculate that 
$$F\circ_2 F= F_{0(12)}\circ_2 F_{0(12)}=F_{0(1(23))}.$$
The composition $F\circ_{2}F=F^{1,23}\cdot F^{2,3}$ is given by 
\begin{gather}
    F^{2,3}=\left(1, g(t_{01}, -t_{01}\textcolor{red}{-t_{02}-t_{03}}), g(t_{02},-t_{01}\textcolor{red}{-t_{02}-t_{03}})e^{\frac{-\mu (t_{01}\textcolor{red}{+t_{02}+t_{03}})}{2}}\right)
    \quad \text{and} \label{eq:F23} \\
    F^{1,23}=\left(g(t_{01}, -t_{01}\textcolor{red}{-t_{02}-t_{03}}), g(t_{02},-t_{01}\textcolor{red}{-t_{02}-t_{03}})e^{\frac{-\mu (t_{01}\textcolor{red}{+t_{02}+t_{03}})}{2}}, g(t_{02},-t_{01}\textcolor{red}{-t_{02}-t_{03}})e^{\frac{-\mu (t_{01}\textcolor{red}{+t_{02}+t_{03}})}{2}}\right). \label{eq:F1,23}
\end{gather}
To describe the action of $F\circ_2F$ on $X_1$ in $\Aut_{\hPaB^1(3)}(r_3^1)$, we first note that $X_1\in\Aut_{\hPaB^1(3)}(r_3^1)$ can be expressed as
\[
X_1=\Psi^{0,1,23}E^{0,1}(\Psi^{0,1,23})^{-1}
\]
 Applying the moperad equivalence $(\varphi^1,\varphi):\hPaB^1\rightarrow\CD^+$ then gives 
\begin{eqnarray*}
 \varphi^1(X_1)&=& \varphi^1\Big(\Psi^{0,1,23}E^{0,1}(\Psi^{0,1,23})^{-1}\Big)\\
 &=& g(t_{01},t_{12}+t_{13})^{-1}e^{t_{01}}g(t_{01},t_{12}+t_{13})\\
 &=& g(t_{01},-t_{01}-t_{02}-t_{03})^{-1}e^{t_{01}}g(t_{01},-t_{01}-t_{02}-t_{03})
\end{eqnarray*}
Here, the last equality is using that $(t_{12}+t_{13}+t_{23}+t_{01}+t_{02}+t_{03})$ is central in $\ib^+_3$, so $g(t_{01},t_{12}+t_{13})=g(t_{01}, -t_{01}-t_{02}-t_{03}-t_{23})$, and $t_{23}$ commutes with $t_{01}$. Comparing with Formulas (\ref{eq:F23}) and (\ref{eq:F1,23}), we see that indeed 
$\barphi_{r_3^1}^1(X_{1}) = F^{1,23}\cdot F^{2,3}(X_1).$

\end{example}

Before getting to the proof that $F_{0(12)}$ satisfies~\eqref{SolKVII}, we need one more ingredient.

\begin{prop}
\label{lem:KVII}
Let $(\varphi^1,\varphi):\hPaB^1 \rightarrow \CD^+$ be an equivalence of completed moperads given by the values~\eqref{eq:values}.
Then, the tangential automorphism $F=F_{0(12)}$ satisfies the identity
\[
f(t_{12},t_{23}) (F \circ_1 F)=(F \circ_2 F).
\]
\end{prop}

\begin{remark}
    The product $f(t_{12},t_{23})  (F \circ_1 F)$ is understood as a composition in $\TAut_3$, where $f(t_{12},t_{23})$ acts by conjugation on $\exp(\lie_3) \hookrightarrow \CD^+(3)$, identifying $e^{x_j}=e^{t_{0j}}$. In particular, if $(F\circ_1 F)(e^{t_{0j}})=z_j^{-1}e^{t_{0j}}z_j$, then $$\Big(f(t_{12},t_{23})  (F \circ_1 F)\Big)(e^{t_{0j}})=f(t_{12},t_{23})^{-1}z_j^{-1}e^{t_{0j}}z_jf(t_{12},t_{23}).$$
\end{remark}

\begin{proof}
 Theorem~\ref{thm:30-AET} implies that $F\circ_1F= F_{0((12)3)}$ and $F\circ_2 F= F_{0(1(23))}$. Our proof strategy is to calculate the images of $X_1, X_2$ and $X_3$ under $\barphi^1_{0((12)3)}$ and $\barphi^1_{0(1(23))}$ and compare the results. To do so, we use specific choices for writing $X_1, X_2$ and $X_3$ as products of the generating morphisms $R^{1,2}$, $\Psi$ and $\Phi$.
    The expressions of the generators are illustrated in Figure~\ref{fig:XisInLeft} for $0((12)3)$ and in Figure~\ref{fig:XisInRight} for $0(1(23))$, and written out in detail below. We begin with $X_1$.
    
    \begin{figure}[H]
    \includegraphics[width=12cm]{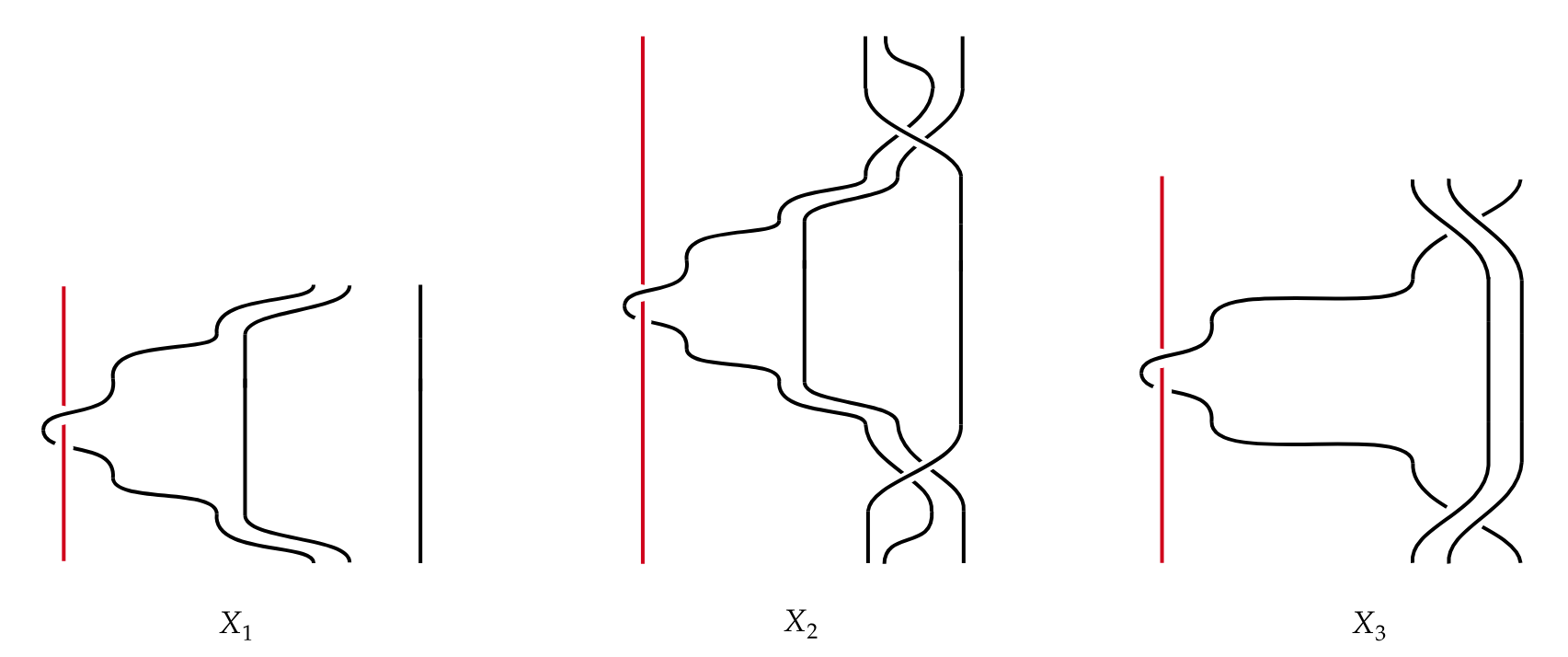}
    \caption{Expressing $X_1, X_2$ and $X_3$ as elements of $\Aut_{\PaB^1_\K(3)}\Big(0((12)3)\Big)$}.\label{fig:XisInLeft}
    \end{figure}

        \begin{figure}[H]
    \includegraphics[width=13cm]{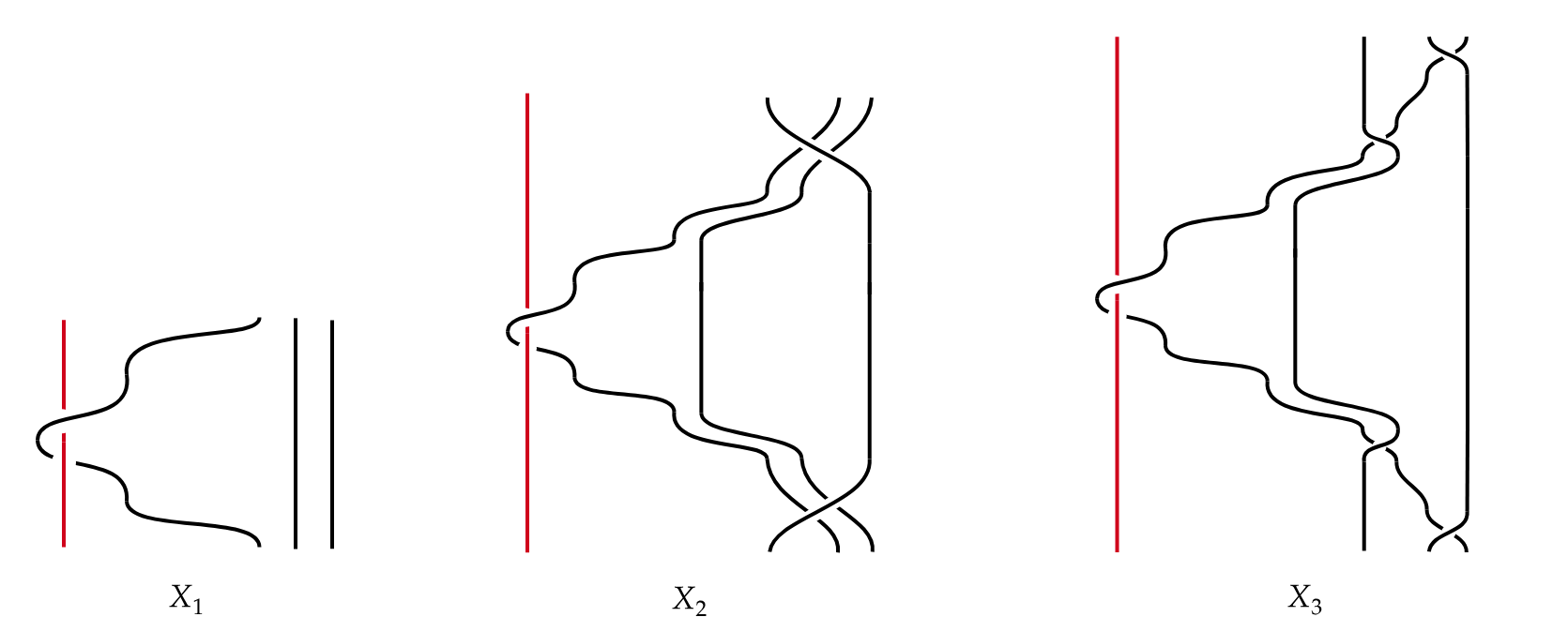}
    \caption{Expressing $X_1, X_2$ and $X_3$ as elements of $\Aut_{\PaB^1_\K(3)}\Big(0(1(23))\Big)$}.\label{fig:XisInRight}
\end{figure}

    \noindent
    {\bf Case of $X_1$.} In $\Aut_{\PaB^1_\K(3)}\Big(0((12)3)\Big)$, we have the expression 
    \begin{equation}
        X_1= \Psi^{0,12,3}\Psi^{0,1,2}E^{0,1}\left(\Psi^{0,1,2}\right)^{-1}\left(\Psi^{0,12,3}\right)^{-1} \label{eq:X1Left}
    \end{equation}
    From this expression, we follow Construction~\ref{construction: TAut from local data} to read off that the action of $(F \circ_1 F)$ on $e^{x_1}=e^{t_{01}}$ is conjugation by
    \[ z_{1_{(0((12)3))}}= g(t_{01},t_{12})g(t_{01}+t_{02},t_{13}+t_{23}).\]
    Thus, $f(t_{12},t_{23})  (F \circ_1 F)$ conjugates $e^{x_1}=e^{t_{01}}$ by
    \begin{equation}
    z_{1_{(0((12)3))}}f(t_{12},t_{23})= g(t_{01},t_{12})g(t_{01}+t_{02},t_{13}+t_{23})f(t_{12},t_{23}).\label{eq:z1Left}
    \end{equation}

    On the other hand, in $\Aut_{\PaB^1_\K(3)}\Big(0(1(23))\Big)$, we express $X_1$ as
    \begin{equation}
        X_1=\Psi^{0,1,23}E^{0,1}\left(\Psi^{0,1,23}\right)^{-1},
    \end{equation}
    and therefore, $F\circ_2F$ conjugates $e^{x_1}$ by
    \begin{equation}
    z_{1_{0(1(23)))}}=g(t_{01},t_{12}+t_{13}).
    \end{equation}
    
    We need to show that the action of $z_{1_{(0((12)3))}}f(t_{12},t_{23})$ on $e^{t_{01}}$ agrees with the action of $z_{1_{(0(1(23)))}}=g(t_{01},t_{12}+t_{13})$ on the same. First notice that $t_{01}$ commutes with $(t_{02}+t_{12})$ and with $t_{23}$, and therefore 
    \[
    [g(t_{02}+t_{12},t_{23}),t_{01}]=0.
    \]
    Therefore, the action of $z_{1_{(0(1(23)))}}=g(t_{01},t_{12}+t_{13})$ on $t_{01}$ agrees with the action of $g(t_{02}+t_{12},t_{23})g(t_{01},t_{12}+t_{13})$. 

    Recall the Mixed Pentagon relation \eqref{eqn:MP} in $\PaB^1(3)$:
    \[    \Psi^{0,1,23}\Psi^{01,2,3}=\Phi^{1,2,3}\Psi^{0,12,3}\Psi^{0,1,2}
    \]
    Therefore,
    \[    g(t_{02}+t_{12},t_{23})g(t_{01},t_{12}+t_{13})=g(t_{01},t_{12})g(t_{01}+t_{02},t_{13}+t_{23})f(t_{12},t_{23})=z_{1_{(0((12)3))}}f(t_{12},t_{23}),
    \]
    showing that 
    \[
    \Big(f(t_{12},t_{23})  (F \circ_1 F)\Big)(e^{x_1})=(F \circ_2 F)(e^{x_1}).
    \]

    \noindent
   {\bf Case of $X_2$.} Moving on to $X_2$,  in $\Aut_{\PaB^1_\K(3)}\Big(0((12)3)\Big)$, we have the expression 
\[
X_2=\left(\Phi^{1,2,3}\right)^{-1}\left(R^{1,23}\right)^{-1} \Psi^{0,23,1} \Psi^{0,2,3} E^{0,2} \left(\Psi^{0,2,3}\right)^{-1} \left(\Psi^{0,23,1}\right)^{-1} R^{1,23} \Phi^{1,2,3},
\]
from which we read (noting the cancellation at the end):
\[
z_{2_{(0((12)3))}}f(t_{12},t_{23})=g(t_{02},t_{23})g(t_{02}+t_{03},t_{12}+t_{23})e^{-\frac{t_{12}+t_{23}}{2}}
\]

On the other hand, in $\Aut_{\PaB^1_\K(3)}\Big(0(1(23))\Big)$, we express $X_2$ as:
\[
X_2=\left(R^{23,1}\right)^{-1}\Psi^{0,23,1} \Psi^{0,2,3} E^{0,2} \left(\Psi^{0,2,3}\right)^{-1}\left(\Psi^{0,23,1}\right)^{-1}R^{1,23}.
\]
Applying the moperad equivalence we get:
\[
z_{2_{(0(1(23)))}}= g(t_{02},t_{23})g(t_{02}+t_{03},t_{12}+t_{13})e^{-\frac{t_{12}+t_{13}}{2}}.
\]
Comparing $z_{2_{(0((12)3))}}$ and $z_{2_{(0(1(23)))}}$, we see that $z_{2_{(0((12)3))}}f(t_{12},t_{23})=z_{2_{(0(1(23)))}}$, and therefore 
 \[
    \Big(f(t_{12},t_{23})  (F \circ_1 F)\Big)(e^{x_2})=(F \circ_2 F)(e^{x_2}).
\]

\noindent
{\bf Case of $X_3$.} Finally, for $X_3$, we express in $\Aut_{\PaB^1_\K(3)}\Big(0((12)3)\Big)$:
\[X_3= \left(R^{3,12}\right)^{-1}\Psi^{0,3,12}E^{0,3}\left(\Psi^{0,3,12}\right)^{-1} R^{12,3},\]
and therefore,
\[ z_{3_{(0((12)3))}}f(t_{12},t_{23})= g(t_{03},t_{03}+t_{13})e^{-\frac{t_{13}+t_{23}}{2}}f(t_{12},t_{23}).\]

In $\Aut_{\PaB^1_\K(3)}\Big(0(1(23))\Big)$, we can write
\[
X_3=\left(R^{2,3}\right)^{-1}\Phi^{1,3,2}\left(R^{1,3}\right)^{-1}\Psi^{0,31,2}\Psi^{0,3,1}E^{0,3}\left(\Psi^{0,3,1}\right)^{-1}\left(\Psi^{0,31,2}\right)^{-1}R^{13}\left(\Phi^{1,3,2}\right)^{-1}R^{2,3},
\]
and therefore
\[
z_{3_{(0(1(23)))}}=g(t_{03},t_{31})g(t_{01}+t_{03},t_{12}+t_{23})e^{-\frac{t_{13}}{2}}f(t_{13},t_{23})e^{-\frac{t_{23}}{2}}
\]

To compare $z_{3_{(0((12)3))}}f(t_{12},t_{23})$ with $z_{3_{(0(1(23)))}}$, recall the Hexagon relation in $\PaB(3)$ (here we permute the indices to match our application):
\[\left(R^{3,12}\right)\Phi^{1,2,3}=\left(\Phi^{3,1,2}\right)^{-1}\left(R^{3,1}\right)^{-1}\Phi^{1,3,2}\left(R^{3,2}\right)^{-1}.\]
Applying this to the last two terms of $z_{3_{(0((12)3))}}f(t_{12},t_{23})$, we obtain:
\[
z_{3_{(0((12)3))}}f(t_{12},t_{23})=g(t_{03},t_{03}+t_{13})f(t_{13},t_{12})^{-1}e^{-\frac{t_{13}}{2}}f(t_{13},t_{23})e^{-\frac{t_{23}}{2}}.
\]
Since $(t_{01}+t_{13})$ and $t_{12}$ both commute with $t_{03}$, it follows that $g(t_{01}+t_{13},t_{12})$ acts trivially on $t_{03}$, and thus, the action of $z_{3_{(0((12)3))}}f(t_{12},t_{23})$ agrees with the action of
\[
g(t_{01}+t_{13},t_{12})g(t_{03},t_{03}+t_{13})f(t_{13},t_{12})^{-1}e^{-\frac{t_{13}}{2}}f(t_{13},t_{23})e^{-\frac{t_{23}}{2}}.\]

In turn, the first three terms of this expression can be reduced via an application of the Mixed Pentagon relation to arrive at:
\[
g(t_{03},t_{13})g(t_{01}+t_{03},t_{12}+t_{23})e^{-\frac{t_{13}}{2}}f(t_{13},t_{23})e^{-\frac{t_{23}}{2}}=z_{3_{(0(1(23)))}}.
\]
Thus, 
 \[
    \Big(f(t_{12},t_{23})  (F \circ_1 F)\Big)(e^{x_3})=(F \circ_2 F)(e^{x_3}),
\]
completing the proof.
\end{proof}

We are now in position to complete the proof that $F_{0(12)}$ is a KV solution of type $(0,2+1)$.

\begin{lemma}
\label{lem:Jacobian-r2}
Let $(\varphi^1,\varphi):\hPaB^1 \rightarrow \CD^+$ be an equivalence of completed moperads given by the values~\eqref{eq:values}.
Then, the associated tangential automorphism $F=F_{0(12)}$ satisfies the second KV equation:
    \begin{align*}
    \exists \; h \in z^2\mathbb{K}[[z]] \quad \text{such that} \quad J(F) & =\trace\left( h\left( x_1+x_2\right)-h(x_1)-h(x_2)\right). 
\end{align*} 
\end{lemma}

\begin{proof}
We compute the cosimplicial differential $d$ of $J(F)$ (see \cref{sec:operadic-cohomology}).
We use the following facts:
\begin{itemize}
\item the Jacobian is a morphism of operads (\cref{prop:Jacobian-operad}), and therefore commutes with $d$;
\item $dF=(F \circ_2 F)(F \circ_1 F)^{-1}$, see ~\cref{ex:cosimplicial-associator};
\item $f(t_{12},t_{23})=(F \circ_2 F)(F \circ_1 F)^{-1}$ from~\cref{lem:KVII};
\item $J(f(t_{12},t_{23}))=0$ by ~\cite[Proposition~22]{AET10}.
\end{itemize}
Hence, we obtain
\[
d(J(F))=J(dF)=J((F \circ_2 F)(F \circ_1 F)^{-1})=J(f(t_{12},t_{23}))=0.\]
Since the cosimplicial cohomology of $\cyc$ vanishes in degree 2 \cite[Theorem~2.8]{AT12}, there is an $h \in \cyc_1=\mathbb{K}[[z]]$ such that $J(F)=d(h)=\trace\left( h\left( x_1+x_2\right)-h(x_1)-h(x_2)\right)$, as claimed. 
\end{proof}


Combining \cref{lem:KVI-r2} and \cref{lem:Jacobian-r2}, we obtain the following result:

\begin{thm}
\label{thm: the construction is good}
Let $(\varphi^1,\varphi):\hPaB^1 \rightarrow \CD^+$ be an equivalence of completed moperads given by the values~\eqref{eq:values}.
Then, the associated tangential automorphism $F=F_{0(12)}$ is a symmetric KV solution of type~$(0,2+1)$. 
\end{thm}

\begin{proof}
    The only remaining fact to prove is that $F$ is a symmetric KV solution.
    \cref{lem:KVII} shows that its KV associator is equal to  $G_F=f(t_{12},t_{23})$.
    Since $f$ is a Drinfeld associator, it satisfies the Inversion equation \eqref{inversion}.
    We deduce that $G_F=(G_F^{3,2,1})^{-1}$.
    The conclusion then follows from \cref{prop:symmetric-iff-associator-symmetric}.
\end{proof}

In fact, \cref{construction: TAut from local data} defines a \emph{family} of genus zero KV solutions -- the suboperad of $\solkv$ generated by the KV solution $F_{0(12)}$.  These operadic compositions echo the constructions found in~\cite[Section 3]{AT12},~\cite[Appendix B1]{AET10}, and especially~\cite[Section 7]{AKKN_genus_zero}, where a gluing procedure for genus zero surfaces with boundary is used to produce families of higher-arity KV solutions. 

\begin{lemma}
\label{lem:0w-KVI}
    Let $(\varphi^1, \varphi):\hPaB^1 \rightarrow \CD^{+}$ be an equivalence of completed moperads given by the values~\eqref{eq:values}.
For any object of the form $0w=0(w) \in \hPaB^1(n)$ with $w$ a parenthesization of the identity permutation $1\cdots n$, the associated tangential automorphism $F_{0w}$ satisfies the first KV equation
\[ 
F_{0w}(e^{x_1}\cdots e^{x_n})=e^{x_1+\cdots +x_n}.
\]
\end{lemma}

\begin{proof}
First observe that by \cref{thm:30-AET}, for any $0w$ as in the statement, the tangential automorphism $F_{0w}$ can be obtained from $F_{0(12)}$ by operadic composition.
By \cref{lem:KVI-r2}, $F_{0(12)}$ satisfies~\eqref{SolKVI}, and in \cref{thm:operad-SolKV} we proved that operadic composition preserves the first KV equation. 
This completes the proof.
\end{proof}


\begin{lemma}
\label{lem:w-KVII}
    Let $(\varphi^1,\varphi):\hPaB^1\rightarrow \CD^{+}$ be an equivalence of completed moperads given by the values~\eqref{eq:values}.
    For any word $0w \in \ob(\hPaB^1(n))$, the associated tangential automorphism $F_{0w}$ satisfies the second KV equation
\begin{align*}
    \exists \; h \in\mathbb{K}[[z]] \quad \text{such that} \quad J(F_{0w}) & =\trace\left( h\left( \sum_{i=1}^{n} x_i\right)-\sum_{i=1}^{n}h(x_i)\right). 
\end{align*} 
Moreover, the Duflo function $h$ is the same for every word.
\end{lemma}

\begin{proof}
By \cref{thm:30-AET}, for any $n\geqslant 2$, the tangential automorphism $F_{r_n^1}$ can be obtained from $F_{r_2^1}=F_{0(12)}$ by operadic composition.
In \cref{thm:operad-SolKV}, we prove that operadic composition preserves the second KV equation as long as 
the Duflo function $h \in\mathbb{K}[[z]]$ is the same for all tangential automorphisms involved. 
By \cref{lem:Jacobian-r2}, $F_{r_2^1}$ satisfies \eqref{SolKVII}, and therefore so does~$F_{r_n^1}$ (see also \cite[Lemma 7.3]{AKKN_genus_zero}). In particular, any well-defined operadic composition of $F_{r_2^1}$ will have the same Duflo function as $F_{r_2^1}$.
Now, any word $0w \in \ob(\hPaB^1(n))$ can be obtained from $r_n^1$ by conjugation with the associators~$\Psi$ and~$\Phi$, and action of the symmetric group $\Sigma_n^+$. 
Since these operations preserve the Jacobian (see \cref{rem:symmetric-action-KVI}), we have $J(F_{0w})=J(F_{r_n^1})$, and the conclusion follows.
\end{proof}

Combining \cref{lem:0w-KVI} with \cref{lem:w-KVII}, we obtain the following theorem, similar in spirit to the arguments in~\cite[Appendix~B]{AET10}.

\begin{theorem}
    \label{thm:SolKV-from-moperad}
    Let $(\varphi^1,\varphi):\hPaB^1 \rightarrow \CD^{+}$ be an equivalence of completed moperads given by the values~\eqref{eq:values}.
    For any word $0w \in \ob(\hPaB^{1}(n))$ with $w$ a parenthesization of the identity permutation~$1\cdots n$, the associated tangential automorphism $F_{0w}$ is a KV solution of type $(0,n+1)$.
    Moreover, the set of such automorphisms can be identified with the suboperad of $\solkv$ generated by the symmetric KV solution $F_{0(12)}$.
\end{theorem}

\medskip

\begin{remark}
Tangential automorphisms $F_w$ for other parenthesised words (i.e.\ not of the form $0w$) are not KV solutions, as they only satisfy \eqref{SolKVI} up to conjugation by associators~$\Phi$ and~$\Psi$.
Note for instance that $F_{(01)2}$ does \emph{not} satisfy \eqref{SolKVI}.
In addition, the symmetric group action on KV solutions does not preserve~\eqref{SolKVI} either, hence $\solkv$ forms a non-symmetric operad, see \cref{rem:symmetric-action-KVI}.
\end{remark}


\subsection{Actions of $\gtm$ on KV solutions}
\label{sec: GT1 actions on constructed KV solutions}

Recall from \cref{sec: GT1} that the Grothendieck--Teichm\"uller module groups $\gtm$ and $\grtm$ are proalgebraic groups canonically identified with the groups of object-fixing automorphisms of the moperads $\hPaB^1$ and $\PaCD^+$:
\[
\gtm \cong \Aut_{0}(\hPaB^1), \qquad \grtm \cong \Aut_{0}(\PaCD^+).
\]
We denote by $\gtm_1 \subset \gtm$ and $\grtm_1 \subset \grtm$ the corresponding prounipotent subgroups, consisting of elements of the form $(1,f,g)$, equivalently, those with coupling constant $\lambda = 1$.

In this section, we show that the assignment
\[
(\varphi^1,\varphi)\longmapsto F_{r_2^1,\varphi^1,\varphi}
\]
from homomorphic expansions $(\varphi^1,\varphi):\hPaB^1\to \CD^+$ to KV solutions intertwines the natural actions of the prounipotent groups $\gtm_1$ and $\grtm_1$ with the corresponding actions of $\kv(2)$ and $\krv(2)$.

By Theorem~\ref{thm: GT1 as PaB1 automorphisms}, every element $(1,f,g)\in \gtm_1$ determines an object-fixing automorphism
\[
(\vartheta^1,\vartheta):\hPaB^1\to \hPaB^1
\]
whose values on generators are
\begin{equation}
\label{eq:values-aut} \tag{$\star$}
    \vartheta(R^{1,2}) = R^{1,2}, \quad
    \vartheta(\Phi^{1,2,3}) = f(x_{12},x_{23})\cdot\Phi^{1,2,3}, \quad
    \vartheta^1(E^{0,1}) = E^{0,1}, \quad
    \vartheta^{1}(\Psi^{0,1,2}) = g(x_{01},x_{12})\cdot\Psi^{0,1,2},
\end{equation}
where $x_{ij}$ are the pure braid generators from \eqref{eq:PureBraidGens}.

Now fix a parenthesization $0(w)$ of the identity permutation $1\cdots n$. Restricting $(\vartheta^1,\vartheta)$ to the object $0(w)\in \ob(\hPaB^1(n))$ gives a local automorphism
\[
\begin{tikzcd}
\Aut_{\hPaB^1(n)}(0w)\cong (\PB_n^1)_{\K}
\arrow[r, "\vartheta^1"]
&
(\PB_n^1)_{\K}\cong \Aut_{\hPaB^1(n)}(0w).
\end{tikzcd}
\]
From this local automorphism we will extract an automorphism of the completed free group $(\F_n)_{\K}$. In the case $w=0(12)$, this will produce an element of $\kv(2)$.

\begin{construction}
\label{construction: element of KV}
Fix a parenthesization $0w$ of the identity permutation $1\cdots n$, and let
\[
(\vartheta^{1},\vartheta):\hPaB^1\to \hPaB^1
\]
be an object-fixing automorphism with values on generators as in \eqref{eq:values-aut}.
\smallskip 

\begin{enumerate}[leftmargin=*,label=(\alph*)]
\item Restricting $(\vartheta^1,\vartheta)$ to the object $0w\in \ob(\hPaB^1(n))$ gives a local automorphism
\[
\vartheta^1_{0w}:\Aut_{\hPaB^{1}(n)}(0w)\cong (\PB_n^1)_{\K}\longrightarrow (\PB_n^1)_{\K}\cong \Aut_{\hPaB^{1}(n)}(0w).
\]
As in Construction~\ref{construction: TAut from local data}, this induces a map
\[
\bar{\vartheta}^1_{0w}:(\F_n)_{\K}\longrightarrow (\F_n)_{\K},
\]
where $(\F_n)_{\K}\subset (\PB_n^1)_{\K}$ is the free subgroup generated by $X_1,\dots,X_n$.

\item To describe this map explicitly, we recall that we can write each generator $X_i$ as a (not necessarily unique) product of the generating morphisms
\[
E^{0,i},\qquad R^{i,j},\qquad \Phi^{i,j,k},\qquad \Psi^{0,i,j}
\]
in $\Aut_{\hPaB^1(n)}(0w)$. For example, when $n=2$ and $w=0(12)$, we have
\[
X_1=\Psi^{0,1,2}E^{0,1}(\Psi^{0,1,2})^{-1},
\qquad
X_2=(R^{2,1})^{-1}\Psi^{0,2,1}E^{0,2}(\Psi^{0,2,1})^{-1}R^{1,2}.
\]

\smallskip

Applying $\vartheta^1_{0w}$ to such expressions determines the images $\bar{\vartheta}^1_{0w}(X_i)$ for $i=1,\dots,n$. Each $\bar{\vartheta}^1_{0w}(X_i)$ is a conjugate of $X_i$ by an element of $(\PB_n^1)_{\K}$, hence lies in $(\F_n)_{\K}$ because $(\F_n)_{\K}$ is a normal subgroup of $(\PB_n^1)_{\K}$. We therefore obtain an automorphism
\begin{equation}\tag{$G_{0w,\vartheta^1,\vartheta}$}
G_{0w,\vartheta^1,\vartheta}:(\F_n)_{\K}\longrightarrow (\F_n)_{\K},
\end{equation}
given on generators by
\[
X_i\longmapsto \bar{\vartheta}^1_{0w}(X_i),\qquad i=1,\dots,n.
\]

\end{enumerate}
\end{construction}

\medskip

The next lemma shows that, when $(\vartheta^{1},\vartheta):\hPaB^1\to \hPaB^1$ is defined on generators by \eqref{eq:values-aut}, the induced automorphism
\[
G_{\vartheta^1}=G_{0(12),\vartheta^1,\vartheta}:(\F_2)_{\K}\to (\F_2)_{\K}
\]
satisfies the first defining equation of the KV symmetry group $\kv(2)$.

\begin{remark}
\label{rem: braid notation for theta}
Recall that braid generators are indexed by the \emph{positions} of strands rather than their labels. To compute the images of the generators $X_1$ and $X_2$, we regard
\[
\bar{\vartheta}^1_{0(12)}(X_i)\in \Aut_{\hPaB^1(2)}(0(12))\cong (\PB_2^1)_{\K}
\subset (\Br_2^1)_{\K}.
\]
Using the expressions for $X_1$ and $X_2$ above, we obtain
\[
\bar{\vartheta}^1_{0(12)}(X_1)
=
g(x_{01},x_{12})\,x_{01}\,g(x_{01},x_{12})^{-1}
\quad \text{and} \quad 
\bar{\vartheta}^1_{0(12)}(X_2)
=
\beta_1^{-1}g(x_{01},x_{12})\,x_{01}\,g(x_{01},x_{12})^{-1}\beta_1.
\]

Here $E^{0,1}$ is identified with the pure braid $x_{01}$, and $R^{1,2}$ with the braid generator $\beta_1$. Although $\bar{\vartheta}^1_{0(12)}(X_2)$ is a pure braid, it is most naturally written as a product in $(\Br_2^1)_{\K}$, which explains the appearance of $\beta_1$ and $\beta_1^{-1}$. We also suppress the associativity morphisms $\Psi$, since they are trivial as elements of the group
\[
(\F_2)_{\K}\subseteq \Aut_{\hPaB^1(2)}(0(12))\cong (\PB_2^1)_{\K}.
\]
\end{remark}

\begin{lemma}
\label{lem:Theta-fixes-product}
Let $(\vartheta^{1},\vartheta):\hPaB^1\to \hPaB^1$ be an object-fixing automorphism defined on generators by \eqref{eq:values-aut}, and let $r_2^1=0(12)$. Then the induced automorphism
\[
G_{\vartheta^1}=G_{r_2^1,\vartheta^1,\vartheta}:(\F_2)_{\K}\longrightarrow (\F_2)_{\K}
\]
is tangential. It also satisfies the first defining equation of $\kv(2)$, namely
\[
G_{\vartheta^1}(X_2X_1)=X_2X_1.
\]
\end{lemma}

\begin{proof}
We first show that $G_{\vartheta^1}$ is a tangential automorphism. By Remark~\ref{rem: braid notation for theta}, the induced map on generators is given by
\[
\bar{\vartheta}^1_{0(12)}(X_1)
=
g(x_{01},x_{12})\,x_{01}\,g(x_{01},x_{12})^{-1}
\qquad \text{and} \qquad
\bar{\vartheta}^1_{0(12)}(X_2)
=
\beta_1^{-1}g(x_{01},x_{12})\,x_{01}\,g(x_{01},x_{12})^{-1}\beta_1.
\]

Recall that the full twist is central in the braid group. In particular, $z=x_{01}x_{12}x_{02}$ is central in $(\Br^{1}_2)_{\K}$, so $x_{12}=x_{01}^{-1}zx_{02}^{-1}.$ Substituting this into the formula for $\bar{\vartheta}^1_{0(12)}(X_1)$ gives
\[
\bar{\vartheta}^1_{0(12)}(X_1)
=
g(x_{01},x_{01}^{-1}zx_{02}^{-1})\,x_{01}\,g(x_{01},x_{01}^{-1}zx_{02}^{-1})^{-1}.
\]
Since $z$ is central, any powers of $z$ appearing in the conjugating element act trivially. Hence
\begin{equation}
\label{eq:GX1}
\bar{\vartheta}^1_{0(12)}(X_1)
=
g(x_{01},x_{01}^{-1}x_{02}^{-1})\,x_{01}\,g(x_{01},x_{01}^{-1}x_{02}^{-1})^{-1},
\end{equation}
so $\bar{\vartheta}^1_{0(12)}(X_1)$ is a conjugate of $X_1$ by an element of $(\F_2)_{\K}$.

We treat $\bar{\vartheta}^1_{0(12)}(X_2)$ similarly. Substituting $x_{12}=x_{01}^{-1}zx_{02}^{-1}$ into the conjugating term yields
\begin{equation}
\label{eq:ValueX2Step1}
\bar{\vartheta}^1_{0(12)}(X_2)
=
\beta_1^{-1}g(x_{01},x_{01}^{-1}x_{02}^{-1})\,x_{01}\,g(x_{01},x_{01}^{-1}x_{02}^{-1})^{-1}\beta_1.
\end{equation}
Now let $\omega(x_{01},x_{02})$ be any word in the free group $(\F_2)_{\K}\subseteq (\Br^{1}_2)_{\K}$. Then
\[
\beta_1^{-1}\omega(x_{01},x_{02})\beta_1
=
x_{12}^{-1}\omega(x_{12}x_{02}x_{12}^{-1},x_{01})x_{12}.
\] This follows from the braid relations; see Figure~\ref{fig:BraidConj} for an illustration.

\begin{figure}[t]
    \includegraphics[width=9cm]{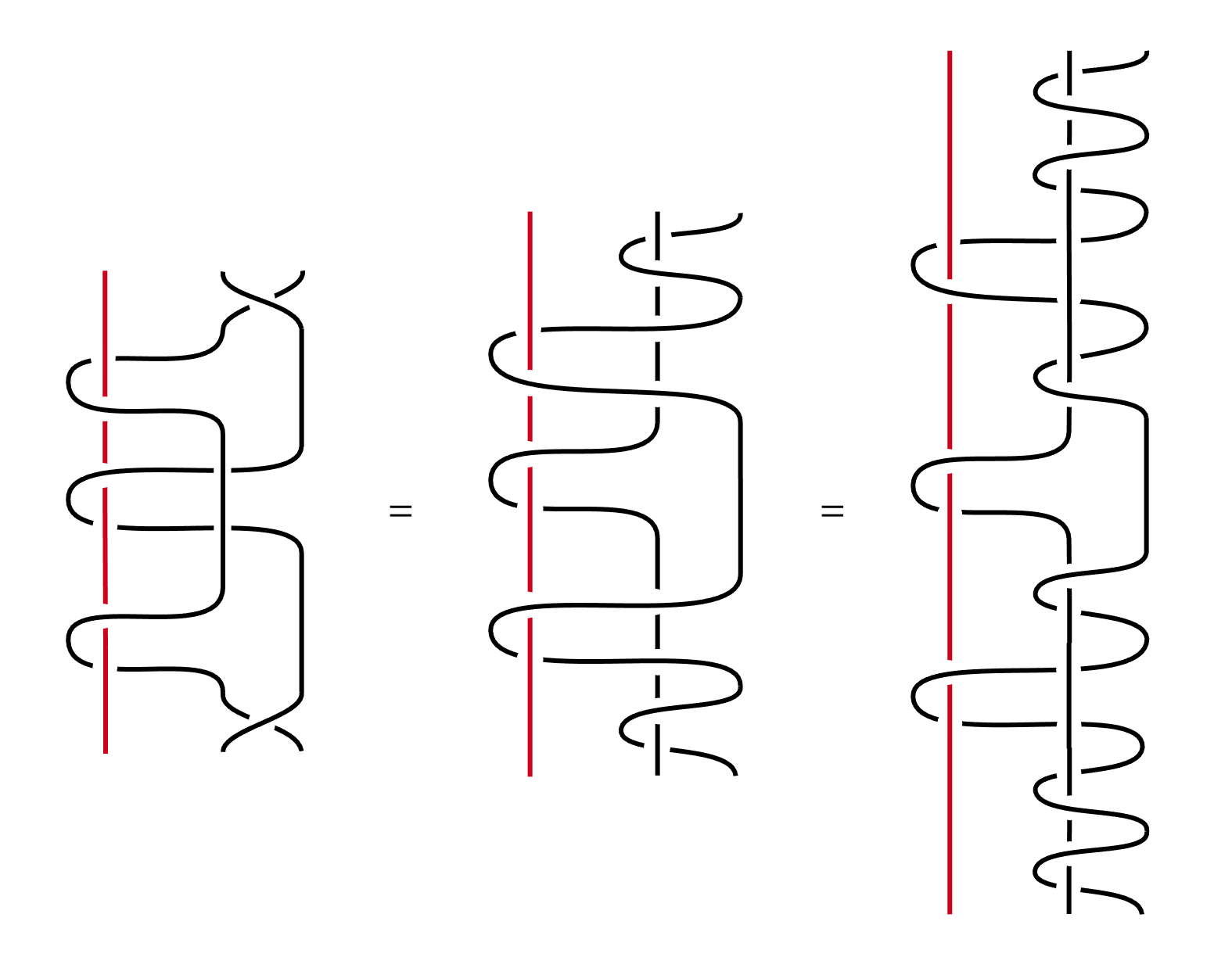}
\caption{Let $\omega(x_{01},x_{02})=x_{01}^{-1}x_{02}x_{01}$. We demonstrate the braid equivalence between $\beta_1^{-1}\omega\beta_1$ (displayed on the left), and $x_{12}^{-1} \omega(x_{12}x_{02}x_{12}^{-1},x_{01})x_{12}$ (displayed on the right).}\label{fig:BraidConj}
\end{figure}

Using again the identity $x_{12}=x_{01}^{-1}zx_{02}^{-1}$, we compute $x_{12}x_{02}x_{12}^{-1}=x_{01}^{-1}x_{02}x_{01}.$ Substituting this into \eqref{eq:ValueX2Step1}, we obtain
\begin{multline}
\label{eq:ValueX2Step2}
\bar{\vartheta}^1_{0(12)}(X_2)
=
x_{02}x_{01}\,
g\left(x_{01}^{-1}x_{02}x_{01},\,x_{01}^{-1}x_{02}^{-1}x_{01}x_{02}^{-1}\right)
x_{01}^{-1}\cdot x_{02}\cdot {}\\
{}\cdot x_{01}\,
g\left(x_{01}^{-1}x_{02}x_{01},\,x_{01}^{-1}x_{02}^{-1}x_{01}x_{02}^{-1}\right)^{-1}
x_{01}^{-1}x_{02}^{-1}.
\end{multline}
Thus $\bar{\vartheta}^1_{0(12)}(X_2)$ is also a conjugate of $X_2$ by an element of $(\F_2)_{\K}$, and hence $G_{\vartheta^1}$ is tangential.

It remains to show that $G_{\vartheta^1}(X_2X_1)=X_2X_1$. By Lemma~\ref{lemma: KV1 for braids}, we have
\[
X_2X_1=E^{0,12}.
\]
Moreover, $\vartheta^1(E^{0,1})=E^{0,1}$ by assumption, and since $E^{0,12}=E^{0,1}\circ_1 \id_2$ and $\vartheta^1$ is a map of moperads, it follows that $\vartheta^1(E^{0,12})=E^{0,12}.$
Therefore,
\[
G_{\vartheta^1}(X_2X_1)=X_2X_1,
\]
as required.
\end{proof}

\begin{remark}
If $(\vartheta^1,\vartheta):\hPaB^1\to \hPaB^1$ is an object-fixing automorphism in $\gtm$ with coupling constant $\lambda\neq 1$, then the associated tangential automorphism $G_{\vartheta^1}$ does not lie in $\kv(2)$. Indeed,
\[
G_{\vartheta^1}(X_2X_1)=(X_2X_1)^\lambda,
\]
so the scalar part of the $\gtm$-action obstructs the first KV equation. Thus only the prounipotent subgroup $\gtm_1\subset \gtm$ gives rise to KV symmetries.
\end{remark}

The next theorem shows that the tangential automorphism $G_{\vartheta^1}=G_{r_2^1,\vartheta^1,\vartheta}\in \TAut_2$ constructed from an element $(\vartheta^1,\vartheta)\in \gtm_1$ is a KV symmetry. 

\begin{theorem}
\label{thm:KV-action-F-012}
Let $(\vartheta^{1},\vartheta):\hPaB^1\to \hPaB^1$ be an automorphism in $\gtm_1$ defined on generators by \eqref{eq:values-aut}, and let $(\varphi^1,\varphi):\hPaB^1\to \CD^+$ be an equivalence of completed moperads defined on generators by \eqref{eq:values}. Then $G_{\vartheta^1}\in \kvs(2)$, and $F_{\varphi^1}\circ G_{\vartheta^1}$ is again a symmetric KV solution. Consequently, the assignment
\[
(\vartheta^1,\vartheta)\longmapsto G_{\vartheta^1}
\]
defines an injective group homomorphism $\gtm_1\hookrightarrow \kvs(2)$.
\end{theorem}

\begin{proof}
Since $(\vartheta^1,\vartheta)$ is an automorphism of completed moperads, the composite
\[
(\varphi^1,\varphi)\circ (\vartheta^1,\vartheta):\hPaB^1\to \CD^+
\]
is again an equivalence of completed moperads. Moreover, because $(\vartheta^1,\vartheta)\in \gtm_1$, the composite is still defined on generators by formulas of the form \eqref{eq:values}, and hence has coupling constant $1$.

By construction, both $F_{\varphi^1}$ and $G_{\vartheta^1}$ are obtained by restricting $(\varphi^1,\varphi)$ and $(\vartheta^1,\vartheta)$, respectively, to the free subgroup
\[
(\F_2)_{\K}\subseteq \Aut_{\hPaB^1(2)}(0(12)).
\]
It follows that $F_{\varphi^1}\circ G_{\vartheta^1}=F_{\varphi^1\circ \vartheta^1}.$ Since $\varphi^1\circ \vartheta^1$ has coupling constant $1$, the right-hand side is a symmetric KV solution by \cref{thm: the construction is good}. Therefore $F_{\varphi^1}\circ G_{\vartheta^1}$ is again a symmetric KV solution. Since both $F_{\varphi^1}$ and $F_{\varphi^1\circ \vartheta^1}$ are symmetric KV solutions, it follows from \cref{symmetry-groups-from-KV-solutions} that
\[
G_{\vartheta^1}\in \kvs(2).
\]

The fact that the resulting map $\gtm_1\longrightarrow \kvs(2)$ is a group homomorphism follows from Construction~\ref{construction: element of KV}. Indeed, the assignment $(\vartheta^1,\vartheta)\mapsto G_{\vartheta^1}$ is obtained by restricting $\vartheta^1$ first to the local automorphism group $\Aut_{\hPaB^1(2)}(0(12))$ and then to the free subgroup $(\F_2)_\K$. Since restriction is compatible with composition, we obtain
\[
G_{\vartheta^1\circ \vartheta'^1}=G_{\vartheta^1}\circ G_{\vartheta'^1},
\]
so the map is a group homomorphism.

It remains to prove injectivity. Suppose that $G_{\vartheta^1}=1$. We will show that $(\vartheta^1,\vartheta)=1$.  By \eqref{eq:GX1}, we have
\[
G_{\vartheta^1}(X_1)
=
g(x_{01},x_{01}^{-1}x_{02}^{-1})\,x_{01}\,g(x_{01},x_{01}^{-1}x_{02}^{-1})^{-1}.
\]
Since $x_{01}$ and $x_{02}$ freely generate $(\F_2)_{\K}$, the element $g(x_{01},x_{01}^{-1}x_{02}^{-1})$ commutes with $x_{01}$ only if it depends on the first argument alone. Equivalently,
\[
g(x,y)=g(x).
\]

We now show that this forces $g=1$, and hence also $f=1$.  Consider the mixed pentagon equation \eqref{GT^1:mixed pentagon}:
\begin{equation}
g(x_{01},x_{13}x_{12})\,g(x_{12}x_{02},x_{23})
=
f(x_{12},x_{23})\,g(x_{02}x_{01},x_{23}x_{13})\,g(x_{01},x_{12}).
\tag{MP}
\end{equation}
If $g$ depends only on its first argument, this becomes
\begin{equation}
g(x_{01})\,g(x_{12}x_{02})\,g(x_{01})^{-1}\,g(x_{02}x_{01})^{-1}
=
f(x_{12},x_{23}).
\end{equation}
The left-hand side is a pure braid on strands $0,1,2$ and does not involve strand $3$. Since $x_{12}$ and $x_{23}$ generate a copy of $(\F_2)_{\K}$ inside $(\PB^1_3)_{\K}$, it follows that if $f(x_{12},x_{23})$ does not involve strand $3$, then $f$ also depends only on its first argument:
\[
f(x_{12},x_{23})=f(x_{12}).
\]
But then the right-hand side does not involve strand $0$, so the left-hand side must also be unchanged after deleting strand $0$. Let $s_0:\PB_3^1\to \PB_3$ denote the strand-deletion map, extended to completions. Applying $s_0$ gives $g(x_{12})=f(x_{12}),$ and so $g=f$. Therefore
\[
g(x_{01})\,g(x_{12}x_{02})\,g(x_{01})^{-1}\,g(x_{02}x_{01})^{-1}
=
g(x_{12}).
\]
Now delete strand $2$. This yields
\[
g(x_{01})\,g(x_{01})^{-1}\,g(x_{01})^{-1}=1,
\]
and hence $g=1$. Therefore $f=1$ as well, so $(\vartheta^1,\vartheta)$ is the identity. This proves injectivity.
\end{proof}

\begin{cor}
\label{cor:KV-action-F-012}
Let $(\vartheta^{1},\vartheta):\hPaB^1\to \hPaB^1$ be an automorphism in $\gtm_1$ defined on generators by \eqref{eq:values-aut}, and let $(\varphi^1,\varphi):\hPaB^1\to \CD^+$ be an equivalence of completed moperads defined on generators by \eqref{eq:values}. Then
\[
F_{\varphi^1\circ \vartheta^1}=G_{\vartheta^1}\cdot F_{\varphi^1},
\]
where $\cdot$ denotes the action of $\kvs(2)$ on the set of symmetric KV solutions.
\end{cor}

\medskip

Every homomorphic expansion $(\varphi^1,\varphi): \hPaB^1\to \CD^+$ extends uniquely to an isomorphism $(\varphi^{1\prime},\varphi'):\hPaB^1\to \PaCD^+$ which acts trivially on objects, where $\PaCD^+$ is the shifted moperad associated to the operad $\PaCD$ (Definition~\ref{defn: PaCD}).

Now let $(\varphi^{1\prime},\varphi'):\hPaB^1\to \PaCD^+$ be such a moperad isomorphism with $\mu=1$, and let $(\vartheta^1,\vartheta):\hPaB^1\to \hPaB^1$ be an element of $\gtm\cong \Aut_0(\hPaB^1)$ with $\lambda=1$. Conjugating by $(\varphi^{1\prime},\varphi')$, we obtain an automorphism $(\varepsilon^1,\varepsilon)\in \Aut_0(\PaCD^+)\cong \grtm$ fitting into the commutative diagram
\begin{equation}
\label{diagram for GRT11}
\begin{tikzcd}
\hPaB^1 \arrow[r, "\vartheta^1"] \arrow[d, "\varphi^{1\prime}"']
& \hPaB^1 \arrow[d, "\varphi^{1\prime}"] \\
\PaCD^+ \arrow[r, dashed, "\varepsilon^1"]
& \PaCD^+.
\end{tikzcd}
\end{equation}

Since $\mu=1$ and $\lambda=1$, the automorphism $\varepsilon^1$ fixes each standard infinitesimal braid generator $t_{ij}$ of $\ib_n^+$. Thus $(\varepsilon^1,\varepsilon)$ lies in the prounipotent subgroup $\grtm_1\subset \Aut_0(\PaCD^+)$.

Because all morphisms in \eqref{diagram for GRT11} fix objects, we may restrict to the local automorphism at $r_2^1=0(12)$:
\[
\varepsilon^1_{r_2^1}:
\Aut_{\PaCD^+(2)}(r_2^1)\cong \exp(\ib_2^+)
\longrightarrow
\exp(\ib_2^+)\cong \Aut_{\PaCD^+(2)}(r_2^1).
\]

\begin{theorem}
\label{thm:KRV}
Let $(\varepsilon^1,\varepsilon):\PaCD^+\to \PaCD^+$ be an element of $\grtm_1$. Then the restriction of
\[
\varepsilon^1_{r_2^1}:\exp(\ib_2^+)\to \exp(\ib_2^+)
\]
to the subgroup $\exp(\lie_2)\subset \exp(\ib_2^+)$ is an element of $\krvs(2)$.
\end{theorem}

\begin{proof}
The automorphism $\varepsilon^1$ fits into the commutative diagram \eqref{diagram for GRT11}. Let $H_{\varepsilon^1}=F_{\varphi^1}G_{\vartheta^1}F_{\varphi^1}^{-1}\in \TAut_2$ denote the automorphism induced by restricting $\varepsilon^1_{r_2^1}$ to $\exp(\lie_2)$.

By \cref{thm: the construction is good} and \cref{thm:KV-action-F-012}, both $F_{\varphi^1}$ and $F_{\varphi^1}G_{\vartheta^1}$ are symmetric KV solutions. It follows from \cref{symmetry-groups-from-KV-solutions} that
\[
H_{\varepsilon^1}\in \krvs(2).
\]
\end{proof}

\begin{cor}
\label{cor:KRV-action-F-012}
Let $(\varphi^1,\varphi):\hPaB^1\to \CD^+$ be an equivalence of completed moperads defined on generators by \eqref{eq:values}, and let $(\varepsilon^1,\varepsilon):\PaCD^+\to \PaCD^+$ be an element of $\grtm_1$. Then
\[
F_{\varepsilon^1\circ \varphi^1}=F_{\varphi^1}\cdot H_{\varepsilon^1},
\]
where $H_{\varepsilon^1}\in \krvs(2)$ is the element induced by restricting $\varepsilon^1_{r_2^1}$ to $\exp(\lie_2)\subset \exp(\ib_2^+)$, and $\cdot$ denotes the action of $\krvs(2)$ on the set of symmetric KV solutions.
\end{cor}

\begin{remark}
Just as the family of KV solutions $\solkv=\{\solkv(n)\}_{n\geqslant 2}$ forms a colored operad in sets, the families of genus zero symmetry groups $\kv=\{\kv(n)\}_{n\geqslant 2}$ and $\krv=\{\krv(n)\}_{n\geqslant 2}$ form colored operads in groups; see Theorem~\ref{thm:operad-SolKV} and \cref{sec:kv-krv-operads}. The analogues of \cref{thm:SolKV-from-moperad} for these operads, together with \cref{cor:KV-action-F-012} and \cref{cor:KRV-action-F-012}, show that the actions of the binary symmetries associated to the word $0(12)$ are compatible with operadic composition of KV solutions.

In particular, starting from a classical element of $\gt_1$ (respectively,\ $\grt_1$), one may apply the diagonal map
\[
\gt_1\longrightarrow \gt_1^{\times k}
\qquad
(\text{respectively,\ } \grt_1\longrightarrow \grt_1^{\times k}),
\]
embed each factor into $\kv(2)$ (respectively,\ $\krv(2)$), and then compose the resulting binary operations according to a chosen rooted binary tree. In this way one recovers the trees of \cite[Theorem~1.13]{AKKN_GT_Formality}.
\end{remark}
\section{Symmetric KV Solutions from Parenthesized Braids}
\label{section: symmetric KV solutions}
In this section, we show that every symmetric KV solution determines a moperad morphism
\[
(\varphi_F^1,\varphi_F):\hPaB^1\to \TAut^+.
\]
We then show that this morphism factors through $\CD^+$ if and only if the associated KV associator is a Drinfeld associator.

\begin{remark}
Much of this section reinterprets, in the language of moperads, the constructions and results of Sections 7--9 of \cite{AT12}. However, as noted in Remark~\ref{rem:kvsol}, our conventions are closer to those of \cite{AET10}. In particular, what we call a KV solution $F\in \solkv(2)$ is the inverse of the corresponding KV solution in \cite{AT12} and \cite{AKKN_genus_zero}.
\end{remark}

\subsection{An involution on $\solkv(2)$}\label{subsec:invol}
As discussed in \cref{ss:symmetric-KV-solutions}, symmetric KV solutions are those KV solutions $F$ which are invariant under the involution
\[
\tau : \solkv(2)\to \solkv(2), \qquad \tau(F)\eqdef e^{-\inner/2}F^{2,1}\etw,
\]
where $\inner=\operatorname{ad}_{x_1+x_2}$ and $\etw=\exp(0,x_1)$. Moreover, the associated automorphism
\[
G_F = F^{1,23}F^{2,3}(F^{1,2})^{-1}(F^{12,3})^{-1}
\]
is a \emph{KV associator}, that is, it satisfies the pentagon, unit, and hexagon equations in $\SAut$:
\begin{gather}
    G^{1,2,34} G^{12,3,4} =  G^{2,3,4} G^{1,23,4} G^{1,2,3}, \label{eq:pentagon in Taut} \tag{P-KV} \\
    G^{1,2,3}G^{3,2,1} =1, \label{eq:KV-inversion} \tag{U-KV}\\
    e^{\frac{\inner^{1,2}+\inner^{1,3}}{2}}=(G^{2,3,1})^{-1} e^{\frac{\inner^{1,3}}{2}} G^{2,1,3} e^{\frac{\inner^{1,2}}{2}} (G^{1,2,3})^{-1}, \label{eq:KV-Hex1} \tag{H1-KV}\\
    e^{\frac{\inner^{1,3}+\inner^{2,3}}{2}} = G^{3,1,2} e^{\frac{\inner^{1,3}}{2}}(G^{1,3,2})^{-1} e^{\frac{\inner^{2,3}}{2}} G^{1,2,3}. \label{eq:KV-Hex2} \tag{H2-KV}
\end{gather}

Every Drinfeld associator gives rise to a KV associator in $\SAut$, via the natural inclusion $\iota:\exp(\ib_n)\hookrightarrow \SAut_n$ induced by the embedding $\ib_n\hookrightarrow \sder_n$.

\subsection{Operad and moperad maps from KV solutions}\label{subsec:MopMApFromKV}

By \cref{thm: presentation pab}, a map of completed operads $\varphi:\hPaB\to \SAut$ is determined by its values on the generators $R^{1,2}$ and $\Phi^{1,2,3}$, and these values define an operad map if and only if they satisfy the relations \eqref{eqn:P}, \eqref{eqn:H1}, and \eqref{eqn:H2} in $\SAut$. The next theorem shows that every symmetric KV solution determines such a map.

\begin{theorem}
\label{lemma: symmetric KV solutions define operad maps}
Let $F\in \solkv^{\tau}(2)$ be a symmetric KV solution, and let
\[
G_F \eqdef F^{1,23}F^{2,3}(F^{1,2})^{-1}(F^{12,3})^{-1}\in \SAut_3
\]
be the associated KV associator. Then the values
\[
\varphi_F(\Phi^{1,2,3})\eqdef G_F 
\qquad \text{and} \qquad
\varphi_F(R^{1,2})\eqdef e^{\inner/2},
\]
define an operad map $\varphi_F:\hPaB\to \SAut$.
\end{theorem}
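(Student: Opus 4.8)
The plan is to invoke the finite presentation of $\hPaB$ from \cref{thm: presentation pab}: a morphism of operads out of $\hPaB$ is freely determined by the images of the two generators $R$ and $\Phi$, subject only to the unit relation \eqref{eqn:U}, the pentagon \eqref{eqn:P}, and the two hexagons \eqref{eqn:H1} and \eqref{eqn:H2}. Since $\SAut$ is an operad in prounipotent groups and the proposed values $\varphi_F(R^{1,2}) = e^{\inner/2} \in \SAut_2$ and $\varphi_F(\Phi^{1,2,3}) = G_F \in \SAut_3$ are legitimate, it suffices to check that these four relations hold in $\SAut$; equivariance is then automatic, since every higher cosimplicial instance occurring in the relations (such as $G_F^{2,1,3}$ or $R^{1,23}$) is produced from the generators by operadic composition and the $\Sigma_n$-action, and the relations are phrased exactly in these terms (for the passage from the uncompleted presentation to $\hPaB$ one uses that the target is prounipotent). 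I would first dispatch \eqref{eqn:U}: because $\SAut_0$ and $\SAut_1$ are trivial, deleting any one of the three strands from $G_F = F^{1,23}F^{2,3}(F^{1,2})^{-1}(F^{12,3})^{-1}$ collapses two of its four factors to arity-one, hence trivial, automorphisms, leaving the surviving two mutually inverse; this is routine bookkeeping.

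For the pentagon, applying $\varphi_F$ to \eqref{eqn:P} yields $G_F^{1,2,34} G_F^{12,3,4} = G_F^{2,3,4} G_F^{1,23,4} G_F^{1,2,3}$, which is precisely the pentagon equation \eqref{eq:pentagon in Taut} of \cref{prop:pentagon-KV-associator}; note this step needs only $F \in \solkv(2)$, not symmetry. Here I would stress that the cosimplicial doublings appearing in \cref{prop:pentagon-KV-associator} are computed with the operad structure on $\TAut$, which restricts to the canonical operad structure on $\SAut$ (cf.\ the remark following \cref{operad of TAut}); since $G_F$ is special and $\varphi_F$ is being built as a map into $\SAut$, the two structures agree and the translation is literal.

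For the hexagons I would use \cref{prop:equations-KV-associator}, which requires the symmetry $\tau(F)=F$ and supplies the inversion \eqref{eq:KV-inversion} together with \eqref{eq:KV-Hex1} and \eqref{eq:KV-Hex2}. The one genuinely non-formal point is the translation of the braiding generators. I would observe that $e^{\inner/2} = \iota(e^{t_{12}/2})$ lies in the image of the operad inclusion $\iota\colon \CD \hookrightarrow \SAut$, so that the doubled braidings are governed by the corresponding doublings in $\CD$, giving $\varphi_F(R^{1,23}) = e^{(\inner^{1,2}+\inner^{1,3})/2}$ and $\varphi_F(R^{12,3}) = e^{(\inner^{1,3}+\inner^{2,3})/2}$ (computing these directly from the $\sder$-operad instead forces one to use the canonical tuple representative of $\inner$ and is the easiest place to introduce a convention error). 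With these identifications, applying $\varphi_F$ to \eqref{eqn:H1} gives $e^{\inner^{1,3}/2} G_F^{2,1,3} e^{\inner^{1,2}/2} = G_F^{2,3,1} e^{(\inner^{1,2}+\inner^{1,3})/2} G_F^{1,2,3}$, which is \eqref{eq:KV-Hex1} after conjugating, and applying $\varphi_F$ to \eqref{eqn:H2} gives \eqref{eq:KV-Hex2} after the analogous rearrangement.

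The main obstacle is therefore not a single deep step but the careful bookkeeping that makes the translation exact: matching the right-to-left groupoid composition convention of $\hPaB$ against group multiplication in $\SAut$, tracking the inverses and the placement of the braiding exponentials in \eqref{eqn:H1}–\eqref{eqn:H2} against \eqref{eq:KV-Hex1}–\eqref{eq:KV-Hex2}, and correctly identifying the cosimplicial doublings of $\inner$. Once these conventions are pinned down the verification is mechanical, and the symmetry hypothesis on $F$ enters exactly and only through \cref{prop:equations-KV-associator} to force the two hexagon relations.
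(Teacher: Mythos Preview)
Your proposal is correct and follows essentially the same approach as the paper: both verify the presentation relations of \cref{thm: presentation pab} by invoking \cref{prop:pentagon-KV-associator} for the pentagon and \cref{prop:equations-KV-associator} for the two hexagons, with the symmetry hypothesis entering only through the latter. You are slightly more explicit than the paper in handling the unit relation~\eqref{eqn:U} and in identifying the doubled braidings $\varphi_F(R^{1,23})$ and $\varphi_F(R^{12,3})$ via the inclusion $\iota\colon \CD\hookrightarrow\SAut$, but the argument is otherwise identical.
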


\begin{proof}
By construction, $\varphi_F(\Phi^{1,2,3})=G_F$. Since $G_F$ satisfies the pentagon equation \eqref{eq:pentagon in Taut} in $\SAut_4$, we have
\[
\varphi_F(\Phi^{1,2,34}\Phi^{12,3,4})
=
G_{F}^{1,2,34}G_{F}^{12,3,4}
=
G_{F}^{2,3,4}G_{F}^{1,23,4}G^{1,2,3}
=
\varphi_F(\Phi^{2,3,4}\Phi^{1,23,4}\Phi^{1,2,3}),
\]
so $\varphi_F$ preserves the relation \eqref{eqn:P}.

Similarly, since $F$ is symmetric, the associated KV associator $G_F$ satisfies the hexagon equation \eqref{eq:KV-Hex1}. Therefore
\begin{align*}
\varphi_F(R^{1,23})
=
e^{\frac{\inner^{1,2}+\inner^{1,3}}{2}}
&=
(G^{2,3,1})^{-1}e^{\frac{\inner^{1,3}}{2}}G^{2,1,3}e^{\frac{\inner^{1,2}}{2}}(G^{1,2,3})^{-1} \\
&=
\varphi_F\bigl((\Phi^{2,3,1})^{-1}R^{1,3}\Phi^{2,1,3}R^{1,2}(\Phi^{1,2,3})^{-1}\bigr),
\end{align*}
so $\varphi_F$ preserves the relation \eqref{eqn:H1}. The second hexagon relation \eqref{eqn:H2} follows in the same way from \eqref{eq:KV-Hex2}. Thus $\varphi_F$ preserves the defining relations of $\hPaB$, and therefore defines an operad map.
\end{proof}

\begin{cor}
\label{prop:operad-factors-Drinfeld-associator}
Let $F$ be a symmetric KV solution. Then the operad map $\varphi_F:\hPaB\to \SAut$ factors through $\CD$ if and only if there exists a Drinfeld associator $(1,f)\in \K^\times\times \exp(\ib_3)$ such that
\[
G_F=\iota(f).
\]
\end{cor}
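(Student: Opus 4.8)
The plan is to unwind what ``factoring through $\CD$'' means: it asserts the existence of an operad morphism $\psi\colon\hPaB\to\CD$ with $\varphi_F=\iota\circ\psi$, where $\iota\colon\CD\hookrightarrow\SAut$ is the canonical inclusion obtained by exponentiating the Lie algebra map $\iota(t_{ij})=t^{ij}$. The single computation driving both implications is the arity-two identity $\iota(t_{12})=t^{12}=(x_2,x_1)=\inner$ (see Example~\ref{example: generating DK Lie algebras as derivations}), which gives $\iota(e^{\mu t_{12}/2})=e^{\mu\inner/2}$. Since $\hPaB$ is generated by $R^{1,2}$ and $\Phi^{1,2,3}$ (\cref{thm: presentation pab}), and $\varphi_F$ is pinned down on these two morphisms by \cref{lemma: symmetric KV solutions define operad maps}, every equality of operad maps into $\SAut$ reduces to an equality on the two generators.

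For the reverse implication, I would assume $G_F=\iota(f)$ for a Drinfeld associator $(1,f)$. By \cref{cor: operadic associators} the pair $(1,f)$ corresponds to an operad isomorphism $\psi\colon\hPaB\to\CD$ with $\psi(R^{1,2})=e^{t_{12}/2}$ and $\psi(\Phi^{1,2,3})=f$. It then suffices to check that $\iota\circ\psi$ and $\varphi_F$ agree on generators: on $\Phi^{1,2,3}$ this is $\iota(f)=G_F$ by hypothesis, and on $R^{1,2}$ it is $\iota(e^{t_{12}/2})=e^{\inner/2}=\varphi_F(R^{1,2})$. Hence $\varphi_F=\iota\circ\psi$ and the map factors through $\CD$.

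For the forward implication, I would assume $\varphi_F=\iota\circ\psi$ for some operad morphism $\psi\colon\hPaB\to\CD$, and set $f\eqdef\psi(\Phi^{1,2,3})\in\exp(\ib_3)$. By definition of $\varphi_F$ and the factorization, $G_F=\varphi_F(\Phi^{1,2,3})=\iota(\psi(\Phi^{1,2,3}))=\iota(f)$ immediately. It then remains to verify that $(1,f)$ is a Drinfeld associator. Because $\ib_2=\K\,t_{12}$ is one-dimensional, $\psi(R^{1,2})=e^{\mu t_{12}/2}$ for some scalar $\mu$; the unitarity relation~\eqref{eqn:U} forces $f\in\exp(\lie_2)$, and the images under $\psi$ of the pentagon~\eqref{eqn:P} and hexagon~\eqref{eqn:H1},~\eqref{eqn:H2} relations of \cref{thm: presentation pab} are exactly the defining equations of the pair $(\mu,f)$, so \cref{cor: operadic associators} identifies $(\mu,f)$ as a Drinfeld associator. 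Finally I would pin down the coupling constant: comparing $\varphi_F(R^{1,2})=e^{\inner/2}$ with $\iota(\psi(R^{1,2}))=e^{\mu\inner/2}$ and using injectivity of $\exp$ together with $\inner\neq 0$ yields $\mu=1$, so $(1,f)$ is the required Drinfeld associator.

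I expect the main obstacle to be conceptual rather than computational: one must ensure that an \emph{a priori} arbitrary operad morphism $\psi\colon\hPaB\to\CD$ genuinely encodes associator data and not merely some operad map, which is precisely why invoking the finite presentation of $\hPaB$ (so that $\psi$ is determined by the two generator values subject to exactly the unitarity, pentagon and hexagon relations) and the bijection of \cref{cor: operadic associators} is essential. The only real computation --- identifying $\iota(t_{12})$ with $\inner$ and thereby forcing $\mu=1$ --- is short, and the remainder is bookkeeping on generators.
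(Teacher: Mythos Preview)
Your proof is correct and essentially matches the paper's argument. The only difference is one of emphasis in the forward implication: the paper pulls the pentagon, inversion, and hexagon equations back from $G_F$ (already known to be a KV associator by \cref{prop:pentagon-KV-associator} and \cref{prop:equations-KV-associator}) via injectivity of $\iota$, whereas you read them off directly from the fact that $\psi$ is an operad morphism out of $\hPaB$ and then pin down $\mu=1$ on $R^{1,2}$; the two routes are equivalent.
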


\begin{proof}
Suppose first that $\varphi_F$ factors through $\CD$. Then
\[
G_F=\varphi_F(\Phi^{1,2,3})=\iota(f)
\]
for some $f\in \exp(\ib_3)$, since $\iota:\CD\hookrightarrow \SAut$ is an injective operad morphism. Since $G_F$ satisfies the pentagon, inversion, and hexagon equations of a KV associator, it follows that $f$ satisfies the corresponding Drinfeld associator equations. Hence $(1,f)$ is a Drinfeld associator.

Conversely, suppose that $G_F=\iota(f)$ for some Drinfeld associator $(1,f)$. Define a map $\tilde{\varphi}_F:\hPaB\to \CD$ by setting
\[
\tilde{\varphi}_F(R^{1,2})\eqdef e^{t_{12}/2}
\qquad \text{and} \qquad
\tilde{\varphi}_F(\Phi^{1,2,3})\eqdef f.
\]
Since $(1,f)$ is a Drinfeld associator, these values satisfy the defining relations of $\hPaB$ in $\CD$ by \cref{cor: operadic associators}. Thus $\tilde{\varphi}_F$ is an operad morphism. By construction,
\[
\iota\circ \tilde{\varphi}_F(R^{1,2})=e^{\inner/2}=\varphi_F(R^{1,2})
\qquad\text{and}\qquad
\iota\circ \tilde{\varphi}_F(\Phi^{1,2,3})=\iota(f)=G_F=\varphi_F(\Phi^{1,2,3}),
\]
so $\varphi_F=\iota\circ \tilde{\varphi}_F$. Hence $\varphi_F$ factors through $\CD$.
\end{proof}

\medskip
Given $F\in \solkv^{\tau}(2)$, we will extend the operad morphism $\varphi_F:\hPaB\to \SAut$ from Theorem~\ref{lemma: symmetric KV solutions define operad maps} to a morphism of moperads 
\[
(\varphi_F^1,\varphi_F):\hPaB^1\to \TAut^+.
\]
The additional generator $E^{0,1}\in \hPaB^1(2)$ is sent to a distinguished tangential automorphism $\Theta$, defined below.

\begin{definition}
Let $\theta \eqdef \operatorname{ad}_{\bch(x_1,x_2)}$ denote the inner derivation of $\lie_2$ given by the Baker--Campbell--Hausdorff series; explicitly, \[\theta(a)=[a,\bch(x_1,x_2)]\] for $a\in \lie_2$. We write $\Theta \eqdef \exp(\theta)$ for the corresponding tangential automorphism.
\end{definition}

The automorphism $\Theta$ is closely related to the tangential automorphism $\etw\in \TAut_2$ which appeared in the definition of the involution on $\solkv(2)$. Recall that $\etw$ is the tangential automorphism determined by
\[
\etw(x_1)=x_1
\qquad \text{and} \qquad
\etw(x_2)=e^{-x_1}x_2e^{x_1}.
\]
The following proposition records the identities relating $\Theta$, $\etw$, and a KV solution $F$; it is a reformulation of Propositions 8.1--8.3 of \cite{AT12}.

\begin{prop}
\label{lemma: properties of B}
The automorphisms $\Theta$ and $\etw$ satisfy:
\begin{gather}
\Theta = \etw^{2,1}\etw^{1,2}, \label{eq:theta-B}\\
\etw^{1,2}\etw^{1,3}\etw^{2,3} =\etw^{2,3}\etw^{1,3}\etw^{1,2}, \label{eq:YB} \\
\etw^{1,23}=\etw^{1,2}\etw^{1,3}. 
\label{eq:BB-B}
\end{gather}
Moreover, if $F\in \TAut_2$ satisfies the first KV equation \eqref{SolKVI}, then
\begin{gather}
e^{\inner} = F \Theta F^{-1}, \label{eq:ThetaAndT}\\
(F^{1,2})^{-1}\etw^{12,3}F^{1,2}
=\etw^{1,3}\etw^{2,3}. 
\label{eq:braided-F}
\end{gather}
\end{prop}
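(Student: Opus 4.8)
The plan is to verify each of the five identities in turn, working entirely in $\tder_2$ (or $\tder_3$) at the Lie-algebra level where possible and exponentiating, using the coordinate description $u=(a_1,a_2)$ of tangential derivations together with the composition rule $([u,v])_k=[a_k,b_k]+u(b_k)-v(a_k)$ recalled in the excerpt. Recall $\tw=(0,x_1)$, $\etw=\exp(\tw)$, $\inner=\operatorname{ad}_{x_1+x_2}=(x_2,x_1)$, and $\theta=\operatorname{ad}_{\bch(x_1,x_2)}$. Throughout I will use that an inner derivation $\operatorname{ad}_w$ for $w\in\lie_2$ exponentiates to the inner automorphism $a\mapsto e^{-w}ae^{w}$ (conjugation), and that the group law on $\TAut_2$ is given by~\eqref{eq: group law on TAut}.

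First I would prove~\eqref{eq:theta-B}, $\Theta=\etw^{2,1}\etw^{1,2}$. The key observation is that $\etw^{1,2}=\exp(0,x_1)$ acts on $\exp(\lie_2)$ by fixing the first generator and conjugating the second: tracing through the action $\rho$, $\etw^{1,2}$ sends $e^{x_1}e^{x_2}\mapsto e^{x_1}(e^{-x_1}e^{x_2}e^{x_1})=e^{x_2}e^{x_1}\cdots$, and similarly $\etw^{2,1}=\exp(x_2,0)$. The cleanest route is to check that both $\Theta$ and $\etw^{2,1}\etw^{1,2}$ induce the same automorphism of $\exp(\lie_2)$, namely conjugation by $e^{\bch(x_1,x_2)}=e^{x_1}e^{x_2}$; since $\Theta=\exp(\operatorname{ad}_{\bch(x_1,x_2)})$ is conjugation by $e^{\bch(x_1,x_2)}$ by definition, and the product $\etw^{2,1}\etw^{1,2}$ is the composite of the two "half-twists," this reduces to the identity $e^{\bch(x_1,x_2)}=e^{x_1}e^{x_2}$ together with the fact that $\TAut_2\to\Aut(\exp(\lie_2))$ is injective on the relevant subgroup. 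Equation~\eqref{eq:BB-B}, $\etw^{1,23}=\etw^{1,2}\etw^{1,3}$, I expect to be the most transparent: it is the statement that the cosimplicial "doubling" of the last strand distributes over $\etw$, which follows directly from the cosimplicial naturality noted after~\cref{operad of TAut} (namely $F^{i,(i+1)\cdots j}$ is compatible with the coproduct), applied to $\etw=\exp(0,x_1)$ where the doubled variable appears linearly.

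Next, for the Yang--Baxter-type relation~\eqref{eq:YB}, $\etw^{1,2}\etw^{1,3}\etw^{2,3}=\etw^{2,3}\etw^{1,3}\etw^{1,2}$, the strategy is to recognize both sides as tangential automorphisms of $\exp(\lie_3)$ and compare their action on the three generators, or equivalently to compare $\log$ of both sides in $\tder_3$ using BCH and the bracket formula. The $\etw^{i,j}$ are the images of the half-twist, so this identity is precisely the image under $\varphi_F$ (or more elementarily under the tangential-automorphism realization) of a braid relation; I would either cite its origin in the braid group via the inclusion of the excerpt or verify it directly by checking that conjugation by $e^{x_1}e^{x_2}e^{x_3}$ is computed consistently in both orders.

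For~\eqref{eq:ThetaAndT} and~\eqref{eq:braided-F}, I would use the hypothesis that $F$ satisfies~\eqref{SolKVI}. For~\eqref{eq:ThetaAndT}, $e^{\inner}=F\Theta F^{-1}$: since $\Theta$ is conjugation by $e^{\bch(x_1,x_2)}=e^{x_1}e^{x_2}$ and $F$ sends $e^{x_1}e^{x_2}$ to $e^{x_1+x_2}$ by~\eqref{SolKVI}, the conjugate $F\Theta F^{-1}$ is conjugation by $F(e^{x_1}e^{x_2})=e^{x_1+x_2}$, which is exactly $e^{\operatorname{ad}_{x_1+x_2}}=e^{\inner}$; the main care is to track that conjugation of the inner automorphism by a tangential automorphism $F$ is inner by the $F$-image of its defining element, a standard fact I would state and use. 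Equation~\eqref{eq:braided-F} is then obtained by applying a cosimplicial version of this same principle: $\etw^{12,3}$ is conjugation-type data on the merged strand $(x_1+x_2,x_3)$, and $(F^{1,2})^{-1}\etw^{12,3}F^{1,2}=\etw^{1,3}\etw^{2,3}$ expresses that pushing $\etw^{12,3}$ through $F^{1,2}$ (which by~\eqref{SolKVI} rewrites $e^{x_1}e^{x_2}$ as $e^{x_1+x_2}$ in the merged slot) splits it, using~\eqref{eq:BB-B} to decompose $\etw^{12,3}$ after the substitution.

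\textbf{Main obstacle.} The hard part will be keeping the cosimplicial bookkeeping honest in~\eqref{eq:braided-F}: one must be precise about how $F^{1,2}$ interacts with the merged index $12$ in $\etw^{12,3}$, i.e.\ that $\etw^{12,3}$ really is the automorphism attached to the pair $(x_1+x_2,x_3)$ and that conjugating by $F^{1,2}$ replaces $x_1+x_2$-data by $\bch(x_1,x_2)$-data via~\eqref{SolKVI} before splitting via~\eqref{eq:BB-B}. The cleanest way to control this is to phrase everything as an equality of automorphisms of $\exp(\lie_3)$ and evaluate on the generators $e^{x_1},e^{x_2},e^{x_3}$, reducing each claimed identity to a manipulation of group-like elements that the first KV equation makes explicit.
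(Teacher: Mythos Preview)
Your plan is sound and is essentially the argument given in \cite[Propositions~8.1--8.3]{AT12}, which is all the paper does here: the proposition is stated as a summary of those results and carries no proof of its own. Two small points are worth tightening. First, the map $\rho:\TAut_n\to\Aut(\exp(\lie_n))$ is genuinely injective (since $\tder_n$ is by definition a Lie subalgebra of $\Der(\lie_n)$), so there is no need to hedge with ``on the relevant subgroup''; equality of actions on generators gives equality in $\TAut_n$ outright. Second, your invocation of~\eqref{eq:BB-B} in the argument for~\eqref{eq:braided-F} is a red herring: \eqref{eq:BB-B} concerns $\etw^{1,23}$, whereas here the doubled index is on the \emph{first} slot, and in fact $\tw^{1,3}$ and $\tw^{2,3}$ do not commute in $\tder_3$ (their bracket has third component $[x_1,x_2]$), so there is no direct product decomposition of $\etw^{12,3}$. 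The approach you flag at the end---evaluate both sides on $e^{x_1},e^{x_2},e^{x_3}$ and use that $(F^{1,2})^{-1}(e^{x_1+x_2})=e^{x_1}e^{x_2}$ from~\eqref{SolKVI}---is the correct one and goes through without appeal to~\eqref{eq:BB-B}.
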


\medskip

Recall from Section~3 that $\TAut^1$ is an $\SAut$-moperad in prounipotent groups, while $\TAut$ itself does not form an operad in groups, since its partial compositions are not group homomorphisms. Accordingly, once the operad map
\[
\varphi_F:\hPaB\to \SAut
\]
has been constructed, the extension to $\hPaB^1$ is naturally formulated as a map of $\hPaB$-modules in prounipotent groups, or equivalently as a map of $\hPaB$-moperads in sets.

\begin{theorem}
\label{prop: symmetric KV solutions give KV solutions}
Let $F\in \solkv^{\tau}(2)$ be a symmetric KV solution. Then the assignment
\[
\varphi^1_F(E^{0,1})\eqdef \Theta^{0,1},
\qquad
\varphi^1_F(\Psi^{0,1,2})\eqdef F^{1,2},
\]
extends $\varphi_F$ to a map of $\hPaB$-modules in prounipotent groups
\[
(\varphi_F^1,\varphi_F):\hPaB^1\longrightarrow \TAut^+.
\]
Equivalently, $(\varphi_F^1,\varphi_F)$ is a map of $\hPaB$-moperads in sets.
\end{theorem}

\begin{proof}
The presentation of $\PaB^1$ as a $\PaB$-moperad in~\cref{presenation of PaB1} says that, in order to extend the operad map $\varphi_{F}:\hPaB\rightarrow \SAut$ defined in~\cref{lemma: symmetric KV solutions define operad maps} to a map $(\varphi^{1}_{F},\varphi_{F}):\hPaB^1 \longrightarrow \TAut^+$, it is sufficient to define values $\varphi^1_{F}(E^{0,1}) \eqdef \Theta$ and $\varphi^1_{F}(\Psi^{0,1,2}) \eqdef F$ and verify that these values satisfy the defining equations~\eqref{eqn:MP}, \eqref{eqn:O} and~\eqref{eqn:RP} of $\PaB^1$. 
For $\Theta$ and $F$ as before, the mixed pentagon equation~\eqref{eqn:MP} gives 
\begin{equation*}
\varphi^1_{F}\Big(\Psi^{0,1,23}\Psi^{01,2,3}\Big) 
=
F^{1,23}F^{2,3} 
= G_{F}^{1,2,3} F^{12,3}F^{1,2}
=\varphi^1_{F} \left(\Phi^{1,2,3} \Psi^{0,12,3} \Psi^{0,1,2}\right).
\end{equation*} Thus $\varphi^1_{F}$ preserves the mixed pentagon equation~\eqref{eqn:MP}.

To verify that the values $\varphi^1_{F}(E^{0,1})$ and $\varphi^1_{F}(\Psi^{0,1,2})$ preserve the octagon equation~\eqref{eqn:O}, we will use the hexagon identities from Proposition~\ref{prop:equations-KV-associator}. In particular, if we write out a cyclic permutation of \eqref{eq:KV-Hex1},
\begin{equation}
e^{\frac{\inner^{3,12}}{2}} = (G^{1,2,3})^{-1} \, e^{\frac{\inner^{3,2}}{2}} \, G^{1,3,2} \, e^{\frac{\inner^{3,1}}{2}} \, (G^{3,1,2})^{-1}
\end{equation} and multiplying $e^{\frac{\inner^{3,12}}{2}}e^{\frac{\inner^{12,3}}{2}} = e^{\frac{\inner^{3,1} + \inner^{3,2}}{2}}e^{\frac{\inner^{1,3} + \inner^{2,3}}{2}} = e^{\inner^{1,3} + \inner^{2,3}}$, we get:
\begin{eqnarray*}\label{eq:KV-Hex2-squared}
e^{\inner^{1,3} + \inner^{2,3}} &=&
\left(
(G^{1,2,3})^{-1} e^{\frac{\inner^{3,2}}{2}} G^{1,3,2} e^{\frac{\inner^{3,1}}{2}} (G^{3,1,2})^{-1}
\right) 
\left(
G^{3,1,2}  e^{\frac{\inner^{1,3}}{2}}  (G^{1,3,2})^{-1}  e^{\frac{\inner^{2,3}}{2}}  G^{1,2,3}
\right)\\
&=& (G^{1,2,3})^{-1} e^{\frac{\inner^{3,2}}{2}} G^{1,3,2} e^{\frac{\inner^{3,1}}{2}}  e^{\frac{\inner^{1,3}}{2}}  (G^{1,3,2})^{-1}  e^{\frac{\inner^{2,3}}{2}}  G^{1,2,3}.
\end{eqnarray*} If we expand out each of the instances of $G$, 
 we arrive at 
\begin{equation}
\label{eq:KV-Hex2-alt-expanded}
\begin{aligned}
e^{\inner^{1,3}+\inner^{2,3}}
&=
\bigl( F^{12,3}\,F^{1,2}\,(F^{2,3})^{-1}\,(F^{1,23})^{-1} \bigr)
\;\;
e^{\frac{\inner^{3,2}}{2}}
\;\;
\bigl( F^{1,32}\,F^{3,2}\,(F^{1,3})^{-1}\,(F^{13,2})^{-1} \bigr)
\\
&\quad\;
e^{\frac{\inner^{3,1}}{2}}
\;\;
e^{\frac{\inner^{1,3}}{2}}
\;\;
\bigl( F^{13,2}\,F^{1,3}\,(F^{3,2})^{-1}\,(F^{1,32})^{-1} \bigr)
\;\;
e^{\frac{\inner^{2,3}}{2}}
\\
&\quad\;
\bigl( F^{1,23}\,F^{2,3}\,(F^{1,2})^{-1}\,(F^{12,3})^{-1} \bigr).
\end{aligned}
\end{equation} After conjugating by $F^{12,3}$ this becomes: 
\begin{equation}
\label{eq:KV-Hex2-alt-conjugated}
\begin{aligned}
(F^{12,3})^{-1}  e^{\inner^{1,3}+\inner^{2,3}}  F^{12,3}
&=
F^{1,2}(F^{2,3})^{-1}(F^{1,23})^{-1}
e^{\frac{\inner^{3,2}}{2}} \\
&\quad
 F^{1,32}F^{3,2}(F^{1,3})^{-1}(F^{13,2})^{-1} 
e^{\frac{\inner^{3,1}}{2}}  e^{\frac{\inner^{1,3}}{2}} \\
&\quad\
 F^{13,2}F^{1,3}(F^{3,2})^{-1}(F^{1,32})^{-1} 
e^{\frac{\inner^{2,3}}{2}} \\
&\quad
 F^{1,23}\,F^{2,3}\,(F^{1,2})^{-1}.
\end{aligned}
\end{equation} 
Since $F^{1,2}$ commutes with $(F^{12,3})^{-1}  e^{\inner^{1,3}+\inner^{2,3}}  F^{12,3}$, we can conjugate~\eqref{eq:KV-Hex2-alt-conjugated} by $F^{1,2}$ to get
\begin{equation}
\begin{aligned}
(F^{12,3})^{-1}  e^{\inner^{1,3}+\inner^{2,3}}  F^{12,3}
&=
(F^{2,3})^{-1}(F^{1,23})^{-1}
e^{\frac{\inner^{3,2}}{2}} \\
&\quad
 F^{1,32}F^{3,2}(F^{1,3})^{-1}(F^{13,2})^{-1} 
e^{\frac{\inner^{3,1}}{2}}  e^{\frac{\inner^{1,3}}{2}} \\
&\quad\
 F^{13,2}F^{1,3}(F^{3,2})^{-1}(F^{1,32})^{-1} 
e^{\frac{\inner^{2,3}}{2}} \\
&\quad
 F^{1,23}\,F^{2,3}.
\end{aligned}
\end{equation} 
Further, since $e^{\frac{\inner^{i,j}}{2}}$ commutes with the $F^{k,ij}$ and $F^{ij,k}$, this can be simplified to 
\begin{equation}
\begin{aligned}
(F^{12,3})^{-1}  e^{\inner^{1,3}+\inner^{2,3}}  F^{12,3}
&=
(F^{2,3})^{-1}
e^{\frac{\inner^{3,2}}{2}} 
 F^{3,2}(F^{1,3})^{-1} 
e^{\frac{\inner^{3,1}}{2}}  e^{\frac{\inner^{1,3}}{2}} 
 F^{1,3}(F^{3,2})^{-1} 
e^{\frac{\inner^{2,3}}{2}} 
F^{2,3} 
\end{aligned}
\end{equation} 
Using that $e^{\frac{\inner^{3,1}}{2}}  e^{\frac{\inner^{1,3}}{2}} = e^{\inner^{1,3}}$, we can now apply \eqref{eq:ThetaAndT} on both sides 
to arrive at
\begin{equation}
\label{eq:KV-Hex2-alt-expanded-B32B23}
\begin{aligned}
\Theta^{12,3}
&=
\bigl( (F^{2,3})^{-1} e^{\frac{\inner^{2,3}}{2}} F^{3,2} \bigr)
\Theta^{1,3}
\bigl( (F^{3,2})^{-1} e^{\frac{\inner^{2,3}}{2}} F^{2,3} \bigr).
\end{aligned}
\end{equation}Passing through the isomorphism $\TAut_3\cong\TAut_2^+$, we can now check:
\begin{align*}
    \varphi^{1}_{F}(E^{01,2})
    = \Theta^{01,2} 
&= \etw^{2,1}\Theta^{0,2} \etw^{1,2}\\
&=
(F^{1,2})^{-1}e^{\frac{\inner^{2,1}}{2}}F^{2,1}\Theta^{0,2}(F^{2,1})^{-1}e^{\frac{\inner^{1,2}}{2}}F^{1,2} \\
&=
\varphi^1_{F}
\left(
(\Psi^{0,1,2})^{-1} R^{2,1} 
\Psi^{0,2,1} E^{0,2} 
(\Psi^{0,2,1})^{-1} R^{1,2} \Psi^{0,1,2}
\right).
\end{align*}
Therefore, $\varphi_F^1$ preserves the octagon equation~\eqref{eqn:O}.

It remains to see that $\varphi_F^1$ preserves the right pentagon~\eqref{eqn:RP}. We first rewrite the expression $\Theta^{12,3}\Theta^{1,2}$ as follows:
\begin{align*}
\Theta^{12,3}\Theta^{1,2}
&= \etw^{3,2}\etw^{3,1}\left(\etw^{1,3}\etw^{2,3}\etw^{2,1}\right)\etw^{1,2}\\
&\overset{(1)}{=} \etw^{3,2}\etw^{3,1}\etw^{2,1}\left(\etw^{2,3}\etw^{1,3}\etw^{1,2}\right)\\
&\overset{(2)}{=} \etw^{3,2}\etw^{3,1}\etw^{2,1}\etw^{1,2}\etw^{1,3}\etw^{2,3}\\
&\overset{(3)}{=} \etw^{3,2}\left((F^{3,2})^{-1}\etw^{32,1}F^{3,2}\right)
\left((F^{3,2})^{-1}\etw^{1,32}F^{3,2}\right)\etw^{2,3}\\
&= \etw^{3,2}(F^{3,2})^{-1}\Theta^{1,32}F^{3,2}\etw^{2,3}.
\end{align*}
Here $(1)$ and $(2)$ are applications of the Yang--Baxter relation \eqref{eq:YB}, while $(3)$ uses \eqref{eq:braided-F} together with the relation
\[
F^{2,3}\etw^{1,2}\etw^{1,3}(F^{2,3})^{-1}=\etw^{1,23},
\]
permuted by $(2\,3)$.

Because $F$ is symmetric, we have $\etw^{3,2}=(F^{2,3})^{-1}e^{\frac{\inner^{3,2}}{2}}F^{3,2}$ and $\etw^{1,2}=(F^{2,1})^{-1}e^{\frac{\inner^{1,2}}{2}}F^{1,2}$. Putting this together, and passing through the isomorphism $\TAut_3\cong  \TAut^+_2$ to relabel the variables, we have: 
\begin{align*}
    \varphi^{1}_{F}(E^{01,2} E^{0,1}) 
    = \Theta^{01,2}\Theta^{0,1}
    &= \etw^{2,1}(F^{2,1})^{-1}\Theta^{0,21}F^{2,1}\etw^{1,2}\\
    &= (F^{1,2})^{-1}e^{\frac{\inner^{2,1}}{2}}F^{2,1}(F^{2,1})^{-1}\Theta^{0,21}F^{2,1}(F^{2,1})^{-1}e^{\frac{\inner^{1,2}}{2}}F^{1,2}\\
    &= (F^{1,2})^{-1}e^{\frac{\inner^{2,1}}{2}}\Theta^{0,21}e^{\frac{\inner^{1,2}}{2}}F^{1,2}\\
    &= \varphi_{F}^{1}((\Psi^{0,1,2})^{-1} R^{2,1} E^{0,21} R^{1,2} \Psi^{0,1,2}). 
\end{align*} 
It follows that $\varphi_F^1$ preserves the right pentagon equation \eqref{eqn:RP}.

In summary $(\varphi^1_{F},\varphi_{F}):\hPaB^1\rightarrow \TAut^+$ is a map of moperads (in sets) and $\PaB$-modules in prounipotent groups.
\end{proof}

In the case where the KV associator $G_{F} \eqdef F^{1,23}F^{2,3}(F^{1,2})^{-1}(F^{12,3})^{-1}$ is a Drinfeld associator, the corresponding moperad morphism $(\varphi^1_F, \varphi_F): \hPaB^1 \to \TAut^+$ factors through a moperad equivalence $\hPaB^1 \to \CD^+$. 
The converse also holds, as the following corollary shows.

\begin{cor}
\label{cor: moperad map factors if KV Ass is Drinf Ass}
Let $F$ be a symmetric KV solution. Then the moperad map $(\varphi_F^1,\varphi_F):\hPaB^1\to \TAut^+$ factors through a moperad equivalence $(\varphi_F^{\prime 1},\varphi_F'):\hPaB^1\to \CD^+$ if and only if the corresponding KV associator $G_F$ is in the $\iota$-image of a Drinfeld associator.
\end{cor}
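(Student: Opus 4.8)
The plan is to deduce this moperad statement from its operadic counterpart, \cref{prop:operad-factors-Drinfeld-associator}, by passing to underlying operad maps in one direction and extending via \cref{prop: moperad isos are associators} in the other. Throughout I would read ``factors through $\CD^+$'' as the assertion that there exist a moperad isomorphism $\Psi=(\psi^1,\psi)\colon \hPaB^1 \to \CD^+$ and a moperad map $j\colon \CD^+ \to \TAut^+$ whose underlying operad map is the canonical inclusion $\iota\colon \CD \hookrightarrow \SAut$, such that $(\varphi_F^1,\varphi_F)=j\circ\Psi$. The point of insisting that the operad part of $j$ be $\iota$ is that $\TAut^+$ is an $\SAut$-moperad, so any moperad map out of the $\CD$-moperad $\CD^+$ has an operad part $\CD\to\SAut$; fixing it to be $\iota$ makes the statement align with, and reduce to, \cref{prop:operad-factors-Drinfeld-associator}. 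Note that the module (frozen-strand) part of $j$ is allowed to be non-special, which will be essential.

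For the ``only if'' direction I would suppose $(\varphi_F^1,\varphi_F)=j\circ\Psi$ and pass to underlying operad maps. The underlying operad map of the left-hand side is precisely the operad morphism $\varphi_F\colon \hPaB\to\SAut$ of \cref{lemma: symmetric KV solutions define operad maps}, while that of the right-hand side is $\iota\circ\psi$, where $\psi\colon \hPaB\to\CD$ is the (isomorphism) operad part of $\Psi$. Hence $\varphi_F$ factors through $\CD$, and \cref{prop:operad-factors-Drinfeld-associator} gives directly that $G_F=\iota(f)$ for a Drinfel'd associator $(1,f)$.

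For the ``if'' direction I would assume $G_F=\iota(f)$ with $(1,f)$ a Drinfel'd associator. By \cref{cor: operadic associators} the assignment $R^{1,2}\mapsto e^{t_{12}/2}$, $\Phi^{1,2,3}\mapsto f$ defines an operad isomorphism $\tilde\varphi_F\colon \hPaB \xrightarrow{\cong}\CD$, and by construction $\iota\circ\tilde\varphi_F=\varphi_F$ (since $\iota(e^{t_{12}/2})=e^{\inner/2}$ and $\iota(f)=G_F$). By \cref{prop: moperad isos are associators} this extends to a moperad isomorphism $\Psi=(\tilde\varphi_F^1,\tilde\varphi_F)\colon \hPaB^1\xrightarrow{\cong}\CD^+$. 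I would then simply set $j\eqdef(\varphi_F^1,\varphi_F)\circ\Psi^{-1}$, which is a moperad map $\CD^+\to\TAut^+$ as a composite of moperad morphisms, and which satisfies $(\varphi_F^1,\varphi_F)=j\circ\Psi$ by construction. Its underlying operad map is $\varphi_F\circ\tilde\varphi_F^{-1}=\iota$, so $j$ is of the required form and the factorization through the isomorphism $\Psi$ holds.

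The subtlety to keep honest, and the main thing to verify rather than the formal bookkeeping above, is the behaviour of $j$ on the frozen strand. Because $\varphi_F^1(E^{0,1})=\Theta^{0,1}$ and $\Theta$ lies in $\TAut^+$ but not in $\TAut^1$ (the remark after \cref{prop: symmetric KV solutions give KV solutions}), the factorization is emphatically not through the standard special-valued inclusion $\iota^+\colon \CD^+\hookrightarrow\SAut^+$. Indeed $j$ carries $e^{t_{01}}=\tilde\varphi_F^1(E^{0,1})$ to $\Theta^{0,1}=(F^{0,1})^{-1}(e^{\inner})^{0,1}F^{0,1}$ by \eqref{eq:ThetaAndT}, so the module part of $j$ is exactly $\iota^+$ conjugated by $F$; the compatibility of this conjugation with the monoid composition $\circ_0$ and the module compositions $\circ_i$ is precisely the hexagon and right-pentagon bookkeeping already carried out in the proof of \cref{prop: symmetric KV solutions give KV solutions}. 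The genuine obstacle is therefore conceptual rather than computational: one must set up the target as an $\SAut$-moperad whose module part may be non-special, so that the operadic reduction to \cref{prop:operad-factors-Drinfeld-associator} goes through while the non-special automorphism $\Theta$ is still accommodated on the frozen strand.
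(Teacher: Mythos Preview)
Your argument is correct, and in the reverse direction it departs from the paper in a way worth recording. Both you and the paper produce the same moperad isomorphism $\Psi\colon \hPaB^1\to\CD^+$ from the Drinfeld associator (the paper explicitly via the shift $(-)^+$ precomposed with $\rho^1$, you via \cref{prop: moperad isos are associators}; these coincide). The difference is in the second leg $\CD^+\to\TAut^+$. The paper checks on generators that the \emph{standard} shifted inclusion $\iota^+$ does the job; you instead set $j\eqdef(\varphi_F^1,\varphi_F)\circ\Psi^{-1}$, which is automatically a moperad map with operad part $\varphi_F\circ\tilde\varphi_F^{-1}=\iota$, and you explicitly flag that $j\neq\iota^+$ on the module part. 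Your route is the robust one: under the standard inclusion one has $e^{t_{01}}\mapsto e^{\inner^{0,1}}$ and $f(t_{01},t_{12})\mapsto G_F^{0,1,2}$, whereas $\varphi_F^1$ sends $E^{0,1}\mapsto\Theta^{0,1}=(F^{0,1})^{-1}e^{\inner^{0,1}}F^{0,1}$ (by \eqref{eq:ThetaAndT}) and $\Psi^{0,1,2}\mapsto F^{1,2}$, so $\iota^+\circ\Psi$ and $(\varphi_F^1,\varphi_F)$ disagree on the frozen strand. Your abstract definition of $j$ sidesteps this discrepancy entirely, and your closing paragraph correctly identifies $j$ as $\iota^+$ conjugated by $F$ on the module side, which is precisely what is needed to accommodate the non-special automorphism $\Theta$.
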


\begin{proof}
Suppose first that $(\varphi_F^1,\varphi_F)$ factors as
\[
\begin{tikzcd}
\hPaB^1
\arrow[r, "\varphi_F^{\prime 1}"]
\arrow[rr, dashed, bend right=15, "\varphi_F^1"']
&
\CD^+ \arrow[r, hook, "\iota"]
&
\TAut^+,
\end{tikzcd}
\]
where $(\varphi_F^{\prime 1},\varphi_F'):\hPaB^1\to \CD^+$ is a moperad equivalence. Then the underlying operad map $\varphi_F':\hPaB\to \CD$ is an operad equivalence. Hence, by Theorem~\ref{cor: operadic associators}, the element $f \eqdef \varphi_F'(\Phi^{1,2,3}) \in \exp(\ib_3)$ is a Drinfeld associator. Since
\[
G_F=\varphi_F(\Phi^{1,2,3})=\iota\!\left(\varphi_F'(\Phi^{1,2,3})\right)=\iota(f),
\]
it follows that $G_F$ is the $\iota$-image of a Drinfeld associator.

Conversely, suppose that $(\varphi_F^1,\varphi_F):\hPaB^1\to \TAut^+$ is defined on generators by
\[
\varphi_F(\Phi^{1,2,3})\coloneqq G_F=\iota(f), \ \varphi_F(R^{1,2})\coloneqq \iota\!\left(e^{t_{12}/2}\right)=e^{\mathbf t/2}, \ \varphi_F^1(E^{0,1})\coloneqq \Theta^{0,1}, \ \text{and} \  \varphi_F^1(\Psi^{0,1,2})\coloneqq F^{1,2}.
\]
We aim to define a factorization \[
\begin{tikzcd}
\hPaB^1
\arrow[r, dashed, "\varphi_F^{\prime 1}"]
\arrow[rr, bend right=15, "\varphi_F^1"']
&
\CD^+ \arrow[r, dashed, "\varphi_F^{\prime \prime 1}"]
&
\TAut^+,
\end{tikzcd}
\] in which  $(\varphi_F^{\prime 1},\varphi_F'):\hPaB^1\xrightarrow{\cong}\CD^+$ is a moperad equivalence. The assumption that $G_F$ lies in the $\iota$-image of a Drinfeld associator $(1,f)$ implies, via Corollary~\ref{prop:operad-factors-Drinfeld-associator}, that the underlying operad map $\varphi_F:\hPaB \to \SAut$ factors through an operad equivalence $\varphi_F':\hPaB \xrightarrow{\cong}\CD.$  By Lemma~\ref{lemma: inclusion PaB1 to PaB+}, this operad equivalence induces a moperad equivalence
\[
(\varphi_F^{\prime 1},\varphi_F'):\hPaB^1 \xrightarrow{\cong}\CD^+,
\]
obtained by precomposing the shifted equivalence $(\varphi_F^+,\varphi_F'):\hPaB^+\xrightarrow{\cong}\CD^+$ with the inclusion $(\rho^1,\id):\hPaB^1 \hookrightarrow \hPaB^+$. Moreover, this map is explicitly given on moperad generators by
\[
\varphi_F^{\prime 1}(E^{0,1})=e^{t_{01}}
\qquad \text{and} \qquad
\varphi_F^{\prime 1}(\Psi^{0,1,2})=f(t_{01},t_{12}). 
\]

We now aim to define a map $(\varphi_F^{\prime\prime 1},\varphi_F^{\prime\prime}):\CD^+\to \TAut^+$ so that $(\varphi_F^{\prime\prime 1},\varphi_F^{\prime\prime})\circ (\varphi_F^{\prime 1},\varphi_F^{\prime})= (\varphi_F^1,\varphi_F)$. 
Write \[\bar{F}^{1,2}:=\iota(\varphi_F^{\prime 1}(\Psi^{0,1,2})) = \iota(f(t_{01},t_{12})) \quad \text{and} \quad \bar{\Theta}:=\iota(\varphi_F^{\prime 1}(E^{0,1})) =\iota(e^{t_{01}}).\] The value $\bar{F}^{1,2}$ gives rise to the same pentagon element as $F^{1,2}$, namely
\[\bar{F}^{1,23}\bar{F}^{2,3}(\bar{F}^{1,2})^{-1}(\bar{F}^{12,3})^{-1} = G_F = F^{1,23}F^{2,3}(F^{1,2})^{-1}(F^{12,3})^{-1}. \]
By Theorem~7.5 of \cite{AT12}, $\bar{F}^{1,2}$ is also a KV solution, and then Proposition~7.2 of \cite{AT12} implies that
\[
F^{1,2}=e^{\lambda \inner}\bar{F}^{1,2}
\]
for some $\lambda\in \K$.  

Now, by \eqref{eq:ThetaAndT},
\[
\Theta^{0,1}
=(F^{1,2})^{-1}e^{\inner}F^{1,2}
=(e^{\lambda\inner}\bar F^{1,2})^{-1}e^{\inner}(e^{\lambda\inner}\bar F^{1,2})
=(\bar{F}^{1,2})^{-1}e^{-\lambda\inner}e^{\inner}e^{\lambda\inner}\bar{F}^{1,2}
=(\bar{F}^{1,2})^{-1}e^{\inner}\bar{F}^{1,2},
\]
since $e^{\lambda\inner}$ commutes with $e^{\inner}$. Thus the original value $\Theta^{0,1}$ may be rewritten in terms of the same tangential automorphism $\bar{F}^{1,2}$ appearing in the factored map.  One can use this, together with the fact that $\Theta$ and $\bar{\Theta}$ must satisfy the equations \eqref{eqn:RP} and \eqref{eqn:O} to write $\Theta$ as an expression in terms of $\bar{\Theta}$. It follows that one then defines the moperad map $(\varphi_F^{\prime\prime 1},\varphi_F^{\prime\prime}):\CD^+\to \TAut^+$ by appropriately twisting the values $\bar{F}^{1,2}:=\iota(\varphi_F^{\prime 1}(\Psi^{0,1,2})) = \iota(f(t_{01},t_{12}))$ and $\bar{\Theta}:=\iota(\varphi_F^{\prime 1}(E^{0,1})) =\iota(e^{t_{01}})$ so that $(\varphi_F^{\prime\prime 1},\varphi_F^{\prime\prime})\circ (\varphi_F^{\prime 1},\varphi_F^{\prime}) = (\varphi_F^{1},\varphi_F)$ as required. 

\end{proof}

\begin{example}
    In \cite[Thm.~1~(3)]{SeveraWillwacher11}, it is shown that the Alekseev--Torossian associator is in fact a Drinfeld associator. In the language of the present section, this means that the symmetric KV solution corresponding to the Alekseev--Torossian construction gives rise to a moperad map $(\varphi_F^1,\varphi_F):\hPaB^1\to \TAut^+$
    which factors through a moperad equivalence $\hPaB^1 \xrightarrow{\sim } \CD^+.$ Thus, in this case, the operadic and moperadic structures arising from the KV solution are already governed by a genuine Drinfeld associator, rather than only by a KV associator in $\SAut$.
\end{example}

\subsection{Actions of $\gtm$ on KV associators}
\label{subsec:GT1-KV-action}
Precomposition by an automorphism of $\hPaB^1$ acts not only on symmetric KV solutions, but also on the associated $\PaB$-module maps
\[
(\varphi_F^1,\varphi_F):\hPaB^1\to \TAut^+.
\]
The first proposition shows that, on the generators $\Psi^{0,1,2}$ and $\Phi^{1,2,3}$, this induced action recovers the usual $\kvs(2)$-action on a symmetric KV solution $F$ and the corresponding action on its KV associator $G_F$. We then show that, when the map factors through $\CD^+$, this reduces to the classical $\gt_1$-action on Drinfeld associators.

To motivate the induced action on KV associators, let $F$ be a KV solution and let $G\in \kv(2)$. Writing $G\cdot F = FG^{-1}$, the corresponding KV associator is
\begin{align}
\begin{split}
\label{KV-associators-action}
\Phi_{G\cdot F}
&=(FG^{-1})^{1,23}(FG^{-1})^{2,3}\bigl((FG^{-1})^{1,2}\bigr)^{-1}\bigl((FG^{-1})^{12,3}\bigr)^{-1} \\
&=F^{1,23}(G^{-1})^{1,23}F^{2,3}(G^{-1})^{2,3}G^{1,2}(F^{1,2})^{-1}
   G^{12,3}(F^{12,3})^{-1} \\
&=F^{1,23}F^{2,3}\,\Phi_{G^{-1}}\,(F^{1,2})^{-1}(F^{12,3})^{-1}.
\end{split}
\end{align}
Thus the action of $\kv(2)$ on KV solutions induces an action on KV associators. A similar construction gives the corresponding $\krv(2)$-action; see \cite[Equation~(27)]{AT12}. This is the \emph{Drinfeld twist} action.

\begin{prop}
\label{prop:GTM-action-on-KV-associators}
Let $F$ be a symmetric KV solution, and let $(\varphi_F^1,\varphi_F):\hPaB^1\to \TAut^+$ be the associated $\PaB$-module map. Let $\gamma=(1,f_\gamma,g_\gamma)\in \gtm_1$, and let
$(\vartheta_\gamma^1,\vartheta_\gamma):\hPaB^1\to \hPaB^1$ be the corresponding object-fixing automorphism. Then the composite
\[
(\varphi_F^1\circ \vartheta_\gamma^1,\varphi_F\circ \vartheta_\gamma):\hPaB^1\to \TAut^+
\]
is the $\PaB$-module map associated to the symmetric KV solution $G_\gamma\cdot F,$ where $G_\gamma\in \kvs(2)$ is the image of $\gamma$ under the inclusion $\gtm_1\hookrightarrow \kvs(2)$. In particular,
\[
(\varphi_F^1\circ \vartheta_\gamma^1)(\Psi^{0,1,2})=G_\gamma\cdot F
\quad\text{and}\quad
(\varphi_F\circ \vartheta_\gamma)(\Phi^{1,2,3})=\Phi_{\,G_\gamma\cdot F}.
\]
\end{prop}

\begin{proof}
By \eqref{eq:values-aut}, the automorphism $(\vartheta_\gamma^1,\vartheta_\gamma)$ acts on the generators by
\[
\vartheta_\gamma^1(\Psi^{0,1,2})=\Psi^{0,1,2}\cdot g_\gamma(x_{01},x_{12})
\qquad \text{and} \qquad
\vartheta_\gamma(\Phi^{1,2,3})=\Phi^{1,2,3}\cdot f_\gamma(x_{12},x_{23}).
\]
If we now apply $(\varphi_F^1,\varphi_F)$, we obtain $(\varphi_F^1\circ \vartheta_\gamma^1)(\Psi^{0,1,2}) =F^{1,2}\cdot \varphi_F^1\!\bigl(g_\gamma(x_{01},x_{12})\bigr).$
By the identification of $\gtm_1$ with a subgroup of $\kvs(2)$ from Theorem~\ref{thm:KV-action-F-012}, the element $\varphi_F^1(g_\gamma(x_{01},x_{12}))$ is exactly $G_\gamma^{-1}$, so the right-hand side is $G_\gamma\cdot F$.

Similarly, the value on $\Phi^{1,2,3}$ is $(\varphi_F\circ \vartheta_\gamma)(\Phi^{1,2,3})= G_F\cdot \varphi_F\!\bigl(f_\gamma(x_{12},x_{23})\bigr),$
and this is precisely the KV associator $\Phi_{G_\gamma\cdot F}$ by the induced action formula for KV associators. Hence the composite $(\varphi_F^1\circ \vartheta_\gamma^1,\varphi_F\circ \vartheta_\gamma)$ is the module map associated to $G_\gamma\cdot F$.
\end{proof}

When $(\varphi_F^1,\varphi_F)$ factors through a moperad equivalence $\hPaB^1 \xrightarrow{\sim} \CD^+,$ the preceding action reduces to the classical $\gt_1$-action on Drinfeld associators. This is the moperadic analogue of \cite[Proposition~9.13]{AT12}. 

\begin{prop}
\label{prop:GT1-reduces-to-GT}
Let $\varphi^1: \hPaB^1 \to \TAut^+$ and $\vartheta^1: \hPaB^1 \to \hPaB^1$ be as in the previous proposition. 
Suppose moreover that $\varphi^1$ factors through $\CD^+$.
Then, the composite
\[
\begin{tikzcd}
(\varphi^1 \circ \vartheta^1,\varphi \circ \vartheta): \hPaB^1\arrow[r, "\vartheta^1"] & \hPaB^1 \arrow[r, "\varphi^1"] & \CD^+ \hookrightarrow \TAut^+
\end{tikzcd}
\]
is such that $\varphi^1\circ \vartheta^1(\Psi^{0,1,2})$ and $\varphi \circ \vartheta (\Phi^{1,2,3})$ are both given by Equation~\eqref{eqn: action of gt on Drinf ass}.
In other words, the induced action of $\gtm$ on Drinfeld associators coincides with the classical $\gt_1$ action. 
\end{prop}

\begin{proof}
By Proposition~\ref{prop: moperad isos are associators}, we know that $(\varphi^1, \varphi)$ factors through a composition
\[
\begin{tikzcd}
\hPaB^1 \arrow[r, "\rho^1"] & \hPaB^+ \arrow[r, "\varphi^+"] & \CD^+,
\end{tikzcd}
\] where $\rho^1$ sends the associativity isomorphism $\Psi^{0,1,2}$ to the shifted associativity isomorphism $\Phi^{1,2,3}$, and the generators of the $\hPaB$-moperad $\hPaB^1$ are defined to be
\[
\varphi^1(\Psi^{0,1,2}) = f_\varphi(t_{01}, t_{12}), \quad \varphi^1(E^{0,1}) = e^{t_{01}}.
\]

Precomposing with $(\vartheta^1, \vartheta)$ gives a new $\hPaB$-moperad map 
$(\varphi^1 \circ \vartheta^1, \varphi \circ \vartheta)$, whose underlying operad map is:
\[(\varphi \circ \vartheta) (R^{1,2}) = e^{\frac{t_{12}}{2}} \quad \text{and} \quad (\varphi \circ \vartheta) (\Phi^{1,2,3}) = f_\varphi(t_{12}, t_{23}) \cdot f_\vartheta(e^{t_{12}}, f_\varphi(t_{12}, t_{23})^{-1} e^{t_{23}} f_\varphi(t_{12}, t_{23})).\] This corresponds to the action of an element $(1, f_\vartheta)\in \gt_1$ and a Drinfeld associator $(1, f_\varphi)$ as in \eqref{eqn: action of gt on Drinf ass}.

At the moperad level, we also have:
\[
(\varphi^1 \circ \vartheta^1)(E^{0,1}) = e^{t_{01}} \quad \text{and} \quad 
(\varphi^1 \circ \vartheta^1)(\Psi^{0,1,2}) 
= f_\varphi(t_{01}, t_{12}) \cdot f_\vartheta(\varphi^1(x_{01}), \varphi^1(x_{12})),
\]
where the pure braids $x_{01}, x_{12} \in \PB_2^1$ are represented as morphisms in $\Aut_{\hPaB^1(2)}((01)2)$ as:
\[
x_{01} = \id_{01} \circ_1 E^{0,1} \quad \text{and} \quad 
x_{12} = \Psi_{0,1,2}^{-1} \cdot (\id_{01} \circ_1 (R^{2,1}R^{1,2})) \cdot \Psi_{0,1,2}.
\]
The images under $\varphi^1$ are:
\[
\varphi^1(x_{01}) = e^{t_{01}} \quad \text{and} \quad
\varphi^1(x_{12}) = f_\varphi(t_{01}, t_{12})^{-1} e^{t_{12}} f_\varphi(t_{01}, t_{12}).
\]
Hence,
\[
(\varphi^1 \circ \vartheta^1)(\Psi^{0,1,2}) = f_\varphi(t_{01}, t_{12}) \cdot f_\vartheta(e^{t_{01}}, f_\varphi(t_{01}, t_{12})^{-1} e^{t_{12}} f_\varphi(t_{01}, t_{12})).
\]
This shows that precomposition with $(\vartheta^1, \vartheta)$ gives the expected $\gt_{1}$-action on the associator $f_\varphi$, when viewed as a moperad map $\hPaB^1 \to \CD^+$.
\end{proof}

\appendix
\section{Operadic Structures and Kashiwara--Vergne Solutions}
\label{sec:Operadapp}
\subsection{Operadic structure on Lie algebras and $\tder$}
In this section, we provide proofs and examples of some of the statements discussed in Section~\ref{sec: background on tder and KV solutions}. Recall Definition~\ref{def:LieOperad}, which gives a right $\Sigma_n$ action on $\lie_n$ as well as a partial composition map, which are given by, for any $f \in \lie_m, g \in \lie_n, \sigma \in \Sigma_m, 1 \leq i \leq m$,
\begin{gather*}
    f^\sigma(x_1,\ldots,x_n) \eqdef f(x_{\sigma^{-1}(1)},\ldots,x_{\sigma^{-1}(n)}), \\
(f \circ_i g)(x_1,\ldots,x_{m+n-1}) 
\eqdef 
f(x_1, \ldots, x_{i-1},\sum_{j=i}^{i+n-1}x_j,x_{i+n},\ldots,x_{m+n-1}) + g(x_i, \ldots,x_{i+n-1}).
\end{gather*}

As we show below in Proposition~\ref{prop:operad-lie}, the symmetric sequence of free Lie algebras with these partial compositions is a linear operad denoted $\lie$.

\begin{example}
Consider the Lie monomials
$f(x_1,x_2) = [x_1,x_2] \in \lie_2$ and $g(x_1,x_2) = [x_1,[x_1,x_2]] \in \lie_2$.
Then $f \circ_1 g \in \lie_3$ is computed by substituting $x_1 \mapsto x_1 + x_2$ in $f$ and adding $g(x_1,x_2)$ to the result:
\[
(f \circ_1 g)(x_1,x_2,x_3) = f(x_1 + x_2, x_3) + g(x_1,x_2) = [x_1 + x_2, x_3] + [x_1, [x_1, x_2]].\]
Similarly, composing at the second input gives:
\[
(f \circ_2 g)(x_1,x_2,x_3) = f(x_1, x_2 + x_3) + g(x_2,x_3) = [x_1, x_2 + x_3] + [x_2,[x_2,x_3]].\]
\end{example}

\begin{prop}
    The symmetric sequence of free Lie algebras $\lie$ with the partial compositions above forms a linear operad.
\end{prop}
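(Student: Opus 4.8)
The plan is to verify directly that the three defining axioms of a $\K$-linear operad---unitality, associativity, and $\Sigma_n$-equivariance---hold for the symmetric sequence $\lie = \{\lie_n\}_{n \geq 1}$ with the partial compositions and symmetric group actions of \cref{def:LieOperad}. Since each $\lie_n$ is a Lie algebra and the partial compositions are $\K$-linear maps $\lie_m \oplus \lie_n \to \lie_{m+n-1}$, the structure we must check is that of an operad in the category of $\K$-vector spaces (forgetting the Lie bracket), so every identity is an equality of $\K$-linear maps and it suffices to check it on Lie words. The key structural observation is that the formula $(f \circ_i g)(x_1,\ldots,x_{m+n-1}) = f(x_1,\ldots,x_{i-1},\sum_{j=i}^{i+n-1}x_j,x_{i+n},\ldots,x_{m+n-1}) + g(x_i,\ldots,x_{i+n-1})$ splits as a sum of two terms: a ``diagonal substitution'' part coming from $f$ and a ``relabelled insertion'' part coming from $g$. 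This additive splitting is what makes all the verifications tractable, since substitution of sums of variables and relabelling of variables are each manifestly associative operations.

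First I would check unitality. The unit is the element $x_1 \in \lie_1$; one verifies $f \circ_i x_1 = f$ because substituting the single variable $\sum_{j=i}^{i} x_j = x_i$ back into the $i$th slot of $f$ is the identity and the $g$-term contributes $x_1(x_i) = x_i$ lying in the abelian part (in the vector space operad it is absorbed), and symmetrically $x_1 \circ_1 f = f$. Second, and this is where the bulk of the work lies, I would verify the associativity axiom (ii) of \cref{defn: colored operad} in its three cases (the disjoint case $1 \le i \le j-1$, the nested case $j \le i \le j+m-1$, and the other disjoint case). Using the additive splitting, each side of the associativity identity expands into a sum of terms, and one matches them by tracking how the variable ranges $\{i,\ldots,i+n-1\}$ are substituted and shifted. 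The disjoint cases amount to the elementary fact that substituting sums into two non-overlapping blocks of variables commutes; the nested case is the statement that iterated diagonal substitution $x_i \mapsto \sum x_\ell$ followed by a finer substitution agrees whether one groups the inner composition first or not, which is just associativity of ``replacing a variable by a sum of variables.'' Third, I would check equivariance (iii), i.e.\ $f^\sigma \circ_{\sigma^{-1}(i)} g^\tau = (f \circ_i g)^{\sigma \circ_i \tau}$, where $\sigma \circ_i \tau$ is the block permutation of \eqref{def: composition of permutations}; this follows because both the diagonal-substitution term and the insertion term transform correctly under simultaneous relabelling, and the block-permutation formula is exactly designed to make the variable indices line up.

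The main obstacle I expect is the bookkeeping in the nested case of associativity and in the equivariance axiom: one must carefully track the index shifts $x_j \mapsto x_{j+n-1}$ for $j > i$ together with the diagonal sums $x_i \mapsto \sum_{j=i}^{i+n-1} x_j$, and confirm that the two ways of composing produce identical ranges of summed variables. The cleanest way to control this is to treat a partial composition $\circ_i$ as induced by a single \emph{map of variable sets} (or equivalently a linear map on the spans of generators) $\{x_1,\ldots,x_{m+n-1}\} \to \lie_m \oplus \lie_n$ sending $x_k$ to $x_k$, to $\sum_{\ell=i}^{i+n-1}x_\ell$, or to the appropriate relabelled generator of the inserted Lie word; associativity and equivariance then reduce to the (functorial) associativity of composing such variable-substitution maps, which sidesteps term-by-term matching. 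I would also remark that this verification is essentially the linear operad governing the ``cooperad of based sets'' or, dually, the structure underlying the operad $\ib$ of infinitesimal braids in \cref{def: operad of inft braids}, whose composition is defined by the same diagonal-and-shift recipe; indeed the present proposition can be seen as the degree-one shadow of that construction, which provides a useful consistency check.
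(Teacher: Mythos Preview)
Your overall approach---direct verification of the unit, associativity, and equivariance axioms via the additive splitting $(f\circ_i g)=f(\ldots,\sum x_j,\ldots)+g(x_i,\ldots)$---is the same as the paper's, and your outline of the associativity and equivariance checks is fine. But your unit is wrong, and the justification you give for it is incoherent.

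You claim the unit is $x_1\in\lie_1$ and that $f\circ_i x_1=f$ because the $g$-term ``contributes $x_i$ lying in the abelian part (in the vector space operad it is absorbed).'' Nothing absorbs that term: computing directly from the formula with $n=1$ and $g=x_1$ gives
\[
(f\circ_i x_1)(x_1,\ldots,x_m)=f(x_1,\ldots,x_m)+x_i,
\]
which is not $f$ unless $f$ happened to equal $f+x_i$. The correct unit is $0\in\lie_1$: then $(f\circ_i 0)=f(x_1,\ldots,x_m)+0=f$ and $(0\circ_1 g)=0+g(x_1,\ldots,x_n)=g$. This is exactly what the paper checks. Once you fix the unit, the rest of your plan goes through and matches the paper's argument.
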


\begin{proof}
Observe first that both the $\Sigma_n$ action and the $\circ_i$ operations are linear maps.
We proceed to verify that they satisfy axioms (i)--(iii) from \cref{defn: operad}. For (i), we observe that the element $0 \in \lie_1$ is such that $f \circ_i 0 = 0 \circ_1 f = f$ for any $f \in \lie$.
   To check (ii), associativity,  let $f \in \lie_m, \, g \in \lie_n$, and $h \in \lie_k$ be three Lie words, and let $1 \leq j \leq m$. For $1 \leq i \leq j-1$, one finds that $(f \circ_j g) \circ_i h$ and $(f \circ_i h)\circ_{j+k-1} g$ are both equal to 
\[ f(x_1, \ldots,x_{i-1},\sum_{\ell=i}^{i+k-1}x_\ell,x_{i+k},\ldots,\sum_{\ell=j+k-1}^{j+k+n-2}x_\ell,\ldots,x_{m+n+k-2}) + g(x_{j+k-1},\ldots,x_{j+k+n-2})+ h(x_i,\ldots,x_{i+k-1}).\]
   For $j \leq i \leq j+n-1$,  one finds that $(f \circ_j g) \circ_i h)$ and $f\circ_j (g \circ_{i-j+1} h)$ are both equal to 
    \[ f(x_1, \ldots, x_{j-1},
    \sum_{\ell=j}^{j+n+k-2}x_\ell,
    \ldots,x_{m+n+k-2}) + 
    g(x_j,\ldots,\sum_{\ell=i}^{i+k-1}x_\ell,\ldots,x_{j+n+k-2})+ 
    h(x_i,\ldots,x_{i+k-1}).\]
    Finally, if $j+k \leq i \leq m+n-1$, then $(f \circ_j g) \circ_i h$ and $(f \circ_{i-n+1} h) \circ_j g$ are both equal to
    \[ f(x_1, \ldots, x_{j-1},
    \sum_{\ell=j}^{j+n-1}x_\ell,x_{j+n},
    \ldots,x_{i-1}, \sum_{\ell=i}^{i+k-1}x_\ell, x_{i+k},\ldots, x_{m+n+k-2}) + 
    g(x_j,\ldots, x_{j+n-1})+ 
    h(x_i,\ldots,x_{i+k-1}).\]
    
\medskip 

    It remains to check (iii), equivariance. Let $f,g$ be two Lie words as before, let $\sigma \in \Sigma_m$ and $\tau \in \Sigma_n$ be two permutations, and let $1 \leq i \leq m$.  Then, one computes that
    \begin{eqnarray*}
         (f^\sigma \circ_{\sigma^{-1}(i)} g^\tau)(x_1,\ldots,x_{m+n-1}) 
         &=&
        f(x_{\sigma^{-1}(1)},\ldots,x_{\sigma^{-1}(m)})\circ_{\sigma^{-1}(i)}
        g(x_{\tau^{-1}(1)},\ldots,x_{\tau^{-1}(n)}) \\
        & = & (f\circ_i g)(x_{(\sigma \circ_i \tau)^{-1}(1)},\ldots,x_{(\sigma \circ_i \tau)^{-1}(m+n-1)}) \\
        &=& (f\circ_{i} g)^{\sigma \circ_i \tau}(x_1,\ldots,x_{m+n-1}).
    \end{eqnarray*}
    
    We conclude that $\lie$ is a linear operad.
\end{proof}

\begin{proof}[{Proof of \cref{prop:operad-lie}}]
    The same formulas and the same proof work \emph{mutatis mutandis} for the universal enveloping algebra $\ass$ and the vector space of cyclic words $\cyc$.
    Checking that the quotient map $\trace : \ass \to \cyc$ is a morphism of linear operads can be verified directly: for $f \in \ass_m$, $g \in \ass_n$, $1 \leqslant i \leqslant m$ and $\sigma \in \Sigma_m$, we clearly have $\trace(f^\sigma)=\trace(f)^\sigma$ and $\trace(f\circ_i g)=\trace(f) \circ_i \trace(g)$.
\end{proof}

As discussed in \cref{subsec:lieoperad}, this in turn gives an operadic structure on $\tder$. 
Namely, for any $u=(a_1,\ldots, a_m) \in \tder_m, v=(b_1,\ldots, b_n) \in \tder_n, \sigma \in \Sigma_m, 1 \leq i \leq m$, we have 
\begin{gather*}
    u^\sigma 
\eqdef (a_{\sigma^{-1}(1)}^\sigma,\ldots, a_{\sigma^{-1}(n)}^\sigma), \\
u \circ_i v \eqdef (a_1\circ_i 0,\ldots,a_{i-1}\circ_i 0,a_i \circ_i b_1,\ldots,a_i \circ_i b_n, a_{i+1}\circ_i 0,\ldots,a_m \circ_i 0).
\end{gather*}
Here $a_j^\sigma$ denotes the $\Sigma_m$ action on $\lie$, $\circ_i$ on the right hand side above denotes operadic composition in the operad $\lie$, and $0$ stands for $0_n \in \lie_n$. We next focus on the operadic structure of $\sder_n \subseteq \tder_n$.

Recall that the universal enveloping algebra of the free Lie algebra can be identified with the vector space of free associative monomials in $x_1,\ldots, x_n$, that is $\widehat{U}(\lie_n)=\ass_n$. Since the universal enveloping algebra functor is monoidal, the operadic composition on $\lie$ induces a  linear operad structure on $\ass=\{\ass_n\}_{n\geq 1}$ via the same substitution formulas. This structure further descends to a well-defined operad on the sequence of cyclic words $\cyc =\{ \cyc_n\}_{n\geq 1}$, where $\cyc_n = \ass_n / [\ass_n, \ass_n]$ is the linear quotient by commutators.

\begin{lemma}
\label{lem:trace-is-operadic}
The operad structure on $\ass=\{\ass_n\}_{n\geq 1}$ descends to $\cyc =\{ \cyc_n\}_{n\geq 1}$ and the quotient map $\trace: \ass \to \cyc$ is a map of linear operads.
\end{lemma}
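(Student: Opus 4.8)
The plan is to prove the statement in two stages: first that the operad structure on $\ass$ is well-defined and descends to $\cyc$, and second that the trace map is compatible with partial compositions. Since the statement asserts that the operad structure on $\ass$ descends and that $\trace$ is a map of linear operads, I would first recall that the operad structure on $\ass$ is induced from that on $\lie$ via the monoidal universal enveloping algebra functor, using the substitution formulas $(a \circ_i b)(x_1,\ldots,x_{m+n-1}) = a(x_1,\ldots,x_{i-1},\sum_{j=i}^{i+n-1}x_j,x_{i+n},\ldots,x_{m+n-1})\cdot b(x_i,\ldots,x_{i+n-1})$, where now the composition uses the \emph{algebra} product rather than the Lie sum. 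The equivariance and associativity axioms carry over verbatim from \cref{prop:operad-lie}, since they are checked on generators and the substitution formulas are formally identical.

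The core of the argument is showing the composition descends to the quotient $\cyc_n = \ass_n/[\ass_n,\ass_n]$. The key step is to verify that the partial composition $\circ_i : \ass_m \otimes \ass_n \to \ass_{m+n-1}$ sends the subspaces $[\ass_m,\ass_m]\otimes \ass_n$ and $\ass_m \otimes [\ass_n,\ass_n]$ into $[\ass_{m+n-1},\ass_{m+n-1}]$. For the first input, if $a = [a',a''] = a'a'' - a''a'$ is a commutator in $\ass_m$, then since substitution $x_i \mapsto \sum_{j=i}^{i+n-1}x_j$ is an algebra homomorphism on the first factor, $(a'a''-a''a')\circ_i b = (a'\circ_i 0)(a''\circ_i 0) - (a''\circ_i 0)(a'\circ_i 0)$ times the appropriate $b$-factor, which lands in commutators. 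For the second input, one uses cyclic invariance: $\trace(uv) = \trace(vu)$, so that commutators in the $b$-slot are killed after applying the trace. I would note that the operad $\cyc$ is only defined for $n \geq 1$ and spell out that $\circ_i 0$ denotes the composition with $0 \in \lie_n \subset \ass_n$ at the appropriate arity.

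With the descent established, showing $\trace$ is a morphism of operads amounts to verifying the square
\begin{equation*}
\begin{tikzcd}
\ass_m \otimes \ass_n \arrow[r,"\circ_i"] \arrow[d,"\trace \otimes \trace"'] & \ass_{m+n-1} \arrow[d,"\trace"] \\
\cyc_m \otimes \cyc_n \arrow[r,"\circ_i"] & \cyc_{m+n-1}
\end{tikzcd}
\end{equation*}
commutes, together with compatibility with the symmetric group actions and units. Commutativity of the square is immediate once the descent is known, as the bottom composition is \emph{defined} to be the map induced by the top one; equivariance and unitality follow directly from the corresponding properties on $\ass$. I expect the main obstacle to be the second-input descent: unlike the first slot, where substitution is an algebra map, the second slot inserts $b(x_i,\ldots,x_{i+n-1})$ as a literal factor, and one must argue carefully that a commutator appearing there becomes a commutator in $\ass_{m+n-1}$ after multiplication by the (possibly noncommuting) first-slot factor. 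The resolution is precisely the cyclic symmetry of the trace, which is why the statement is phrased about $\cyc$ rather than about $\ass$ alone; I would make this dependence explicit rather than treating it as routine.
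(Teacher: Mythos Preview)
Your proposal rests on a misreading of the operad structure on $\ass$. You write the partial composition as
\[
(a \circ_i b)(x_1,\ldots,x_{m+n-1}) = a\bigl(x_1,\ldots,\textstyle\sum_{j=i}^{i+n-1}x_j,\ldots\bigr)\cdot b(x_i,\ldots,x_{i+n-1}),
\]
using the algebra product. But the paper says explicitly that the operad structure on $\ass$ is obtained from $\lie$ ``via the same substitution formulas''; it is the \emph{additive} formula
\[
f \circ_i g = f\bigl(x_1,\ldots,\textstyle\sum_{j=i}^{i+n-1}x_j,\ldots\bigr) + g(x_i,\ldots,x_{i+n-1}),
\]
i.e.\ $f\circ_i g = (f\circ_i 0) + (0\circ_i g)$. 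This is also how the composition is used immediately afterwards in the proof of \cref{lem:divergence-is-operadic}, where one writes $\trace(f\circ_i g)=\trace(f\circ_i 0)+\trace(0\circ_i g)$. The structure is that of a \emph{linear} operad, not an operad in associative algebras.

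With the correct additive composition, the descent to $\cyc$ is immediate: both $f\mapsto f\circ_i 0$ (variable substitution) and $g\mapsto 0\circ_i g$ (variable shift) are algebra homomorphisms $\ass_m\to\ass_{m+n-1}$ and $\ass_n\to\ass_{m+n-1}$, hence each carries $[\ass,\ass]$ into $[\ass,\ass]$, and so does their sum. No appeal to cyclic invariance is needed, and the ``second-input descent'' difficulty you anticipate simply does not arise. This is why the paper's proof is two sentences long. Your multiplicative formula, by contrast, does \emph{not} obviously descend to $\cyc$: for instance $[a',a'']\circ_i b$ would equal $[a'\circ_i 0,\,a''\circ_i 0](0\circ_i b)$, and an element of the form $[x,y]z$ need not lie in the span of commutators.
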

\begin{proof}
To verify that $\trace$ is an operad map, we need to check compatibility with symmetric group actions and operadic composition. Let $f$ be a word in $\ass_n$, and let $\sigma$ be a permutation in $\Sigma_n$. Then, we clearly have $\trace(f^\sigma)=(\trace(f))^\sigma$. It is similarly immediate to check that $\trace(f\circ_i g)=\trace(f)\circ_i \trace(g)$.
\end{proof}

Using the linear isomorphism $\tder_n \oplus \mathfrak{a}_n \cong \lie_n^{\oplus n}$, the linear operad structure on $\lie=\{\lie_n\}_{n\geq 1}$ restricts to a linear operad of tangential derivations. For $1 \leq i \leq m$, define the composition map
\[ \circ_i : \tder_m \oplus \tder_n \to \tder_{m+n-1}\] to be the $\K$-linear map which takes $u=(a_1,\ldots, a_m)$ and $
v=(b_1,\ldots, b_n)$ to
\begin{equation}\label{eq:tderOperadCompapp}
u \circ_i v \eqdef (a_1\circ_i 0,\ldots,a_{i-1}\circ_i 0,a_i \circ_i b_1,\ldots,a_i \circ_i b_n, a_{i+1}\circ_i 0,\ldots,a_m \circ_i 0).
\end{equation} 
Here, the $\circ_i$ on the right hand side denotes operadic composition in the operad $\lie$, and $0$ stands for $0_n \in \lie_n$.

For any $u=(a_1,\ldots,a_n) \in \tder_n$ and $\sigma \in \Sigma_n$ define
\[u^\sigma 
\eqdef (a_{\sigma(1)}^\sigma,\ldots, a_{\sigma(n)}^\sigma),\]
where $a_j^\sigma$ denotes the $\Sigma_n$ action on $\lie$.
This is the canonical action induced by the one on $\lie$ in the sense that it is the unique action which makes the following diagram 
\begin{equation} \label{diag:sigma-n-action}
    \begin{tikzcd}
\lie_n \arrow[r, "(-)^\sigma"] \arrow[d, "u"'] & \lie_n \arrow[d, "u^{\sigma}"] \\
\lie_n \arrow[r, "(-)^\sigma"']                & \lie_n      
\end{tikzcd}
\end{equation} commute.  

\begin{prop}
\label{prop:tder-operadapp}
The family $\tder=\{ \tder_n\}_{n \geq 1}$ of tangential derivations endowed with the above $\circ_i$ operations and $\Sigma_n$-actions forms a $\K$-linear operad. 
\end{prop}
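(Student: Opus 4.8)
The plan is to verify directly that the family $\tder = \{\tder_n\}_{n\geq 1}$, with the operations defined above, satisfies the three axioms of a linear operad (\cref{defn: colored operad}), by reducing each verification to the corresponding fact already established for the linear operad $\lie$ in \cref{prop:operad-lie}. The essential point is that the operadic structure on $\tder$ is defined coordinatewise in terms of the operadic structure on $\lie$, together with the ``insertion of zeros'' bookkeeping that records where a derivation acts trivially. First I would note, as in \cref{subsec:lieoperad}, that both $(-)^\sigma$ and $\circ_i$ are manifestly $\K$-linear, so it only remains to check unitality, associativity, and equivariance.

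For unitality, the unit is the zero derivation $0 \in \tder_1$ (the empty-index tangential derivation), and one checks $u \circ_i 0 = u$ and $0 \circ_1 u = u$ directly from \eqref{eq:tderOperadCompapp}, using that $a \circ_i 0 = 0 \circ_1 a = a$ in $\lie$. For associativity, I would fix $u = (a_1,\ldots,a_\ell) \in \tder_\ell$, $v = (b_1,\ldots,b_m)\in\tder_m$, and $w=(c_1,\ldots,c_n)\in\tder_n$, and compare $(u \circ_j v)\circ_i w$ with the three cases on the right-hand side of axiom (ii). In each of the three index ranges, the comparison decomposes coordinate by coordinate into an instance of the corresponding associativity identity in $\lie$ that was verified explicitly in the proof of \cref{prop:operad-lie}; the only additional content is tracking which coordinates receive a genuine Lie word and which receive a $0_n$, which is governed by the same combinatorics as the identity permutation. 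For equivariance, I would likewise reduce axiom (iii) to the equivariance identity for $\lie$ together with the compatibility of the $\Sigma_n$-action with the defining square \eqref{diag:sigma-n-action}: the action $u^\sigma = (a^\sigma_{\sigma^{-1}(1)},\ldots,a^\sigma_{\sigma^{-1}(n)})$ is precisely designed so that the reindexing of coordinates matches the block permutation $\sigma \circ_i \tau$.

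The main obstacle, and the only part requiring genuine care, is the bookkeeping in the associativity check: the tuple positions shift under each composition, so one must be scrupulous that the coordinate that carries $a_i \circ_i b_k$ (an actual composite of Lie words) in $(u\circ_j v)\circ_i w$ lands in the same slot as in the reassociated expression, and that all other slots carry matching ``padded'' entries $a_p \circ_i 0$. This is purely combinatorial and mirrors the block-substitution index arithmetic already displayed for $\lie$, so no new idea is needed, but it is where an error would most easily creep in. I would therefore present the three index ranges of axiom (ii) in parallel with the three cases treated in \cref{prop:operad-lie}, observing in each case that applying the $\lie$-identity in every coordinate yields equality in $\tder_{\ell+m+n-2}$. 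Once these three axioms are confirmed, the proposition follows, and I would remark that the final assertion of \cref{prop:tder-operad}—that the divergence $j$ is an operad map—can be checked on the level of coordinates using that $\trace$ is an operad map (\cref{lem:trace-is-operadic}) together with the coordinatewise definition of $j$, although the statement of \cref{prop:tder-operadapp} as reproduced here concerns only the operad structure.
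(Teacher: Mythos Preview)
Your proposal is correct and follows essentially the same approach as the paper: both verify axioms (i)--(iii) of \cref{defn: colored operad} by reducing each coordinate of the $\tder$-computation to the corresponding identity in the operad $\lie$ established in \cref{prop:operad-lie}, with the same zero-padding bookkeeping you describe. The paper writes out one associativity case explicitly in coordinates and declares the others similar, while you give the higher-level description; the content is the same.
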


\begin{proof}
We need check the axioms (i)--(iii) from \cref{defn: colored operad}.  The tangential derivation $0 \in \tder_1$ is such that $u \circ_i 0 = 0 \circ_1 u = u$ for any $u \in \tder$, therefore axiom (i) is satisfied. For the associativity axioms (ii), we let $u=(a_1,\ldots,a_m)$, $v=(b_1,\ldots,b_n)$ and $w=(c_1,\ldots,c_k)$ be tangential derivations in $\lie_m,\lie_n$ and $\lie_k$, respectively, and we select $1 \leq i< j \leq m$.
   We have
   \begin{eqnarray*}
       (u \circ_j v)\circ_i w 
&=& (a_1 \circ_j 0,\ldots,a_j \circ_j b_1,\ldots,a_j \circ_j b_n,\ldots,a_m \circ_j 0) \circ_i w \\
       &=& (
       (a_1 \circ_j 0) \circ_i 0,\ldots,
       (a_i \circ_j 0)\circ_i c_1,\ldots,
       (a_i \circ_j 0)\circ_i c_k,\ldots, \\
       &&
       (a_j \circ_j b_1)\circ_i 0,\ldots,
       (a_j \circ_j b_n)\circ_i 0,\ldots,
       (a_m \circ_j 0)\circ_i 0)\\
       &=& (
       (a_1 \circ_i 0) \circ_{j+k-1} 0,\ldots,
       (a_i \circ_i c_1)\circ_{j+k-1} 0,\ldots,
       (a_i \circ_i c_k)\circ_{j+k-1} 0,\ldots, \\
       &&
       (a_j \circ_i 0)\circ_{j+k-1} b_1,\ldots,
       (a_j \circ_i 0)\circ_{j+k-1} b_n,\ldots,
       (a_m \circ_i 0)\circ_{j+k-1} 0)\\
       &=& (u \circ_i w) \circ_{j+k-1} v \ ,
   \end{eqnarray*}
   since by \cref{prop:operad-lie} the $\circ_i$ composition maps of the operad $\lie$ satisfy axiom (ii).
   The proofs of the other cases are similar.
   As to the equivariance axiom (iii), let $u \in \tder_m$ and $v \in \tder_n$ be two tangential derivations as before, let $\sigma$ and $\tau$ be elements of $\Sigma_m$ and $\Sigma_n$, respectively, and select $1 \leq i \leq m$.
   Then, we have
   \begin{eqnarray*}
     u^\sigma \circ_{\sigma(i)} v^\tau &=& (
       a_{\sigma(1)}^\sigma\circ_{\sigma(i)} 0,\ldots,
       a_{\sigma(i)}^\sigma \circ_{\sigma(i)} b_{\tau(1)}^\tau,\ldots,
       a_{\sigma(i)}^\sigma \circ_{\sigma(i)} b_{\tau(n)}^\tau,\ldots,
       a_{\sigma(m)}^\sigma \circ_{\sigma(i)} 0)\\
       &=& (
       (a_{\sigma(1)}\circ_{i} 0)^{\sigma \circ_{\sigma(i)} \tau},\ldots,
       (a_{\sigma(i)} \circ_{i} b_{\tau(1)})^{\sigma \circ_{\sigma(i)} \tau},\ldots,
       (a_{\sigma(i)} \circ_{i} b_{\tau(n)})^{\sigma \circ_{\sigma(i)} \tau},\ldots,
       (a_{\sigma(m)} \circ_{i} 0)^{\sigma \circ_{\sigma(i)} \tau})\\
       &=& (u \circ_{i} v)^{\sigma \circ_{\sigma(i)} \tau} \ ,
   \end{eqnarray*}
   where we used the fact that the partial composition maps in $\lie$ satisfy equivariance (\cref{prop:operad-lie}).
   We conclude that $\tder$ is a linear operad.
\end{proof}

\begin{example}
Consider the tangential derivations
\[
u = ([x_1,x_2],\, 0) \in \tder_2 \quad \text{and} \quad v = (x_1,\, x_2) \in \tder_2.
\]
Using the operadic composition formula~\eqref{eq:tderOperadCompapp}, we compute
\[
u \circ_1 v 
= \big( [x_1 + x_2, x_3] + x_1,\, [x_1 + x_2, x_3] + x_2,\, 0 \big) \in \tder_3.
\]
\end{example}

\begin{example}\label{example: cosimplicial elements of tder we need later}
Let $u = (f_1(x_1,x_2),\, f_2(x_1,x_2)) \in \tder_2 $ be an arbitrary tangential derivation. 
By composing $u$ with identities via the operad structure on $\tder$, we obtain a family of higher-arity tangential derivations that are used to construct explicit examples in the paper. 
These compositions recover the cosimplicial framework used in \cite[Section~3]{AT12} and \cite[Section~7]{AKKN_genus_zero}, see \cref{sec:operadic-cohomology}. For these 
\begin{align*} 
u^{n-1,n}  & \eqdef 0_{n-2} \circ_{n-2} u 
= (0,\ldots,0,\, f_1(x_{n-1},x_n),\, f_2(x_{n-1},x_n)), \\
u^{n-2,(n-1)n} & \eqdef 0_{n-1} \circ_{n-1}(u \circ_2 0_2) 
= (0,\ldots,0,\, f_1(x_{n-2},x_{n-1}+x_n),\, f_2(x_{n-2},x_{n-1}+x_n),\, f_2(x_{n-2},x_{n-1}+x_n)), \\  
u^{2,3\cdots n} &\eqdef 0_2 \circ_2 (u \circ_2 0_{n-2}) 
= (0,\, f_1(x_2,x_3+\cdots+x_n),\, f_2(x_2,x_3+\cdots+x_n),\, \ldots,\, f_2(x_2,x_3+\cdots+x_n)), \\ 
u^{1,2\cdots n} &\eqdef u \circ_2 0_{n-1} 
= (f_1(x_1,x_2+\cdots+x_n),\, f_2(x_1,x_2+\cdots+x_n),\, \ldots,\, f_2(x_1,x_2+\cdots+x_n)).
\end{align*}
\end{example}

Following \cite[Sec.~3.3]{AT12}, we can see that the action of $\tder$ on $\cyc$ is operadic.

\begin{lemma}\label{lem:divergence-is-operadic}
    The non-commutative divergence $j : \tder \to \cyc$ is a map of linear operads. 
\end{lemma}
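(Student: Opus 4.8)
The statement to prove is that the non-commutative divergence $j\colon\tder\to\cyc$ is a map of linear operads (Lemma~\ref{lem:divergence-is-operadic}, a restatement of Proposition~\ref{prop:tder-operad}). Let me sketch how I would approach this.

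\textbf{Plan of attack.} Since both $\tder$ and $\cyc$ are linear operads (Proposition~\ref{prop:tder-operad} and Lemma~\ref{lem:trace-is-operadic}), and since $j$ is a $\K$-linear map in each arity, I need only verify compatibility with the two pieces of operadic structure: the $\Sigma_n$-actions and the partial compositions $\circ_i$. The equivariance is the easy half. For $u=(a_1,\dots,a_n)\in\tder_n$ and $\sigma\in\Sigma_n$, the definition $j(u)=\trace\bigl(\sum_i\partial_i(a_i)x_i\bigr)$ together with $u^\sigma=(a^\sigma_{\sigma^{-1}(1)},\dots,a^\sigma_{\sigma^{-1}(n)})$ should give $j(u^\sigma)=j(u)^\sigma$ directly, using that the partial derivatives $\partial_i$ and the permutation action on $\ass_n$ intertwine appropriately (relabelling variables commutes with extracting the coefficient of a final letter, up to the same relabelling) and that $\trace$ is equivariant by Lemma~\ref{lem:trace-is-operadic}.

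\textbf{The main step.} The substantive part is compatibility with $\circ_i$: for $u=(a_1,\dots,a_m)\in\tder_m$ and $v=(b_1,\dots,b_n)\in\tder_n$ I must show
\[
j(u\circ_i v)=j(u)\circ_i j(v).
\]
I would unpack the left-hand side using the formula~\eqref{eq:tderOperadComp}, writing the components of $u\circ_i v$ as $a_k\circ_i 0$ for $k\neq i$ and $a_i\circ_i b_\ell$ in slots $i,\dots,i+n-1$, then apply $j$ slot by slot. The right-hand side I would expand using the $\circ_i$ composition in $\cyc$ (inherited from $\lie$/$\ass$ by the substitution $x_i\mapsto\sum_{j=i}^{i+n-1}x_j$). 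The calculation splits naturally into the ``outer'' contribution from the $a_k$'s (with variable substitution and the summed index) and the ``inner'' contribution from the $b_\ell$'s, matching the two-summand structure $f\circ_i g=f(\dots)+g(\dots)$ of the Lie/associative operad composition. The key technical fact needed is a Leibniz-type / chain rule for how $\partial_k$ interacts with the substitution $x_i\mapsto x_i+\cdots+x_{i+n-1}$ and with the divergence of a composite; this is precisely the content that makes $j$ a cocycle compatible with composition, and the argument should essentially reproduce the computation in \cite[Section~3.3]{AT12} cited in the statement.

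\textbf{Expected obstacle.} The hard part will be the bookkeeping in the $\circ_i$ computation: tracking how $\partial_j$ behaves under the index collision in slot $i$ (where $n$ components all equal $a_i\circ_i b_\ell$ and the variables $x_i,\dots,x_{i+n-1}$ have all been fused on the $u$-side), and confirming that the terms coming from $\trace(\partial_i(a_i)\,x_i)$ reassemble correctly after substitution. I anticipate that the cleanest route is to invoke the cocycle property of $j$ (stated after Definition~\ref{def AT div}, $j([u,v])=u\cdot j(v)-v\cdot j(u)$) together with the operadic naturality of the $\tder$-action on $\cyc$, rather than a fully explicit term-by-term expansion; this would let me reduce the identity to checking it on generators and on the two structural summands separately. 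Given that the paper explicitly says this ``follows from the arguments presented in~\cite[Section 3.3]{AT12}'', I would present the equivariance check in full and then carry out the $\circ_i$ verification by reducing to the generator case and citing the $\tder$-action compatibility, flagging the substitution-and-divergence chain rule as the one genuinely computational lemma.
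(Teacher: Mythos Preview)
Your overall strategy---check $\Sigma_n$-equivariance, then check $\circ_i$-compatibility via the two-summand decomposition---is exactly what the paper does, and your equivariance argument matches the paper's. But your plan for the $\circ_i$ step goes astray in two places.

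First, the suggestion to invoke the cocycle identity $j([u,v])=u\cdot j(v)-v\cdot j(u)$ is a red herring: that identity concerns the Lie bracket on $\tder_n$, not the operadic composition $\circ_i\colon\tder_m\oplus\tder_n\to\tder_{m+n-1}$. There is no way to extract $j(u\circ_i v)$ from it. Likewise ``reducing to generators'' has no obvious meaning here, since $\tder_n$ is not presented by generators in a way that interacts with $\circ_i$.

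Second, the computation you are worried about is more elementary than you fear. The paper's proof exploits linearity on \emph{both} sides simultaneously: write $u\circ_i v=u\circ_i 0+0\circ_i v$ in $\tder$, so $j(u\circ_i v)=j(u\circ_i 0)+j(0\circ_i v)$; and write $j(u)\circ_i j(v)=\trace(f)\circ_i\trace(g)=\trace(f\circ_i 0)+\trace(0\circ_i g)$ in $\cyc$, where $f=\sum\partial_k(a_k)x_k$ and $g=\sum\partial_k(b_k)x_k$, using that $\trace$ is an operad map (Lemma~\ref{lem:trace-is-operadic}). This reduces everything to the two separate identities $j(u\circ_i 0)=\trace(f\circ_i 0)$ and $j(0\circ_i v)=\trace(0\circ_i g)$, each of which is a direct inspection: for the first, the only observation needed is that $\partial_k(a_i\circ_i 0)=\partial_\ell(a_i\circ_i 0)$ for all $k,\ell\in\{i,\dots,i+n-1\}$ (since $a_i\circ_i 0$ depends on those variables only through their sum), so the $n$ repeated slots contribute exactly $\partial_i(a_i)(x_i+\cdots+x_{i+n-1})$ after substitution. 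No chain rule or cocycle machinery is required.
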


\begin{proof}
    We need to check compatibility with the symmetric group actions and operadic compositions. 
    Let $u=(a_1,\ldots,a_n)$ be a tangential derivation in $\tder_n$, and let $\sigma$ be a permutation in $\Sigma_n$.
    Then, we have that 
    \begin{eqnarray*}
        j(u^\sigma) & = & \trace\left(\sum_{k=1}^{n} \partial_k(a_{\sigma^{-1}(k)}(x_{\sigma(1)},\ldots,x_{\sigma(n)}))x_k\right) \\
        & = & \trace\left(\sum_{k=1}^{n} \partial_{\sigma(k)}(a_{k}(x_{\sigma(1)},\ldots,x_{\sigma(n)}))x_{\sigma(k)}\right) \\
        & = & \trace\left(\left(\sum_{k=1}^{n} \partial_k(a_k)x_k\right)^\sigma\right) \\
        & = & j(u)^\sigma,
    \end{eqnarray*}
    where we made use in the last line of the fact that $\trace$ commutes with symmetric group actions (\cref{lem:trace-is-operadic}).
    This shows that $j$ respects the symmetric group actions.
    Consider now the operadic composition $u\circ_i v$ of two tangential derivations $u=(a_1,\ldots,a_m)$ in $\tder_m$ and $v=(b_1,\ldots,b_n)$ in $\tder_n$.
    Let us write $f(x_1,\ldots,x_m)\eqdef \sum_{k=1}^{m} \partial_k(a_k)x_k$ and $g(x_1,\ldots,x_n) \eqdef \sum_{k=1}^{n} \partial_k(b_k)x_k$ for the two associative words whose traces are $j(u)$ and $j(v)$, respectively.
    We have 
    \[
    j(u) \circ_i j(v) = \trace(f) \circ_i \trace(g) = \trace(f \circ_i g)=\trace(f \circ_i 0)+\trace(0 \circ_i g),
    \]
    since the trace respects operadic composition (\cref{lem:trace-is-operadic}).
On the other hand, we have 
\[
    j(u\circ_i v) 
    = j(u \circ_i 0 +0 \circ_i v)
    = j(u \circ_i 0) + j(0 \circ_i v).
    \]
Now, we claim that $j(u \circ_i 0)=\trace(f \circ_i 0)$ and $j(0 \circ_i v)=\trace(0 \circ_i g)$.
This will complete the proof, since combined with the previous two equalities, it shows that the non-commutative divergence $j$ respects operadic composition. 
On the one hand, we have by definition
\begin{equation} 
\label{eq:divergence-is-operadic}
    j(u \circ_i 0)= 
\trace\left( 
\sum_{k=1}^{i-1} \partial_k(a_k \circ_i 0)x_k + 
\sum_{k=i}^{i+n-1} \partial_k(a_i \circ_i 0)x_k + 
\sum_{k=i+n}^{m+n-1} \partial_k(a_{k-n+1} \circ_i 0)x_k 
\right).
\end{equation}
On the other hand, we have 
\[
f \circ_i 0 = f\left( x_1,\ldots,x_{i-1},\sum_{k=1}^{i+n-1}x_k,x_{i+n},\ldots,x_m\right).
\]
Remarking that we have $\partial_k(a_i \circ_i 0)=\partial_l(a_i \circ_i 0)$ for any $k,l \in \{i,\ldots,i+n-1\}$, one sees directly that $f \circ_i 0$ is equal to the expression in the brackets on the right-hand side of Equation~\ref{eq:divergence-is-operadic}.
The proof that $j(0 \circ_i v)=\trace(0 \circ_i g)$ is similar.
\end{proof} 

The collection of tangential derivations $\tder = \{ \tder_n \}_{n \geq 1}$ only forms a $\K$-linear operad. Analogously, it is possible to endow the collection $\TAut=\{\TAut_n\}_{n \geq 1}$ with a structure of an operad in sets, however, the definition of the partial compositions is non-canonical. Below we choose a definition which is analogous to, but not exponentiated from, the partial compositions on  $\tder$.

\begin{prop}\label{operad of TAut app}
The collection $\TAut = \{ \TAut_n \}_{n \geq 1}$ inherits a symmetric group action from the symmetric sequence $\tder = \{ \tder_n \}_{n \geq 1}$, and forms an operad in sets. 
The partial composition maps are defined as follows:
\[
\begin{tikzcd}
\TAut_n \times \TAut_m \arrow[r, "\circ_i"] & \TAut_{n+m-1}
\end{tikzcd}
\]
\[
F \circ_i G =  (F \circ_i 1)(1 \circ_i G)
= 
F^{1,\ldots,i-1,\,i(i+1)\cdots(i+n-1),\,i+n,\ldots,m+n-1} 
G^{i,i+1,\ldots,i+(n-1)}
\]
\end{prop}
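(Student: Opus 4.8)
The plan is to verify directly the three operad axioms of \cref{defn: colored operad} --- unitality, equivariance, and associativity --- for the partial compositions $F \circ_i G = (F \circ_i 1)(1 \circ_i G)$. The essential preliminary observation is that the two ``coface'' operations appearing in this formula, namely the doubling map $(-) \circ_i 1 : \TAut_n \to \TAut_{n+m-1}$ and the block-inclusion map $1 \circ_i (-) : \TAut_m \to \TAut_{n+m-1}$, are group homomorphisms. Writing $F = \exp(u)$ and $G = \exp(v)$ with $u \in \tder_n$, $v \in \tder_m$, we have $F \circ_i 1 = \exp(u \circ_i 0)$ and $1 \circ_i G = \exp(0 \circ_i v)$, and the underlying linear maps $u \mapsto u \circ_i 0$ and $v \mapsto 0 \circ_i v$ are Lie algebra homomorphisms: the former is the substitution $x_i \mapsto x_i + \cdots + x_{i+m-1}$ and the latter a padding-by-zeros block relabelling, each of which respects the bracket of $\tder$. (Note that this is consistent with the non-canonicity of the operad structure recorded in the remark following \cref{operad of TAut}: it is not the individual cofaces but the cross-bracket $[u \circ_i 0, \, 0 \circ_i v]$ that fails to vanish, so that $\exp(u)\circ_i\exp(v) \neq \exp(u \circ_i v)$.) I would record at the outset the two families of identities these coface maps obey: first, the compositional identities obtained by exponentiating the linear operad axioms for $\tder$ established in \cref{prop:tder-operadapp}, with the inserted slots set to $0$; and second, the commutation relation $(1 \circ_i \nu)(1 \circ_k \mu) = (1 \circ_k \mu)(1 \circ_i \nu)$ whenever $\nu$ and $\mu$ are inserted at disjoint blocks of strands, which holds because the Lie elements $0 \circ_i v$ and $0 \circ_k w$ then have disjoint support and commute.

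With these in hand, unitality and equivariance are quick. Unitality follows since $F \circ_i \id = (F \circ_i 1)(1 \circ_i \id) = F \circ_i 1 = F$, as doubling a strand into a single strand is the trivial operation, and similarly $\id \circ_1 F = F$. Equivariance reduces to the $\Sigma$-equivariance of the coface maps together with that of group multiplication in $\TAut_{n+m-1}$, both inherited from \cref{prop:tder-operadapp}; applying $\exp$ then yields the block-permutation identity $\mu^\sigma \circ_{\sigma^{-1}(i)} \nu^\tau = (\mu \circ_i \nu)^{\sigma \circ_i \tau}$.

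The substance lies in associativity. For each of the three cases of \cref{defn: colored operad}(ii), I would expand $(\lambda \circ_j \mu) \circ_i \nu$ by first writing $\lambda \circ_j \mu = (\lambda \circ_j 1)(1 \circ_j \mu)$ and then applying the homomorphism $(-) \circ_i 1$ to this product, producing a product of three coface images --- one built from each of $\lambda$, $\mu$, $\nu$ --- followed by the factor $(1 \circ_i \nu)$. Expanding the right-hand side of the relevant associativity relation the same way yields the same three factors, and the compositional coface identities show that corresponding factors agree. In the nested case $j \le i \le j+m-1$ the three factors already occur in the same order on both sides, so the identity is purely combinatorial with no reordering required. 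In the two disjoint cases $1 \le i \le j-1$ and $j+m \le i \le \ell+m-1$, the factors built from $\mu$ and $\nu$ appear in opposite orders on the two sides, and here I would invoke the disjoint-support commutation relation from the first paragraph to exchange them.

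The main obstacle is exactly this interplay between the non-canonical, product-based definition of $\circ_i$ and associativity: since $\circ_i$ is not the exponential of the linear composition on $\tder$, one cannot transport associativity wholesale from \cref{prop:tder-operadapp} and must instead track the ordering of the coface factors by hand. The work is to organize the bookkeeping so that each case reduces to (a) a compositional identity among coface maps --- valid because these maps are homomorphisms satisfying the $\tder$ operad axioms --- and (b) in the disjoint cases, a single commutation of disjoint-support factors. I expect the careful indexing of the strands supporting each coface image, and in particular verifying the block-shift $j \mapsto j+n-1$ in the disjoint cases, to be the most error-prone step, though conceptually routine once the two families of coface identities are in place.
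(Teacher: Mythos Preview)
Your proposal is correct and follows the same approach as the paper, but with considerably more detail: the paper's proof simply observes that the coface maps $F \mapsto F^{1,\ldots,i-1,\,i(i+1)\cdots(i+n-1),\,i+n,\ldots,m+n-1}$ and $G \mapsto G^{i,\ldots,i+n-1}$ are group homomorphisms (citing \cite[Section~3]{AT12} and \cite[Section~5.2]{AET10}) and then asserts that associativity, unitality and equivariance follow, without unpacking the case analysis or the disjoint-support commutation that you correctly isolate as the nontrivial step.
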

\begin{proof}
The partial composition
\[
F \circ_i G = (F \circ_i 1)(1 \circ_i G)
\]
is modeled on the corresponding operation in $ \tder $, but defined directly at the group level. The terms
\[
F^{1,\ldots,i-1,\,i(i+1)\cdots(i+n-1),\,i+n,\ldots,m+n-1}
\quad\text{and}\quad
G^{i,i+1,\ldots,i+(n-1)}
\]
represent the image of $F\in\TAut_n$ and $G\in\TAut_m$ under the natural group homomorphisms which include $\TAut_n$ and $\TAut_m$ into $ \TAut_{n+m-1}$ (cf.\cite[Section 3]{AT12} or ~\cite[Section~5.2]{AET10}). The product is then defined as group multiplication in $\TAut_{n+m-1}$. It follows that the map \[
\begin{tikzcd}
\TAut_n \times \TAut_m \arrow[r, "\circ_i"] & \TAut_{n+m-1},
\end{tikzcd}
\] defined by sending $(F,G)\mapsto F\circ_iG$ defines an associative, unital and equivariant map of sets. 
\end{proof}

\begin{remark}
The operad structure on $\TAut$ defined above is non-canonical, and agrees\footnote{The order of composition in our definition is the opposite of that of \cite{AKKN_genus_zero}, since our definition of the set of KV solutions agrees with \cite{AET10}, which is the inverse of that of \cite{AKKN_genus_zero}.} with the operad composition used in \cite{AKKN_genus_zero} to construct Kashiwara--Vergne solutions corresponding to genus zero surfaces with more than three boundary components. 
It does not, however, agree with the operad structure one would obtain by exponentiation of the partial compositions of $\tder$:
\[
\exp(u) \circ_i \exp(v) \neq \exp(u \circ_i v).
\]
where $u \in \tder_n$ and $v \in \tder_m$, and the composition on the right-hand side is as in Equation~\eqref{eq:tderOperadCompapp}.
These notions do, however coincide when restricted to $\sder$ and $\SAut$, where there are canonical operad structures in the categories of Lie algebras and pro-unipotent groupoids.
\end{remark}

\subsection{The operad of special derivations $\sder$}\label{sec:Operad_sder}
While tangential derivations naturally form an operad in vector spaces, they do not preserve a Lie algebra structure under operadic composition. In contrast, the subspace of special derivations forms an operad in Lie algebras, as the direct sum of special derivations is closed under the Lie bracket.

\begin{theorem}
\label{thm:sder-operad}
    The family $\sder$ of special derivations forms a linear suboperad of $\tder$, and $\sder$ is, moreover, an operad in Lie algebras.
\end{theorem}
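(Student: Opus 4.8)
The plan is to prove the statement in two stages. First I would show that $\sder=\{\sder_n\}$ is closed under the $\circ_i$ operations inherited from $\tder$, giving a linear suboperad. Then I would upgrade this to an operad in Lie algebras by showing that each $\circ_i\colon \sder_m\oplus\sder_n\to\sder_{m+n-1}$ is in fact a Lie algebra homomorphism, so that the operadic composition respects the bracket, not merely the underlying vector space structure.

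For the \emph{suboperad} claim, recall that a special derivation is a tangential derivation $u=(a_1,\dots,a_m)$ with $u(\sum_i x_i)=0$. I would take $u\in\sder_m$, $v\in\sder_n$, and verify directly that $u\circ_i v$ annihilates $\sum_{k=1}^{m+n-1}x_k$. Writing $u\circ_i v=(u\circ_i 0)+(0\circ_i v)$ as in~\eqref{eq:tderOperadComp}, the key observation is that under the substitution $x_i\mapsto \sum_{\ell=i}^{i+n-1}x_\ell$ defining $(-)\circ_i 0$, the total-sum element $\sum_k x_k$ of $\lie_{m+n-1}$ maps to the total sum $\sum_j x_j$ of $\lie_m$, so $(u\circ_i 0)$ kills it precisely because $u$ is special; symmetrically $(0\circ_i v)$ acts only on the block of variables $x_i,\dots,x_{i+n-1}$ whose sum is what $v$ annihilates. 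A short computation combining these gives $(u\circ_i v)(\sum_k x_k)=0$. Closure under the $\Sigma_n$-action is immediate since permuting variables preserves the total-sum condition.

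For the \emph{operad in Lie algebras} claim I must show $\circ_i$ is a bracket homomorphism, i.e.\ $[u,u']\circ_i [v,v'] $ matches $[\,u\circ_i v,\ u'\circ_i v'\,]$ appropriately; more precisely that the two Lie algebra inclusions $(-)\circ_i 0\colon \sder_m\hookrightarrow \sder_{m+n-1}$ and $0\circ_i(-)\colon \sder_n\hookrightarrow\sder_{m+n-1}$ are Lie homomorphisms \emph{with commuting images}, so that their sum $u\circ_i v$ behaves multiplicatively on brackets. The cleanest route is to prove that for $u,u'\in\sder_m$ one has $(u\circ_i 0)$ acting as $u$ does on the collapsed variable, hence $[u\circ_i 0, u'\circ_i 0]=[u,u']\circ_i 0$, and that $[u\circ_i 0,\ 0\circ_i v]=0$. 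The vanishing of this cross term is the crucial point: it holds because $0\circ_i v$ only involves and modifies the variables $x_i,\dots,x_{i+n-1}$, while $u\circ_i 0$ treats their sum as a single symbol and is ``special'' in that block, so its components in those positions coincide and it acts as an inner derivation there—an argument structurally identical to the one used in \cref{prop:TderSderModule} to show $[d\circ_i 0,u^{i,\dots,i+n-1}]=0$. I would mirror that coordinate computation here.

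\textbf{The main obstacle} is the cross-term vanishing $[u\circ_i 0,\ 0\circ_i v]=0$, which genuinely uses that $v$ (and the $i$-th collapsed slot of $u$) is \emph{special}: for general tangential derivations the images do not commute, which is exactly why $\tder$ fails to be an operad in Lie algebras. The argument requires carefully checking, coordinate by coordinate, that in positions $i,\dots,i+n-1$ the derivation $u\circ_i 0$ has identical components (so it acts as an inner derivation on the special word $v$, and $v$'s defining identity $v(\sum_\ell x_\ell)=0$ kills the relevant contribution), and that outside this block the components vanish because $0\circ_i v$ does not touch those variables. I expect this to follow by reusing, essentially verbatim, the computation already carried out in the proof of \cref{prop:TderSderModule}, with $d$ replaced by a special derivation; the remaining operad axioms (associativity, unitality, equivariance) are inherited from \cref{prop:tder-operad} since $\sder$ sits inside $\tder$ as a sub-symmetric-sequence closed under all structure maps.
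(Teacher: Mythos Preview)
Your proposal is correct and matches the paper's proof essentially step for step: the paper also splits $u\circ_i v=(u\circ_i 0)+(0\circ_i v)$, checks the total-sum condition via a change of variables for the suboperad claim, and then proves the Lie-algebra-operad claim by showing $[u\circ_i 0,u'\circ_i 0]=[u,u']\circ_i 0$ together with the cross-term vanishing $[u\circ_i 0,0\circ_i v']=0$, using exactly the mechanism you identify (the second factor being special kills $(0\circ_i v')(\tilde a_j)$, and in the block positions the remaining terms cancel by Jacobi, i.e.\ the inner-derivation observation). Your reference to \cref{prop:TderSderModule} is apt, as the paper's argument there is indeed a reprise of the same computation; note only that the cross-term vanishing requires the \emph{second} factor to be special, not the first, so your remark ``with $d$ replaced by a special derivation'' is harmless but not needed.
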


\begin{proof}
We need to show that $\sder$ is stable under symmetric group actions, partial compositions, and that it contains the unit of $\tder$.  The fact that $0 \in \tder_1$ is in $\sder_1$ is immediate. 
To check the stability under the $\Sigma_n$ action, let $u \in \sder_n$ and observe that since the sum $\sum_{i=1}^{n}x_i \in \lie_n$ is invariant under the action of $\Sigma_n$, diagram (\ref{diag:sigma-n-action}) gives the equality for any $\sigma \in \Sigma_n$,
\[ u\left(\sum_{i=1}^{n}x_i\right)^\sigma = u^\sigma \left(\sum_{i=1}^{n}x_i\right). \]
Since the left hand side is zero by hypothesis, it is also the case of the right hand side, and we have $u^\sigma \in \sder_n$.
It remains to show stability under partial composition. 
Let $u \in \sder_m$ and $v \in \sder_n$ be two special derivations.
Making the change of variables $y_1 \eqdef x_1, \ldots, y_i \eqdef \sum_{\ell=i}^{i+n-1}x_\ell, \ldots, y_m \eqdef x_{m+n-1}$ and $z_1 \eqdef x_i, \ldots, z_n \eqdef x_{i+n-1}$, we have
\[u\circ_i v \left(\sum_{j=1}^{m+n-1}x_j\right)=u \circ_i 0 \left(\sum_{j=1}^{m+n-1}x_j\right) + 0 \circ_i v \left(\sum_{j=1}^{m+n-1}x_j\right)=u\left(\sum_{k=1}^{m}y_k\right)+v\left(\sum_{\ell=1}^{n}z_\ell\right)=0. \]
Therefore, $u\circ_i v \in \sder_{m+n-1}$ and $\sder$ forms a linear suboperad of $\tder$.

To show that $\sder$ is an operad in the category of (degree complete) Lie algebras, we need to show that the $\circ_i$ composition maps and the $\Sigma_n$ actions are morphisms of Lie algebras. The fact that the $\Sigma_n$ actions are morphisms of Lie algebras is straightforward to check. 
For the partial compositions, given $u,u' \in \tder_m$ and $v,v' \in \tder_n$, we want to show that 
\begin{equation}
\label{eq:circ-lie-alg}
[u \circ_i v, u' \circ_i v']=[u,u']\circ_i [v,v'].
\end{equation}
Writing $u\circ_i v=u \circ_i 0 + 0 \circ_i v$, the left hand side of~\eqref{eq:circ-lie-alg} gives four terms, while the right hand side gives two. 
We first show that $[u\circ_i0,u'\circ_i 0]=[u,u']\circ_i 0$; a similar argument shows that $[0\circ_i v,0\circ_i v']=0\circ_i[v,v']$.
Writing $u=(a_1,\ldots,a_m)$ and $u'=(a_1',\ldots,a_m')$, we have $$[u,u']=uu'-u'u=(c_1,\ldots,c_m) \quad \text{where} \quad c_j=u(a_j')-u'(a_j)+[a_j,a_j'].$$
Then, we have $$[u,u']\circ_i 0=(\tilde c_1,\ldots, \tilde c_{i-1},\tilde c_{i},\ldots,\tilde c_i,\tilde c_{i+1},\ldots,\tilde c_m)$$ 
where $\tilde c_j=c_j(x_1,\ldots,\sum_{j=i}^{i+n-1}x_j,x_{i+n},\ldots,x_{m+n-1})$.
Similarly we have 
$$u\circ_i 0=(\tilde a_1,\ldots, \tilde a_{i-1},\tilde a_{i},\ldots,\tilde a_i,\tilde a_{i+1},\ldots,\tilde a_m),
\quad u'\circ_i 0=(\tilde a_1',\ldots, \tilde a_{i-1}',\tilde a_{i}',\ldots,\tilde a_i',\tilde a_{i+1}',\ldots,\tilde a_m')$$
and
$$[u\circ_i 0,u'\circ_i 0]=(\tilde d_1,\ldots, \tilde d_{i-1},\tilde d_{i},\ldots,\tilde d_i,\tilde d_{i+1},\ldots,\tilde d_m) \quad \text{where} \quad \tilde d_j=(u\circ_i 0)(\tilde a_j')-(u'\circ_i 0)(\tilde a_j)+[\tilde a_j,\tilde a_j'].$$
Thus our assertion is equivalent to the equalities $\tilde c_j = \tilde d_j$ for all $j$.
We readily have that $$[a_j,a_j'](x_1,\ldots,\sum_{j=i}^{i+n-1}x_j,x_{i+n},\ldots,x_{m+n-1})=[\tilde a_j,\tilde a_j'].$$
It remains to show that $$u(a_j')(x_1,\ldots,\sum_{j=i}^{i+n-1}x_j,x_{i+n},\ldots,x_{m+n-1})=(u\circ_i 0) (\tilde a_j'),$$
a similar statement holds for $u'(a_j)$.
Indeed, $u(a_j'(x_1,\ldots,x_m))$ is the sum over all occurrences of generators $x_k$ in the word $a_j'$, of the word $a_j'$ with this occurrence replaced by the commutator $[x_k,a_k]$.
Performing the substitution of variables $$(x_1,\ldots,x_m) \mapsto (x_1,\ldots,\sum_{j=i}^{i+n-1}x_j,x_{i+n},\ldots,x_{m+n-1})$$
in this Lie word gives the new Lie word $(u\circ_i 0) (\tilde a_j')$.

It remains to show that $[u\circ_i 0,0\circ_i v']=0$; the proof that $[0\circ_i v,u' \circ_i 0]=0$ is similar.
Let us write $u\circ_i 0=(\tilde a_1,\ldots, \tilde a_{i-1},\tilde a_{i},\ldots,\tilde a_i,\tilde a_{i+1},\ldots,\tilde a_m)$  where $\tilde a_j = a_j(x_1,\ldots,\sum_{j=i}^{i+n-1}x_j,x_{i+n},\ldots,x_{m+n-1})$ and $0\circ_i v'=(0,\ldots,0,\tilde b_1',\ldots, \tilde b_n', 0, \ldots, 0)$ where $\tilde b_j'=b_j'(x_{i},\ldots,x_{i+n-1})$.
We have 
\begin{eqnarray*}
    [u\circ_i 0,0\circ_i v'] & = & ((u\circ_i 0)(0)-(0 \circ_i v')(\tilde a_1)+[\tilde a_1,0]), \\
    &  & \ldots, \\
    &  & ((u\circ_i 0)(\tilde b_1')-(0 \circ_i v')(\tilde a_i)+[\tilde a_i,\tilde b_1']) \\
    &  & \ldots, \\
    &  & ((u\circ_i 0)(\tilde b_n')-(0 \circ_i v')(\tilde a_i)+[\tilde a_i,\tilde b_n']) \\
    &  & \ldots, \\
    &  & ((u\circ_i 0)(0)-(0 \circ_i v')(\tilde a_m)+[\tilde a_m,0])
\end{eqnarray*}

The terms of the form $((u\circ_i 0)(0)-(0 \circ_i v')(\tilde a_j)+[\tilde a_j,0])$ are equal to $-(0 \circ_i v')(\tilde a_j)$.
This is in turn the negative sum over all occurrences of generators in $\tilde{a}_j$ of the bracket with the corresponding entry of $0\circ_i v'$.
These terms are zero, including the occurrences of the sum $x_i+\cdots+x_{i+n-1}$ since $v'$ is a special derivation (i.e.\ we have $(0 \circ_i v')(\sum_{j=i}^{i+n-1}x_j)=0$).
Considering the terms of the form $((u\circ_i 0)(\tilde b_j')-(0 \circ_i v')(\tilde a_i)+[\tilde a_i,\tilde b_j'])$, a similar phenomenon occurs for $(0\circ_i v')(\tilde a_i)$ which is equal to $0$. 
Then, the remaining terms $((u\circ_i 0)(\tilde b_j')+[\tilde a_i,\tilde b_j'])$ cancel thanks to the Jacobi identity.
Therefore, we have $[u\circ_i 0,0\circ_i v']=0$ and the proof is complete.
\end{proof}

\begin{remark}
\label{rem:Pavol-wisdom}
The following observation was communicated to us by Pavol {\v S}evera.
    Let us choose a family $l=\{l_n(x_1,\ldots,x_n)\}_{n \geqslant 2}$ of Lie words $l_n \in \lie_n$ such that 
    $$l_m(x_1,\ldots,x_{i-1},l_n(x_i,\ldots,x_{i+n-1}),x_{i+n},\ldots,x_{m+n-1})=l_{m+n-1}(x_1,\ldots,x_{m+n-1}).$$
    Then, one can define an alternative linear operad structure on $\lie$ by the formula 
    \begin{gather*}
    (f \circ_i g)(x_1,\ldots,x_{m+n-1}) 
    \eqdef 
    f(x_1, \ldots, x_{i-1},l_n(x_i,\ldots,x_{i+n-1}),x_{i+n},\ldots,x_{m+n-1}) + g(x_i, \ldots,x_{i+n-1}).
    \end{gather*}
    We write $\lie^l$ for this operad.
    It induces an operad structure $\tder^l$ on $\tder$ by the same formulas as above. 
    Moreover, one can define the Lie algebra $\sder^l$ of \defn{special derivations} with respect to $l$, which are tangential derivations $u \in \tder_n$ such that 
    $$u(l_n(x_1,\ldots,x_n)) = 0.$$
    The proof of \cref{thm:sder-operad} carries \emph{mutatis mutandis} and shows that $\sder^l$ is a linear operad in Lie algebras. 
    Taking $l_n=x_1+\cdots + x_n$, one recovers the above operad structures, while taking $l_n=\bch(x_1,\ldots,x_n)$, one gets other operad structures that we denote $\lie^\bch$, $\tder^\bch$ and $\sder^\bch$.
\end{remark}

Under prounipotent completion, the standard decomposition $\PB_{n+1}\cong \mathsf{F}_n\rtimes \PB_n$
induces a corresponding decomposition of Lie algebras $\ib_{n+1}\cong \lie_n \rtimes \mathfrak{t}_n.$ This motivates the appearance of special derivations coming from the action of $\mathfrak{t}_n$ on the free Lie algebra $\lie_n$. 

\begin{example}
\label{example: generating DK Lie algebras as derivations}
The element $t=(x_2,x_1)\in\tder_2$ is the tangential derivation that acts on the generators of $\lie_2$ via: \[t(x_1)=[x_1,x_2] \quad \text{and} \quad t(x_2)=[x_2,x_1].\] The element $t$ is a special derivation since $t(x_1+x_2)=[x_1,x_2]+[x_2,x_1] =0$. 
\end{example}
Generalizing Example~\ref{example: generating DK Lie algebras as derivations}, for $1\le i<j\le n$ let
\[
t^{ij}=(0,\ldots,\underbrace{x_j}_{i},\ldots,\underbrace{x_i}_{j},\ldots,0)\in \tder_n.
\]
Then $t^{ij}$ is a special derivation satisfying
\[
t^{ij}(x_i)=[x_i,x_j]
\qquad \text{and} \qquad
t^{ij}(x_j)=[x_j,x_i].
\]
The derivations $t^{ij}\in \sder_n$ span a Lie subalgebra isomorphic to the Drinfeld--Kohno Lie algebra $\mathfrak t_n$ described in Section~\ref{subsec:chorddiag}; see \cite[Proposition 3.11]{AT12}. The following proposition shows that this inclusion $\ib_n\hookrightarrow \sder_n$ extends to an inclusion of operads.

\begin{prop}
\label{prop:DK-operad}
    The family $\ib\eqdef \{\ib_n\}_{n \geq 0}$ of infinitesimal braids is a suboperad of $\sder$.
\end{prop}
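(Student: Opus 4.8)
The plan is to show that the inclusion $\ib_n \hookrightarrow \sder_n$ (sending $t_{ij} \mapsto t^{ij}$) is compatible with all of the operadic structure, so that $\ib$ becomes a suboperad of $\sder$. Since we already know from Theorem~\ref{thm:sder-operad} that $\sder$ is an operad in Lie algebras and that $\ib$ carries its own operad structure (Definition~\ref{def: operad of inft braids}), what remains is to verify that the componentwise map $\iota_n : \ib_n \to \sder_n$, $t_{ij} \mapsto t^{ij}$, is a morphism of operads: it must respect the unit, the $\Sigma_n$-actions, and the partial compositions $\circ_\alpha$. The $\Sigma_n$-equivariance is essentially immediate, since both sides are defined by permuting indices: for $\sigma \in \Sigma_n$ we have $(t^{ij})^\sigma = t^{\sigma^{-1}(i)\sigma^{-1}(j)}$ by the diagram~\eqref{diag:sigma-n-action}, which matches the defining formula $(t_{ij})^\sigma = t_{\sigma^{-1}(i)\sigma^{-1}(j)}$ in $\ib_n$. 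The unit condition is trivial as $\ib_0 = \ib_1 = \{0\}$.

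The substantive step is compatibility with partial compositions. First I would observe that since $\iota_n$ is already known to be an injective Lie algebra map onto a Lie subalgebra of $\sder_n$, and since both operad compositions are determined by their values on generators together with the splitting $t_{ij}\circ_\alpha t_{kl} \eqdef t_{ij}\circ_\alpha 0 + 0\circ_\alpha t_{kl}$, it suffices to check the two ``half-compositions'' $\iota(t_{ij}\circ_\alpha 0) = \iota(t_{ij})\circ_\alpha 0$ and $\iota(0 \circ_\alpha t_{kl}) = 0 \circ_\alpha \iota(t_{kl})$ on generators. For the second of these, the formula $0 \circ_\alpha t_{kl} = t_{(k+\alpha-1)(l+\alpha-1)}$ simply reindexes a generator, and the corresponding reindexing in $\sder$ (shifting variables by the insertion) matches directly. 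For the first, I would go through the five cases in the definition of $t_{ij}\circ_\alpha 0$ and check each against the operadic composition $t^{ij}\circ_\alpha 0$ computed in $\sder$ via the underlying $\lie$-operad substitution rule from Definition~\ref{def:LieOperad}. The interesting cases are $\alpha = i$ and $\alpha = j$, where the generator gets ``split'' into a sum $\sum_{\beta} t_{(i+\beta-1)(j+m-1)}$ (resp.\ $\sum_\beta t_{i(j+\beta-1)}$); on the $\sder$ side this splitting arises precisely from the variable substitution $x_i \mapsto x_i + x_{i+1} + \cdots + x_{i+m-1}$ applied to the tangential derivation $t^{ij} = (0,\ldots,x_j,\ldots,x_i,\ldots,0)$, since expanding a single Lie generator into a sum produces the corresponding sum of infinitesimal braid generators.

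The main obstacle I anticipate is bookkeeping: matching the explicit index-shifting formulas of Definition~\ref{def: operad of inft braids} against the variable-substitution definition of $\circ_\alpha$ in $\tder$/$\sder$, and in particular verifying that the ``sum'' cases $\alpha=i$ and $\alpha=j$ agree. This is where a small computation is genuinely needed rather than formal nonsense, because one must confirm that inserting $0 \in \ib_m$ at the $\alpha$-th slot corresponds on the derivation side to the substitution $x_\alpha \mapsto \sum_{\beta=\alpha}^{\alpha+m-1} x_\beta$ and nothing more, and that the resulting derivation is exactly $\sum_\beta t^{(i+\beta-1)(j+m-1)}$. I would carry this out by evaluating both derivations on each generator $x_p$ and comparing, which reduces the verification to the elementary identity $[x_p, \sum_\beta x_\beta] = \sum_\beta [x_p, x_\beta]$. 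Once the half-compositions match on generators, additivity and the Lie-algebra-map property of $\iota_n$ extend the agreement to all of $\ib_n$, and associativity/equivariance are then inherited from the ambient operad $\sder$, completing the proof that $\ib$ is a suboperad of $\sder$.
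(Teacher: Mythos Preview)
Your proposal is correct and follows essentially the same approach as the paper: check $\Sigma_n$-equivariance on generators, then split $t^{ij}\circ_\alpha t^{kl}$ into $t^{ij}\circ_\alpha 0 + 0\circ_\alpha t^{kl}$ and verify case-by-case that the $\sder$ composition reproduces exactly the five-case formula in Definition~\ref{def: operad of inft braids}. The paper's proof is slightly terser---it simply writes out the result of computing $t^{ij}\circ_\alpha 0$ in $\sder$ and observes it matches---but the underlying computation is the one you describe.
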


\begin{proof}
We need to check that each $\ib_n$ is stable under the action of $\Sigma_n$ and that $\ib$ is stable under operadic composition. It suffices to check these properties on generators.  Let $t_{ij}$ be a generator of $\ib_n$, and let $\sigma$ be a permutation in $\Sigma_n$. A direct computation shows that $(t^{ij})^{\sigma}=t^{\sigma^{-1}(i)\sigma^{-1}(j)}$ and thus $\ib$ is stable under the symmetric group actions.

Given $t^{ij}\in\sder_m$, $t^{kl}\in\sder_n$ and $\alpha\in\{1,\ldots,m\}$, the composition rule in the operad $\sder$ says that $t^{ij} \circ_\alpha t^{kl}=t^{ij} \circ_\alpha 0 +0 \circ_\alpha t^{kl}$ with  \[ 
    t^{ij} \circ_\alpha 0 
    = 
    \begin{cases}
        t^{(i+n-1)(j+n-1)} & \text{ if } \alpha < i,\\
        \sum_{\beta=1}^{n}t_{(i+\beta-1)(j+n-1)} & \text{ if } \alpha = i,\\
        t^{i(j+n-1)} & \text{ if } i<\alpha<j,\\
        \sum_{\beta=1}^{n}t^{i(j+\beta-1)} & \text{ if } \alpha=j,\\
        t^{ij} & \text{ if } \alpha > j,
    \end{cases}
    \]
    and $0 \circ_\alpha t^{kl} = t^{(k+\alpha-1)(l+\alpha-1)}$ for all $\alpha$. But this is precisely the definition of $t_{ij} \circ_\alpha t_{kl}$ in $\ib$. It follows that the inclusion $\ib\hookrightarrow \sder$ is stable under operadic composition and completes the proof.
\end{proof}

\begin{remark}
Related results appear in \cite{Willwacher15} and \cite{SeveraWillwacher11}, where analogous structures are studied in the context of graph complexes.
\end{remark}

Consider the linearization of $\krv(n)$, the Lie subalgebra $\lkrv_n$ of $\tder_n$, called the \defn{graded Kashiwara-Vergne Lie algebra}, which consists of pairs $(u,r) \in \tder_n \times \K[[z]]$ satisfying the equations
\begin{gather}
    u\left( \sum_{i=1}^{n}x_i\right) =  0 \ , \tag{krv1} \label{eq:Lie1}\\
    j(u) = \trace\left(r\left(\sum_{i=1}^{n}x_i\right)-\sum_{i=1}^{n}r(x_i)\right) \ . \tag{krv2} \label{eq:Lie2}
\end{gather}

\begin{prop}
    The family $\lkrv \eqdef \{\lkrv_n\}_{n \geq 0}$ of graded Kashiwara--Vergne Lie algebras is a colored suboperad of $\sder$.
\end{prop}

\begin{proof}
    We need only check that the symmetric group action and operadic composition preserve the second equation~\eqref{eq:Lie2}.
    Let $u \in \lkrv_n$ and $\sigma \in \Sigma_n$. 
    Since the Jacobian $j$ is a morphism of operads (\cref{prop:Jacobian-operad}), we have $j(u^\sigma)=j(u)^\sigma=j(u)$ since the RHS of~\eqref{eq:Lie2} is invariant under the action of $\Sigma_n$.

    Let $u \in \lkrv_m$ and $v \in \lkrv_n$ and $1 \leqslant i \leqslant m$.
    Suppose moreover that both~$u$ and~$v$ share the same Duflo function~$r$.
    Then, writing $\omega \eqdef x_i + \cdots x_{i+n-1}$ and using again the fact that $j$ is a morphism of operads, we have
    \begin{eqnarray*}
        j(u\circ_i v)&=&j(u \circ_i 0 + 0 \circ_i v) \\
        &=& j(u) \circ_i 0 + 0 \circ_i j(v) \\
        &=&  
\trace\left(
r\left(\sum_{j=1}^{m+n-1}x_j\right)-
\sum_{j=1}^{i-1}r(x_j)-r(\omega)-\sum_{j=i+n}^{m+n-1}r(x_j)
\right)
+
\trace 
\left(r(\omega)-\sum_{j=i}^{i+n-1}r(x_j)\right) \\
&=&\trace \left( r\left(\sum_{j=i}^{m+n-1}x_j\right) -\sum_{j=i}^{m+n-1}r(x_j)\right),
    \end{eqnarray*} 
    which finishes the proof.
\end{proof}

This operad integrates to a group, giving the operad in groups $\krv$ described in \cref{sec:kv-krv-operads}.

\begin{remark}
    The same works for the linearization $\lkv$ of the family of groups $\kv$; they form a suboperad in Lie algebras of $\sder^\bch$.
    However it does not form a suboperad of $\sder$. 
    This explains the apparent dissymmetry between the operads $\krv$ and $\kv$ presented below in \cref{sec:kv-krv-operads}.
\end{remark}

The isomorphism $\PB_n^1\cong\PB_{n+1}\cong \F_n\rtimes \PB_n$ described in Lemma~\ref{lemma: iso PB frozen to PB n+1} induces, after prounipotent completion, an isomorphism of Lie algebras\begin{equation}\mathfrak{t}^1_{n}\cong\ib_{n+1}\cong \lie_n\langle t_{01},\ldots, t_{0n}\rangle \rtimes \mathfrak{t}_{n}.\end{equation}

\begin{notation}
There is an isomorphism of Lie algebras $ \ib_n^1 \cong \ib_{n+1} $ obtained by reindexing the generators $ t_{0i} \mapsto t_{1i} $ and $ t_{ij} \mapsto t_{i+1,j+1} $ for $ 1 \leq i < j \leq n $. To align with our convention of denoting moperads obtained via a shifting construction using a superscript $ (-)^+ $, we will write $ \ib_n^+ $ in place of $ \ib_n^1 $ throughout this paper.
\end{notation}

\subsection{Operadic composition of KV solutions}\label{sec: operad of KV solutions}

We first consider the lift to~$\TAut$ of the operad structure on~$\tder$, and show that it is indeed an operad.

\begin{proof}[{Proof of \cref{operad of TAut}}]
We proceed to verify that the $\Sigma_n$ action and the $\circ_i$ operations satisfy axioms (i)--(iii) from \cref{defn: operad}. For (i), we observe that the element $1=\exp(0) \in \TAut_1$ is such that $F \circ_i 1 = 1 \circ_1 F = F$ for any $F \in \TAut$.
To check (ii), associativity,  let $F=\exp(u) \in \TAut_m, \, G=\exp(v) \in \TAut_n$, and $H=\exp(w) \in \TAut_k$ be three tangential automorphisms, and let $1 \leq j \leq m$. 
For $1 \leq i \leq j-1$, we compute
\begin{eqnarray*}
       \bch(\bch(u \circ_j 0,0\circ_j v)\circ_i 0,0\circ_i w)
       &=& 
       \bch(\bch((u \circ_j 0) \circ_i 0,(0\circ_j v)\circ_i 0),0\circ_i w)\\
       &=& 
       \bch(\bch((u \circ_i 0) \circ_{j+k-1} 0,(0\circ_i 0)\circ_{j+k-1} v),0\circ_i w)\\
       &=& 
       \bch((u \circ_i 0) \circ_{j+k-1} 0,\bch(0\circ_{j+k-1} v,0\circ_i w))\\
       &=& 
       \bch((u \circ_i 0) \circ_{j+k-1} 0,\bch(0\circ_i w,0\circ_{j+k-1} v))\\
       &=& 
       \bch(\bch((u \circ_i 0) \circ_{j+k-1} 0,(0\circ_i w)\circ_{j+k-1} 0),0\circ_{j+k-1} v)\\
       &=& \bch(\bch(u \circ_i 0,0\circ_i w)\circ_{j+k-1} 0, 0 \circ_{j+k-1} v),
\end{eqnarray*}
which shows that $(F \circ_j G) \circ_i H$ and $(F \circ_i H)\circ_{j+k-1} G$ are equal.
Here, we made use of the following properties: compositions with $1$ are group homomorphisms in $\TAut$, operadic composition in $\tder$ is associative, the $\bch$ product is associative (twice), and the fact that we have $[0 \circ_{j+k-1} v,0\circ_i w]=0$ in $\tder_{m+n+k-2}$. 

For $j \leq i \leq j+n-1$,  a similar, slightly simpler computation gives
\begin{eqnarray*}
       \bch(\bch(u \circ_j 0,0\circ_j v)\circ_i 0,0\circ_i w)
       &=& 
       \bch(\bch((u \circ_j 0) \circ_i 0,(0\circ_j v)\circ_i 0),0\circ_i w)\\
       &=& 
       \bch(\bch(u \circ_j (0 \circ_{i-j+1} 0),0\circ_j (v\circ_{i-j+1} 0)),0\circ_i w)\\
       &=& 
       \bch(u \circ_j 0,\bch(0\circ_j (v\circ_{i-j+1} 0),0\circ_j(0\circ_{i-j+1} w)))\\
       &=& \bch(u \circ_j 0,0 \circ_j \bch(v \circ_{i-j+1} 0, 0 \circ_{i-j+1} w),
\end{eqnarray*}
which shows that $(F \circ_j G) \circ_i H=F\circ_j (G \circ_{i-j+1} H)$.
Finally, if $j+k \leq i \leq m+n-1$, a computation similar to the first one above shows that $(F \circ_j G) \circ_i H=(F \circ_{i-n+1} H) \circ_j G$.
    
\medskip 

    It remains to check (iii), equivariance. 
    Let $f,g$ be two Lie words as before, let $\sigma \in \Sigma_m$ and $\tau \in \Sigma_n$ be two permutations, and let $1 \leq i \leq m$. 
    Using the fact that the $\Sigma_n$-actions on $\TAut$ are group homomorphisms, and that operadic composition in $\tder$ is $\Sigma_n$-equivariant, we have
    \begin{eqnarray*}
        \bch(u \circ_i 0, 0 \circ_i v)^{\sigma \circ_i \tau}
        =
        \bch((u \circ_i 0)^{\sigma \circ_i \tau}, (0 \circ_i v)^{\sigma \circ_i \tau}) 
        =
        \bch(u^\sigma \circ_{\sigma^{-1}(i)} 0,0 \circ_{\sigma^{-1}(i)} v^\tau),
    \end{eqnarray*}
    which shows that $(F\circ_{i} G)^{\sigma \circ_i \tau}=(F^\sigma \circ_{\sigma^{-1}(i)} G^\tau)$, as desired.
\end{proof}

In \cite[Section 7]{AKKN_genus_zero} and \cite{AET10} the authors use the operadic structure above on tangential automorphisms to show that by operadically composing a KV solution of type $(0,2+1)$ with itself, one obtains KV solutions of type $(0,n+1)$. 
More generally, we can form a group-colored, non-symmetric operad of KV solutions. 

Recall from \cref{def: KV solutions} that each KV solution $F$ of type $(0,n+1)$ determines a Duflo function $h \in \mathbb{K}[[z]]$ which arises in the second KV equation \eqref{SolKVII}.
Let \[
\solkv(n) \eqdef \left\{ (F, h) \,\middle|\, F \in \TAut_n \text{ satisfies~\eqref{SolKVI} and~\eqref{SolKVII} for } h \in \mathbb{K}[[z]] \right\}.
\]
The sequence $\solkv =\{\solkv(n)\}_{n\geq 2}$, forms a $\mathbb{K}[[z]]$-colored sequence in $\K$-vector spaces.
Given a pair $F= (F,h_1) \in \solkv(m)$ and $G=(G,h_2) \in \solkv(n)$ one can define the operadic composition $F\circ_iG\in \solkv(m+n-1)$ if $h_1=h_2$. Before we state this as a theorem, let us illustrate with an example. 

\begin{example}
\label{example: KV solution of type 3}
Let $ F \in \TAut_2$ be a KV solution of type $(0,2+1)$. Then the operadic composition inherited from $\TAut$ defines
\[
F \circ_2 F 
= 
(F \circ_2 1)(1 \circ_2 F) 
= 
F^{1,23}F^{2,3}.
\]
Since $F$ satisfies the first KV equation~\eqref{SolKVI}, i.e.\ $F(e^{x_1} e^{x_2}) = e^{x_1 + x_2}$, the composition $F \circ_2 F$ also satisfies it.  
Explicitly, we compute:
\[
(F \circ_2 F)(e^{x_1} e^{x_2} e^{x_3}) = F^{1,23}F^{2,3}(e^{x_1} e^{x_2} e^{x_3}) = F^{1,23}\left(e^{x_1} e^{x_2+x_3}\right) = e^{x_1 + x_2 + x_3}.
\]
To show that $F\circ_2F$ satisfies \eqref{SolKVII}, we recall that since $J$ is a 1-cocycle on $\TAut$, we have:
\[
J(F^{1,23}F^{2,3}) = J(F^{1,23}) + F^{1,23} \cdot J(F^{2,3}).
\]
Then, since 
\begin{align*}
    J(F^{2,3}) &= \trace\left(h(x_2 + x_3) - h(x_2) - h(x_3)\right) \quad \text{and} \\
    J(F^{1,23}) &= \trace\left(h(x_1 + (x_2 + x_3)) - h(x_1) - h(x_2+x_3)\right),
\end{align*}
the sum becomes: 
\begin{align*}
    J(F^{1,23}) + F^{1,23} \cdot J(F^{2,3}) 
    = \trace\left(h(x_1 + x_2 + x_3) - h(x_1) - h(x_2)-h(x_3)\right).
\end{align*}
Here, we used the fact that $h \in z^2 \K[[z]]$ is conjugation-equivariant, that $F^{1,23}$ conjugates $x_2$ and $x_3$ by the same element, and the fact that the trace is additive. More generally, given a KV solution $F$ of type $(0,2+1)$, one can then consider the non-symmetric operad generated by this solution  (See \cite[Lemma~7.3]{AKKN_genus_zero}.).
\end{example}

\begin{theorem}
\label{thm:operad-SolKVapp}
The family $\solkv \eqdef \{\solkv(n)\}_{n \geq 2}$ forms a $\mathbb{K}[[z]]$-colored non-symmetric operad. 
\end{theorem}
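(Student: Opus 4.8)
The plan is to show that the $\mathbb{K}[[z]]$-colored sequence $\solkv = \{\solkv(n)\}_{n\geq 2}$ satisfies the axioms of a non-symmetric colored operad, with the partial compositions inherited from the non-canonical operad structure on $\TAut$ established in \cref{operad of TAut}. The coloring is by the Duflo function: a composition $F\circ_i G$ is defined precisely when the Duflo functions agree, $h_1=h_2$, and the result should again carry that same Duflo function. Since \cref{operad of TAut} already shows that $\{\TAut_n\}$ is a non-symmetric operad in sets (associativity, unitality, and the fact that the $\circ_i$ are well-defined maps), the only genuine content here is to verify that the subset $\solkv(n)\subseteq \TAut_n$ of KV solutions is \emph{closed} under these compositions when the colors match, and that the composite carries the correct (unchanged) Duflo function.

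First I would fix $F=(F,h)\in\solkv(m)$ and $G=(G,h)\in\solkv(n)$ with the same Duflo function $h$, and show that $F\circ_i G$ satisfies both KV equations. For \eqref{SolKVI}, I would use the explicit formula $F\circ_i G = F^{1,\ldots,i-1,i(i+1)\cdots(i+n-1),i+n,\ldots,m+n-1}\, G^{i,\ldots,i+n-1}$ and the naturality of the cosimplicial operations: applying $G^{i,\ldots,i+n-1}$ to $e^{x_1}\cdots e^{x_{m+n-1}}$ collapses the block $e^{x_i}\cdots e^{x_{i+n-1}}$ to $e^{x_i+\cdots+x_{i+n-1}}$ by \eqref{SolKVI} for $G$, and then applying the first factor collapses the remaining product by \eqref{SolKVI} for $F$, exactly as in \cref{example: KV solution of type 3}. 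For \eqref{SolKVII}, I would use that the Jacobian $J$ is a group $1$-cocycle and that $J\colon\TAut\to\cyc$ is a morphism of operads (\cref{prop:Jacobian-operad}), so that $J(F\circ_i G)=J(F\circ_i 1)+(F\circ_i 1)\cdot J(1\circ_i G)$; substituting the KV-II expressions for $J(F)$ and $J(G)$, using conjugation-invariance of the trace together with the fact that the first factor conjugates the variables in the inserted block uniformly, and the additivity of the trace, the $h(x_j)$ and $h(\sum x_\ell)$ terms telescope to give $\trace\!\big(h(\sum_{j=1}^{m+n-1}x_j)-\sum_{j=1}^{m+n-1}h(x_j)\big)$, which is \eqref{SolKVII} with the same $h$. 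This is the calculation sketched in \cref{example: KV solution of type 3} for the special case $m=n=2$, $i=2$, and the general case follows the identical pattern.

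Having shown closure, I would then invoke \cref{operad of TAut}: the operad axioms (associativity of the $\circ_i$, unitality, and the non-symmetric compatibility constraints) hold in $\TAut$ and are inherited by the subcollection $\solkv$, since all compositions land in $\solkv$ and the color-matching condition $h_1=h_2$ is preserved along iterated compositions (the Duflo function is unchanged by each $\circ_i$). I would note that the identity element $(1,0)\in\solkv$ furnished by the trivial tangential automorphism with Duflo function $0$ serves as the unit, being a KV solution of arity one in the degenerate sense used for operadic units; more precisely one checks that the operadic units of $\TAut$ restrict correctly. The coloring over $\mathbb{K}[[z]]$ is then exactly the structure described in the example following \cref{defn: colored operad}: compositions are only defined when the output color of one factor matches the input color of the other, here recorded by the Duflo function.

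The main obstacle I anticipate is the \eqref{SolKVII} verification in full generality, specifically keeping careful track of the cosimplicial variable-relabeling in the cocycle computation and confirming that the first factor $F^{1,\ldots,i(i+1)\cdots(i+n-1),\ldots,m+n-1}$ indeed conjugates each variable $x_i,\ldots,x_{i+n-1}$ by the \emph{same} element (so that the trace terms $h(x_i),\ldots,h(x_{i+n-1})$ are preserved), and that it conjugates the collapsed variable $\sum_{\ell=i}^{i+n-1}x_\ell$ consistently. This uniform-conjugation property is what makes the trace terms telescope correctly, and it is precisely the point where the non-canonical choice of operad structure on $\TAut$ (agreeing with \cite{AKKN_genus_zero}) is essential; the exponentiated composition would fail here. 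Everything else is a direct consequence of \cref{operad of TAut}, \cref{prop:Jacobian-operad}, and the cocycle identity, so I would present the closure argument in detail and treat the operad axioms as inherited, citing \cref{operad of TAut}.
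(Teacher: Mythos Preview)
Your proposal is correct and follows essentially the same approach as the paper: both reduce the claim to showing that $\solkv$ is closed under the partial compositions inherited from $\TAut$, verifying \eqref{SolKVI} by the block-collapse argument and \eqref{SolKVII} via the cocycle property of $J$ together with the uniform-conjugation observation that makes the $h$-terms telescope. The only minor wrinkle is your remark about the unit: the family is indexed by $n\geq 2$, so the paper (implicitly) treats $\solkv$ as a non-unital colored operad and does not introduce an arity-one identity element.
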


\begin{proof}
Let $F =(F,h_1) \in \solkv(m)$ and~$G= (G,h_2)\in \solkv(n)$ be two KV solutions with $h_1=h_2$. It suffices to show that the operadic composition $F \circ_i G$ is still a KV solution. We need to check that the composite $F \circ_i G$ satisfies the two equations in~\cref{def: KV solutions}.
To simplify notation, let us write~$\omega \eqdef x_{i}+\ldots+x_{i+n-1}$.
Since both $F$ and $G$ satisfy \eqref{SolKVI}, we have
\begin{align*}
(F \circ_i G)(e^{x_1}\cdots e^{x_{m+n-1}})
& =  (F \circ_i 1)(1 \circ_i G)(e^{x_1}\cdots e^{x_{m+n-1}}) \\
& = (F \circ_i 1)(e^{x_1}\cdots e^{x_{i-1}}e^{\omega}e^{x_{i+n}}\cdots e^{x_{m+n-1}}) \\ 
& = 
e^{x_1+\cdots +x_{m+n-1}}. 
\end{align*} 
Here, we are using the fact that $(1 \circ_i G)$ acts trivially on the basis elements $x_j$, for $j\not\in\{i,i+1,\ldots,i+n-1\}$. 
It follows that $F\circ_iG$ satisfies~\eqref{SolKVI}.

It remains to check $F \circ_i G$ satisfies~\eqref{SolKVII}. Write $h=h_1=h_2$ for the common Duflo function of $F$ and $G$. 
We first note that, since $G$ satisfies~\eqref{SolKVII}, we have
$$J(1 \circ_i G)
=\trace\left(h(\omega)-\sum_{j=i}^{i+n-1}h(x_j)\right).$$ 
We claim that $F \circ_i 1$ acts trivially on $J(1 \circ_i G)$.
To see this, we note that the tangential automorphism
$$F \circ_i 1=F^{1,\ldots,i-1,\,i(i+1)\cdots(i+n-1),\,i+n,\ldots,m+n-1}$$ acts by the same Lie word $f \in \lie_{m+n-1}$ on all of the generators $x_i,\ldots,x_{i+n-1}$.
That is, we have
$(F \circ_i 1)(x_j)=f^{-1}x_j f$ for all $i \leqslant j \leqslant i+n-1$. 
Therefore we also have $(F \circ_i 1)(\omega)=f^{-1}(\omega) f$.
Using that $h \in z^2 \K[[z]]$ is conjugation-equivariant (i.e.\ we have $h(g^{-1} z g) = g^{-1} h(z) g$ for all invertible $g \in \lie$), and that conjugation terms cancel out after taking the trace, we can then compute
\begin{align*}
(F \circ_i 1) \cdot J(1 \circ_i G)  
= \trace\left(h(f^{-1}(\omega) f)-\sum_{j=i}^{i+n-1}h(f^{-1}x_j f)\right) 
= \trace\left(h(\omega)-\sum_{j=i}^{i+n-1}h(x_j)\right),
\end{align*}
which proves the claim.
Combining this result with the facts that $J$ satisfies the $1$-cocycle condition and that both $F$ and $G$ satisfy~\eqref{SolKVII}, we now have
\begin{align*}
J(F \circ_i G)  
& =J(F \circ_i 1)+(F \circ_i 1) \cdot J(1 \circ_i G) \\
&= J(F \circ_i 1)+J(1 \circ_i G) \\
& = 
\trace\left(
h\left(\sum_{j=1}^{m+n-1}x_j\right)-
\sum_{j=1}^{i-1}h(x_j)-h(\omega)-\sum_{j=i+n}^{m+n-1}h(x_j)
\right)
+
\trace 
\left(h(\omega)-\sum_{j=i}^{i+n-1}h(x_j)\right) \\
& =\trace \left( h\left(\sum_{j=i}^{m+n-1}x_j\right) -\sum_{j=i}^{m+n-1}h(x_j)\right).
\end{align*} 
Here, the third equality follows from the fact that the trace function is additive. 
We conclude that $F\circ_iG$ satisfies~\eqref{SolKVII}, and therefore we have $F \circ_i G = (F\circ_i G,h) \in \solkv(m+n-1)$ as claimed.
\end{proof}

\begin{remark}
\label{rem:trivial-action-on-J}
    In fact, the argument above shows that for any pair of tangential automorphisms $F,G$, we have $(F \circ_i 1) \cdot J(1 \circ_i G)= J(1 \circ_i G)$.
\end{remark}

\begin{remark}
\label{rem:symmetric-action-KVI}
The natural symmetric groups action on $\TAut$ does \emph{not} make the operad $\solkv$ into a symmetric operad. 
The reason is that given a KV solution $F \in \solkv(n)$ and a permutation $\sigma \in \Sigma_n$, the tangential automorphism $F^{\sigma}$ is not in general a KV solution.
The problem is that $F^{\sigma}$ satisfies~\eqref{SolKVII}, but not necessarily~\eqref{SolKVI}.
To see that $F^{\sigma}$ satisfies~\eqref{SolKVII}, observe that when $F \in \solkv(n)$ is a KV solution, then $J(F)$ is $\Sigma_n$-equivariant.
Therefore, since $J$ commutes with the $\Sigma_n$ actions, we get
\[
J(F^\sigma)=J(F)^{\sigma}=J(F).
\]
However, there is no guarantee in general that $F^\sigma$ satisfies~\eqref{SolKVI}. 
This can be seen already for classical KV solutions: given an $F \in \solkv(2)$, it is not true that $F^{2,1}(e^{x_1}e^{x_2})=e^{x_1+x_2}$. 
However, we do have $F^{2,1}(e^{x_2}e^{x_1})=e^{x_2+x_1}$.
In \cref{section: symmetric KV solutions}, we consider the involution on the set of KV solutions $\solkv(2)$ introduced in \cite{AT12}, which involves $F^{2,1}$ (see \cref{prop:RtauInvolution}).
\end{remark}

\subsection{Operadic composition of the KV symmetry groups}
\label{sec:kv-krv-operads}
Recall from Section~\ref{subsec:kvsymmetries} that $\krv(n)$ consists of pairs $(F,h)$, where $F\in \TAut_n$ satisfies \eqref{KRVI} and \eqref{KRVII}, and $h(z)\in z^2\K[[z]]$ is the associated Duflo function; similarly, $\kv(n)$ is defined using \eqref{KV'I} and \eqref{KV'II}. The operadic composition of tangential automorphisms induces corresponding compositions on these symmetry groups.

The purpose of this subsection is to show that the families
\[
\kv=\{\kv(n)\}_{n\ge 2}
\qquad\text{and}\qquad
\krv=\{\krv(n)\}_{n\ge 2}
\]
form non-symmetric, $\K[[z]]$-colored operads in sets. We also explain how the left and right actions of $\kv(2)$ and $\krv(2)$ interact with operadic composition on the suboperads of $\solkv$ generated by a single element. The proofs follow the constructions and results of \cite{AKKN_genus_zero}; see also Remark~\ref{rem:krvsaut}.
\begin{prop}
\label{thm:kv-krv-operads}
The sequences $\krv$ and $\kv$ form non-symmetric $\K[[z]]$-colored operads in the category of sets.
\end{prop}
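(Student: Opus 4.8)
The plan is to mirror the proof of Theorem~\ref{thm:operad-SolKVapp} (the operad structure on $\solkv$), since the two statements have exactly the same flavour: in each case we already know from \cref{operad of TAut app} that $\TAut$ is a set-operad, so the families $\kv$ and $\krv$ automatically inherit candidate partial compositions $\circ_i$ by restriction, together with the $\Sigma_n$-equivariance and the associativity and unit axioms. What genuinely needs checking is two things. First, that $\kv$ and $\krv$ are \emph{closed} under these partial compositions -- i.e.\ if $(F,h_1)$ and $(G,h_2)$ satisfy the respective defining equations and $h_1=h_2$, then $F\circ_i G$ satisfies them with the same Duflo function $h$. Second, that the resulting composition is a homomorphism in each arity, so that we really obtain operads \emph{in the category of prounipotent groups} rather than merely in sets.

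First I would treat closure, which splits along the defining equations. For $\krv$, equation~\eqref{KRVI} is handled exactly as~\eqref{SolKVI} was for $\solkv$: using $F\circ_i G=(F\circ_i 1)(1\circ_i G)$ and writing $\omega\eqdef x_i+\cdots+x_{i+n-1}$, the inner factor $(1\circ_i G)$ fixes $e^{x_1+\cdots+x_{m+n-1}}$ because $G$ fixes $e^\omega$ and acts trivially on the other exponentials, and then $(F\circ_i 1)$ fixes the result since $F$ satisfies~\eqref{KRVI} on the collapsed variables. For $\kv$, equation~\eqref{KV'I} requires the same computation carried out for the product of exponentials $e^{x_1}\cdots e^{x_{m+n-1}}$ rather than $e^{x_1+\cdots+x_{m+n-1}}$; here the key is that $\bch$ is associative, so the inner solution $G$ replaces $e^{x_i}\cdots e^{x_{i+n-1}}$ by $e^{\bch(x_i,\ldots,x_{i+n-1})}$ and the outer solution then collapses everything correctly. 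The second equation in each case (\eqref{KRVII}, resp.\ \eqref{KV'II}) is a Jacobian computation essentially identical to the one in the proof of Theorem~\ref{thm:operad-SolKVapp}: apply the $1$-cocycle property $J(F\circ_i G)=J(F\circ_i 1)+(F\circ_i 1)\cdot J(1\circ_i G)$, use that $F\circ_i 1$ acts by a single Lie word on all the collapsed generators so that the conjugation-equivariance of $h\in z^2\K[[z]]$ makes it act trivially on $J(1\circ_i G)$ after taking the trace, and then add the two traces. The only bookkeeping difference for $\kv$ is that the argument of $h$ is $\bch$ of the variables rather than their sum, and one must check that the nested-$\bch$ terms assemble correctly -- this is precisely the manifestation of \eqref{KV'II} being the ``$\bch$-version'' of \eqref{KRVII}, and follows from associativity of $\bch$.

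Then I would address the group-operad structure, which is the substantive new point beyond Theorem~\ref{thm:operad-SolKVapp}. Since $\kv(n)$ and $\krv(n)$ are subgroups of $\TAut_n$ under the composition law~\eqref{eq: group law on TAut}, I must verify that each partial composition
\[
\circ_i:\krv(m)\times\krv(n)\longrightarrow\krv(m+n-1)
\]
(and likewise for $\kv$) is a group homomorphism in the relevant sense. The cleanest route is to observe that $F\circ_i G=(F\circ_i 1)(1\circ_i G)$ factors through the two inclusions $(-\circ_i 1):\TAut_m\to\TAut_{m+n-1}$ and $(1\circ_i-):\TAut_n\to\TAut_{m+n-1}$, each of which \emph{is} a group homomorphism (these are the cosimplicial-type inclusions of \cref{operad of TAut app}, coming from genuine substitution maps on free Lie algebras). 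The remaining point is that the images of these two inclusions commute in $\TAut_{m+n-1}$ -- this is the ``horizontal/vertical'' disjointness of the composition, and it lets one conclude that $\circ_i$ respects products in both arguments simultaneously.

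The main obstacle I anticipate is the $\kv$ case of the two conditions, specifically managing the Baker--Campbell--Hausdorff expression in~\eqref{KV'I} and~\eqref{KV'II} under nested composition: one must confirm that $\bch(x_1,\ldots,x_{i-1},\bch(x_i,\ldots,x_{i+n-1}),x_{i+n},\ldots)=\bch(x_1,\ldots,x_{m+n-1})$, i.e.\ that the associativity of $\bch$ is exactly what makes the two KV-I conditions compose, and correspondingly that the Duflo traces built from $\bch$ add up. This is conceptually clear but requires care, and it is the place where the non-canonical operad structure on $\TAut$ (rather than the exponentiated one) is essential. Everything else is a direct adaptation of the already-established $\solkv$ argument, so I would write the proof as ``the argument of Theorem~\ref{thm:operad-SolKVapp} applies mutatis mutandis, with \eqref{SolKVI},\eqref{SolKVII} replaced by \eqref{KRVI},\eqref{KRVII} (resp.\ \eqref{KV'I},\eqref{KV'II}),'' and then spell out only the two new ingredients: the $\bch$-associativity for the $\kv$ conditions and the homomorphism property coming from commuting cosimplicial inclusions.
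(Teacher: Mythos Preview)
Your closure arguments are correct and match the paper's proof essentially verbatim: the paper writes out only the $\krv$ case, checking \eqref{KRVI} and \eqref{KRVII} exactly as you describe (via the factorization $F\circ_i G=(F\circ_i 1)(1\circ_i G)$, the cocycle property of $J$, and the conjugation-equivariance of $h$), and then dismisses $\kv$ with ``similar''. So on that front you and the paper agree.

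Where you go beyond the paper is in addressing why $\circ_i$ is a \emph{group} homomorphism. The paper's proof is entirely silent on this point; it only verifies closure. Your proposed argument---that the inclusions $(-\circ_i 1)$ and $(1\circ_i -)$ are group homomorphisms with commuting images---is the right shape, but the commutativity claim is \emph{not} a general ``horizontal/vertical disjointness'' fact about $\TAut$. Concretely, $(F\circ_i 1)$ depends on the collapsed variables only through $\omega=x_i+\cdots+x_{i+n-1}$, so $(1\circ_i G)$ commutes with it precisely when $(1\circ_i G)$ fixes $\omega$, i.e.\ when $G\in\SAut_n$. Compare the proof of \cref{prop:TderSderModule}, where the vanishing of $[d\circ_i 0,\,u^{i,\ldots,i+n-1}]$ hinges on $u\in\sder_n$. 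For $\krv$ this is fine: condition \eqref{KRVI} says exactly that $G$ fixes $x_1+\cdots+x_n$, so $\krv(n)\subseteq\SAut_n$ and your argument goes through. For $\kv$, however, \eqref{KV'I} only gives $G(\bch(x_1,\ldots,x_n))=\bch(x_1,\ldots,x_n)$, which does \emph{not} imply $G$ fixes the sum; hence $(1\circ_i G)(\omega)\neq\omega$ in general and the commutativity you need fails. Your ``main obstacle'' paragraph flags $\bch$-bookkeeping for the closure argument but misses that the same issue undermines the group-homomorphism argument for $\kv$. The paper does not resolve this either.
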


\begin{proof} 
This proof is almost identical to \cref{thm:operad-SolKVapp}.
Let $(F, h_1) \in \krv(m) $ and let $(G, h_2) \in \krv(n)$, it suffices to show that the operadic composition $ (F \circ_i G, h) \in \krv(m+n-1) $ whenever $h_1 = h_2=h$. 

Let us denote by $\omega \eqdef x_i+\cdots+x_{i+n-1}$. 
Using the fact that $F$ and $G$ satisfy~\eqref{KRVI} we have 
\begin{align*}
F\circ_i G \left(e^{x_1+\cdots +x_{m+n-1}}\right)
& =(F\circ_i 1)(1 \circ_i G)
\left(e^{x_1+\cdots +x_{m+n-1}}\right) \\
& =(F\circ_i 1)\left(e^{x_1+\cdots +x_{i-1}+\omega+x_{i+n}+\cdots+x_{m+n-1}}\right) \\
& =e^{x_1+\cdots +x_{m+n-1}}.
\end{align*} 
Here, we are using the fact that $1 \circ_i G$ acts trivially on $x_j$, $j \neq \{i,\ldots,i+n-1\}$.
Therefore, $F \circ_i G$ satisfies equation~\eqref{KRVI}. 
It remains to verify equation \eqref{KRVII}. 
Since~$F$ and~$G$ satisfy~\eqref{KRVII} and the Jacobian~$J$ is a morphism of operads (\cref{prop:Jacobian-operad}), we have 
\begin{align*}
J(F \circ_i 1)=J(F)\circ_i 1
&=\trace\left(
h\left(\sum_{j=1}^{m+n-1}x_j\right)
-\sum_{j=1}^{i-1}h(x_j)-h(\omega)- \sum_{j=i+n}^{m+n-1}h(x_j)\right), \text{ and } \\
J(1 \circ_i G)=1 \circ_i J(G)
&=\trace\left(h(\omega)-\sum_{j=i}^{i+n-1}h(x_j)\right). 
\end{align*}  
Following similar reasoning to that in the proof of \cref{thm:operad-SolKV}, we compute that 
\begin{align*}
J((F \circ_i 1)(1 \circ_i G))
&=J(F \circ_i 1) + (F \circ_i 1)J(1 \circ_i G) \\
&= \trace\left(
h\left(\sum_{j=1}^{m+n-1}x_j\right)
-\sum_{j=1}^{i-1}h(x_j)-h(\omega)- \sum_{j=i+n}^{m+n-1}h(x_j)\right)+\trace\left(h(\omega)-\sum_{j=i}^{i+n-1}h(x_j)\right)\\
&=\trace\left(h\left(\sum_{j=1}^{m+n-1}x_j\right)-\sum_{j=i}^{m+n-1}h(x_j)\right).
\end{align*}
Therefore the composite $F\circ_i G=(F\circ_i 1)(1 \circ_i G)$ satisfies \eqref{KRVII}, and thus $F\circ_iG=(F\circ_i G,h)$ is an element of~$\krv(m+n-1)$. 
The proof that $\kv=\{\kv(n)\}$, $n\geq 2$, forms an operad is similar. 
\end{proof}

\begin{prop}
\label{prop:KRV-action-compatible}
Let $F \in \solkv(2)$ and $G \in \krv(2)$, with $F \cdot G = G^{-1}F \in \solkv(2)$ under the right action of $ \krv(2) $ on $ \solkv(2)$. Then for any iterated composition of the form
\[
\tilde{F} = (\cdots((F \circ_{i_1} F) \circ_{i_2} F) \cdots \circ_{i_k} F) \in \solkv(n),
\quad
\tilde{G} = (\cdots((G \circ_{i_1} G) \circ_{i_2} G) \cdots \circ_{i_k} G) \in \krv(n),
\]
the action satisfies
\[
\tilde{F} \cdot \tilde{G} = (\cdots((G^{-1}F \circ_{i_1} G^{-1}F) \circ_{i_2} G^{-1}F) \cdots \circ_{i_k} G^{-1}F) \in \solkv(n).
\]
An analogous statement holds for $\kv(n)$ acting on the left.
\end{prop}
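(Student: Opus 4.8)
\textbf{The plan} is to reduce the statement about general iterated compositions to two ingredients that are already available in the excerpt: the single-composition compatibility recorded in part (iii) of \cref{thm:operad-SolKV}, and the fact that the operad structures on $\solkv$, $\kv$, and $\krv$ are genuinely operadic (\cref{thm:operad-SolKVapp}, \cref{thm:kv-krv-operads}), so that one may manipulate iterated partial compositions by induction on the number $k$ of insertions. Since the right action of $\krv(n)$ on $\solkv(n)$ is given explicitly by $F \cdot G = G^{-1}F$ (\cref{thm:free-transitive-krv}), the target identity is the assertion that forming $G^{-1}F$ commutes with iterated operadic composition.

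First I would establish the base case, which is precisely the single-composition statement: for $F_1, F_2 \in \solkv(2)$ and $G_1, G_2 \in \krv(2)$,
\[
(F_1 \circ_i F_2)\cdot(G_1 \circ_i G_2) = (G_1\circ_i G_2)^{-1}(F_1 \circ_i F_2) = (G_1^{-1}F_1)\circ_i (G_2^{-1}F_2).
\]
This is exactly the content of \cref{thm:operad-SolKV}(iii) for $k=1$, and the essential verification behind it is that partial composition distributes over the action, i.e. that $(G_1 \circ_i G_2)^{-1} = G_1^{-1}\circ_i G_2^{-1}$ and that this interacts correctly with $F_1 \circ_i F_2$. The key computation, which I would carry out at the level of $\TAut$, uses the explicit formula $F \circ_i G = (F\circ_i 1)(1\circ_i G)$ from \cref{operad of TAut}: one checks that the factors $(F\circ_i 1)$ and $(1\circ_i G)$ sit in ``disjoint'' cosimplicial positions and hence the $\krv$-factors can be slid past the $\solkv$-factors in the required way. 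This disjointness—$F\circ_i 1$ acts by the same Lie word on the block of variables $x_i,\dots,x_{i+n-1}$ while $1\circ_i G$ acts only on that block—is the structural reason the identity holds.

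Next I would run the induction on $k$. Write $\tilde F = \tilde F' \circ_{i_k} F$ and $\tilde G = \tilde G' \circ_{i_k} G$, where $\tilde F'\in\solkv(n')$ and $\tilde G'\in\krv(n')$ are the compositions of the first $k-1$ insertions. By the inductive hypothesis, $\tilde F' \cdot \tilde G' = (\tilde G')^{-1}\tilde F'$ is the corresponding iterated composition of $G^{-1}F$. Applying the single-composition case to the outermost insertion $\circ_{i_k}$ gives
\[
\tilde F \cdot \tilde G = \big((\tilde G')^{-1}\tilde F'\big)\circ_{i_k}\big(G^{-1}F\big),
\]
and substituting the inductive expression for $(\tilde G')^{-1}\tilde F'$ yields exactly the claimed $k$-fold composition of $G^{-1}F$. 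The analogous statement for the left action of $\kv(n)$, with $G\cdot F = FG^{-1}$, follows by the identical argument with the roles of left and right multiplication exchanged; here one uses the left-action formula from \cref{thm:free-transitive-krv} and the same disjoint-support observation.

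\textbf{The main obstacle} I anticipate is not the induction, which is formal, but making the base case airtight: one must verify that the operadic composition map on $\TAut$—which is non-canonical and is \emph{not} a group homomorphism in each variable (\cref{operad of TAut})—nevertheless satisfies the bilinearity-type identity $(G_1\circ_i G_2)^{-1}(F_1\circ_i F_2) = (G_1^{-1}F_1)\circ_i(G_2^{-1}F_2)$. Because $\circ_i$ fails to be a homomorphism, one cannot simply expand and cancel; instead the argument must exploit that the two factors $(\,\cdot\,\circ_i 1)$ and $(1\circ_i\,\cdot\,)$ have disjoint cosimplicial supports, so the ``cross terms'' that would obstruct a naive computation vanish. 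I would therefore isolate and prove the commutation lemma $[\,\cdot\circ_i 1,\ 1\circ_i\,\cdot\,]$-type statement explicitly (it is implicit in the proof of \cref{thm:operad-SolKVapp} and in \cref{prop:TderSderModule}), and then the base case reduces to reassociating the product $(F_1\circ_i 1)(1\circ_i F_2)$ against $(G_1\circ_i 1)(1\circ_i G_2)$ using this commutation. Once that lemma is in hand, the remainder is bookkeeping.
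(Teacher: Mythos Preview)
Your approach is essentially the same as the paper's: reduce by induction/associativity to a single elementary composition, then establish the required commutation $(F\circ_i 1)(1\circ_i G^{-1})=(1\circ_i G^{-1})(F\circ_i 1)$ so that the product rearranges correctly. The paper carries out the base case for $\circ_2$ by explicit computation on the generators $x_1,x_2,x_3$, showing directly that $F^{1,23}$ and $(G^{-1})^{2,3}$ commute.

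One small sharpening is worth making. Your description of the commutation as coming from ``disjoint cosimplicial positions'' is not quite right: the factors $F\circ_i 1$ and $1\circ_i G^{-1}$ both act nontrivially on the block $x_i,\ldots,x_{i+n-1}$, so their supports overlap. The commutation holds not by disjointness but because $G^{-1}\in\krv(n)$ fixes $x_i+\cdots+x_{i+n-1}$, while $F\circ_i 1$ depends on those variables only through this sum. The paper makes this explicit at the group level; your reference to \cref{prop:TderSderModule} is the correct infinitesimal analogue, but you should state the $\krv$ condition as the reason rather than appeal to disjointness. With that clarification your argument goes through.
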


\begin{proof}
Since operadic composition in $\TAut$ is associative and equivariant, it suffices to verify the identity for a single elementary composition:
    \[
(G \circ_2 G)(F \circ_2 F) = G^{-1}F \circ_2 G^{-1}F.
\]
Using cosimplicial notation, this is equivalent to verifying:
\begin{equation}
\label{eq:compatibility-factorization}
((G^{-1})^{1,23}(G^{-1})^{2,3})(F^{1,23}F^{2,3}) 
=  ((G^{-1})^{1,23}F^{1,23})((G^{-1})^{2,3}F^{2,3}).
\end{equation}
To prove this identity, it is enough to show that $F^{1,23}$ and $(G^{-1})^{2,3}$ commute.
We verify this explicitly on the generators $ x_1, x_2, x_3 \in \lie_3 $. We will denote $G^{-1}=(g_1,g_2) \in \krv(2) \subseteq \TAut_2$, suppressing the associated Duflo element.

Since $(G^{-1})^{2,3}$ acts trivially on $ x_1 $, we compute:
\[
(F^{1,23}(G^{-1})^{2,3})(x_1) 
= F^{1,23}(x_1) 
= f_1^{-1}(x_1, x_2 + x_3) \cdot x_1 \cdot f_1(x_1, x_2 + x_3).
\]
On the other hand,
\begin{align*}
((G^{-1})^{2,3}F^{1,23})(x_1) 
&= f_1^{-1}(x_1, (G^{-1})^{2,3}(x_2 + x_3)) \cdot (G^{-1})^{2,3}(x_1) \cdot f_1(x_1, (G^{-1})^{2,3}(x_2 + x_3)) \\
&= f_1^{-1}(x_1, x_2 + x_3) \cdot x_1 \cdot f_1(x_1, x_2 + x_3),
\end{align*}
where we used the fact that since $G^{-1} \in \krv(2)$, so $ (G^{-1})^{2,3}(x_2 + x_3)=x_2+x_3$.

\smallskip

We now compute the action on $ x_2 $. 
First, we have
\begin{align*}
(F^{1,23} (G^{-1})^{2,3})(x_2) 
&= F^{1,23}((G^{-1})^{2,3}(x_2)) \\
&= F^{1,23}(g_2^{-1}(x_2,x_3) \cdot x_2 \cdot g_2(x_2,x_3)) \\
&= g_2^{-1}(F^{1,23}(x_2), F^{1,23}(x_3)) \cdot F^{1,23}(x_2) \cdot g_2(F^{1,23}(x_2), F^{1,23}(x_3)).
\end{align*}
Using $F^{1,23}(x_2) = f_2^{-1}(x_1, x_2 + x_3) \cdot x_2 \cdot f_2(x_1, x_2 + x_3)$, and the similar formula for $ F^{1,23}(x_3) $, the expression becomes:
\begin{align*}
&= f_2^{-1}(x_1, x_2 + x_3) \cdot 
g_2^{-1}(x_2, x_3) \cdot x_2 \cdot g_2(x_2,x_3) \cdot 
f_2(x_1, x_2 + x_3).
\end{align*}

\smallskip

On the other hand:
\begin{align*}
((G^{-1})^{2,3} \cdot F^{1,23})(x_2) 
&= (G^{-1})^{2,3}(F^{1,23}(x_2)) \\
&= (G^{-1})^{2,3}(f_2^{-1}(x_1, x_2 + x_3) \cdot x_2 \cdot f_2(x_1, x_2 + x_3)) \\
&= f_2^{-1}(x_1, (G^{-1})^{2,3}(x_2 + x_3)) \cdot (G^{-1})^{2,3}(x_2) \cdot f_2(x_1, (G^{-1})^{2,3}(x_2 + x_3)) \\
&= f_2^{-1}(x_1, x_2 + x_3) \cdot g_2^{-1}(x_2, x_3) \cdot x_2 \cdot g_2(x_2,x_3) \cdot f_2(x_1, x_2 + x_3),
\end{align*}
where we have again used that $(G^{-1})^{2,3}$ fixes $x_2 + x_3$ and acts on $ x_2 $ by conjugation. 

\smallskip
The actions on $ x_3 $ follow by identical reasoning. Therefore, $ F^{1,23} \cdot (G^{-1})^{2,3} = (G^{-1})^{2,3} \cdot F^{1,23} $, and the equality~\eqref{eq:compatibility-factorization} holds. 
The analogous result for $\kv$ is proven similarly.

\end{proof}

\begin{remark}
    For KV solutions associated with a moperad equivalence $\hPaB^1 \to \CD^+$, this result follows from~\cref{thm:SolKV-from-moperad} and~\cref{cor:KRV-action-F-012}.
\end{remark}
\section{Cosimplicial Structures and Operadic Cohomology}
\label{sec:operadic-cohomology}

We recall the construction of the cosimplicial cohomology of a multiplicative operad \cite{merkulov2019grothendieck}, and we apply it to the operads $\lie$ and $\tder$.
Let $\calP$ be an operad with an element $e \in \calP(2)$ such that $e \circ_1 e = e \circ_2 e$. 
Then, $\calP$ admits a cosimplicial structure given by the maps $d_i : \calP(n) \to \calP(n+1)$ defined by
\[ 
d_i(\mu)
\eqdef 
\begin{cases}
    e \circ_2 \mu & \text{if } i=0, \\
    \mu \circ_i e & \text{if } i \in [n], \\
    e \circ_1 \mu & \text{if } i=n+1.
\end{cases}
\]
If moreover $\calP$ is an operad in vector spaces, the cosimplicial maps assemble into a differential $d : \calP(n) \to \calP(n+1)$ defined by 
\[ d(\mu) \eqdef \sum_{i=0}^{n+1}(-1)^i d_i(\mu)\]
which turns the shifted totalization of $\calP$
\[ \Simp^\bullet(\calP) \eqdef \bigoplus_{n \geq 1} \calP(n)[-n]\]
into a chain complex. 
Its cohomology $H^\bullet(\Simp^\bullet(\calP))$ is called the \defn{cosimplicial cohomology} of the operad $\calP$.

\subsubsection{Alekseev--Torossian cohomologies}
Consider the element $e\eqdef 0 \in \lie_2$.
We clearly have 
$$e\circ_1 e = 0 = e \circ_2 e.$$
Applying the definition, we get cosimplicial maps $\lie_n \to \lie_{n+1}$ given by 
\[ 
\delta_i(f)
\eqdef 
\begin{cases}
    f(x_2,\ldots,x_{n+1})& \text{if } i=0, \\
    f(x_1,\ldots,x_i+x_{i+1},\ldots, x_{n+1}) & \text{if } i \in [n], \\
    f(x_1,\ldots,x_n)& \text{if } i=n+1.
\end{cases}
\]
Taking instead the operad $\lie^\bch$ we get
\[ 
\tilde \delta_i(f)
\eqdef 
\begin{cases}
    f(x_2,\ldots,x_{n+1})& \text{if } i=0, \\
    f(x_1,\ldots,\bch(x_i,x_{i+1}),\ldots, x_{n+1}) & \text{if } i \in [n], \\
    f(x_1,\ldots,x_n)& \text{if } i=n+1.
\end{cases}
\]
We recover the cohomologies defined in \cite[Section~2.3]{AT12} as the cosimplicial cohomologies of the operads $\lie$ and $\lie^\bch$. 
These cohomologies are also used in \cite[Prop.~26-27]{AET10}.
Alternatively, a general construction from the sole datum of the $\Sigma_n$-action was given by {\v S}evera--Willwacher in~\cite{severaWillwacher2011}, while a geometric construction has been given by Vergne in~\cite{Vergne2012}.
Moreover, these structures carry through to the linear operads $\ass$ and $\cyc$.

Similarly, consider the element $e\eqdef (0,0) \in \tder_2$. Then we have $e\circ_1 e = (0,0,0) = e \circ_2 e$. Applying the definition, we get cosimplicial maps $d_i : \tder_n \to \tder_{n+1}$ given by 
\[ 
d_i(u)
\eqdef 
\begin{cases}
    (0,a_1(x_2,\ldots,x_{n+1}),\ldots,a_n(x_2,\ldots,x_{n+1})) & \text{if } i=0, \\
    (a_1(x_1,\ldots,x_i+x_{i+1},\ldots,x_n),\ldots,a_i,a_i,\ldots,a_n) & \text{if } i \in [n], \\
    (a_1(x_1,\ldots,x_{n}),\ldots,a_n(x_1,\ldots,x_{n}),0) & \text{if } i=n+1.
\end{cases}
\]
We recover the cohomology defined in \cite[Section~3.2]{AT12} as the cosimplicial cohomology of the operad $\tder$.
On infinitesimal braids, we recover its cosimplicial cohomology studied by Willwacher in~\cite{Willwacher15}. 

In usual cosimplicial notation, $u\circ_i e$ is denoted $u^{1,\ldots,i(i+1),\ldots,n}$, and more generally a natural notation for $u \circ_i 0$ is $u^{1,\ldots, i-1,i(i+1)\cdots(i+n-1),i+n,\ldots,m+n-1}$.
Similarly $e\circ_1 v \in \tder_{n+1}$ is usually denoted $v^{1,\ldots,n}$, and more generally a natural notation for $0 \circ_i v$ is $v^{i,i+1,\ldots,i+(n-1)}$.
Since we have $u \circ_i v = u \circ_i 0 + 0 \circ_i v$, in ``cosimplicial terms'' the partial compositions maps can be written
\begin{equation}
\label{eq:cosimplicial-composition}
    u \circ_i v = u^{1,\ldots, i-1,i(i+1)\cdots(i+n-1),i+n,\ldots,m+n-1}+v^{i,i+1,\ldots,i+(n-1)}.
\end{equation}

\begin{example}
\label{ex:cosimplicial-associator-tder}
    Consider $u \in \tder_2$. 
    Then, we have in $\tder_3$
    \begin{align*}
        du
        &=d_0(u)-d_1(u)+d_2(u)-d_3(u) \\
        &=0\circ_2 u-u \circ_1 0+u \circ_2 0-0\circ_1 u \\
        &=u \circ_2 0+0\circ_2 u-u \circ_1 0-0\circ_1 u \\
        &= u\circ_2 u - u\circ_1 u.
    \end{align*}
\end{example}

\subsubsection{Cosimplicial structure on tangential automorphisms}

The element $1=\exp(0,0) \in \TAut_2$ plays for the operad $\TAut$ the same role as the element $(0,0) \in \tder_2$ plays for $\tder$, thus the operad structure on $\TAut$ induces a cosimplicial structure given by composing with~$1$.
This comes from the analogous structure on $\tder$, in the sense that for $F=\exp{(u)} \in \TAut_n, u \in \tder_n$, we have $F^{i,(i+1)\hdots j}=\exp{(u^{i,(i+1)\hdots j})}$. 

Similar to~\eqref{eq:cosimplicial-composition} above, partial composition in $\TAut$ can be written via cosimplicial maps: we have 
$$ 
F\circ_i G = F^{1,\ldots,i-1,\,i(i+1)\cdots(i+n-1),\,i+n,\ldots,m+n-1} \circ
G^{i,i+1,\ldots,i+(n-1)}.$$

Now observe that, even though partial compositions $\circ_i$ on $\tder$ are not morphisms of Lie algebras in general, compositions with zero $-\circ_i 0$ and $0 \circ_i -$ are.
Therefore cosimplicial maps are morphisms of Lie algebras, and behave well with respect to the Jacobian~\cite[Section 3]{AT12}: for any $ F \in \TAut_n $, the identity $ J(F^{i,j}) = J(F)^{i,j} $ holds for all pairs $ i, j $.
See~\cite[Proposition~24]{AET10} and the discussion preceding~\cite[Lemma~7.3]{AKKN_genus_zero} for more details.
A consequence of this fact is that the Jacobian map is a morphism of operads in sets.


\begin{proof}[{Proof of \cref{prop:Jacobian-operad}}]
    This is a direct computation; using that $J$ commutes with cosimplicial maps together with \cref{rem:trivial-action-on-J}, we have that
    \begin{eqnarray*}
        J(F \circ_i G) &=& J(F \circ_i 1) + (F \circ_i 1) J(1 \circ_i G) \\
        &=& J(F \circ_i 1) + J(1 \circ_i G) \\
        &=& J(F) \circ_i 0 + 0 \circ_i J(G) \\
        &=& J(F) \circ_i J(G).
    \end{eqnarray*}
\end{proof}

Similarly to the case of tangential derivations, one can define a ``differential'' on $F \in \TAut_n$ by the formula
\[ 
d(F) 
\eqdef 
\prod_{i=0}^{n+1} d_{i}(F)^{\left((-1)^{i}\right)}.
\]
\begin{example}
\label{ex:cosimplicial-associator}
Consider $F \in \TAut_2$.
Then, we have 
\begin{align*}
    dF
    &=d_0(F)d_1(F)^{-1}d_2(F)d_3(F)^{-1} \\
    &=(1\circ_2 F)(F \circ_1 1)^{-1}(F \circ_2 1)(1\circ_1 F)^{-1} \\
    &=(F \circ_2 1)(1\circ_2 F)(1\circ_1 F)^{-1}(F \circ_1 1)^{-1}\\
    &= (F\circ_2 F) (F \circ_1 F)^{-1}
\end{align*}
in $\TAut_3$.
\end{example}


\bibliographystyle{amsalpha}
\bibliography{bib.bib}

\end{document}